\theoremstyle{plain}
\newtheorem{theorem}{Theorem}[section]
\newtheorem{corollary}[theorem]{Corollary}
\newtheorem{lemma}[theorem]{Lemma}
\newtheorem{proposition}[theorem]{Proposition}
\theoremstyle{definition}
\newtheorem{definition}[theorem]{Definition}
\newtheorem{assumption}[theorem]{Assumption}
\newtheorem{remark}[theorem]{Remark}
\numberwithin{equation}{section}
\numberwithin{figure}{section}
\numberwithin{table}{section}
\tikzstyle{fleche}=[>=stealth', postaction={decorate}, thick]
\tikzstyle{axis}=[->, >=stealth', thick, gray]
\tikzstyle{path}=[->, >=stealth', thick]
\tikzstyle{grille}=[dotted, gray]
\colorlet{lgray}{white!85!black}
\colorlet{lred}{white!65!red}
\DeclareMathOperator{\E}{\mathbb E}
\DeclareMathOperator{\PP}{\mathbb P}
\DeclareMathOperator{\fix}{fix}
\DeclareMathOperator{\wt}{wt}
\DeclareMathOperator{\Pf}{pf}
\DeclareMathOperator{\Thetad}{Theta}
\DeclareMathOperator{\sgn}{sgn}
\newcommand{\R}[0]{\mathbb R}
\newcommand{\Z}[0]{\mathbb Z}
\newcommand{\C}[0]{\mathbb C}
\newcommand{\N}[0]{\mathbb N}
\newcommand{\deq}[0]{\overset{(d)}{=}}
\title[Half space current fluctuations]{Boundary current fluctuations for the half space ASEP and six vertex model}
\author{Jimmy He}
\address{Department of Mathematics, MIT, Cambridge, MA  02139}
\email{jimmyhe@mit.edu}
\subjclass{82C23 (Primary), 60K35, 82C22 (Secondary)}
\date{\today}
\begin{document}
\maketitle
\begin{abstract}
We study fluctuations of the current at the boundary for the half space asymmetric simple exclusion process (ASEP) and the height function of the half space six vertex model at the boundary at large times. We establish a phase transition depending on the effective density of particles at the boundary, with GSE and GOE limits as well as the Baik--Rains crossover distribution near the critical point. This was previously known for half space last passage percolation, and recently established for the half space log-gamma polymer and KPZ equation in the groundbreaking work of Imamura, Mucciconi, and Sasamoto \cite{IMS22}. 

The proof uses the underlying algebraic structure of these models in a crucial way to obtain exact formulas. In particular, we show a relationship between the half space six vertex model and a half space Hall--Littlewood measure with two boundary parameters, which is then matched to a free boundary Schur process via a new identity of symmetric functions. Fredholm Pfaffian formulas are established for the half space ASEP and six vertex model, indicating a hidden free fermionic structure.
\end{abstract}
\setcounter{tocdepth}{1}
\tableofcontents

\section{Introduction}
Tremendous progress has been made in recent years towards our understanding of universal fluctuations for models in the Kardar--Parisi--Zhang (KPZ) universality class, including particle systems, polymers and the KPZ equation. However, the introduction of a boundary to these models introduces many difficulties, and our understanding of the asymptotic fluctuations in these models remains incomplete.

While one point fluctuations for a wide variety of models without a boundary have been established, see e.g. \cite{TW09, BCG16, A18}, and indeed there are now classes of models for which we even have multipoint and process convergence to the limiting universal objects \cite{QS23, W23, V20, DOV22, MQR21}, the analysis of systems with boundary is usually much more complicated. Models with a single boundary, which we will call half space models, are expected to exhibit universal behavior with a phase diagram depending on the boundary strength. Some progress towards this has been made in zero-temperature models \cite{BR01b, SI04,BBCS18}, hinting that similar phase transitions should hold for all half space models in the KPZ universality class, but progress in positive temperature models has been more difficult.

This problem was first considered by Kardar \cite{K85}, and has been studied in the physics literature using a replica Bethe Ansatz approach \cite{K87, GP12, DNKLDT20, KLD20}, culminating in work of De Nardis, Krajenbrink, Le Doussal, and Thiery \cite{DNKLDT20}, where a phase transition for the one point distribution at the boundary of the half space KPZ equation was found. In the physics work of Borodin, Bufetov, and Corwin \cite{BBC16}, nested contour integral formulas were found for certain moments, some of which were then proven rigorously in work of Barraquand, Borodin, and Corwin \cite{BBC20}, but these still did not lead to mathematically rigorous asymptotics. For a long time, the only mathematically rigorous result on this phase transition for positive temperature models was work of Barraquand, Borodin, Corwin, and Wheeler \cite{BBCW18} on the half space asymmetric simple exclusion process (ASEP) and KPZ equation, and this was restricted to the critical boundary strength.

Recently, the groundbreaking work of Imamura, Mucciconi, and Sasamoto \cite{IMS22} established a phase transition for the one point distribution at the boundary for the free energy of the half space log-gamma polymer and the solution to the half space KPZ equation. This was the first work which rigorously established this phase transition for a positive temperature model. They found a connection to a free fermionic model via a bijective approach, resulting in formulas amenable to asymptotics. However, this left the problem of studying a variety of other discrete models including the ASEP, which contains an additional boundary parameter.

Beyond just the phase transition studied in this paper, there has been tremendous progress in understanding half space models at positive temperature and their algebraic structure, see e.g. \cite{TW13,TW13b,OSZ14,BR22,H22, BC22, BW22}. For some additional recent works on half space models, see \cite{BFO20, CS18,P22, P19, W20, BBS23}, and also relevant work in the physics literature \cite{BD21, BKD22, BBC16}.

\subsection{Overview of results and proof}
In this paper, we study the ASEP on the non-negative integers, a system of particles which can jump to unoccupied sites, and with particles allowed to enter and exit at $0$. Initially, the system starts with no particles. From known results for zero temperature models (see e.g. \cite{BR01b,SI04,BBCS18}), it is expected that the fluctuations for the total number of particles in the system, which we call the current, has a phase transition depending on the effective density $\rho$ of particles at the boundary, which depends on the rates at which particles enter and exit. In particular, Tracy--Widom GSE and GOE fluctuations are expected when $\rho>\frac{1}{2}$ and $\rho=\frac{1}{2}$ respectively, with $\tau^{1/3}$ scaling at time $\tau$, and Gaussian fluctuations with $\tau^{1/2}$ scaling are expected when $\rho<\frac{1}{2}$. However, for the ASEP, results are currently only known at the critical point $\rho=\frac{1}{2}$, and only for a specific choice of parameters, by work of Barraquand, Borodin, Corwin, and Wheeler \cite{BBCW18}. 

Our main result is to establish these asymptotics as well as the appearance of the Baik--Rains crossover distribution introduced in \cite{BR01b} if the boundary strength is tuned appropriately, subject only to a condition that the direction of the asymmetry at the boundary matches that of the bulk. This restriction only affects the Gaussian regime, and still allows for all possible densities at the boundary. We expect that this condition can be removed, but the methods in this paper do not seem to be able to handle this. We are also able to study the half space stochastic six vertex model, and we establish the same phase transition for the fluctuations of the height function at the boundary.

In order to obtain asymptotics, we find exact Fredholm Pfaffian formulas for the distribution functions after a random shift. This suggests that there is some hidden free fermionic structure responsible for these formulas, even though the systems themselves are not free fermionic. The proof for these formulas involves relating the half space stochastic six vertex model to the half space Hall--Littlewood measure (Theorem \ref{thm: general 6vm HL distr equality}), a probability distribution on partitions, using a Yang--Baxter argument taking advantage of integrability of the model. In particular, we show that the half space six vertex model with two boundary parameters falls into the class of half space Macdonald processes, something not known before this work. This half space Hall--Littlewood measure is then related to the free boundary Schur process introduced by Betea, Bouttier, Nejjar, and Vuleti\'c \cite{BBNV18,BBNV20} (Theorem \ref{thm: HL FBS same}) via an identity of symmetric functions (Theorem \ref{thm: general identity}) which we establish using a bijection of Imamura, Mucciconi, and Sasamoto \cite{IMS21}. Exact contour integral formulas are known for the free boundary Schur process, and we are able to extract asymptotics via a steepest descent analysis. Since the ASEP can be obtained as a limit of the six vertex model, we also obtain exact formulas for it, from which we are able to extract asymptotics.

Let us remark that currently, results in the literature on asymptotics in half space models only consider models with a single boundary parameter, whereas our models have two boundary parameters. We believe that this will allow for potentially more applications, especially as the six vertex model is known to give many other stochastic models via various fusion and limiting procedures.

\subsection{ASEP}
\begin{figure}
    \centering
    \includegraphics{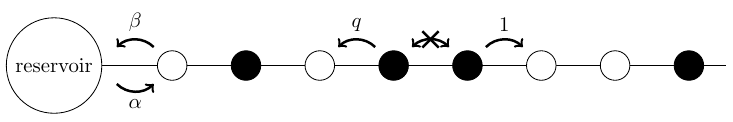}
    \caption{Configuration of the ASEP along with some possible transitions and their rates. Particles which would jump to an occupied site are blocked. Note that since the site at $0$ is empty, particles enter at rate $\alpha$. If it were occupied, the particle would instead leave at rate $\beta$.}
    \label{fig:asep}
\end{figure}
Our main result is a phase transition for the asymptotic current in the half space ASEP, which we now define. The \emph{half space asymmetric simple exclusion process (ASEP)} is a continuous time Markov chain on configurations of particles on $\N$. Initially, the system begins with no particles. At $0$, particles enter or exit at rates $\alpha$ and $\beta$ respectively (we can think of a reservoir connected to $0$ from which particles can enter or exit), and in the bulk they jump left and right at rates $q<1$ and $1$ respectively, except that if a particle would jump to a site occupied by another particle, this is blocked. See Figure \ref{fig:asep} for an example of a configuration and the possible transitions. Note there is no loss of generality in these restrictions, since rescaling the rates corresponds to rescaling time, and by reversing the roles of particles and holes, we swap $1$ and $q$.

A key quantity associated to the system is the \emph{effective density} of particles at $0$,
\begin{equation*}
    \rho=\frac{1}{1+\nu^{-1}},
\end{equation*}
where
\begin{equation*}
    \nu^{-1}=\frac{1-q+\beta-\alpha+\sqrt{(1-q+\beta-\alpha)^2+4\alpha\beta}}{2\alpha}.
\end{equation*}
This quantity arises as the density for which the product Bernoulli measure is stationary for the ASEP dynamics, and indeed $\rho$ solves $\alpha (1-\rho)+q\rho(1-\rho)-\beta \rho-\rho(1-\rho)=0$. We will often write formulas in terms of $\nu$, which simplifies certain expressions. We will let $t=\beta/\alpha$, and note that $\nu t<1$. We will be interested in studying the number of particles within the system at time $\tau$, which we will denote by $N(\tau)$. 

We will make the assumption that $q,t<1$. The assumption that $q<1$ is needed so particles will drift into the system, but the requirement that $t<1$ is a technical one that should not be needed. Since $\nu t<1$ always holds, this restriction is only relevant in the Gaussian regime when $\nu<1$. In particular, we do not assume Liggett's condition that $\alpha+\beta/q=1$ \cite{L75}. Previously asymptotics were only known in the special case that $\alpha=\frac{1}{2}$ and $\beta=\frac{q}{2}$ \cite{BBCW18}.

For the definitions of the Tracy--Widom distributions $F_{GOE}$ and $F_{GSE}$, and the Baik--Rains crossover distribution $F_{cross}(s,\xi)$, see Definition \ref{def: BR TW}.
\begin{theorem}
\label{thm: main}
Let $N(\tau)$ denote the number of particles within the system at time $\tau$ in the half space ASEP started from the empty configuration, and let $\rho$ denote the effective density of particles at $0$. Assume that $t<1$. Then depending on $\rho$, as $\tau\to\infty$,
\begin{equation*}\tag{$\rho>\frac{1}{2}$}
    \PP\left(-\frac{N\left(\frac{\tau}{1-q}\right)-\frac{\tau}{4}}{2^{-4/3} \tau^{1/3}}\leq s\right)\to F_{GSE}(s),
\end{equation*}
\begin{equation*}\tag{$\rho=\frac{1}{2}$}
    \PP\left(-\frac{N\left(\frac{\tau}{1-q}\right)-\frac{\tau}{4}}{2^{-4/3} \tau^{1/3}}\leq s\right)\to F_{GOE}(s),
\end{equation*}
\begin{equation*}\tag{$\rho<\frac{1}{2}$}
    \PP\left(-\frac{N\left(\frac{\tau}{1-q}\right)-\mu \tau}{\sigma \tau^{1/2}}\leq s\right)\to \Phi(s),
\end{equation*}
where $F_{GSE}$ and $F_{GOE}$ are the Tracy--Widom GSE and GOE distribution functions, $\Phi$ is the distribution function for a standard Gaussian, and
\begin{equation*}
    \mu=\frac{\nu}{(1+\nu)^2}, \qquad \sigma^2=\nu^{-2}\frac{1-\nu}{(1+\nu^{-1})^3}.
\end{equation*}
Furthermore, if $\rho=\frac{1}{2}+\frac{2^{-2/3}\xi}{\tau^{1/3}}$ and $t$ is fixed, then
\begin{equation*}\tag{$\rho\downarrow \frac{1}{2}$}
    \PP\left(-\frac{N\left(\frac{\tau}{1-q}\right)-\frac{\tau}{4}}{2^{-4/3} \tau^{1/3}}\leq s\right)\to F_{cross}(s,\xi),
\end{equation*}
where $F_{cross}$ is the Baik--Rains crossover distribution.
\end{theorem}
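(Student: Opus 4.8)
The plan is to obtain Theorem~\ref{thm: main} by transferring the asymptotic analysis to a model where exact Fredholm Pfaffian formulas are already available, namely the free boundary Schur process. The chain of reductions is indicated in the introduction: first realize the half space ASEP as a limit of the half space stochastic six vertex model (so it suffices to treat the six vertex model and then take a careful degeneration at the level of the limiting distributions); second, invoke Theorem~\ref{thm: general 6vm HL distr equality} to identify the distribution of the height function at the boundary with the distribution of a suitable part of a partition under the half space Hall--Littlewood measure with two boundary parameters; third, use Theorem~\ref{thm: HL FBS same}, whose proof rests on the symmetric function identity Theorem~\ref{thm: general identity}, to match the half space Hall--Littlewood measure with the free boundary Schur process of Betea--Bouttier--Nejjar--Vuleti\'c. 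At that point one has a Fredholm Pfaffian formula for $\PP(N(\tau)\le \cdot)$ after the random shift built into the free boundary construction, with an explicit correlation kernel given by double contour integrals.

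\emph{The core analytic step} is then a steepest descent analysis of this kernel. I would substitute the KPZ-scaling ansatz $\tau/(1-q)$ for time and $N = \tau/4 + 2^{-4/3}\tau^{1/3} s$ (or the Gaussian centering $\mu\tau + \sigma\tau^{1/2} s$ in the subcritical regime), locate the critical point of the action in the contour integrals, and deform the contours to pass through it. The phase transition is governed by the position of a single real pole coming from the boundary parameter $\nu$ (equivalently the density $\rho$) relative to the critical point: when $\rho>\tfrac12$ the pole is strictly on one side and does not interact with the saddle, producing the GSE kernel; when $\rho=\tfrac12$ the pole collides with the critical point, producing the GOE kernel; when $\rho<\tfrac12$ the contribution of the pole dominates over the cube-root Airy-type fluctuations and a central limit theorem takes over with $\tau^{1/2}$ scaling; and in the scaling window $\rho = \tfrac12 + 2^{-2/3}\xi\tau^{-1/3}$ the pole sits at distance $O(\tau^{-1/3})$ from the critical point, which is exactly the regime producing the Baik--Rains crossover kernel $F_{cross}(s,\xi)$. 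The computation of $\mu$ and $\sigma^2$ in the Gaussian regime amounts to reading off the first two Taylor coefficients of the exponential action at the relevant pole. One also needs to undo the random shift: the shift is an explicit (geometric-type) random variable of fluctuation scale $O(\tau^{1/2})$ that becomes deterministic to leading order on the $\tau^{1/3}$ scale in the GSE/GOE regimes, so it can be absorbed, while in the Gaussian regime it contributes to $\sigma^2$ and must be tracked through a convolution/Slutsky argument.

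\emph{The main obstacle}, and the place where the restriction $t<1$ enters, is controlling the Fredholm Pfaffian expansion uniformly in $\tau$: one must produce exponential decay bounds on the deformed contours that justify interchanging the limit with the infinite series, handle the pinching of contours near the pole (especially in the crossover window where the pole is asymptotically on the contour), and verify trace-class convergence of the kernels to the limiting Airy-type Pfaffian kernels. This is delicate because the contours for the half space free boundary Schur process are constrained, and the two boundary parameters together with the condition $\nu t < 1$ determine whether admissible steepest descent contours exist; the hypothesis $t<1$ is what guarantees enough room to deform in the Gaussian regime. Secondarily, the six-vertex-to-ASEP limit must be taken after the asymptotic analysis (or shown to commute with it), which requires the formulas to degenerate cleanly; this is routine given the known scaling limit of the stochastic six vertex model to ASEP but must be stated carefully so that the scalings in the four regimes above match the stated centerings and variances. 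Once these estimates are in place, the identification of the limits with $F_{GSE}$, $F_{GOE}$, $\Phi$, and $F_{cross}(s,\xi)$ follows by comparing the limiting kernels with the definitions in Definition~\ref{def: BR TW}.
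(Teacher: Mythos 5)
Your high-level chain of reductions matches the paper exactly: six vertex model $\to$ half space Hall--Littlewood measure (Theorem~\ref{thm: general 6vm HL distr equality}) $\to$ free boundary Schur process (Theorems~\ref{thm: HL FBS same} and~\ref{thm: general identity}) $\to$ Fredholm Pfaffian formula of Betea--Bouttier--Nejjar--Vuleti\'c $\to$ steepest descent. However, there are a few substantive inaccuracies in how you describe the analytic steps.

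First, the random shifts are \emph{not} of fluctuation scale $O(\tau^{1/2})$ and do not contribute to $\sigma^2$ in the Gaussian regime. The shift $\chi+2S$ with $\chi\sim RS(q,t)$ and $S\sim\Thetad(\zeta^2,q^2)$ has constant-order fluctuations in all regimes, so it is negligible under any nontrivial scaling; the variance $\sigma^2$ in the Gaussian case comes entirely from $G''(\nu^{-1})$ at the saddle. The actual subtlety is that $\chi$ is a \emph{signed} measure when $t>q$, and the paper removes it using the fact that it has a convolutional inverse which is a genuine probability distribution (Lemma~\ref{lem: signed measure asymp}); this is where $t<1$ is needed for the absolute summability. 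Second, the six-vertex-to-ASEP limit is taken at the level of the finite Fredholm Pfaffian formula (Corollary~\ref{cor: ASEP formula} is derived from Corollary~\ref{cor: 6vm formula} by dominated convergence of the kernel integrand), not after the steepest descent; only the asymptotic analysis is then done separately for the two $F(z)$ factors. Third, in the Gaussian regime the analytic continuation in $\nu$ past $\nu=1$ encounters not a single pole but a whole family of poles at $z=q^{-k}\nu$ and $z=q^{-k}w^{-1}$ (Lemma~\ref{lem: residues} and Theorem~\ref{thm: pfaff analytic cont}); their residues must all be subtracted, giving a perturbed kernel $\widehat{k}=k_{\nu^{-1}}-S-A(x)B(y)+A(y)B(x)$, and convergence of the perturbed Fredholm Pfaffian expansion is a serious technical issue because the rank-two piece grows exponentially in $x,y$. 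Finally, you omit a key structural ingredient: the Pfaffian kernel from Theorem~\ref{thm: pfaff formula} contains the distributional term $(D_x^{(\gamma_2)}-D_y^{(\gamma_2)})\Delta(x,y)$, which forces a discrete summation-by-parts argument (Proposition~\ref{prop: approx distr pf identity}) to put the kernel into a form comparable to the Airy/GOE/GSE kernels, with nontrivial control of the boundary terms; this step is needed in every regime, not just the Gaussian one.
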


\begin{remark}
Our results require that we study the current at the boundary. It is interesting to ask about the number of particles which have passed a macroscopically far location. While certain results in this paper, in particular the connections between the six vertex model and the Hall--Littlewood measure, extend to this case, the connection to the free boundary Schur process is still not understood. It would be very interesting to develop a theory which could handle these observables. Based on results for zero-temperature models \cite{BBCS18, PS00,SI04} and universality considerations, it should be expected that suitably normalized, the number of particles past some point $x$ by time $\tau$ (both going to $\infty$) exhibits the Baik--Ben Arous--P\'ech\'e phase transition \cite{BBP05} as the boundary strength varies, with either $F_{GUE}$, $F_{GOE}^2$, or Gaussian fluctuations. It would be very interesting to show this for any positive temperature model in the half space KPZ universality class. Another possible application would be to understand the distribution of the solution to the half space KPZ equation at finite times.
\end{remark}

\begin{remark}
Although the half space ASEP is known to converge to the half space KPZ equation with Neumann boundary data \cite{CS18,P19}, formulas and asymptotics for the KPZ equation were already derived in \cite{IMS22} via the log-gamma polymer. We thus do not attempt to take a KPZ equation limit or derive asymptotics for the KPZ equation.
\end{remark}

The proof of Theorem \ref{thm: main} involves finding exact Fredholm Pfaffian formulas via an algebraic approach which gives distributional identities relating $N(\tau)$ to models where formulas are known. These formulas can then be studied via a steepest descent analysis. The formulas must be established at the level of the half space stochastic six vertex model, a more complicated model which give the ASEP as a limit, and is of independent interest.

\subsection{Half space six-vertex model}
We are also able to study the half space six vertex model with a certain family of weights. In fact, part of the proof requires working in this level of generality.

\begin{figure}
    \centering
    \includegraphics[scale=0.7]{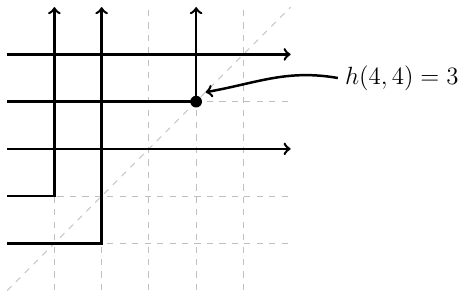}
    \caption{A configuration of the six vertex model and its height function at $(4,4)$.}
    \label{fig:6vm}
\end{figure}

The model is a probability distribution on certain configurations of arrows in the lattice $\N^2$ (see Figure \ref{fig:6vm}), which can be described with the following sampling procedure. We begin with arrows entering from the left and no arrows entering from the bottom. We then sample the outcome of vertices $(i,j)$ with $i\geq j$ in a Markovian fashion, starting with vertices whose left and bottom edges are determined, using certain integrable stochastic weights for the outcome. The outcome of vertices $(j,i)$ are determined by what occurs at $(i,j)$ in a symmetric manner. In particular, an arrow crosses an edge if and only if no arrow crosses the corresponding edge given by reflection across $x=y$. The observable which we will study is the height function $h(i,j)$, which counts the number of arrows passing at or to the left of the point $(i,j)$.

The weights depend on parameters $a_i$ associated to rows/columns, a bulk parameter $q$, and boundary parameters $t$ and $\nu$ (here $q$ and $t$ are the asymmetry parameters). For convenience we let $q_i=q$ if $i>0$ and $q_0=t$. We let
\begin{equation*}
    \mathbf{p}_{i,j}=\mathbf{p}_{|i-j|}(a_ia_j)=\begin{cases}\frac{1-a_ia_j}{1-qa_ia_j}&\text{ if }i\neq j,
    \\\frac{1-a_i^2}{(1-\nu ta_i)\left(1+\frac{1}{\nu}a_i\right)}&\text{ if }i=j.
    \end{cases}
\end{equation*}
The weights are then given in Figure \ref{fig:vtx wts}. We let $h(n,n)$ denote the \emph{height function} at $(n,n)$, defined as the number of arrows leaving a vertex upwards at or to the left of the vertex $(n,n)$. See Figure \ref{fig:6vm} for an example.

\begin{figure}
    \centering
    \begin{tabular}{l|cccccc}
    Configuration:&$\vcenter{\hbox{\includegraphics[scale=1]{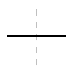}}}$ &$\vcenter{\hbox{\includegraphics[scale=1]{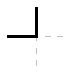}}}$&$\vcenter{\hbox{\includegraphics[scale=1]{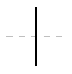}}}$&$\vcenter{\hbox{\includegraphics[scale=1]{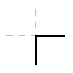}}}$&$\vcenter{\hbox{\includegraphics[scale=1]{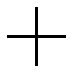}}}$& $\vcenter{\hbox{\includegraphics[scale=1]{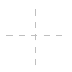}}}$\\
    Probability:&$\mathbf{p}_{i,j}$&$1-\mathbf{p}_{i,j}$&$q_{|i-j|}\mathbf{p}_{i,j}$&$1-q_{|i-j|}\mathbf{p}_{i,j}$ &1&1
    \end{tabular}
    \caption{Probabilities for sampling outgoing arrows at a vertex $(i,j)$. The black line represents an arrow and the dashed line represents no arrow.}
    \label{fig:vtx wts}
\end{figure}

\begin{theorem}
\label{thm: main 6vm}
Let $a_i=a$ for all $i$, let $h(n,n)$ denote the height function of the half space six vertex model at $(n,n)$, and let $\rho=\frac{1}{1+\nu^{-1}}$. Assume that $a\in (0,1)$ and $q,t,t\nu \in [0,1)$. Then depending on $\rho$, as $n\to\infty$,
\begin{equation*}\tag{$\rho>\frac{1}{2}$}
    \PP\left(\frac{h(n,n)-\frac{2a}{1+a}n}{\frac{(a(1-a))^{1/3}}{1+a}n^{1/3} }\leq s\right)\to F_{GSE}(s),
\end{equation*}
\begin{equation*}\tag{$\rho=\frac{1}{2}$}
    \PP\left(\frac{h(n,n)-\frac{2a}{1+a}n}{\frac{(a(1-a))^{1/3}}{1+a}n^{1/3}}\leq s\right)\to F_{GOE}(s),
\end{equation*}
\begin{equation*}\tag{$\rho<\frac{1}{2}$}
    \PP\left(\frac{h(n,n)-\mu n}{\sigma n^{1/2}}\leq s\right)\to \Phi(s),
\end{equation*}
where $F_{GSE}$ and $F_{GOE}$ are the Tracy--Widom GSE and GOE distribution functions, $\Phi$ is the distribution function for a standard Gaussian, and
\begin{equation*}
    \mu=\frac{2a^2+a(\nu+\nu^{-1})}{(1+a\nu)(1+a\nu^{-1})}, \qquad \sigma^2=\frac{a(1-a^2)(\nu^{-1}-\nu)}{(1+a\nu)^2(1+a\nu^{-1})^2}.
\end{equation*}
Furthermore, if $\rho=\frac{1}{2}+\frac{2^{-2/3}\xi}{n^{1/3}}$ and $t$ is fixed, then
\begin{equation*}\tag{$\rho\downarrow \frac{1}{2}$}
    \PP\left(\frac{h(n,n)-\frac{2a}{1+a}n}{\frac{(a(1-a))^{1/3}}{1+a}n^{1/3} }\leq s\right)\to F_{cross}(s,\xi),
\end{equation*}
where $F_{cross}$ is the Baik--Rains crossover distribution.
\end{theorem}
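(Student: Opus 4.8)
The plan is to prove Theorem~\ref{thm: main 6vm} by pushing the height function $h(n,n)$ through the chain of distributional identities recorded above until it lands on a model carrying an exact Fredholm Pfaffian formula, and then to run a steepest descent analysis in which the four regimes are governed by the position of a single ``boundary pole'' relative to a fixed critical point.

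First I would invoke Theorem~\ref{thm: general 6vm HL distr equality} with $a_i=a$ to identify the law of $h(n,n)$, up to an explicit independent random shift $\mathcal G_n$, with the law of a natural statistic of a random partition $\lambda$ sampled from the half space Hall--Littlewood measure with $n$ equal specializations and boundary parameters $t,\nu$. Applying Theorem~\ref{thm: HL FBS same} together with the symmetric function identity of Theorem~\ref{thm: general identity} then matches this measure to a free boundary Schur process, for which Betea--Bouttier--Nejjar--Vuleti\'c type contour integral formulas hold. Composing the identities produces a representation of the form
\[
\PP\bigl(h(n,n)+\mathcal G_n\le m\bigr)=\Pf\!\left(J-K_n\right),
\]
where $K_n$ is a $2\times 2$ matrix kernel built from double contour integrals with an integrand of the shape $e^{\,n(S(z)-S(w))}$ times a boundary factor possessing a simple pole at a point $p(\nu)$. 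Under the standing hypotheses $a\in(0,1)$ and $q,t,t\nu\in[0,1)$ the shift $\mathcal G_n$ is $O_{\PP}(1)$ (its geometric parameter stays bounded away from $1$), hence of smaller order than the fluctuation scale in every regime, and may be dropped after taking the limit.

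Next I would analyze $S=S_a$. In the diagonal direction its critical equation has a double critical point $z_c=z_c(a)$ with $S'(z_c)=S''(z_c)=0$ and $S'''(z_c)\ne0$, reflecting that the diagonal is the most degenerate direction; $S(z_c)$ and $S'''(z_c)$ yield the centering $\tfrac{2a}{1+a}$ and the scale $\tfrac{(a(1-a))^{1/3}}{1+a}$. Since $\rho=\frac{1}{1+\nu^{-1}}$, the position of $p(\nu)$ relative to the steepest descent contour through $z_c$ is exactly the trichotomy of interest: $p(\nu)$ on the favourable side iff $\rho>\tfrac12$, $p(\nu)=z_c$ iff $\rho=\tfrac12$, $p(\nu)$ on the unfavourable side iff $\rho<\tfrac12$. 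Deforming the two contours to steepest descent paths through $z_c$ compatible with the reflection symmetry of the free boundary Schur process, I expect: for $\rho>\tfrac12$ the pole does not obstruct the deformation and the localized contribution near $z_c$ converges, after the $2^{-4/3}$-type rescaling, to the Fredholm Pfaffian defining $F_{GSE}$; for $\rho=\tfrac12$ the pole sits at $z_c$ and contributes a rank-one correction giving $F_{GOE}$; for $\rho<\tfrac12$ the contour must be pushed past $p(\nu)$, the residue there dominates, and a Laplace expansion of that residue (controlled by $S(p(\nu))$ and $S''(p(\nu))$) gives Gaussian fluctuations with the stated $\mu,\sigma^2$; and for $\rho=\tfrac12+2^{-2/3}\xi n^{-1/3}$ the pole sits at distance $\sim\xi n^{-1/3}$ from $z_c$, so in the local coordinate it survives in the limiting kernel at a finite location proportional to $\xi$, turning the Fredholm Pfaffian into $F_{cross}(s,\xi)$.

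The hard part will be making this steepest descent rigorous: exhibiting global contours on which $\Re\bigl(S(z)-S(w)\bigr)$ has the correct sign away from $z_c$ and $p(\nu)$, keeping them admissible for the Pfaffian structure under deformation, and proving uniform tail estimates that give trace-class convergence of $K_n$ to each limiting kernel --- in particular \emph{uniformly as $p(\nu)\to z_c$}, which the crossover regime demands. The hypotheses $a\in(0,1)$, $q,t,t\nu\in[0,1)$ are what guarantee that the Hall--Littlewood and Schur measures are genuine probability measures with convergent normalization, that $\mathbf p_{i,i}\in[0,1]$, and that contours separating all poles exist; the assumption $t<1$ (binding only when $\nu<1$, i.e.\ in the Gaussian regime) is precisely what lets the residue computation go through, and removing it would require controlling the formula as the boundary pole crosses other singularities, which the present contour choices cannot accommodate. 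Finally, Theorem~\ref{thm: main} follows from Theorem~\ref{thm: main 6vm} by taking the standard degeneration of the stochastic six vertex model to the ASEP and matching parameters.
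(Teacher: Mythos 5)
Your high-level architecture is the same as the paper's: push $h(n,n)$ through Theorem~\ref{thm: general 6vm HL distr equality} to a half space Hall--Littlewood measure, then through Theorems~\ref{thm: HL FBS same} and \ref{thm: general identity} to a free boundary Schur process, extract the Pfaffian formula, and run steepest descent with the phases controlled by where the boundary pole sits relative to the critical point. But there are several concrete gaps between the sketch and a working proof.

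First, the random shift is mislocated. Theorem~\ref{thm: general 6vm HL distr equality} gives the clean equality $h(n,n)\deq l(\lambda)$ with no shift; the shift $\chi\sim RS(q,t)$ enters only when passing to the free boundary Schur process in Theorem~\ref{thm: HL FBS same}, and there is a second independent shift $2S$ with $S\sim\Thetad(\zeta^2,q^2)$ forced by the Pfaffian formula itself (Theorem~\ref{thm: pfaff formula}). More importantly, $\chi$ is only a genuine probability distribution when $t\le q$; for $q<t<1$ it is a signed measure, so ``dropping $\mathcal G_n$ after taking the limit'' is not a soft argument. The paper must prove and invoke Lemma~\ref{lem: signed measure asymp} (together with the convolutional inverse of the signed part) to transfer the asymptotics; your proposal asserts $\mathcal G_n=O_{\PP}(1)$ and moves on, which does not handle this case.

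Second, the kernel $K$ you would obtain is not yet of the limiting form: its $22$-entry contains the distributional term $(D^{(\gamma_2)}_x-D^{(\gamma_2)}_y)\Delta(x,y)$, and the first-order difference operators carry factors of $2$. The paper needs an approximate discrete summation-by-parts machinery (Lemma~\ref{lem: sum by parts}, Lemma~\ref{lem: int by parts multivariate}, Proposition~\ref{prop: approx distr pf identity}) to convert $\Pf(J-K)$ into the Fredholm Pfaffian of the clean ``derivative'' kernel $K'$, with uniform control of the boundary terms that appear at every Pfaffian order. Your sketch goes straight from ``double contour integral'' to ``limiting Pfaffian'' and silently assumes summation by parts holds exactly; the boundary-term estimates (Lemma~\ref{lem: boundary bound complicated}) are a real piece of the argument.

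Third, the $\rho<\tfrac12$ regime is substantially harder than ``push the contour past $p(\nu)$ and take the residue.'' The Pfaffian formula in Theorem~\ref{thm: pfaff formula} is only valid for $\nu>1$; for $\nu<1$ the factor $g$ produces an entire family of poles $z=q^{-k}\nu$ \emph{and} the factor $(qzw;q)_\infty^{-1}$ produces poles at $z=q^{-k}w^{-1}$. The analytic continuation (Theorem~\ref{thm: pfaff analytic cont}) must subtract \emph{all} of these residues, yielding a modified kernel $\widehat k=k_{\nu^{-1}}-S-A(x)B(y)+A(y)B(x)$, with $S$ itself a sum of many skew-symmetric terms; one must then show $k_{\nu^{-1}}$ and $S$ are exponentially negligible and the $A$,$B$ rank-two piece alone survives (Lemmas~\ref{lem: asep gaus asymp}, \ref{lem: 6vm gaus asymp}, Proposition~\ref{prop: gauss pfaff conv}), which requires delicate two-sided growth estimates because the rank-two term grows like $\nu^{\pm\sigma\tau^{1/2}u}$ in opposite directions. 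Your sketch of ``the residue there dominates and a Laplace expansion gives Gaussian'' captures the outcome but not the convergence argument, and it is precisely this part that needs the $t<1$ hypothesis to keep the $(\gamma_2 q z;q)_\infty$-poles from interfering. Similarly, your description of the integrand as $e^{n(S(z)-S(w))}$ should be a sum $e^{n(G(z)+G(w))}$ (Pfaffian, not conjugated determinantal kernel), and the ``boundary factor with a simple pole at $p(\nu)$'' is in reality a function $g$ with infinitely many $q$-shifted poles, only finitely many of which matter for any fixed $\nu$.

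Finally, a minor structural point: the paper proves the ASEP and six vertex asymptotics in parallel from Corollaries~\ref{cor: 6vm formula} and \ref{cor: ASEP formula}, rather than first establishing Theorem~\ref{thm: main 6vm} and then taking the ASEP limit at the level of limit theorems; the exchange of limits in your last sentence is not free, though it is not needed given the structure of the formulas.
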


\begin{remark}
Our exact formulas apply to the general inhomogeneous half space stochastic six vertex model, and so in principle more general asymptotic results could be obtained, say for periodic parameters. We do not pursue the details.
\end{remark}

Since we obtain results for the half space stochastic six vertex model, it should in principle be possible to obtain formulas and thus asymptotics for other models obtained from the six vertex model via limits and a procedure known as fusion. While many of these models fall into the class of half space Macdonald processes as studied in \cite{IMS22, BBC20}, our work allows for two boundary parameters, both an asymmetry and a density, and so applies to a larger class of models.

Our method to obtain Fredholm Pfaffian formulas require the generality of the six vertex model. In the course of establishing these formulas, we find non-trivial distributional identities relating the six vertex model to two other probabilistic models, the half space Hall--Littlewood measure, and the free boundary Schur process.

\subsection{Algebraic identities}
At the heart of our approach are two unexpected distributional identities relating the half space six vertex model with two other objects, the half space Hall--Littlewood process, and then the free boundary Schur process. This allows Fredholm Pfaffian formulas derived in \cite{BBNV18,BBNV20} to be applied to the six vertex model, and ultimately the ASEP via a limiting procedure. We now state these identifications. 

Note that for these algebraic results, no assumptions on the parameters are needed if the distributional equalities are treated formally. Thus, we will interpret the following statements as equalities of formal power series, which give genuine distributional equalities if the relevant distributions are actually probability distributions, and the relevant power series converge.

We let
\begin{equation*}
    h_n(x;q)=\sum_{k=0}^n\binom{n}{k}_q x^k
\end{equation*}
denote the \emph{Rogers--Szeg\"o polynomials}, and for a partition $\lambda$, we let
\begin{equation*}
    h_\lambda(q,t,\nu)=\prod_{i\text{ even}}h_{m_i(\lambda)}(-t,q) \prod_{i\text{ odd}}(-t\nu)^{m_i(\lambda)}h_{m_i(\lambda)}\left(-\frac{1}{\nu^2 t},q\right),
\end{equation*}
where $m_i(\lambda)$ denotes the number of occurrences of $i$ in the partition $\lambda$. We define the half space Hall--Littlewood measure $\mathbb{HL}_a^{(q,t,\nu)}$ to be a (signed) probability measure on partitions proportional to $h_\lambda(q,t,\nu)P_\lambda(a;q)$, where $P_\lambda$ denotes a Hall--Littlewood polynomial. Note that this generalizes\footnote{Strictly speaking, this is non-trivially equivalent to previous definitions, see Lemma \ref{lem: match with Macdonald process}.} previous definitions of half space Hall--Littlewood measures in the literature, e.g. \cite{BBC20}. 

\begin{theorem}
\label{thm: 6vm HL ident main}
Let $h(n,n)$ denote the height function at $(n,n)$ in the half space six vertex model with parameters $(a_1,\dots, a_n)$, and let $\lambda$ be distributed according to $\mathbb{HL}_a^{(q,t,\nu)}$. Then
\begin{equation*}
    h(n,n)\deq l(\lambda).
\end{equation*}
\end{theorem}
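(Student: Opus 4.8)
The plan is to realize the half space six vertex model as the partition function of an integrable transfer operator built from the stochastic $R$-matrix of Figure~\ref{fig:vtx wts} together with the diagonal boundary vertex of weight $\mathbf{p}_{i,i}$, and then to collapse this partition function against a half space (Littlewood-type) Cauchy identity for Hall--Littlewood polynomials. Concretely, I would read the configuration off one diagonal layer at a time: the $k$-th layer consists of the vertices $(k,j)$ and $(j,k)$ for $j<k$ together with the diagonal vertex $(k,k)$, and processing it transforms the configuration on the staircase boundary into a new one. Using the known vertex-model realization of Hall--Littlewood polynomials, the two halves of a layer act, after normalization, as the branching operator inserting the variable $a_k$ into $P_\lambda(a_1,\dots,a_n;q)$ and its adjoint, while the diagonal vertex $(k,k)$ contributes a boundary factor. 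The role of the Yang--Baxter equation, together with the companion reflection relation for the diagonal vertex (which holds since the boundary weight is assembled from the same $\mathbf{p}$'s), is to show that the layer operator is compatible with the Markovian sampling rule and that the spectral parameters $a_1,\dots,a_n$ may be freely permuted, so that the law of the terminal partition is symmetric in the $a_i$.

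Second, I would assemble the layers into an exact formula. Applying the $n$ layer operators to the empty configuration and expanding in the Hall--Littlewood basis, the bulk part of the Cauchy kernel cancels against the normalization of the stochastic weights, leaving a sum $\sum_\lambda c_\lambda(q,t,\nu)\,P_\lambda(a_1,\dots,a_n;q)\,z^{l(\lambda)}$, where $z$ is a formal variable recording the arrows that pass at or to the left of $(n,n)$ and $c_\lambda$ is the total weight of the diagonal-vertex routings compatible with the diagonal profile $\lambda$. It then remains to identify $c_\lambda$ with $h_\lambda(q,t,\nu)$.

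Third, I would compute $c_\lambda$ explicitly. The weight $c_\lambda$ factors over the distinct part-values $i$ occurring in $\lambda$, and for each such $i$ the local sum over the ways of routing arrows through the diagonal vertices associated to the block of $m_i(\lambda)$ equal parts, weighted by $\mathbf{p}_{i,i}=\tfrac{1-a_i^2}{(1-\nu t a_i)(1+\nu^{-1}a_i)}$ and by the boundary deformation $q_0=t$, collapses to a Gaussian $q$-binomial sum. This sum is precisely a Rogers--Szeg\"o polynomial: one gets $h_{m_i(\lambda)}(-t,q)$ when $i$ is even and $(-t\nu)^{m_i(\lambda)}h_{m_i(\lambda)}\!\left(-\tfrac{1}{\nu^2 t},q\right)$ when $i$ is odd, the parity dichotomy reflecting the alternation of occupied and empty edges met along the diagonal. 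Taking the product over $i$ gives $c_\lambda=h_\lambda(q,t,\nu)$, so the measure on $\lambda$ is proportional to $h_\lambda(q,t,\nu)P_\lambda(a_1,\dots,a_n;q)$, i.e.\ $\mathbb{HL}_a^{(q,t,\nu)}$; comparing powers of $z$ then yields $h(n,n)\deq l(\lambda)$. All of this is an identity of formal power series, which becomes a genuine distributional identity whenever both sides converge and define probability measures, so no analytic input is required.

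I expect the third step to be the main obstacle: extracting the exact boundary factor $h_\lambda(q,t,\nu)$ --- its Rogers--Szeg\"o form, the even/odd alternation between the arguments $-t$ and $-1/(\nu^2 t)$, and the prefactors $(-t\nu)^{m_i(\lambda)}$ --- directly from the vertex weights, including the bookkeeping of which diagonal vertices carry $q_0=t$ rather than $q$. This is exactly where the two boundary parameters $t$ and $\nu$ genuinely enter, going beyond the one-parameter computations in the literature. A secondary, more routine, point is to set up the transfer-operator/Hall--Littlewood dictionary so that the arrow-count statistic matches $l(\lambda)$ on the nose rather than a conjugate or shifted quantity, and to check that the cancellation of the bulk Cauchy kernel against the stochastic normalization is exact.
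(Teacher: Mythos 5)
Your proposal takes a genuinely different route from the paper and, as written, has a substantive gap precisely where you flag "the main obstacle." The paper does \emph{not} start from the six vertex model and extract the Hall--Littlewood structure layer by layer along the diagonal. Instead it starts from the Hall--Littlewood side, expressing $\mathbb{HL}_a^{(q,t,\nu)}([\vec\lambda]=s)$ as the partition function of a deformed $q$-boson lattice with the weight $h_\lambda(q,t,\nu)$ attached at the bottom, then introduces a single boundary vertex and \emph{transports} it across the boson lattice using Lemma~\ref{lem: two vertex compat} and the Yang--Baxter exchange of Proposition~\ref{prop: YB boson}; after all rows are swapped to the top, the boson part freezes and the Yang--Baxter crossings together with the accumulated boundary dots form exactly the half-space six vertex lattice. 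In this route the Rogers--Szeg\"o weight is never derived — it is present from the start, and the only thing to check is that the $h_\lambda$-weighted boson sum is invariant under passing a boundary dot through two columns. That invariance (Lemma~\ref{lem: two vertex compat}) is a genuine identity: it does not follow automatically from the boundary vertex being "assembled from the same $\mathbf{p}$'s," and in the paper it is verified by a finite symbolic computation using the Rogers--Szeg\"o recurrence. Notably, the boundary vertex must be passed \emph{two} boson columns at a time, because the boson normalization alternates (gray/red) between the two sides; this even/odd alternation is what produces the parity dichotomy in $h_\lambda(q,t,\nu)$, and is not visibly the same phenomenon as your "alternation of occupied and empty edges along the diagonal."

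The two gaps in your proposal are therefore: (i) the "companion reflection relation" you invoke is exactly the nontrivial content analogous to Lemma~\ref{lem: two vertex compat}, and you assert rather than prove it; (ii) your plan to sum over diagonal-vertex routings and recognize the result as $h_\lambda(q,t,\nu)$ is a direct computation that the paper avoids — it replaces a forward derivation by a backward compatibility check. Your plan is conceptually coherent and, if both steps were carried out, would likely yield the theorem (indeed it roughly amounts to running the paper's argument in reverse). But since you explicitly identify step (ii) as the obstacle and do not supply (i), the proposal does not constitute a proof. If you want to pursue your direction, I would suggest first proving the two-column compatibility as in Lemma~\ref{lem: two vertex compat} (which is where the $t$-versus-$q$ distinction at the diagonal and the factor $(-t\nu)^{m_i}$ for odd $i$ actually enter), and then observing that this gives your $c_\lambda=h_\lambda$ by a telescoping argument, rather than attempting the routing sum from scratch.
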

Theorem \ref{thm: 6vm HL ident main}, which relates the half space six vertex model with the half space Hall--Littlewood measure, is a special case of the more general Theorem \ref{thm: general 6vm HL distr equality}, which applies to the joint distribution of $h(i,n)$ for all $i$. It generalizes a result in \cite{BBCW18}, which proved the $t=\nu=1$ case.  The proof uses vertex models, and in particular the Yang--Baxter equation. As a corollary, we obtain contour integral formulas for the $q$-moments of the height function for the half space six vertex model, see Corollary \ref{cor: contour int formula}.

We let $\mathbb{FBS}_{\widehat{a}}^{(q,t,\nu)}$ be a (signed) probability measure on partitions $\lambda$ proportional to
\begin{equation*}
    \sum_{\rho}\gamma_1^{o(\rho')}\gamma_2^{o(\lambda')}(q^{1/2})^{|\rho|}s_{\lambda/\rho}(\widehat{a}),
\end{equation*}
where $\gamma_1=\nu^{-1}q^{-1/2}$, $\gamma_2=-\nu t$, $o(\lambda)$ indicates the number of odd rows in $\lambda$, the sum is over all partitions $\rho\subseteq \lambda$, and $\widehat{a}$ is a particular specialization of the Schur functions defined in terms of $a_1,\dotsc,a_n$. This is a specific specialization of the free boundary Schur process as defined in \cite{BBNV18,BBNV20}. See Section \ref{sec: FBS} for further details.

We then state the following result which relates the half space Hall--Littlewood measure with the free boundary Schur process, after a random shift. We write $\chi\sim RS(q,t)$ (for what we call the \emph{Rogers--Szeg\"o distribution}, a possibly signed probability measure) if
\begin{equation*}
    \PP(\chi=k)=(q;q)_\infty(-t;q)_\infty\frac{q^kh_k(-t/q;q)}{(q;q)_k}.
\end{equation*}

\begin{theorem}
\label{thm: HL FBS same}
Let $\lambda$ and $\mu$ have distributions $\mathbb{HL}_a^{(q,t,\nu)}$ and $\mathbb{FBS}_{\widehat{a}}^{(q,t,\nu)}$. Then
\begin{equation*}
    \PP(l(\lambda)+\chi\leq s)=\PP(\mu_1\leq s),
\end{equation*}
where $\chi\sim RS(q,t)$ is independent of $\lambda$.
\end{theorem}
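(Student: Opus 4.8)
The plan is to prove the identity at the level of generating functions by comparing the two measures term by term after introducing the Rogers--Szegő shift. Both $\mathbb{HL}_a^{(q,t,\nu)}$ and $\mathbb{FBS}_{\widehat a}^{(q,t,\nu)}$ are built, respectively, out of a Hall--Littlewood polynomial $P_\lambda(a;q)$ weighted by $h_\lambda(q,t,\nu)$, and out of a sum of skew Schur functions $s_{\lambda/\rho}(\widehat a)$ weighted by the odd-row statistics $\gamma_1^{o(\rho')}\gamma_2^{o(\lambda')}$ and $(q^{1/2})^{|\rho|}$. The natural bridge between these two worlds is the Imamura--Mucciconi--Sasamoto bijection \cite{IMS21}, which is exactly the combinatorial device cited before Theorem \ref{thm: general identity}; indeed, the cleanest route is to \emph{deduce} Theorem \ref{thm: HL FBS same} from the symmetric function identity of Theorem \ref{thm: general identity}. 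So the first step is to write down what Theorem \ref{thm: general identity} says as an equality of formal power series: it should express $\sum_\lambda h_\lambda(q,t,\nu)P_\lambda(a;q)\cdot(\text{something encoding }l(\lambda))$ against $\sum_\mu (\text{the }\mathbb{FBS}\text{ weight of }\mu)\cdot(\text{something encoding }\mu_1)$, with the discrepancy between ``$l(\lambda)$'' and ``$\mu_1$'' being precisely a factor that can be interpreted as the probability generating function of an independent $RS(q,t)$ variable.

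The key steps, in order, would be: (1) normalize both measures — compute the partition functions $Z_{\mathbb{HL}}=\sum_\lambda h_\lambda(q,t,\nu)P_\lambda(a;q)$ and $Z_{\mathbb{FBS}}$, using the half space Cauchy/Littlewood-type identities for Hall--Littlewood and Schur functions together with the product formula defining $h_\lambda$; (2) express the cumulative distribution functions $\PP(l(\lambda)\le s)$ and $\PP(\mu_1\le s)$ as restricted sums over partitions with $\lambda_1\le s$ (note $l(\lambda)$, the number of parts, equals $\lambda_1'$, while $\mu_1$ is the largest part — so one of the two sides must be transposed, and the transpose symmetry of the relevant identity is what makes $o(\lambda')$ versus $o(\lambda)$ match up); (3) identify the $RS(q,t)$ factor: since $\PP(\chi=k)=(q;q)_\infty(-t;q)_\infty q^k h_k(-t/q;q)/(q;q)_k$, its presence corresponds to a generating-function factor $\sum_k q^k h_k(-t/q;q) x^k/(q;q)_k$ (a $q$-exponential-type series with a Rogers--Szegő insertion), which should be exactly the ``extra'' sum over $\rho$ (or over $m_1(\lambda)$, the multiplicity of parts equal to $1$, the smallest nonzero part) appearing when one matches $h_\lambda$ against the $\mathbb{FBS}$ weights; (4) assemble: convolving $l(\lambda)$ with $\chi$ and truncating at $s$ produces precisely the truncation $\mu_1\le s$ on the other side, because the bijection of \cite{IMS21} intertwines the truncation parameter correctly.

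I expect the main obstacle to be step (3), namely pinning down \emph{exactly} which part of the combinatorial identity of Theorem \ref{thm: general identity} degenerates into the Rogers--Szegő distribution, and verifying that the resulting factor is genuinely a (signed) probability generating function that is \emph{independent} of $\lambda$ — the independence is the subtle point, since a priori the ``extra'' factor could depend on the boundary behavior of $\lambda$ (its small parts). Resolving this requires tracking how the $h_{m_i(\lambda)}$ factors for the smallest index $i$ interact with the skew-Schur sum over $\rho\subseteq\lambda$ in the $\mathbb{FBS}$ weight, and recognizing the Rogers--Szegő polynomial $h_k(-t/q;q)$ and the Pochhammer prefactors $(q;q)_\infty(-t;q)_\infty$ as coming from a single-variable Hall--Littlewood/Schur specialization. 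A secondary technical point is bookkeeping of the $q^{1/2}$ and $\nu$ powers ($\gamma_1=\nu^{-1}q^{-1/2}$, $\gamma_2=-\nu t$) so that the normalization constants cancel cleanly; I would handle this by first checking the identity formally (all parameters as indeterminates, equality of formal power series as the excerpt licenses) and only afterward noting that convergence in the stated parameter range $\nu t<1$ etc.\ upgrades it to an equality of actual distributions.
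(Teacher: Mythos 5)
Your proposal takes the same route as the paper: Theorem \ref{thm: HL FBS same} is deduced directly from Theorem \ref{thm: general identity} by specializing $zw=-t\nu=\gamma_2$ and $z/w=(\nu\sqrt q)^{-1}=\gamma_1$ (so $z^2/\sqrt q = -t/q$, producing exactly the Rogers--Szeg\"o weight), and then applying the involution $p_n(a)\mapsto p_n(\widehat a)$, which sends $\widehat Q_\lambda$ to $P_{\lambda'}$ and thereby exchanges the roles of $\lambda_1$ and $l(\lambda)$ — this is precisely the ``transpose'' step you identify in your step (2). One point worth flagging: the ``main obstacle'' you anticipate in step (3) — isolating the $RS(q,t)$ factor and verifying it is independent of $\lambda$ — is not actually an obstacle here, because the left side of Theorem \ref{thm: general identity} is already written as a convolution (the outer sum over $l$ of the Rogers--Szeg\"o weight against the sum over partitions with $\lambda_1=k-l$), so independence is built into the statement and there is no residual bookkeeping involving $m_1(\lambda)$ or the small parts; once you have Theorem \ref{thm: general identity}, the deduction really is a one-line substitution plus the involution, with the $(q;q)_\infty(-t;q)_\infty$ prefactor absorbed into the normalizations of the two measures.
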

This is equivalent to an identity of symmetric functions (Theorem \ref{thm: general identity}), which generalizes a result in \cite{IMS21}. The proof uses a bijection found in \cite{IMS21}.

\begin{remark}
Let us remark that at certain points, the formulas we obtain involve signed probability measures. This is a technical obstacle which can be overcome, as long as these measures are exponentially decaying. We will continue to use probabilistic notation even in this case.

To ensure absolute convergence of all power series, it suffices to assume that $a_i,q,t,\nu^{-1}\in [0,1)$ and $\nu^2 t<1$. These conditions are restrictive, but can be lifted via analytic continuation to obtain Fredholm Pfaffian formulas.
\end{remark}

\subsection{Outline}
In Section \ref{sec: 6vm and HL}, we give a formal definition of the half-space six vertex model and Hall--Littlewood measure, and explain their connection. In Section \ref{sec: identity}, we establish an identity of symmetric functions connecting Schur and Hall--Littlewood functions, and in Section \ref{sec: pfaff}, we define the free boundary Schur process and use the identity from the previous section to connect it to the half space Hall--Littlewood measure. We then derive Fredholm Pfaffian formulas for the ASEP and the six-vertex model through known formulas for the free boundary Schur process. Section \ref{sec: asymp prelim} gives some useful tools and formulas for the asymptotic analysis, which is done in Sections \ref{sec: cross} and \ref{sec: Gauss} for the $\rho\geq \frac{1}{2}$ and $\rho<\frac{1}{2}$ regimes respectively.

\subsection{Notation}
We will use $(a;q)_n=\prod_{i=0}^{n-1}(1-aq^i)$ to denote the $q$-Pochhammer symbol (with $n=\infty$ allowed), $(a_1,\dotsc, a_k;q)_n=(a_1;q)_n\dotsm (a_k;q)_n$, and
\begin{equation*}
    \binom{b}{a}_q=\frac{(q;q)_b}{(q;q)_a(q;q)_{b-a}}
\end{equation*}
to denote the $q$-binomial coefficient.

We use $\deq$ to denote equality in distribution between two random variables. We use $I_S$ to denote the indicator function for a statement $S$.

We will use $c$ and $C$ to denote small and large constants respectively which may change from line to line.

\section{The half space six vertex model and half space Hall--Littlewood measure}
\label{sec: 6vm and HL}
In this section, we formally define the half space stochastic six vertex model and Hall--Littlewood process, and prove a result relating their observables.

\subsection{Half space six vertex model}
\label{sec: 6vm}
We start with parameters $a_i$ for $i\in\N$, $q$, $t$ and $\nu$. We will sometimes use the notation $q_i=q$ if $i>0$ and $q_0=t$ to unify certain expressions. We call the $a_i$ \emph{rapidities}. Initially, we will assume that they are chosen so all weights defined below are probabilities, but later on we will view the weights as rational functions in these parameters, which will then be treated as formal variables. Recall that
\begin{equation*}
    \mathbf{p}_{i,j}=\mathbf{p}_{|i-j|}(a_ia_j)=\begin{cases}\frac{1-a_ia_j}{1-qa_ia_j}&\text{ if }i\neq j,
    \\\frac{1-a_i^2}{(1-\nu ta_i)\left(1+\frac{1}{\nu}a_i\right)}&\text{ if }i=j.
    \end{cases}
\end{equation*}
We will refer to the argument of $\mathbf{p}_{|i-j|}$, the $a_ia_j$, as the \emph{spectral parameter}.

We consider certain configurations of arrows on the edges of $\N^2$. Each edge contains at most one arrow. Initially, we begin with arrows entering on the left along the $y$-axis, and no arrows entering from the bottom along the $x$-axis. See Figure \ref{fig:6vm} for an example of such a configuration. We will refer to vertices $(i,j)$ with $i\neq j$ as \emph{bulk} vertices and vertices $(i,i)$ as \emph{boundary} vertices. Vertices $(1,j)$ and $(i,1)$ will be referred to as \emph{incoming} vertices.

We now describe a Markovian sampling procedure to generate configurations. At a vertex $(i,j)$ with $i\leq j$ where both the left and bottom incoming edges already have arrows assigned, we sample outgoing arrows at the top and right according to the probabilities given in Figure \ref{fig:vtx wts}. If the vertex is not a boundary vertex (i.e. $i< j$), we then also symmetrically set the outcome in vertex $(j,i)$ by requiring that if an edge is occupied by an arrow, then its reflection about $x=y$ is not, and vice versa. Repeating this process inductively, we define a random configuration on $\N^2$. Note that as long as we choose the parameters correctly, we obtain a genuine probability distribution on configurations. For this reason, we will assume from now on that $0<a_i<1$ for all $i$, $0\leq q,t<1$, $\nu>0$, and $\nu t<1$, which guarantees that we indeed obtain a probability distribution.

The bottom half of the model contains no additional information, and so it is possible to consider the model only on vertices $(i,j)$ with $j\geq i$, which we will sometimes do. In this case, boundary vertices should be thought of as having only one incoming and one outgoing edge, with the same probabilities as described above.

We let $h(n,n)$ denote the \emph{height function} at $(n,n)$, defined as the number of arrows leaving a vertex upwards at or to the left of the vertex $(n,n)$. See Figure \ref{fig:6vm} for an example. Given a configuration, we define the \emph{path string} $S=(s_1,\dots, s_n)$ (within some fixed box of size $n$) to be the sequence of $0$'s and $1$'s indicating whether an edge is occupied with an arrow or not at the top of the box. We have $\sum_i s_i=h(n,n)$. In the example given by Figure \ref{fig:6vm}, $S=(1,1,0,1)$.

The identification of certain parameters in the ASEP and the six vertex model is not an accident. Indeed, it is known that under a certain scaling, the six vertex model converges to the ASEP. Ultimately, we need the six vertex model as parts of the proof use vertex model techniques. The formulas we obtain for the ASEP are derived as a limit of ones for the six vertex model.

\subsection{Symmetric functions}
We will use symmetric functions in an essential way. We thus collect some basic background and notation which will be useful throughout the paper. We refer the reader to \cite{M79} for further background.

A \emph{partition} is a finite sequence of non-increasing numbers non-negative integers. For a partition $\lambda=(\lambda_1,\dotsc,\lambda_n)$, we will write $l(\lambda)=n$ for the \emph{length}, $m_i(\lambda)$ for the number of occurrences of $i$ in $\lambda$, and $\lambda'$ for the \emph{conjugate partition} given by transposing its \emph{Young diagram}. We let $o(\lambda)$ denote the number of odd rows in $\lambda$. We will also sometimes allow $0$ as a part, with the obvious modifications to the above definitions when this is done.

We let $x=(x_1,\dotsc,)$ denote a formal alphabet. We will let $p_\lambda(x)$ denote the power sum symmetric functions, defined by $p_\lambda(x)=\prod p_{\lambda_i}(x)$, where $p_n(x)=\sum x_i^n$. These form a basis for the ring of symmetric functions, which we denote by $\Lambda$. It is a fact that the $p_n$ are an algebraically independent generating set for the ring of symmetric functions, i.e. $\Lambda\cong \C[p_1,p_2,\dotsc]$. We define a \emph{specialization} $a$ to be an algebra homomorphism $a:\Lambda\to \C$ (or possibly some other field). Since the $p_n$ are algebraically independent and generate $\Lambda$, a specialization $a$ is equivalent to a choice of where to send $p_n$ for all $n$. 

There is a natural family of specializations, given by plugging in $x_i=a_i$ for $a_i\in \C$ and $i\leq n$, and $x_i=0$ for $i>n$. For this reason, we will use the notation $f(a)=a(f)$ for $a$ a specialization and $f\in \Lambda$, even if $a$ is not of this form. One other family of specializations which we will use is the $q$-beta specialization. We define an automorphism of $\Lambda$ by
\begin{equation*}
    p_n(\widehat{x})=(-1)^{n-1}(1-q^{n})p_n(x),
\end{equation*}
and for any specialization $a$, we let $\widehat{a}$ denote its composition with this automorphism. We will normally consider the case when $a$ is given by plugging in complex numbers $a=(a_1,\dotsc,a_n)$. 

Finally, we will deal with three other families of symmetric functions (all indexed by partitions). The \emph{Schur functions}, which we denote $s_\lambda$, are crucial in defining the free boundary Schur process. The other two families, the \emph{Hall--Littlewood polynomials} $P_\lambda(x;q)$ and \emph{$q$-Whittaker polynomials} $\widehat{Q}_{\lambda}(x;q)$, are related by
\begin{equation*}
    \widehat{Q}_\lambda(\widehat{x};q)=P_{\lambda'}(x,q).
\end{equation*}
We will ultimately need more properties of these symmetric functions, but we will introduce these as needed.

\subsection{Half space Hall--Littlewood measures}
\label{sec: HL meas}
Recall that
\begin{equation*}
    h_n(x;q)=\sum_{k=0}^n\binom{n}{k}_q x^k
\end{equation*}
denotes the \emph{Rogers--Szeg\"o polynomials}. They satisfy the recurrence
\begin{equation*}
    h_{n+1}(x;q)=(1+x)h_n(x;q)+x(q^n-1)h_{n-1}(x;q),
\end{equation*}
with $h_0(x;q)=1$ and $h_1(x;q)=1+x$. We will also define
\begin{equation*}
    h_\lambda(q,t,\nu)=\prod_{i\text{ even}}h_{m_i(\lambda)}(-t,q) \prod_{i\text{ odd}}(-t\nu)^{m_i(\lambda)}h_{m_i(\lambda)}\left(-\frac{1}{\nu^2 t},q\right).
\end{equation*}
Due to the identity $h_n(x^{-1},q)=x^{-n}h_n(x;q)$, $h_\lambda(q,t,\nu)$ is invariant under the substitution $\nu\mapsto (-\nu t)^{-1}$, which has the effect of swapping $\nu^{-1}$ and $-\nu t$. We also note the easy bound
\begin{equation}
\label{eq: RS bound}
|h_n(x;q)|\leq \frac{n|x|^n}{(q;q)_n},
\end{equation}
valid if $0<q<1$ and $|x|\geq 1$.

\begin{remark}
We note that if $0<q,t,\nu^2 t<1$, then $h_\lambda(q,t,\nu)$ is always non-negative, because the even factors are always non-negative from the recurrence, and the odd factors are non-negative as the sign of $h_{m_i(\lambda)}\left(-\frac{1}{\nu^2 t},q\right)$ is $(-1)^{m_i(\lambda)}$, which can be proven by induction as in the recurrence, $1+x$ is negative and $x(q^n-1)$ is positive.
\end{remark}

Let $q,t\in [0,1)$ and $\nu>0$ such that $\nu^2 t<1$ be parameters, and let $a$ denote a specialization of the Hall--Littlewood polynomials of parameter $q$ such that $P_\lambda(a;q)\geq 0$ for all $\lambda$. We define the \emph{half space Hall--Littlewood measure}\footnote{Note that in general this is a signed probability measure. See Lemma \ref{lem: when signed} for conditions on the parameters which ensure it is a genuine probability measure.} to be
\begin{equation*}
    \mathbb{HL}_a^{(q,t,\nu)}(\lambda)=\frac{1}{\Pi(a;q,t,\nu)} h_\lambda(q,t,\nu)P_\lambda(a;q),
\end{equation*}
where $\Pi$ normalizes this to sum to $1$. If $a=(a_1,a_2,\dotsc)$, we have
\begin{equation}
\label{eq: Littlewood identity}
    \Pi(a;q,t,\nu)=\prod \frac{(1-a_i\nu t)(1+a_i/\nu)}{1-a_i^2}\prod_{i<j}\frac{1-qa_ia_j}{1-a_ia_j}.
\end{equation}
The fact that this sums to $1$ is equivalent to a Littlewood identity of Warnaar (Theorem 1.1 of \cite{W06}). This section will give an independent proof of this, see Remark \ref{rmk: Littlewood identity new pf}.

For a specialization $a=a_1+ a_2+\dotsm+ a_n$ (if the $a_i$ are alphabets, this notation means concatenation), we define the \emph{half space Hall--Littlewood process} to be the measure on ascending sequences of partitions of length $n$, given by
\begin{equation*}
    \mathbb{HL}_a^{(q,t,\nu)}(\Vec{\lambda})=\frac{1}{\Pi(a;q,t,\nu)} h_{\lambda^{(n)}}(q,t,\nu)\prod_i P_{\lambda^{(i)}/\lambda^{(i-1)}}(a_i;q),
\end{equation*}
where $\Vec{\lambda}=(\lambda^{(1)}\subseteq \dotsc \subseteq  \lambda^{(n)})$ is an ascending sequence of partitions, and $\lambda^{(0)}=\emptyset$. The $\lambda^{(n)}$ marginal is the half space Hall--Littlewood measure, which follows from the branching rule for Hall--Littlewood polynomials (see (5.5$'$) of \cite{M79})
\begin{equation*}
    \sum_{\rho}P_{\lambda/\rho}(x;q)P_{\rho/\mu}(y;q)=P_{\lambda/\mu}(x,y;q).
\end{equation*}
More generally, one could define measures on sequences of partitions which are both ascending and descending, as done in \cite{BBC20}, but we leave this to the interested reader.

\begin{lemma}
\label{lem: when signed}
If $q,t,a_i,\nu t,\nu^{-1}a_i\in [0,1)$, $\nu^2 t<1$, then $\mathbb{HL}_a^{(q,t,\nu)}$ is a genuine probability measure. If the condition that $\nu^2 t<1$ is dropped, then it is a signed probability measure which is absolutely summable.
\end{lemma}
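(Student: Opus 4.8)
The plan is to verify that $\mathbb{HL}_a^{(q,t,\nu)}$ assigns nonnegative mass to each partition and sums to $1$, under the stated hypotheses, and to isolate exactly which hypothesis is responsible for positivity so that the ``signed'' version of the statement follows by dropping it. First I would observe that the normalization constant $\Pi(a;q,t,\nu)$ is given explicitly by \eqref{eq: Littlewood identity} (Warnaar's Littlewood identity), so that ``sums to $1$'' is automatic \emph{provided} the relevant sum converges absolutely; hence the whole statement reduces to (i) nonnegativity of each term $h_\lambda(q,t,\nu)P_\lambda(a;q)$ together with positivity of $\Pi$, and (ii) absolute summability. For (i), positivity of $P_\lambda(a;q)$ holds because under the assumption $a_i\in[0,1)$ and $q\in[0,1)$ the specialization $a=(a_1,a_2,\dots)$ is a genuine (Hall--Littlewood-)positive specialization, so $P_\lambda(a;q)\geq 0$ for all $\lambda$; this is exactly the hypothesis imposed in the definition of the measure. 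Positivity of $h_\lambda(q,t,\nu)$ is precisely the content of the Remark preceding the lemma: when $0<q,t,\nu^2 t<1$ the even factors $h_{m_i(\lambda)}(-t;q)$ are nonnegative by the Rogers--Szeg\"o recurrence (since $1+x>0$ and $x(q^n-1)\le 0$ for $x=-t$... more carefully, one checks $h_n(-t;q)\ge 0$ by induction using the recurrence), while each odd factor $(-t\nu)^{m_i(\lambda)}h_{m_i(\lambda)}(-1/(\nu^2 t);q)$ is nonnegative because $h_n(-1/(\nu^2t);q)$ has sign $(-1)^n$ — again an induction via the recurrence, using that at $x=-1/(\nu^2 t)<-1$ one has $1+x<0$ and $x(q^n-1)>0$ — which cancels the sign of $(-t\nu)^{m_i(\lambda)}$. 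So under $q,t,\nu^2 t\in[0,1)$ and $a_i\in[0,1)$ every term is nonnegative; combined with absolute summability this forces $\Pi>0$ and the measure is genuine.

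For the absolute summability in (ii), which I expect to be the main obstacle, the idea is to bound $\sum_\lambda |h_\lambda(q,t,\nu)| P_\lambda(a;q)$ by a convergent product. Using the bound \eqref{eq: RS bound}, $|h_{m_i(\lambda)}(x;q)|\leq m_i(\lambda)|x|^{m_i(\lambda)}/(q;q)_{m_i(\lambda)}$ for $|x|\ge 1$ (the odd factors, where $x=-1/(\nu^2 t)$ has $|x|>1$, are the dangerous ones; the even factors with $|x|=t<1$ are uniformly bounded in $\lambda$, or one can bound them crudely), we get
\begin{equation*}
|h_\lambda(q,t,\nu)|\leq C^{l(\lambda)}\prod_{i\ \mathrm{odd}} \frac{m_i(\lambda)\,(t\nu)^{m_i(\lambda)}(\nu^2 t)^{-m_i(\lambda)}}{(q;q)_{m_i(\lambda)}}=C^{l(\lambda)}\prod_{i\ \mathrm{odd}}\frac{m_i(\lambda)\,\nu^{-m_i(\lambda)}}{(q;q)_{m_i(\lambda)}},
\end{equation*}
so each odd part of size $i$ contributes a factor growing like $\nu^{-m_i(\lambda)}$ per box. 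One then compares with the known principal specialization / Hall--Littlewood generating identity: $\sum_\lambda P_\lambda(a;q) z^{|\lambda|}$ or more precisely $\sum_\lambda P_\lambda(a;q)\prod_i b_i^{m_i(\lambda)}$ converges as long as the effective variables $a_j b_i$ stay in $[0,1)$, since by the combinatorial formula $P_\lambda(a;q)$ is (up to the $b_\lambda(q)$ factors, which are bounded) a sum of monomials $\prod a_{j}^{(\text{size of }\lambda\text{ contributed})}$. The extra weight $\nu^{-1}$ attached to odd parts, together with the $a_i$ attached to each box, means the series converges provided $a_i\nu^{-1}\in[0,1)$ — which is exactly the hypothesis $\nu^{-1}a_i\in[0,1)$ in the lemma — while the $\nu t\le $ (something) type conditions control the even parts. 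So the role of each hypothesis: $q,t<1$ for convergence of the Rogers--Szeg\"o factors and positivity, $a_i<1$ for positivity and convergence of $P_\lambda$, $\nu^{-1}a_i<1$ for convergence against the $\nu^{-1}$-weighted odd parts, and $\nu^2 t<1$ purely for positivity.

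Hence, when all of $q,t,a_i,\nu t,\nu^{-1}a_i\in[0,1)$ and $\nu^2t<1$: the series converges absolutely (by the product bound above, each factor being a convergent geometric-type series in the multiplicities $m_i(\lambda)$), every term is nonnegative, and the sum equals $1$ by Warnaar's identity \eqref{eq: Littlewood identity}; so $\mathbb{HL}_a^{(q,t,\nu)}$ is a genuine probability measure. If instead we only drop $\nu^2 t<1$ but keep $\nu^{-1}a_i<1$ (so $\nu^{-2}t^{-1} a_i^2 = (\nu^{-1}a_i)^2 t^{-1}$ could still be large, but the product bound above used $(\nu^{-1})^{m_i}$ which survives), the same product bound still gives absolute summability — the positivity of individual Rogers--Szeg\"o factors may fail, but $|h_\lambda|$ is still controlled — so $\mathbb{HL}_a^{(q,t,\nu)}$ is an absolutely summable signed measure of total mass $1$, i.e.\ a signed probability measure, as claimed. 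I would write the summability estimate out as the one genuinely computational lemma and treat the positivity and the total-mass claim as bookkeeping around the cited Littlewood identity and the Remark.
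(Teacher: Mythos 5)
Your approach is essentially the one the paper takes: positivity of $h_\lambda$ comes from the remark (via the Rogers--Szeg\"o recurrence), total mass comes from the Littlewood identity \eqref{eq: Littlewood identity}, and absolute summability is handled by bounding $|h_\lambda|$ and absorbing the per-odd-part factor of $\nu^{-1}$ into $P_\lambda$ using the hypothesis $\nu^{-1}a_i<1$ (the paper phrases this via homogeneity, $P_\lambda(a;q)=\nu^{|\lambda|}P_\lambda(\nu^{-1}a;q)$, but it is the same bookkeeping). You correctly identify which hypothesis is doing what.

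There is one genuine gap, and it sits precisely in the signed-measure half of the claim. The bound \eqref{eq: RS bound}, $|h_n(x;q)|\leq n|x|^n/(q;q)_n$, is only valid for $|x|\geq 1$. You apply it with $x=-1/(\nu^2 t)$, and $|x|\geq 1$ is equivalent to $\nu^2 t\leq 1$ --- exactly the condition being dropped. So the display
\[
|h_\lambda(q,t,\nu)|\leq C^{l(\lambda)}\prod_{i\text{ odd}}\frac{m_i(\lambda)\,\nu^{-m_i(\lambda)}}{(q;q)_{m_i(\lambda)}}
\]
is not justified when $\nu^2 t>1$, and the parenthetical remark that ``the product bound above used $(\nu^{-1})^{m_i}$ which survives'' is not correct as written: when $|x|<1$ the Rogers--Szeg\"o polynomial is merely bounded (say $|h_n(x;q)|\leq (n+1)/(q;q)_\infty^2$), so the odd factor is bounded by $(t\nu)^{m_i(\lambda)}$ times a polynomial, not $\nu^{-m_i(\lambda)}$, and $t\nu>\nu^{-1}$ precisely in the dropped regime. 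The conclusion is still fine --- $t\nu<1$ is among the standing hypotheses, so these odd factors are bounded (indeed decaying), and since $\nu^2 t>1$ together with $t<1$ forces $\nu>1$, the $\nu^{-1}a_i<1$ condition is automatic and you do not even need the homogeneity trick here --- but you should split the estimate on $|h_n(-1/(\nu^2 t);q)|$ into the two cases $\nu^2 t\leq 1$ and $\nu^2 t>1$ and apply the correct bound in each, rather than carrying the $|x|\geq 1$ bound across the boundary.

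A cosmetic point: ``the even factors $\dots$ are uniformly bounded in $\lambda$'' is not quite right; $h_n(-t;q)$ tends to a nonzero constant but the natural elementary bound is $(n+1)/(q;q)_\infty^2$, so it grows polynomially in $m_i(\lambda)$. This is harmless since the decay comes from $P_\lambda$, but worth stating accurately if you write it out.
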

\begin{proof}
First, we note that if $t\in [0,1)$ and $\nu^2t<1$, then $h_\lambda(q,t,\nu)\geq 0$. We have $h_\lambda(q,t,\nu)$ decays exponentially in $l(\lambda)$ if $t\in [0,1)$ and $\nu^{-1}<1$. The condition that $\nu^{-1}<1$ can be relaxed to $\nu^{-1}a_i<1$ for all $i$, as by homogeneity of $P_\lambda(a;q)$, we can replace $a_i$ with $\nu^{-1}a_i$.
\end{proof}

\begin{remark}
Although the half space Hall--Littlewood process is seemingly more general than those previously considered in the literature, e.g. in \cite{BBCW18,IMS21, BBC20}, which correspond to $\nu=t=1$, it is actually possible to obtain it via the usual half space Hall--Littlewood process by a plethystic substitution, which is more or less explained in \cite{W06} at the level of the Littlewood identity. However, this does not appear to have been noticed, and certainly the connection to the half space ASEP with general boundary seems to be new. This does mean that all general results on half space Macdonald processes should apply (see e.g. \cite{BBC20}), although often the results are stated for positive specializations and so do not immediately apply in our setting (but the proofs should still work). We note that this specialization is special because the factor $h_\lambda$ factorizes, something which does not happen in general.
\end{remark}
 Since the argument given in \cite{W06} is not quite enough to match to \cite{BBC20} and the substitution itself is non-trivial, we give an explanation in the following lemma.
\begin{lemma}
\label{lem: match with Macdonald process}
Following the notation of \cite{BBC20} (except that we have exchanged the roles of $q$ and $t$), we have
\begin{equation*}
    \mathcal{E}_\lambda(b)=\sum_{\mu'\text{ even}}b^{el}_\mu(q)Q_{\lambda/\mu}(b)=h_\lambda(q,t,\nu),
\end{equation*}
where $b$ is the specialization defined using plethysm as
\begin{equation*}
    b=\frac{\{1,-1\}-\{\nu t, -\nu^{-1}\}}{1-q}.
\end{equation*}
Alternatively, $b$ can be defined by
\begin{equation*}
    p_k(b)=\frac{1+(-1)^k-(\nu t)^k-(-\nu)^{-k}}{1-q^k}.
\end{equation*}
In particular, the half space Hall--Littlewood process considered in this paper can be obtained via the specialization $b$ from the one defined in \cite{BBC20}. 
\end{lemma}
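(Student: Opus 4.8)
The plan is to prove this by matching the conventions of \cite{BBC20} and then carrying out one explicit evaluation. First I would unpack the boundary weight $\mathcal{E}_\lambda$ of \cite{BBC20}: it is the functional on specializations $\rho$ given by $\mathcal{E}_\lambda(\rho)=\sum_{\mu:\,\mu'\text{ even}}b^{el}_\mu(q)\,Q_{\lambda/\mu}(\rho)$, where $b^{el}_\mu(q)$ is the explicit product over the (necessarily even) multiplicities $m_i(\mu)$ occurring in the Hall--Littlewood Littlewood identity and, after the $q\leftrightarrow t$ relabelling noted in the lemma, $Q$ is the dual Hall--Littlewood function of parameter $q$. Since the half space Hall--Littlewood measure of \cite{BBC20} with boundary specialization $\rho$ is proportional to $\mathcal{E}_\lambda(\rho)\,P_\lambda(a;q)$, the lemma reduces to the identity $\mathcal{E}_\lambda(b)=h_\lambda(q,t,\nu)$ for the specialization $b$ with $p_k(b)=\frac{1+(-1)^k-(\nu t)^k-(-\nu)^{-k}}{1-q^k}$, which then also yields the claim that this paper's process is the \cite{BBC20} one pulled back along $\rho=b$.

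To prove $\mathcal{E}_\lambda(b)=h_\lambda(q,t,\nu)$ I would evaluate directly. Plethystically $b=\frac{\{1,-1\}-\{\nu t,-\nu^{-1}\}}{1-q}$ is a difference of two geometric alphabets, so I would expand $Q_{\lambda/\mu}(b)$ using the additivity of the skew Hall--Littlewood functions under adding alphabets together with the standard single-geometric-alphabet formulas for $Q_{\lambda/\mu}$, and then carry out the sum over $\mu$ with $\mu'$ even. The crucial point --- which is precisely the special feature of this specialization flagged in the remark preceding the lemma --- is that for this $b$ the result factors as a product over the distinct part values of $\lambda$, the factor at part value $i$ being a $q$-binomial sum that one identifies, via $h_n(x;q)=\sum_k\binom{n}{k}_q x^k$, with the Rogers--Szeg\"o polynomial $h_{m_i(\lambda)}(-t;q)$ when $i$ is even and with $(-t\nu)^{m_i(\lambda)}h_{m_i(\lambda)}(-\nu^{-2}t^{-1};q)$ when $i$ is odd; multiplying these over $i$ gives exactly $h_\lambda(q,t,\nu)$. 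The invariance of the ``even'' piece $\{1,-1\}$ under $z\mapsto -z$ and of the whole alphabet under $\nu\mapsto(-\nu t)^{-1}$ provide useful sanity checks, matching the symmetries of $h_\lambda$ recorded above. As a shortcut avoiding the hands-on sum, one could instead quote the product formula for $\sum_\lambda\mathcal{E}_\lambda(\rho)P_\lambda(a;q)$ from \cite{BBC20}, specialize $\rho=b$, compare with Warnaar's Littlewood identity $\sum_\lambda h_\lambda(q,t,\nu)P_\lambda(a;q)=\Pi(a;q,t,\nu)$ (Theorem 1.1 of \cite{W06}), and then invoke linear independence of $\{P_\lambda(\,\cdot\,;q)\}$ in $\Lambda$ to upgrade the equality of generating functions to the termwise identity.

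The work here is bookkeeping rather than new ideas. The main obstacle is simply to carry the $q\leftrightarrow t$ relabelling and the precise form of $b^{el}_\mu(q)$ through the formalism of \cite{BBC20} without slips, and --- in the direct evaluation --- to disentangle the sum over even $\mu\subseteq\lambda$ into the per-multiplicity Rogers--Szeg\"o factors, getting the powers of $\nu$ and $t$ and the argument of $h_{m_i(\lambda)}$ right in the odd-row versus even-row cases. Signedness and convergence do not intervene, since the whole statement is an identity of formal power series.
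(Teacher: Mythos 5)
Your ``shortcut'' at the end is exactly what the paper does: it quotes the Cauchy-type identity $\sum_\lambda P_\lambda(x;q)\,\mathcal{E}_\lambda(b)=\Pi(x;b)\prod_{i<j}\frac{1-qx_ix_j}{1-x_ix_j}$ from \cite{BBC20}, verifies by a direct power-sum computation that $\Pi(x;b)=\prod_i\frac{(1-\nu t x_i)(1+x_i/\nu)}{1-x_i^2}$, compares with Warnaar's Littlewood identity \eqref{eq: Littlewood identity} which already expands the same product as $\sum_\lambda h_\lambda(q,t,\nu)P_\lambda(x;q)$, and then invokes linear independence of the $P_\lambda$. That is precisely what you describe, so the proposal does contain the paper's argument.

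Your primary route --- directly evaluating $\sum_{\mu'\text{ even}}b^{el}_\mu(q)\,Q_{\lambda/\mu}(b)$ and watching it factor into per-part-value Rogers--Szeg\H{o} factors --- is, however, only sketched at the level of ``this should be bookkeeping,'' and I don't think that characterization is quite fair. The specialization $b$ is a \emph{signed} combination of four geometric alphabets, so expanding $Q_{\lambda/\mu}(b)$ via the branching rule puts you into supersymmetric-function territory and produces a multi-fold sum, and the $\mu$-sum then has to be reorganized in a way that collapses part-value by part-value. The remark in the paper preceding this lemma explicitly flags that this factorization is a special feature of this $b$ and does ``not happen in general,'' so the step you describe as the ``crucial point'' is exactly the nontrivial content, and you have asserted rather than demonstrated it. The generating-function route sidesteps all of this --- one identity for $\Pi(x;b)$ checked on power sums replaces the combinatorial collapse --- which is why the paper uses it. If you want a standalone proof along your primary lines, the honest statement is that you would be re-deriving Warnaar's Littlewood identity by hand; quoting it, as you do in the shortcut and as the paper does, is both cleaner and self-contained given \cite{W06}.
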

\begin{proof}
Starting from the identity (see e.g. Equation 30 in \cite{BBC20}),
\begin{equation*}
    \sum_\lambda P_\lambda(x;q)\mathcal{E}_\lambda(b)= \Pi(x,b)\prod_{i<j}\frac{1-qx_ix_j}{1-x_ix_j},
\end{equation*}
it suffices to check that
\begin{equation*}
    \Pi(x;b)=\prod_i \frac{(1-\nu t x_i)(1+x_i/\nu)}{1-x_i^2},
\end{equation*}
as the $P_\lambda(x;q)$ are a basis for $\Lambda$ and we already have an expansion with coefficients $h_\lambda(q,t,\nu)$. This computation can be readily checked.
\end{proof}

Note that if $a=(a_1,\dotsc, a_n)$ and each $a_i$ is a single variable, then the lengths of the partition increase by at most $1$. When $a=(a_1,\dotsc,a_n)$, we write $[\Vec{\lambda}]=(s_1,\dotsc, s_n)$ to denote the binary sequence of length $n$ with $s_i=I_{l(\lambda^{(i)})=l(\lambda^{(i-1)})+1}$, called the \emph{support} of $\Vec{\lambda}$.

We now state the main result of this section, which gives a distributional relationship between the half space six vertex model and the half space Hall--Littlewood measure.
\begin{theorem}
\label{thm: general 6vm HL distr equality}
Let $a=(a_1,\dotsc, a_n)$ and $q,t,\nu$ denote the rapidities and other parameters in the half space six vertex model, and let $S$ denote the path string of a configuration from the six vertex model. Then
\begin{equation*}
    \PP(S=s)=\mathbb{HL}_a^{(q,t,\nu)}([\Vec{\lambda}]=s).
\end{equation*}
In particular, we have
\begin{equation*}
    (h(i,n))_{i=1}^n\deq (l(\lambda^{(i)}))_{i=1}^n.
\end{equation*}
\end{theorem}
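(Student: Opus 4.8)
The plan is to prove Theorem \ref{thm: general 6vm HL distr equality} by a Yang--Baxter argument, showing that the partition function of the half space six vertex model, read off along the top boundary of an $n\times n$ box, matches term-by-term with the weights $h_{\lambda^{(n)}}(q,t,\nu)\prod_i P_{\lambda^{(i)}/\lambda^{(i-1)}}(a_i;q)$ of the half space Hall--Littlewood process. The strategy follows the template established in \cite{BBCW18} for the special case $t=\nu=1$, with the new ingredients being the boundary weights $\mathbf{p}_{i,i}$ and the extra boundary parameters $t,\nu$, which must produce the factor $h_\lambda(q,t,\nu)$ rather than the simpler $t=\nu=1$ version.

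First I would reformulate the model on the fundamental domain $\{(i,j):j\geq i\}$, where the boundary vertices $(i,i)$ carry a single incoming and single outgoing edge with the weights from Figure \ref{fig:vtx wts}, and recognize the bulk weights as the stochastic six vertex weights whose partition functions are known (via the standard vertex-model/Yang--Baxter machinery of Borodin--Corwin--Petrov and Borodin--Wheeler) to produce Hall--Littlewood polynomials $P_{\lambda/\mu}(a_i;q)$ as the transfer-matrix acts column by column; concretely, adding one column with rapidity $a_i$ advances the partition $\lambda^{(i-1)}\subseteq\lambda^{(i)}$ with weight $P_{\lambda^{(i)}/\lambda^{(i-1)}}(a_i;q)$, which is exactly the branching structure of the half space Hall--Littlewood process. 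The support $[\Vec\lambda]=(s_1,\dots,s_n)$ records, for each column, whether the length of the partition increases, which corresponds precisely to whether an arrow exits the top of the box in that column, so $S=[\Vec\lambda]$ at the level of individual configurations once the weights are matched. Second, the boundary contribution: the reflection across $x=y$ means each diagonal vertex $(i,i)$ acts like a "reflection" vertex, and summing the vertex weights over the allowed local configurations along the diagonal must be shown to collapse to the combinatorial factor $h_{\lambda^{(n)}}(q,t,\nu)$. This is where one uses the explicit form of $\mathbf{p}_{i,i}$ together with a reflection Yang--Baxter (boundary Yang--Baxter / reflection equation) relation to commute the diagonal boundary operator past the bulk transfer matrices, reducing the whole computation to the one-column case and an identification of the resulting generating function in the multiplicities $m_i(\lambda)$ with the product of Rogers--Szegő polynomials.

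The cleanest way to organize the boundary computation is probably to split it into the even-row and odd-row contributions, matching $\prod_{i\text{ even}}h_{m_i(\lambda)}(-t,q)$ and $\prod_{i\text{ odd}}(-t\nu)^{m_i(\lambda)}h_{m_i(\lambda)}(-1/(\nu^2 t),q)$ separately; the appearance of two different Rogers--Szegő polynomials reflects the two distinct local behaviors of arrows hitting the diagonal at even versus odd heights, and the factor $(-t\nu)^{m_i(\lambda)}$ comes from the asymmetry between the $(1-\nu t a_i)$ and $(1+a_i/\nu)$ factors in the diagonal weight $\mathbf{p}_{i,i}$. I would verify the normalization separately: summing $\PP(S=s)$ over all binary strings $s$ gives $1$ on the six vertex side, and this must match $\Pi(a;q,t,\nu)^{-1}\sum_\lambda h_\lambda(q,t,\nu) P_\lambda(a;q)=1$, which is Warnaar's Littlewood identity \eqref{eq: Littlewood identity}; as the authors note, this argument in fact reproves that identity, so one should be careful to make the vertex-model normalization computation self-contained rather than invoking \cite{W06}.

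The main obstacle will be the boundary vertex bookkeeping: isolating the correct reflection Yang--Baxter relation for these particular diagonal weights and then carrying out the summation over diagonal configurations to extract the $h_\lambda(q,t,\nu)$ factor. The bulk part is essentially known technology, but the two-parameter boundary weights $\mathbf{p}_{i,i}$ are new, and establishing that they satisfy a suitable reflection equation compatible with the stochastic bulk weights — so that the diagonal operator can be "pushed through" and localized — is the crux. A secondary subtlety is keeping track of signs and convergence once $\nu^2 t \ge 1$, where the measures become signed; but per the remark in the excerpt, the identity can be proven first as an equality of formal power series (or under the restrictive parameter assumptions guaranteeing genuine probability measures) and then extended by analytic continuation, so I would prove the formal identity and defer analytic issues. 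Once $\PP(S=s)=\mathbb{HL}_a^{(q,t,\nu)}([\Vec\lambda]=s)$ is established, the joint distributional statement $(h(i,n))_{i=1}^n \deq (l(\lambda^{(i)}))_{i=1}^n$ follows immediately, since $h(i,n)=s_1+\dots+s_i$ on the six vertex side and $l(\lambda^{(i)})=\sum_{j\le i} I_{l(\lambda^{(j)})=l(\lambda^{(j-1)})+1}$ is the same linear functional of $[\Vec\lambda]$ on the Hall--Littlewood side.
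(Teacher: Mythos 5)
The proposal correctly identifies the broad strategy --- a Yang--Baxter argument extending \cite{BBCW18} from $t=\nu=1$ --- and correctly anticipates that the even/odd column structure is what produces the two distinct Rogers--Szeg\"o polynomials in $h_\lambda$. But there is a concrete gap in the bulk mechanism: the six-vertex transfer matrix does \emph{not} directly produce Hall--Littlewood polynomials acting column by column, as you claim. What produces $P_{\lambda/\mu}(a_i;q)$ and $Q_{\lambda/\mu}(a_i;q)$ are the rows of a \emph{deformed boson lattice} (unbounded occupation on vertical edges, with weights given in \eqref{eq:black-vertices} and \eqref{eq:red-vertices}), in two different normalizations; this is Lemma \ref{lem: boson HL}. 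The six-vertex model and the boson lattice share an $R$-matrix (Proposition \ref{prop: YB boson}), which is what permits the Yang--Baxter moves, but they are distinct lattices, and without introducing the boson intermediary the partition-function equality cannot even be set up. Your outline would stall at the point of trying to identify a six-vertex transfer-matrix action with the Hall--Littlewood branching rule.

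The second gap concerns the boundary. You propose to establish a reflection (boundary Yang--Baxter) equation for the diagonal weights $\mathbf{p}_{i,i}$ and use it to commute a $K$-matrix past the transfer matrices. That is a natural guess but it is not what the paper does: the diagonal is encoded as a single one-input/one-output \emph{boundary vertex} (a dot on a horizontal edge), not a corner reflection vertex, and the commutation relation it needs (Lemma \ref{lem: two vertex compat}) is not a reflection equation but a two-column compatibility identity that pushes the dot rightward \emph{through two boson columns at a time}, simultaneously exchanging the two boson normalizations and transforming the $h_\lambda$ weight via the Rogers--Szeg\"o recurrence. The two-columns-at-a-time structure is forced precisely by the even/odd asymmetry you noticed, and the identity is verified by direct computation rather than being an instance of a standard integrability relation. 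Once one has Lemma \ref{lem: two vertex compat}, the argument is: start from the boson-lattice expression of the Hall--Littlewood process with right boundary condition $s$, insert a free boundary vertex at the bottom-left, push it to the right with the two-column lemma, braid the bottom row to the top using Proposition \ref{prop: YB boson}, and iterate; the boson lattice freezes and what remains is exactly the six-vertex partition function. Your observation that this argument also reproves Warnaar's Littlewood identity is correct (Remark \ref{rmk: Littlewood identity new pf}), and your final deduction of the joint distributional identity for $(h(i,n))_i$ from the path-string identity is fine.
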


\begin{remark}
As in Remark 4.6 of \cite{BBCW18}, we expect a more general statement to be true, relating a Hall--Littlewood process with ascending and descending partitions to $h(i,j)$ for $(i,j)$ following a jagged path. Since this is not needed in this paper, we do not pursue this further.
\end{remark}

Using Theorem \ref{thm: general 6vm HL distr equality} and known contour integral formulas for certain $q$-moments of the half space Hall--Littlewood process \cite{BBC20}, we obtain the following formulas for the $q$-moments of the height function.
\begin{corollary}
\label{cor: contour int formula}
Let $1\leq n\leq m$ and let $k\in\N$, and let $h(n,m)$ denote the height function of the half space six vertex model. Then
\begin{equation*}
\begin{split}
    \E\left(q^{-k h(n,m)}\right)=&q^{k\choose 2}\frac{1}{(2\pi i)^k}\oint_{C_1}dz_1\dotsm \oint_{C_k}dz_k\prod_{i<j}\frac{z_i-z_j}{z_i-qz_j}\frac{1-qz_iz_j}{1-z_iz_j}
    \\&\qquad \times\prod_{j=1}^k\left(\frac{1}{z_j}\frac{1-qz_j^2}{(1-\nu t z_j)(1+z_j/\nu)}\prod_{i=1}^m\frac{1-a_iz_j}{1-qa_iz_j}\prod_{i=1}^n\frac{z_j-a_i/q}{z_j-a_i}\right),
\end{split}
\end{equation*}
with the contours $C_i$ are positively oriented, contain $0$ and $a_i$, are contained in the open ball of radius $1$, and are nested such that for $i<j$, $C_i$ does not contain any part of $tC_j$.
\end{corollary}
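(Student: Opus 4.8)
The plan is to deduce this directly from Theorem \ref{thm: general 6vm HL distr equality} together with the contour integral formula for $q$-moments of the half space Hall--Littlewood measure in \cite{BBC20}. By Theorem \ref{thm: general 6vm HL distr equality}, applied with rapidities $(a_1,\dotsc,a_m)$ (so that the six vertex model runs out to column $m$), we have $h(n,m)\deq l(\lambda^{(n)})$, where $\Vec\lambda$ is distributed according to the half space Hall--Littlewood process $\mathbb{HL}_{(a_1,\dotsc,a_m)}^{(q,t,\nu)}$; indeed $l(\lambda^{(n)})$ is the number of the first $n$ support coordinates that equal $1$, which matches $h(n,m)$ by the path-string identity. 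In particular $\E(q^{-k h(n,m)})=\E(q^{-k l(\lambda^{(n)})})$.

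Next I would invoke Lemma \ref{lem: match with Macdonald process}, which identifies our $\mathbb{HL}_a^{(q,t,\nu)}$ with the half space Hall--Littlewood (Macdonald, at the Hall--Littlewood specialization, with $q$ and $t$ exchanged relative to their conventions) process of \cite{BBC20}, via the plethystic boundary specialization $b$ with $p_k(b)=\frac{1+(-1)^k-(\nu t)^k-(-\nu)^{-k}}{1-q^k}$. The observable $l(\lambda^{(n)})$ corresponds to the length of the $n$th partition in their process. One then quotes the nested contour integral formula of \cite{BBC20} for $\E(q^{-k\, l(\lambda^{(n)})})$ (their moment formula is usually phrased for $\lambda^{(n)}_1$ of the conjugate/$q$-Whittaker version, or directly for the length; the length of $\lambda$ equals $\lambda'_1$, which is the quantity their formula computes). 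The contribution of the bulk rapidities $a_1,\dotsc,a_m$ enters through the factor $\prod_{i=1}^m \frac{1-a_i z_j}{1-q a_i z_j}$ coming from the specialization $P_\lambda(a_1,\dotsc,a_m;q)$, and the restriction to the $n$th partition in the flag produces the additional factor $\prod_{i=1}^n \frac{z_j - a_i/q}{z_j - a_i}$ (these are the ``observed up to stage $n$'' factors in the Hall--Littlewood process formula). The boundary factor $\frac{1}{z_j}\frac{1-qz_j^2}{(1-\nu t z_j)(1+z_j/\nu)}$ is exactly the generating function $\sum_{k\geq 0} z_j^k$-type contribution of the specialization $b$; concretely it is the single-variable Littlewood kernel $\Pi(z_j;b)\cdot(1-z_j^2)/z_j \cdot$(a correction), and I would verify it equals the stated rational function by the computation indicated in the proof of Lemma \ref{lem: match with Macdonald process}. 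The cross terms $\prod_{i<j}\frac{z_i-z_j}{z_i-qz_j}\frac{1-qz_iz_j}{1-z_iz_j}$ and the prefactor $q^{\binom k2}$ are the standard Hall--Littlewood-type interaction kernel, taken verbatim from \cite{BBC20}.

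The main obstacle is bookkeeping rather than anything conceptual: matching the precise normalization (the $q^{\binom k2}$, and any $q$-shifts hidden in $a_i/q$ versus $a_i$, stemming from the passage between $P_\lambda$, $Q_\lambda$, and the $q$-Whittaker side in \cite{BBC20}), and verifying that the contour prescription --- $C_i$ positively oriented, enclosing $0$ and the $a_i$, inside the unit ball, and nested so that $C_i$ excludes $t C_j$ for $i<j$ --- is exactly the image of the contour prescription in \cite{BBC20} under the exchange of $q$ and $t$ and the boundary specialization $b$. The poles to be enclosed are $z_j=0$ and $z_j=a_i$ (from $\frac1{1-qa_iz_j}$ at $z_j=a_i/q$? no --- from $\frac{1}{z_j-a_i}$); the poles to be avoided are $z_j=a_i/q$, $z_j=1/(\nu t)$, $z_j=-\nu$, $z_j=1/(q a_i)$, and the interaction poles $z_i = q z_j$, $z_i z_j = 1$, which is what forces $C_i\not\supseteq tC_j$ and the containment in the unit ball. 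I would check these pole locations against the hypotheses $a_i,q,t,t\nu\in[0,1)$ to confirm a valid nested family of contours exists, and otherwise simply cite \cite{BBC20} for the formula and \cite{W06} / Lemma \ref{lem: match with Macdonald process} for the specialization identity.
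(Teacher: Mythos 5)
Your proposal takes the same overall route as the paper: apply Theorem \ref{thm: general 6vm HL distr equality} with rapidities $(a_1,\dotsc,a_m)$ to identify $h(n,m)$ with $l(\lambda^{(n)})$, translate into the half-space Hall--Littlewood process of \cite{BBC20} via Lemma \ref{lem: match with Macdonald process} and the plethystic boundary specialization $b$, and then invoke the nested contour integral formula for $q$-moments from \cite{BBC20}.

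There is, however, a genuine gap in the final step that your proposal waves away: you propose to ``simply cite \cite{BBC20}'' for the contour integral formula, but Corollary 5.8 of \cite{BBC20} is only stated for a descending specialization of the form $\rho=(a_{n+1},\dotsc,a_m)$, not for the augmented specialization $\rho=(a_{n+1},\dotsc,a_m)+\frac{\{1,-1\}-\{\nu t,-\nu^{-1}\}}{1-q}$ that arises here. The substantive content of the paper's proof is the observation that the argument in \cite{BBC20} extends to this more general $\rho$: the integrand is modified only by a factor of $\prod_i\frac{1-z_i^2}{(1-\nu t z_i)(1+z_i/\nu)}$, and --- crucially --- the new poles this introduces (at $z_i=1/(\nu t)$ and $z_i=-\nu$) lie outside the unit circle, hence outside the nested contours $C_i$, so none of the contour manipulations in \cite{BBC20}'s proof are disturbed. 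This verification is what makes the corollary go through; your proposal never performs it. Two smaller slips: the extra factor is $1/\Pi(z_j;b)=\frac{1-z_j^2}{(1-\nu t z_j)(1+z_j/\nu)}$, not $\Pi(z_j;b)$ times corrections as you suggest (the $(1-\nu t z_j)(1+z_j/\nu)$ must end up in the denominator to match the stated integrand), and your pole list includes $z_j=a_i/q$, which is a zero of the integrand rather than a pole.
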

\begin{proof}
The proof follows the same ideas as Corollary 5.8 of \cite{BBC20}. In particular, we can use Theorem \ref{thm: general 6vm HL distr equality} and Lemma \ref{lem: match with Macdonald process} to relate the expectation to a half space Hall--Littlewood process in the sense of \cite{BBC20}, with $\rho=(a_{n+1},\dotsc, a_m)+\frac{\{1,-1\}-\{\nu t, -\nu^{-1}\}}{1-q}$. The result in \cite{BBC20} is only stated when $\rho=(a_{n+1},\dotsc, a_m)$, but the same proof works in this more general case. In particular, this change only affects the integrand by a factor of $\prod_i \frac{1-z_i^2}{(1-\nu t z_i)(1+z_i/\nu)}$, and the new poles are outside the unit circle and so do not interfere with the contour manipulations done in the proof.
\end{proof}

\begin{remark}
This formula cannot be deformed to the ASEP limit, as this requires $a_i\to 1$ while the contours must contain $a_i$ but not $1$, which would cause the poles $z_i=z_j^{-1}$ to become an issue. However, in \cite{BC22}, formulas for $q$-moments are derived in terms of a residue expansion (under Liggett's condition on the jump rates). It would be interesting to understand the relationship between these formulas, and whether the formulas found in \cite{BC22} could be extended to more general rates.
\end{remark}

\begin{remark}
The restriction on the parameter $\nu$, that $\nu^2t<1$, will ultimately need to be removed. Fortunately, this is not an issue, since we will only need to study the first part of $\lambda_n$, and this marginal is a probability distribution even if $\nu^2t\geq 1$. Furthermore, Theorem \ref{thm: general 6vm HL distr equality} holds as an equality of rational functions. Thus, even if the parameters are arbitrary, the result remains true, although the two sides may be negative.
\end{remark}

\begin{remark}
Since $h_\lambda(q,t,\nu)$ is invariant under $\nu\mapsto (-\nu t)^{-1}$, and so is $\Pi(a;q,t,\nu)$, the half space Hall--Littlewood process is also invariant under this substitution. For this reason, it's completely harmless to assume $\nu\geq 0$ (except for the special case $t=0$ which will be discussed in detail in Remark \ref{rem: t=0 special}). We note that the the half space six vertex model also has this symmetry.
\end{remark}

\begin{remark}
Given that we have defined a two-parameter generalization, it is natural to wonder whether a Macdonald version of this process exists. Unfortunately, although it is possible to define such a process, see e.g. \cite{W06} where a Littlewood identity is proven, it lacks many of the nice features that the Hall--Littlewood process has, and in particular the factor replacing $h_\lambda$ no longer factorizes. However, for some one-parameter families of specializations, this property remains (see Proposition 1.3 in \cite{W06}), which suggests that these processes may also be of some interest. We do not consider these questions here.
\end{remark}

\begin{remark}
\label{rmk: Littlewood identity new pf}
Since the proof of Theorem \ref{thm: general 6vm HL distr equality} does not require a priori knowledge of the normalizing constant $\Pi(a;q,t,\nu)$, and the six vertex model does not require any normalization, one can actually derive the formula \eqref{eq: Littlewood identity} from it. This is equivalent to Theorem 1.1 of \cite{W06}, and so we have given a new vertex model proof of this result.
\end{remark}

\subsection{Deformed bosons}
A key tool to prove Theorem \ref{thm: general 6vm HL distr equality} will be a deformed model of bosons. This was used to establish a special case of Theorem \ref{thm: general 6vm HL distr equality} in \cite{BBCW18} (when $\nu=t=1$), and we extend their proof. In order to do so, we will need to introduce some results from the paper. We will require two versions of this model, which we now define. 

Both models are models for arrows on a lattice, where the horizontal edges again can have at most one arrow, but where the vertical edges can have any number of arrows. The model has parameters (called \emph{rapidities}) associated to each row and a parameter $q$. The vertex weights given by

\begin{align}
\label{eq:black-vertices}
\begin{array}{cccc}
\begin{tikzpicture}[scale=0.8,>=stealth]
\draw[lgray,ultra thick] (-1,0) -- (1,0);
\draw[lgray,line width=10pt] (0,-1) -- (0,1);
\node[below] at (0,-1) {$m$};
\draw[ultra thick,->,rounded corners] (-0.075,-1) -- (-0.075,1);
\draw[ultra thick,->,rounded corners] (0.075,-1) -- (0.075,1);
\node[above] at (0,1) {$m$};
\end{tikzpicture}
\quad\quad\quad
&
\begin{tikzpicture}[scale=0.8,>=stealth]
\draw[lgray,ultra thick] (-1,0) -- (1,0);
\draw[lgray,line width=10pt] (0,-1) -- (0,1);
\node[below] at (0,-1) {$m$};
\draw[ultra thick,->,rounded corners] (-0.075,-1) -- (-0.075,1);
\draw[ultra thick,->,rounded corners] (0.075,-1) -- (0.075,0) -- (1,0);
\node[above] at (0,1) {$m-1$};
\end{tikzpicture}
\quad\quad\quad
&
\begin{tikzpicture}[scale=0.8,>=stealth]
\draw[lgray,ultra thick] (-1,0) -- (1,0);
\draw[lgray,line width=10pt] (0,-1) -- (0,1);
\node[below] at (0,-1) {$m$};
\draw[ultra thick,->,rounded corners] (-1,0) -- (-0.15,0) -- (-0.15,1);
\draw[ultra thick,->,rounded corners] (0,-1) -- (0,1);
\draw[ultra thick,->,rounded corners] (0.15,-1) -- (0.15,1);
\node[above] at (0,1) {$m+1$};
\end{tikzpicture}
\quad\quad\quad
&
\begin{tikzpicture}[scale=0.8,>=stealth]
\draw[lgray,ultra thick] (-1,0) -- (1,0);
\draw[lgray,line width=10pt] (0,-1) -- (0,1);
\node[below] at (0,-1) {$m$};
\draw[ultra thick,->,rounded corners] (-1,0) -- (-0.15,0) -- (-0.15,1);
\draw[ultra thick,->,rounded corners] (0,-1) -- (0,1);
\draw[ultra thick,->,rounded corners] (0.15,-1) -- (0.15,0) -- (1,0);
\node[above] at (0,1) {$m$};
\end{tikzpicture}
\\
1
\quad\quad\quad
&
a
\quad\quad\quad
&
(1-q^{m+1})
\quad\quad\quad
&
a
\end{array}\end{align}
where $0\leq a<1$ is the row rapidity, and $m$ is the number of arrows entering from below.   

We will also need another version of this model with an alternative normalization. It is defined in the same way, but with alternative vertex weights

\begin{align}
\label{eq:red-vertices}
\begin{array}{cccc}
\begin{tikzpicture}[scale=0.8,>=stealth]
\draw[lred, ultra thick] (-1,0) -- (1,0);
\draw[lred,line width=10pt] (0,-1) -- (0,1);
\node[below] at (0,-1) {$m$};
\draw[ultra thick,->,rounded corners] (-0.075,-1) -- (-0.075,1);
\draw[ultra thick,->,rounded corners] (0.075,-1) -- (0.075,1);
\node[above] at (0,1) {$m$};
\end{tikzpicture}
\quad\quad\quad
&
\begin{tikzpicture}[scale=0.8,>=stealth]
\draw[lred,ultra thick] (-1,0) -- (1,0);
\draw[lred,line width=10pt] (0,-1) -- (0,1);
\node[below] at (0,-1) {$m$};
\draw[ultra thick,->,rounded corners] (-0.075,-1) -- (-0.075,1);
\draw[ultra thick,->,rounded corners] (0.075,-1) -- (0.075,0) -- (1,0);
\node[above] at (0,1) {$m-1$};
\end{tikzpicture}
\quad\quad\quad
&
\begin{tikzpicture}[scale=0.8,>=stealth]
\draw[lred,ultra thick] (-1,0) -- (1,0);
\draw[lred,line width=10pt] (0,-1) -- (0,1);
\node[below] at (0,-1) {$m$};
\draw[ultra thick,->,rounded corners] (-1,0) -- (-0.15,0) -- (-0.15,1);
\draw[ultra thick,->,rounded corners] (0,-1) -- (0,1);
\draw[ultra thick,->,rounded corners] (0.15,-1) -- (0.15,1);
\node[above] at (0,1) {$m+1$};
\end{tikzpicture}
\quad\quad\quad
&
\begin{tikzpicture}[scale=0.8,>=stealth]
\draw[lred,ultra thick] (-1,0) -- (1,0);
\draw[lred,line width=10pt] (0,-1) -- (0,1);
\node[below] at (0,-1) {$m$};
\draw[ultra thick,->,rounded corners] (-1,0) -- (-0.15,0) -- (-0.15,1);
\draw[ultra thick,->,rounded corners] (0,-1) -- (0,1);
\draw[ultra thick,->,rounded corners] (0.15,-1) -- (0.15,0) -- (1,0);
\node[above] at (0,1) {$m$};
\end{tikzpicture}
\\
b
\quad\quad\quad
&
1
\quad\quad\quad
&
b (1-q^{m+1})
\quad\quad\quad
&
1
\end{array}
\end{align}
where again, $0\leq b<1$ is the row rapidity and $m$ is the number of arrows entering from below.

In both cases, we will use a graphical notation to write partition functions. For a single row, we will write $w_a(\cdot )$ around a picture to represent the sum over all internal edges of the products of the vertex weights, with external edges fixed and $a$ denoting the row rapidity. If there is more than one row, we will instead simply give the picture with row rapidities identified.

We will also wish to consider infinite rows, formally defined as a limit of longer finite rows. For finitely supported sequences of non-negative integers $m_i$ and $n_i$, we will let
\begin{multline*}
    w_a
\left(
\begin{tikzpicture}[baseline=(current bounding box.center),>=stealth,scale=0.8]
\draw[lgray,ultra thick] (0,0) -- (7,0);
\foreach\x in {1,...,6}{
\draw[lgray,line width=10pt] (7-\x,-1) -- (7-\x,1);
}
\node[left] at (0,0) {$0$};
\node[right] at (7,0) {$j$};
\foreach\x in {1,2,3}{
\node[text centered,below] at (7-\x,-1) {\tiny $m_{\x}$};
\node[text centered,above] at (7-\x,1) {\tiny $n_{\x}$};
}
\foreach\x in {4,5}{
\node[text centered,below] at (7-\x,-1) {\tiny $\cdots$};
\node[text centered,above] at (7-\x,1) {\tiny $\cdots$};
}
\end{tikzpicture}\right)
\\=\lim_{N\to\infty}w_a
\left(
\begin{tikzpicture}[baseline=(current bounding box.center),>=stealth,scale=0.8]
\draw[lgray,ultra thick] (0,0) -- (7,0);
\foreach\x in {1,...,6}{
\draw[lgray,line width=10pt] (7-\x,-1) -- (7-\x,1);
}
\node[left] at (0,0) {$0$};
\node[right] at (7,0) {$j$};
\foreach\x in {1,2,3}{
\node[text centered,below] at (7-\x,-1) {\tiny $m_{\x}$};
\node[text centered,above] at (7-\x,1) {\tiny $n_{\x}$};
}
\foreach\x in {4,5}{
\node[text centered,below] at (7-\x,-1) {\tiny $\cdots$};
\node[text centered,above] at (7-\x,1) {\tiny $\cdots$};
}
\node[text centered,below] at (1,-1) {\tiny $m_{N}$};
\node[text centered,above] at (1,1) {\tiny $n_{N}$};
\end{tikzpicture}\right),
\end{multline*}
and similarly for the second deformed boson model, except with an arrow entering from the left. These limits are well-defined, because the $m_i$ and $n_i$ are finitely supported, so eventually the weights will all be $1$. Note that if an arrow entered the left (or no arrow entered in the second deformed boson model), the weights would eventually be $a$, and not $1$, and the limit would simply be $0$ since $a<1$.

Finally, we will introduce an extra feature which we call a \emph{boundary vertex}, which corresponds to the boundary of the half space six vertex model. We will use a dot on a horizontal edge to denote it visually, and it has the effect of probabilistically outputting an arrow depending on the input, with probabilities
\begin{align*}
\begin{array}{cccc}
\begin{tikzpicture}[scale=0.8,>=stealth]
\draw[lgray,ultra thick] (-1,0) -- (1,0);
\node at (0,0) {$\bullet$};
\end{tikzpicture}
\quad\quad
&
\begin{tikzpicture}[scale=0.8,>=stealth]
\draw[lgray,ultra thick] (-1,0) -- (1,0);
\draw[ultra thick,->] (0,0) -- (1,0);
\node at (0,0) {$\bullet$};
\end{tikzpicture}
\quad\quad
&
\begin{tikzpicture}[scale=0.8,>=stealth]
\draw[lgray,ultra thick] (-1,0) -- (1,0);
\draw[ultra thick,->] (-1,0) -- (0,0);
\node at (0,0) {$\bullet$};
\end{tikzpicture}
\quad\quad
&
\begin{tikzpicture}[scale=0.8,>=stealth]
\draw[lgray,ultra thick] (-1,0) -- (1,0);
\draw[ultra thick,->] (-1,0) -- (1,0);
\node at (0,0) {$\bullet$};
\end{tikzpicture}
\vspace{0.2cm}
\\
\frac{a\nu^{-1}(1-t\nu^2)+(1-t)}{(1-a\nu t)(1+a/\nu)}
\quad\quad
&
\frac{t(1-a^2)}{(1-a\nu t)(1+a/\nu)}
\quad\quad
&
\frac{1-a^2}{(1-a\nu t)(1+a/\nu)}
\quad\quad
&
\frac{a\nu^{-1}(1-t\nu^2)+a^2(1-t))}{(1-a\nu t)(1+a/\nu)}
\end{array}\end{align*}
where $a$ is the row rapidity. Note these are equal to the boundary vertex weights in the half space six vertex model. Here, the weights are the same in both models.

The reason this model is useful to show Theorem \ref{thm: general 6vm HL distr equality} is that it shares the same $R$-matrix as the six vertex model, but its partition functions are given by Hall--Littlewood polynomials. These two statements are given in the following lemmas. We will use a crossing rotated by $45^\circ$ to denote a \emph{Yang--Baxter vertex}, a vertex with the same weights as those of the six--vertex model with corresponding row and column parameter.

\begin{proposition}[{\hspace{1sp}\cite[Proposition 4.8]{BBCW18}}]
\label{prop: YB boson}
For any finitely supported sequences $n_i$ and $m_i$, and any $j_1,j_2=0,1$, we have
\begin{equation*}
\label{graph-exchange}
\left(
\frac{1-a b}{1-q a b}
\right)
\sum_{p_i}
\begin{tikzpicture}[baseline=(current bounding box.center),>=stealth,scale=0.7]
\draw[lgray,ultra thick] (-1,1) node[left,black] {$a$}
-- (4,1) node[right,black] {$j_2$};
\draw[lred,ultra thick] (-1,0) node[left,black] {$b$}
-- (4,0) node[right,black] {$j_1$};
\foreach\x in {0,...,3}{
\draw[lgray,line width=10pt] (3-\x,0.5) -- (3-\x,2);
\draw[lred,line width=10pt] (3-\x,-1) -- (3-\x,0.5);
}
\node[below] at (3,-1) {$m_1$};
\node at (3,0.5) {$p_1$};
\node[above] at (3,2) {$n_1$};
\node[below] at (2,-1) {$m_2$};
\node at (2,0.5) {$p_2$};
\node[above] at (2,2) {$n_2$};
\node[text centered] at (0,0.5) {$\cdots$};
\node[text centered] at (1,0.5) {$\cdots$};
\draw[ultra thick,->] (-1,0) -- (0,0);
\end{tikzpicture}
=
\sum_{p_i,k_1,k_2}
\begin{tikzpicture}[baseline=(current bounding box.center),>=stealth,scale=0.7]
\foreach\x in {0,...,3}{
\draw[lgray,line width=10pt] (3-\x,-1) -- (3-\x,0.5);
\draw[lred,line width=10pt] (3-\x,0.5) -- (3-\x,2);
}
\draw[lred,ultra thick] (-1,1) node[left,black] {$b$}
-- (4,1);
\draw[lgray,ultra thick] (-1,0) node[left,black] {$a$}
-- (4,0);
\draw[dotted,thick] (4,1) node[above] {$k_1$} -- (5,0) node[right] {$j_1$};
\draw[dotted,thick] (4,0) node[below] {$k_2$} -- (5,1) node[right] {$j_2$};
\node[below] at (3,-1) {$m_1$};
\node at (3,0.5) {$p_1$};
\node[above] at (3,2) {$n_1$};
\node[below] at (2,-1) {$m_2$};
\node at (2,0.5) {$p_2$};
\node[above] at (2,2) {$n_2$};
\node[text centered] at (0,0.5) {$\cdots$};
\node[text centered] at (1,0.5) {$\cdots$};
\draw[ultra thick,->] (-1,1) -- (0,1);
\end{tikzpicture},
\end{equation*}
where on the left, $a$ and $b$ indicate the row rapidities, and the left boundary conditions are given by the arrows as indicated.
\end{proposition}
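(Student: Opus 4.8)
The statement is a Yang--Baxter ($RLL=LLR$) relation between the six vertex $R$-matrix (the Yang--Baxter vertex) and the $L$-operators of the two deformed boson models of \eqref{eq:black-vertices} and \eqref{eq:red-vertices}. The plan is the standard ``zipper'' argument: first establish the identity for a single column, and then propagate the crossing through all the columns one at a time.

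\textbf{Step 1 (the local relation).} Fix a single column, i.e. one vertical line carrying $m$ arrows in from below and $n$ arrows out above. One must show
\[
\mathbf{p}_1(ab)\cdot w\bigl(\text{crossing}\,\circ\, L^{(b)}_{m\to n}\,\circ\, L^{(a)}\bigr)\;=\;w\bigl(L^{(a)}\,\circ\, L^{(b)}_{m\to n}\,\circ\,\text{crossing}\bigr)
\]
for every choice of the four horizontal boundary states (each in $\{0,1\}$) and every $m,n$, where $L^{(a)}$ denotes the black boson weights and $L^{(b)}$ the red ones, and $\mathbf{p}_1(ab)=\frac{1-ab}{1-qab}$. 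Since horizontal edges carry at most one arrow and the boson weights only support transitions with $n\in\{m-1,m,m+1\}$, for each fixed horizontal configuration there are only finitely many internal states to sum over, and each instance becomes an identity of rational functions in $a,b,q$ (with $m$ entering through $q^m$), verified directly. The scalar $\mathbf{p}_1(ab)$ is precisely the discrepancy between the stochastic normalization of the crossing and the normalization under which $RLL=LLR$ holds exactly; it appears once, globally, which is why it sits outside the sum on the left.

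\textbf{Step 2 (propagation).} Granting the local relation, one attaches the columns from the right. Starting from the right-hand side of the claimed identity, the crossing sits to the right of all columns; apply the local relation to push it left past one column, then again, until it reaches the left boundary. There the crossing acts only on the two fixed incoming horizontal states, and a direct check of the four $R$-matrix entries shows it reduces to exactly the incoming-arrow configuration drawn on the left-hand side (with the rows swapping back so $j_1,j_2$ end up correctly placed). Each step of the induction invokes only the single-column identity, so the induction is clean. The passage to infinite rows needs no extra input: as already noted in the text, past the support of the $m_i$ and $n_i$ all weights are $1$, so only finitely many columns are nontrivial and the limit is legitimate.

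\textbf{Main obstacle.} The only real work is Step 1: carefully matching the two different normalizations and verifying the finite family of $q$-deformed identities, making sure the crossing vertex uses the six vertex weights with the correct assignment of $a$ and $b$ as row and column parameters and that the $(1-q^{m+1})$ factors combine correctly across the two sides. A shortcut --- essentially the route of \cite{BBCW18} --- is to recognize the black and red vertices as two normalizations of the $L$-operator of the $q$-boson (stochastic higher-spin six vertex) model, for which the $RLL=LLR$ relation is classical, and then merely track normalizations to extract the prefactor $\mathbf{p}_1(ab)$; the boundary vertex plays no role here and enters only in the later use of this proposition.
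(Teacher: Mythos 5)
The paper offers no proof of this statement; it simply cites \cite[Proposition~4.8]{BBCW18}. Your sketch follows the same Yang--Baxter train argument that underlies that reference, so the strategy is the right one. However, the place where you attach the scalar $\mathbf{p}_1(ab)$ is wrong, and this matters for the argument to close. The single-column exchange relation (your Step~1) must hold \emph{exactly}, with no prefactor: if $\mathbf{p}_1(ab)$ sat inside the local $RLL=LLR$ identity, pushing the crossing past $N$ columns would accumulate a factor $\mathbf{p}_1(ab)^N$, whereas the proposition has a single global factor independent of $N$. The black and red normalizations of \eqref{eq:black-vertices}--\eqref{eq:red-vertices} are precisely the ones that make the local exchange exact against the stochastic six-vertex crossing, so there is no ``normalization discrepancy'' to absorb at the local step.

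Where the prefactor actually comes from is the frozen crossing at $-\infty$, and your Step~2 glosses over this. After the crossing has been pushed past the support of the $m_i,n_i$, its left edges carry the fixed boundary states (arrow on the $b$-line, no arrow on the $a$-line) while its right edges feed the remaining infinite run of trivial columns. Two crossing configurations are a priori compatible with those inputs. One sends the arrow to the black ($a$) line; in the infinite-column partition function this forces weight $a$ on infinitely many trivial columns, hence total weight $0$ --- exactly the observation the paper records when defining the infinite-row limit. The other configuration sends the arrow to the red ($b$) line and reproduces the left-hand boundary; its six-vertex weight is $\mathbf{p}_1(ab)=\tfrac{1-ab}{1-qab}$, and that is the origin of the prefactor. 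So the argument you want is: exact local $RLL=LLR$, propagate, discard the frozen configuration that dies in the limit, and read off $\mathbf{p}_1(ab)$ from the weight of the surviving one.
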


\begin{lemma}[{\hspace{1sp}\cite[Lemma 4.11]{BBCW18}}]
\label{lem: boson HL}
For any partitions $\lambda$ and $\mu$, we have
\begingroup
\allowdisplaybreaks
\begin{align*}
    w_a
\left(
\begin{tikzpicture}[baseline=(current bounding box.center),>=stealth,scale=0.8]
\draw[lgray,ultra thick] (0,0) -- (7,0);
\foreach\x in {1,...,6}{
\draw[lgray,line width=10pt] (7-\x,-1) -- (7-\x,1);
}
\node[left] at (0,0) {$0$};
\node[right] at (7,0) {$0$};
\foreach\x in {1,2,3}{
\node[text centered,below] at (7-\x,-1) {\tiny $m_{\x}(\lambda)$};
\node[text centered,above] at (7-\x,1) {\tiny $m_{\x}(\mu)$};
}
\foreach\x in {4,5}{
\node[text centered,below] at (7-\x,-1) {\tiny $\cdots$};
\node[text centered,above] at (7-\x,1) {\tiny $\cdots$};
}
\end{tikzpicture}
\right)&=I_{l(\lambda)=l(\mu)}P_{\lambda/\mu}(a;q),
    \\w_a
\left(
\begin{tikzpicture}[baseline=(current bounding box.center),>=stealth,scale=0.8]
\draw[lgray,ultra thick] (0,0) -- (7,0);
\foreach\x in {1,...,6}{
\draw[lgray,line width=10pt] (7-\x,-1) -- (7-\x,1);
}
\node[left] at (0,0) {$0$};
\draw[ultra thick,->] (6,0) -- (7,0);
\node[right] at (7,0) {$1$};
\foreach\x in {1,2,3}{
\node[text centered,below] at (7-\x,-1) {\tiny $m_{\x}(\lambda)$};
\node[text centered,above] at (7-\x,1) {\tiny $m_{\x}(\mu)$};
}
\foreach\x in {4,5}{
\node[text centered,below] at (7-\x,-1) {\tiny $\cdots$};
\node[text centered,above] at (7-\x,1) {\tiny $\cdots$};
}
\end{tikzpicture}\right)&=I_{l(\lambda)=l(\mu)+1}P_{\lambda/\mu}(a;q),
    \\w_a
\left(
\begin{tikzpicture}[baseline=(current bounding box.center),>=stealth,scale=0.8]
\draw[lred,ultra thick] (0,0) -- (7,0);
\foreach\x in {1,...,6}{
\draw[lred,line width=10pt] (7-\x,-1) -- (7-\x,1);
}
\node[left] at (0,0) {$1$};
\draw[ultra thick,->] (0,0) -- (1,0);
\node[right] at (7,0) {$0$};
\foreach\x in {1,2,3}{
\node[text centered,below] at (7-\x,-1) {\tiny $m_{\x}(\mu)$};
\node[text centered,above] at (7-\x,1) {\tiny $m_{\x}(\lambda)$};
}
\foreach\x in {4,5}{
\node[text centered,below] at (7-\x,-1) {\tiny $\cdots$};
\node[text centered,above] at (7-\x,1) {\tiny $\cdots$};
}
\end{tikzpicture}\right)&=I_{l(\lambda)=l(\mu)+1}Q_{\lambda/\mu}(a;q),
    \\w_a
\left(
\begin{tikzpicture}[baseline=(current bounding box.center),>=stealth,scale=0.8]
\draw[lred,ultra thick] (0,0) -- (7,0);
\foreach\x in {1,...,6}{
\draw[lred,line width=10pt] (7-\x,-1) -- (7-\x,1);
}
\node[left] at (0,0) {$1$};
\draw[ultra thick,->] (0,0) -- (1,0);
\draw[ultra thick,->] (6,0) -- (7,0);
\node[right] at (7,0) {$1$};
\foreach\x in {1,2,3}{
\node[text centered,below] at (7-\x,-1) {\tiny $m_{\x}(\mu)$};
\node[text centered,above] at (7-\x,1) {\tiny $m_{\x}(\lambda)$};
}
\foreach\x in {4,5}{
\node[text centered,below] at (7-\x,-1) {\tiny $\cdots$};
\node[text centered,above] at (7-\x,1) {\tiny $\cdots$};
}
\end{tikzpicture}\right)&=I_{l(\lambda)=l(\mu)}Q_{\lambda/\mu}(a;q).
\end{align*}
\endgroup
\end{lemma}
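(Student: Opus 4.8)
The four identities split into two parallel computations, one for each family of vertex weights, and each reduces to the classical one-variable formula for skew Hall--Littlewood polynomials. The first step is to record the conservation law obeyed by both families of weights: at every vertex the number of arrows entering from the left and below equals the number leaving to the right and above. Reading a single row from left to right and summing this relation over an initial segment of columns, one finds that the occupation (which is $0$ or $1$) of each internal horizontal edge is forced to equal the prescribed left-boundary value $H$ plus the partial sum $\sum_{k>i}\bigl(m_k(\lambda)-m_k(\mu)\bigr)=\lambda'_{i+1}-\mu'_{i+1}$ of differences of multiplicities, i.e.\ a difference of conjugate coordinates (in the last two identities the bottom and top rows carry $\mu$ and $\lambda$ respectively, so the same computation gives $H-(\lambda'_{i+1}-\mu'_{i+1})$ instead). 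Because a horizontal edge carries at most one arrow, this forces two things at once: the row partition function vanishes unless every such partial difference lies in $\{0,1\}$ --- that is, unless $\lambda/\mu$ (resp.\ $\mu/\lambda$) is a horizontal strip --- and summing over all columns fixes the occupation of the rightmost edge, so that comparing with the prescribed right boundary produces exactly the indicators $I_{l(\lambda)=l(\mu)}$ resp.\ $I_{l(\lambda)=l(\mu)+1}$ in the stated cases; moreover, when these constraints are met the \emph{entire} configuration of horizontal edges is determined, and since the vertex at a column is determined by its four surrounding edge states, $w_a(\cdot)$ is a single monomial in the vertex weights rather than a genuine sum.

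It then remains to evaluate that monomial. For the black weights, a column contributes a factor of the row rapidity $a$ exactly when its outgoing horizontal edge is occupied; the number of such columns telescopes to $|\lambda|-|\mu|$, yielding $a^{|\lambda|-|\mu|}$. The only other nontrivial contributions come from the columns at which the horizontal arrow is absorbed into the vertical stack --- these are exactly the columns where the forced edge occupation jumps from $0$ to $1$ --- each contributing $1-q^{m}$ with $m$ the resulting stack height; collecting them gives precisely Macdonald's quantity $\psi_{\lambda/\mu}(q)$ (the one-variable skew formula in \cite{M79}, with our $q$ playing the role of his Hall--Littlewood parameter $t$), so $w_a(\cdot)=a^{|\lambda|-|\mu|}\psi_{\lambda/\mu}(q)=P_{\lambda/\mu}(a;q)$. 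The two black identities differ only in whether the right boundary edge is empty or occupied, which is precisely what selects between $l(\lambda)=l(\mu)$ and $l(\lambda)=l(\mu)+1$; the value of the monomial is the same formula in both. For the red weights the structure of the argument is identical, but the roles are swapped: the rapidity factors now accumulate at the columns whose outgoing edge is \emph{empty}, and the factors $1-q^{m}$ appear at the columns where the edge occupation drops from $1$ to $0$, producing Macdonald's $\varphi_{\lambda/\mu}(q)$ in place of $\psi_{\lambda/\mu}(q)$ and hence $Q_{\lambda/\mu}(a;q)$; again the two red identities are distinguished only by the right boundary value.

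I expect the main obstacle to be bookkeeping rather than anything conceptual. One must treat the infinite row honestly: check that for finitely supported occupation data the forced horizontal edge configuration is eventually $0$ in both directions and the vertex weights are eventually $1$, so that the limit defining $w_a(\cdot)$ coincides with the finite product computed above and the uniqueness-of-configuration argument is not disturbed by the truncation. One must also carefully match the index set over which the vertex-model product runs with the index sets appearing in Macdonald's definitions of $\psi_{\lambda/\mu}$ and $\varphi_{\lambda/\mu}$, and confirm the telescoping count that produces $a^{|\lambda|-|\mu|}$. Finally, since the statement is quoted as \cite[Lemma 4.11]{BBCW18}, one may simply cite that reference; I have sketched the argument here because it is short and self-contained given the explicit one-variable Hall--Littlewood formulas.
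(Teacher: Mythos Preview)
The paper does not supply its own proof of this lemma; it is quoted verbatim from \cite[Lemma~4.11]{BBCW18} and used as a black box, exactly as you note at the end of your proposal. Your sketch of the underlying argument (conservation forces a unique configuration encoding the horizontal-strip condition, and the resulting monomial matches Macdonald's one-variable branching coefficients $\psi_{\lambda/\mu}$, $\varphi_{\lambda/\mu}$) is the standard one and is correct in substance---the only imprecision is that in both the black and red models the $(1-q^{m})$ factor arises at the \emph{same} vertex type (horizontal arrow absorbed, case~3 in both weight tables), so your contrasting descriptions ``jumps from $0$ to $1$'' versus ``drops from $1$ to $0$'' are slightly misleading; what actually differs between the two colours is which partition ($\mu$ versus $\lambda$) sits on top and hence which multiplicity appears in the exponent, yielding $\psi$ versus $\varphi$.
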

Both these results were already used in \cite{BBCW18} to study a special case of Theorem \ref{thm: general 6vm HL distr equality}. The key new tool to extend the methods of \cite{BBCW18} to show Theorem \ref{thm: general 6vm HL distr equality} is the following lemma, which extends Proposition 4.9 of \cite{BBCW18} and shows that the more general boundary for the half space six vertex model in this paper remains compatible with this deformed boson model. Note that due to the differences in the weights for the even and odd columns, we must pass the boundary vertices through two at a time.

\begin{lemma}
\label{lem: two vertex compat}
For any choice of $i,j=0,1$ and $n_1,n_2\in\N$, we have
\begin{multline*}
    \sum_{m_1,m_2=0}^{\infty}h_{m_2}(-t,q) (-t\nu)^{m_1}h_{m_1}\left(-\frac{1}{\nu^2 t},q\right)
w_a\left(
\begin{tikzpicture}[scale=0.7,>=stealth,baseline=(current bounding box.center)]
\draw[lgray,ultra thick] (-1.5,0) -- (2,0);
\draw[lgray,line width=10pt] (0,-1) -- (0,1);
\draw[lgray,line width=10pt] (1,-1) -- (1,1);
\node[below] at (0,-1) {$m_2$};
\node[above] at (0,1) {$n_2$};
\node[below] at (1,-1) {$m_1$};
\node[above] at (1,1) {$n_1$};
\node[left] at (-1.5,0) {$i$};
\node[right] at (2,0) {$j$};
\node at (-1,0) {$\bullet$};
\end{tikzpicture}
\right)
\\=\sum_{m_1,m_2=0}^{\infty} h_{m_2}(-t,q) (-t\nu)^{m_1}h_{m_1}\left(-\frac{1}{\nu^2 t},q\right) w_a\left(
\begin{tikzpicture}[scale=0.7,>=stealth,baseline=(current bounding box.center)]
\draw[lred,ultra thick] (-1,0) -- (2.5,0);
\draw[lred,line width=10pt] (0,-1) -- (0,1);
\draw[lred,line width=10pt] (1,-1) -- (1,1);
\node[below] at (0,-1) {$m_2$};
\node[above] at (0,1) {$n_2$};
\node[below] at (1,-1) {$m_1$};
\node[above] at (1,1) {$n_1$};
\node[left] at (-1,0) {$i$};
\node[right] at (2.5,0) {$j$};
\node at (2,0) {$\bullet$};
\end{tikzpicture}
\right).
\end{multline*}
\end{lemma}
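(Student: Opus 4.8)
The plan is to prove this identity by a direct computation using the explicit boundary vertex weights together with the deformed boson vertex weights in \eqref{eq:black-vertices} and \eqref{eq:red-vertices}, reducing both sides to finite sums over the possible values of $m_1,m_2$ (and intermediate edge occupations) and then checking equality via the Rogers--Szeg\"o recurrence. The first step is to observe that a boundary vertex sitting to the left of a two-column block interacts only with the leftmost column, so on each side the partition function factors as a known single-column boson weight times a $2\times 2$ transfer operation across the first column. Concretely, I would introduce the intermediate edge occupations $p_1,p_2$ on the vertical edges between the two columns (these already appear in Proposition \ref{prop: YB boson}) and the intermediate horizontal occupations $k_0, k_1$ to the right of the boundary dot and between the two columns. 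Summing the boundary weight against the black vertex weights on the left column, and separately against the red vertex weights on the left column, gives two $2\times 2$ matrices (indexed by $i$ and by the horizontal occupation entering the second column); the claim reduces to showing these two matrices become equal after the Rogers--Szeg\"o-weighted sum over $m_1$, and likewise the second column contributes an identical factor after a Rogers--Szeg\"o-weighted sum over $m_2$ since columns $2,4,6,\dots$ are ``even'' columns and columns $1,3,5,\dots$ are ``odd'' columns.

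The key computational input is the pair of summation identities for Rogers--Szeg\"o polynomials against the boson weights. For the even column, one needs an identity of the shape
\[
\sum_{m\geq 0} h_m(-t;q)\, W^{\mathrm{black}}_a(m, n; \text{edges}) \;=\; \sum_{m\geq 0} h_m(-t;q)\, W^{\mathrm{red}}_a(m,n;\text{edges})
\]
for each fixed boundary data, and for the odd column the analogous identity with weight $(-t\nu)^m h_m(-1/(\nu^2 t);q)$. These are precisely generalizations of Proposition 4.9 of \cite{BBCW18}: in the $\nu=t=1$ case the single Rogers--Szeg\"o weight $h_m(-1;q)$ handled a single column, and here the factorization $h_\lambda(q,t,\nu)=\prod_{i\text{ even}}h_{m_i}(-t,q)\prod_{i\text{ odd}}(-t\nu)^{m_i}h_{m_i}(-1/(\nu^2 t),q)$ forces us to pair an even column with the odd column to its left, which is exactly why the lemma is stated for two vertices at a time. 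I would verify these one-column identities by writing the left side of each as a $q$-hypergeometric-type sum, using the three-term recurrence $h_{n+1}(x;q)=(1+x)h_n(x;q)+x(q^n-1)h_{n-1}(x;q)$ to set up an induction on $n_1$ (resp.\ $n_2$), with the base cases $n=0,1$ checked by hand against the explicit weights. The boundary vertex weights are built precisely so that the parameter $-t$ (even columns) and $-1/(\nu^2 t)$ together with the prefactor $-t\nu$ (odd columns) appear after carrying out the $m$-sum; the denominators $(1-a\nu t)(1+a/\nu)$ in the boundary weights cancel against $\mathbf p_{i,i}$-type factors, leaving a clean identity.

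The main obstacle I expect is the bookkeeping of the boundary vertex: unlike the bulk Yang--Baxter move of Proposition \ref{prop: YB boson}, here the incoming horizontal arrow state $i$ is an external datum that the dot can flip, and the black versus red normalizations differ by the row rapidity $a$, so one must track carefully how the factor $a$ (from the ``$a$''-weighted black vertices versus the ``$1$''-weighted red vertices) gets absorbed into the Rogers--Szeg\"o sum. Getting the four cases $i,j\in\{0,1\}$ to all collapse to the same identity — rather than four separate verifications — is the delicate point; I would handle this by treating the horizontal occupation as a formal variable and showing the generating function in that variable matches, which packages the four cases into one rational-function identity. Once the two one-column Rogers--Szeg\"o identities are established, the lemma follows by composing them: the even column contributes its factor after summing over $m_2$ with weight $h_{m_2}(-t;q)$, the odd column after summing over $m_1$ with weight $(-t\nu)^{m_1}h_{m_1}(-1/(\nu^2 t);q)$, and the boundary dot passes from the black side to the red side, which is the assertion. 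Convergence of the $m_1,m_2$ sums is not an issue under the standing assumptions since $h_{m}$ grows only polynomially (bound \eqref{eq: RS bound}) while the geometric factors from $a<1$ and $\nu t<1$ dominate.
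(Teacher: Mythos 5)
Your plan is a genuine departure from the paper's proof, which simply observes that for fixed $n_1,n_2$ only the terms with $m_i\in\{n_i-1,n_i,n_i+1\}$ contribute and then verifies the resulting finite identity directly by symbolic computation (in SageMath), treating the $h_n$ as formal symbols subject to the three-term recurrence. Your alternative — factor the two-column passage into two one-column passages and compose — is appealing because it would give a conceptual explanation, but as written it has a real gap: you never specify the intermediate vertex. To compose, you would need a new horizontal ``modified dot'' vertex $X$ sitting between the two columns such that both
\[
\sum_{m_2}h_{m_2}(-t;q)\,[D\,T^{\mathrm{black}}_{m_2}]_{i\to k}=\sum_{m_2}h_{m_2}(-t;q)\,[T^{\mathrm{red}}_{m_2}\,X]_{i\to k}
\]
and
\[
\sum_{m_1}(-t\nu)^{m_1}h_{m_1}(-\tfrac{1}{\nu^2 t};q)\,[X\,T^{\mathrm{black}}_{m_1}]_{k\to j}=\sum_{m_1}(-t\nu)^{m_1}h_{m_1}(-\tfrac{1}{\nu^2 t};q)\,[T^{\mathrm{red}}_{m_1}\,D]_{k\to j}
\]
hold for all external data, where $D$ is the original dot. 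Until you exhibit a candidate for $X$ (a $2\times2$ stochastic-like matrix) and verify both one-column identities, the ``compose'' step is not justified. The paper's own remark — that one ``must pass the boundary vertices through two at a time'' because the even- and odd-column Rogers--Szeg\"o weights differ — is precisely a warning that the naive single-column move (where $X=D$) fails for general $(t,\nu)$; your appeal to Proposition 4.9 of \cite{BBCW18} only covers the degenerate case $\nu=t=1$ where the even and odd weights coincide. If such an $X$ exists, your route would be a nice structural explanation, but finding it is essentially equivalent in difficulty to the direct check, and you have not done it.

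Two smaller issues: you refer to ``the Rogers--Szeg\"o-weighted sum over $m_1$'' for the left column, but the left column carries $m_2$ (and weight $h_{m_2}(-t;q)$), so the indices are swapped; and the claim that $h_m$ ``grows only polynomially'' is not what \eqref{eq: RS bound} says — that bound has an exponential factor $|x|^n$ — though this is moot since, as you correctly note at the outset, the sums over $m_1,m_2$ are in fact finite once $n_1,n_2,i,j$ are fixed.
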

\begin{proof}
Although the sums over $m_1$ and $m_2$ are infinite, for any fixed choice of $i$, $j$, $n_1$, and $n_2$, there are only finitely many possible choices which give a non-zero weight. Thus, we verify the equality directly for each possible choice of $i$, $j$, $n_1$, and $n_2$. It turns out that the only input needed is the recurrence for the Rogers--Szeg\"o polynomials, so this is an easy but fairly tedious computation.

This is done via a symbolic computation in SageMath. Since for fixed $n_1$ and $n_2$, $m_1\in \{n_1-1,n_1,n_1+1\}$ and $m_2\in \{n_2-1,n_2,n_2+1\}$, we simply view $h_n(x,q)$ as a formal variable and use the recurrence to write $h_{n_1+1}$ in terms of $h_{n_1}$ and $h_{n_1-1}$, and similarly for $h_{n_2+1}$. A bit of care is needed when $n_1=0$ or $n_2=0$, as the recurrence is only stated for $n\geq 1$, but remains valid for $n=0$ by defining $h_{-1}=0$ and extending the deformed boson model to allow $-1$ arrows to enter from below, with weight $0$. The code used to run this computation can be found in "symbolic check.sage".
\end{proof}

By iterating Lemma \ref{lem: two vertex compat}, we obtain the following proposition.
\begin{proposition}
\label{prop: two vertex compat}
Let $n_1,n_2,\dotsc$ be a finitely supported sequence of non-negative integers. We have
\begin{multline*}
\sum_{\lambda}h_\lambda(q,t,\nu)w_a\left(
\begin{tikzpicture}[baseline=(current bounding box.center),>=stealth,scale=0.8]
\node[left] (0,0) {$1$};
\draw[lgray,ultra thick] (0,0) -- (8,0);
\draw[ultra thick,->] (0,0) -- (1,0);
\foreach\x in {0,...,5}{
\draw[lgray,line width=10pt] (7-\x,-1) -- (7-\x,1);
}
\node at (1,0) {$\bullet$};
\node[right] at (8,0) {$j$};
\foreach\x in {1,2,3}{
\node[text centered,below] at (8-\x,-1) {\tiny $m_{\x}(\lambda)$};
\node[text centered,above] at (8-\x,1) {\tiny $n_{\x}$};
}
\foreach\x in {3,4}{
\node[text centered,below] at (7-\x,-1) {\tiny $\cdots$};
\node[text centered,above] at (7-\x,1) {\tiny $\cdots$};
}
\end{tikzpicture}
\right)
\\
=
\sum_{\lambda}h_\lambda(q,t,\nu)w_a
\left(
\begin{tikzpicture}[baseline=(current bounding box.center),>=stealth,scale=0.8]
\draw[lred,ultra thick] (0,0) -- (8,0);
\foreach\x in {1,...,6}{
\draw[lred,line width=10pt] (7-\x,-1) -- (7-\x,1);
}
\node[left] at (0,0) {$1$};
\node at (7,0) {$\bullet$};
\draw[ultra thick,->] (0,0) -- (1,0);
\draw[ultra thick,->] (6,0) -- (7,0);
\node[right] at (8,0) {$j$};
\foreach\x in {1,2,3}{
\node[text centered,below] at (7-\x,-1) {\tiny $m_{\x}(\lambda)$};
\node[text centered,above] at (7-\x,1) {\tiny $n_{\x}$};
}
\foreach\x in {4,5}{
\node[text centered,below] at (7-\x,-1) {\tiny $\cdots$};
\node[text centered,above] at (7-\x,1) {\tiny $\cdots$};
}
\end{tikzpicture}
\right),
\end{multline*}
where the sums are over partitions $\lambda$.
\end{proposition}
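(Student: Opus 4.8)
The plan is to obtain this from Lemma \ref{lem: two vertex compat} by an induction on the size of the support of the sequence $(n_i)$, pushing the boundary vertex from the left end of the row to the right end two columns at a time. First I would observe that the left-hand side of the Proposition equals the left-hand side of Lemma \ref{lem: two vertex compat} summed over all configurations of the remaining columns to the right of the first two (with the intermediate horizontal edge state $j'$ between the second column and the rest of the row summed over), weighted by the factor $\prod_{i\geq 3}h_{m_i(\lambda)}(\dots)$ built from the Rogers--Szeg\"o polynomials attached to columns $3,4,\dots$. Here one has to be slightly careful about how $h_\lambda(q,t,\nu)$ splits: since $h_\lambda(q,t,\nu)=\prod_{i\text{ even}}h_{m_i(\lambda)}(-t,q)\prod_{i\text{ odd}}(-t\nu)^{m_i(\lambda)}h_{m_i(\lambda)}(-\tfrac{1}{\nu^2 t},q)$ is a product over the multiplicities $m_i(\lambda)$, and column $i$ of the deformed boson row records exactly $m_i$ arrows entering from below, the factor $h_\lambda$ factors columnwise into the pair $(m_2,m_1)$ appearing in Lemma \ref{lem: two vertex compat} (columns $1$ and $2$, i.e. multiplicities $m_1$ and $m_2$, corresponding to an odd and an even index) times the product over columns $\geq 3$.

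The key step is then to apply Lemma \ref{lem: two vertex compat} to the first two columns. The Lemma is stated for a single two-column block with a boundary vertex on its left, with arbitrary horizontal boundary states $i,j$ on the far left and far right of the block and arbitrary vertical states $n_1,n_2$ on top; crucially, the two sides of the identity involve the \emph{same} top states $n_1,n_2$ and the \emph{same} far-left/far-right horizontal states, so the identity can be used inside a larger diagram, leaving everything to the right of the block untouched. Thus summing the Lemma against $\prod_{i\geq 3}h_{m_i(\lambda)}(\dots)$ times the fixed partition function of the rest of the row (with its left horizontal edge glued to the right edge $j'$ of the two-column block, and $j'$ summed over) converts the black two-column block with a boundary vertex on its left into a red two-column block with a boundary vertex on its left, at the cost of also re-summing the vertical states on top of those two columns. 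This is exactly the statement that the boundary vertex has moved two columns to the right, now sitting between column $2$ and column $3$, with columns $1,2$ now red; and one repeats. Since the sequence $(n_i)$ is finitely supported, after finitely many applications all columns to the left of the last occupied one are red, the boundary vertex has reached the far right, and — again using that a black (or red) column with no arrows and no boundary feature through which nothing of substance passes contributes trivially (all such weights are $1$, as noted in the excerpt for infinite rows) — the remaining all-empty black columns to the right of the boundary vertex can be absorbed, giving the right-hand side of the Proposition.

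The main obstacle, and the point needing the most care, is bookkeeping the horizontal edge states and the direction of the ``extra'' arrow as the boundary vertex migrates. In Lemma \ref{lem: two vertex compat} the boundary vertex sits to the left of the two-column block on the black side and to the right of it on the red side; when iterating, one must check that the output horizontal state of the just-processed red block matches the required input of the next black block, so that the diagrams genuinely concatenate — in particular the incoming arrow drawn on the far left of the row in the Proposition is consistent with the $i=1$ boundary condition, and the arrow ``$\to$'' on edge $(6,0)$ in the right-hand diagram of the Proposition is precisely the horizontal state that gets fed into the boundary vertex after all columns have been converted. None of this is deep, but it is the kind of thing where a sign or an off-by-one in which column is ``odd'' versus ``even'' would break the factorization of $h_\lambda$; so I would set up the columnwise factorization of $h_\lambda(q,t,\nu)$ explicitly first, matching the even/odd index convention to the two factors $h_{m_2}(-t,q)$ and $(-t\nu)^{m_1}h_{m_1}(-\tfrac{1}{\nu^2 t},q)$ appearing in Lemma \ref{lem: two vertex compat}, and only then run the induction.
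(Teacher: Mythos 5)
Your high-level plan — iterate Lemma \ref{lem: two vertex compat} to walk the boundary vertex across the row two columns at a time — is the paper's approach, and you are right that the crux is gluing the Lemma into a larger diagram and keeping track of the columnwise factorization of $h_\lambda(q,t,\nu)$ together with the even/odd parity of the factors. However, there are two genuine gaps.

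First, the iteration cannot ``terminate after finitely many applications because $(n_i)$ is finitely supported.'' In the infinite deformed-boson row the boundary vertex sits at the far left, which is \emph{past all columns} $m_i$ with $i$ arbitrarily large; the row extends leftward and the indices increase to the left (so the first two columns the vertex passes are the highest-indexed ones, not $m_1,m_2$ — your description of the first step has the indexing reversed, which also inverts the even/odd assignment relative to the Lemma, where the leftmost column of the block carries the even factor $h_{m_2}(-t,q)$). Moreover the sum over partitions $\lambda$ does not have bounded $\lambda_1$, so no finite number of Lemma applications can treat all contributing $\lambda$. The paper resolves this by working with the infinite row as a limit of finite rows: truncate, apply the Lemma $N$ times for the truncated row, and take $N\to\infty$, at which point one must justify the interchange of limits. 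That justification is where the real content lies: one needs that the contribution from the boundary vertex emitting an arrow (so that an arrow enters the black row from the far left) tends to $0$ as the row grows, that the left-hand sum over $\lambda$ converges absolutely because an arrow entering from column $i$ must travel a distance growing with $i$ to reach the first nonzero $n_j$ (geometric decay in $a$), and that the right-hand sum is in fact finite. Your proposal contains none of this.

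Second, the step ``the remaining all-empty black columns to the right of the boundary vertex can be absorbed'' has no counterpart: once the boundary vertex is at the far right there are no columns to its right, and if the iteration is stopped early there remain \emph{black} columns between the boundary vertex and $j$, which is not the right-hand side of the Proposition (which has all columns red). Relatedly, ``induction on the size of the support of $(n_i)$'' is not the right induction variable — the $(n_i)$ are unchanged by each Lemma application, so it doesn't parametrize the iteration. I would recast this as the truncation parameter $N$, and supply the convergence estimates before claiming equality of the two infinite-row partition functions.
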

\begin{proof}
The sum over $\lambda$ is equivalent to a sum over finitely supported sequences $m_i$ of non-negative integers. We then apply Lemma \ref{lem: two vertex compat}, where we note that after restricting to $l(\lambda)\leq 2N$, we require only finitely many applications, and taking $N\to\infty$ gives the desired equality, as the term on the left hand side with an arrow entering the row converges to $0$, the left hand side converges since any arrow entering from the bottom needs to travel to the first non-zero $n_i$, giving geometric decay, and the right hand side is a finite sum.
\end{proof}
\begin{remark}
Note that Proposition \ref{prop: two vertex compat} only holds if an arrow is entering from the left. At the finite level, it does not matter whether an arrow enters or not, but there are issues with taking a limit of the right hand side since without an arrow to block potential arrows from entering at the bottom, arrows will want to enter from the bottom at the left hand side. Heuristically, one would want to allow arrows to enter from the bottom infinitely far to the left on the right hand side in order to make sense of this case.
\end{remark}

\subsection{Proof of Theorem \ref{thm: general 6vm HL distr equality}}
With the tools developed in the previous section, the proof of Theorem \ref{thm: general 6vm HL distr equality} follows the proof in \cite{BBCW18}, using a Yang--Baxter graphical argument.

\begin{proof}[Proof of Theorem \ref{thm: general 6vm HL distr equality}]
We have by Lemma \ref{lem: boson HL} that $\mathbb{HL}_a^{(q,t,\nu)}([\vec{\lambda}]=s)$ is equal to
\begin{equation*}
\frac{1}{\Pi(a;q,t,\nu)}\sum_{\lambda}h_\lambda(q,t,\nu)\begin{tikzpicture}[scale=0.8,baseline=(current bounding box.center),>=stealth]
\foreach\x in {0,...,6}{
\draw[lgray,line width=10pt] (\x,0) -- (\x,7);
}
\foreach\y in {1,...,6}{
\draw[lgray,thick] (-1,\y) -- (7,\y);
}
\draw[ultra thick,->] (6,1) -- (7,1); \draw[ultra thick,->] (6,2) -- (7,2);
\draw[ultra thick,->] (6,4) -- (7,4); \draw[ultra thick,->] (6,6) -- (7,6);
\node[right] at (7,1) {$s_n$}; \node[right] at (7,6) {$s_1$};
\node[left] at (-1,6) {$a_1$};
\node[left] at (-1,3.5) {$\vdots$};
\node[left] at (8,3.5) {$\vdots$};
\node[left] at (-1,1) {$a_n$};
\node[below] at (6,0) {\tiny $m_1(\lambda)$};
\node[below] at (5,0) {\tiny $m_2(\lambda)$};
\node[below] at (4,0) {\tiny $m_3(\lambda)$};
\node[below] at (2,0) {$\cdots$};
\node[above] at (6,7) {\tiny $0$};
\node[above] at (5,7) {\tiny $0$};
\node[above] at (4,7) {\tiny $0$};
\node[above] at (2,7) {$\cdots$};
\end{tikzpicture}
\end{equation*}
where the boundary conditions on the left are empty (the $a_i$ indicate the row rapidities) and the boundary conditions on the right are indicated by the $s_i$. We can introduce a boundary vertex at the left on the bottom row, at the cost of a $\frac{1-a_n^2}{(1-a_n\nu t)(1+a_n/\nu)}$ factor (which will cancel with a factor in $\Pi(a;q,t,\nu)$), since if an arrow enters the row the weight will be $0$ so only one outcome is possible. We then use Lemma \ref{prop: two vertex compat} to move the boundary vertex to the right, resulting in the expression
\begin{equation*}
    \begin{array}{l}
    \frac{1-a_n^2}{(1-a_n\nu t)(1+a_n/\nu)}
    \\
    \\\qquad\times \frac{1}{\Pi(a;q,t,\nu)}\sum_{\lambda}h_\lambda(q,t,\nu)
    \end{array}
    \begin{tikzpicture}[scale=0.8,baseline=(current bounding box.center),>=stealth]
\foreach\x in {0,...,6}{
\draw[lgray,line width=10pt] (\x,1) -- (\x,7);
}
\foreach\y in {2,...,6}{
\draw[lgray,thick] (-1,\y) -- (7,\y);
}
\foreach\x in {0,...,6}{
\draw[lred,line width=10pt] (\x,-1) -- (\x,1);
}
\foreach\y in {0}{
\draw[lred,thick] (-1,\y) -- (8,\y);
}
\node at (7,0) {$\bullet$};
\draw[ultra thick,->] (-1,0) -- (0,0);
\draw[ultra thick,->] (7,0) -- (8,0); \draw[ultra thick,->] (6,2) -- (7,2);
\draw[ultra thick,->] (6,4) -- (7,4); \draw[ultra thick,->] (6,6) -- (7,6);
\node[right] at (7,2) {$s_{n-1}$};
\node[right] at (8,0) {$s_n$}; \node[right] at (7,6) {$s_1$};
\node[left] at (-1,6) {$a_1$};
\node[left] at (-1,3.5) {$\vdots$};
\node[left] at (8,3.5) {$\vdots$};
\node[left] at (-1,2) {$a_{n-1}$};
\node[left] at (-1,0) {$a_n$};
\node[below] at (6,-1) {\tiny $m_1(\lambda)$};
\node[below] at (5,-1) {\tiny $m_2(\lambda)$};
\node[below] at (4,-1) {\tiny $m_3(\lambda)$};
\node[below] at (2,-1) {$\cdots$};
\node[above] at (6,7) {\tiny $0$};
\node[above] at (5,7) {\tiny $0$};
\node[above] at (4,7) {\tiny $0$};
\node[above] at (2,7) {$\cdots$};
\end{tikzpicture}
\end{equation*}

Next, we use Proposition \ref{prop: YB boson} to swap the bottom row all the way to the top, at the cost of a $\prod _i\frac{1-a_ia_n}{1-qa_ia_n}$ factor, coming from $\Pi(a;q,t,\nu)$, resulting in the expression
\begin{equation*}
    \begin{array}{l}
    \frac{1-a_n^2}{(1-a_n\nu t)(1+a_n/\nu)}\prod_{i<n}\frac{1-a_ia_n}{1-qa_ia_n}
    \\
    \\\qquad\times \frac{1}{\Pi(a;q,t,\nu)}\sum_{\lambda}h_\lambda(q,t,\nu)
    \end{array}
    \begin{tikzpicture}[scale=0.8,baseline=(current bounding box.center),>=stealth]
\foreach\x in {0,...,6}{
\draw[lgray,line width=10pt] (\x,-1) -- (\x,5);
}
\foreach\y in {0,...,4}{
\draw[lgray,thick] (-1,\y) -- (7,\y);
}
\foreach\x in {0,...,6}{
\draw[lred,line width=10pt] (\x,5) -- (\x,7);
}
\foreach\y in {6}{
\draw[lred,thick] (-1,\y) -- (7,\y);
}
\draw[thick, dotted] (7,6) -- (10.5,2.5);
\node at (10.5,2.5) {$\bullet$}; \draw[ultra thick,->] (10.5,2.5) -- (11,3);
\draw[thick, dotted] (7,4) -- (8.5,5.5) node[above right] {$s_1$};
\draw[thick, dotted] (7,3) -- (9,5);
\draw[thick, dotted] (7,2) -- (9.5,4.5);
\draw[thick, dotted] (7,1) -- (10,4);
\draw[thick, dotted] (7,0) -- (10.5,3.5) node[above right] {$s_{n-1}$};
\draw[thick, dotted] (10.5,2.5) -- (11,3) node[above right] {$s_n$};
\draw[ultra thick,->] (8,5) -- (8.5,5.5); \draw[ultra thick,->] (9,4) -- (9.5,4.5);
\draw[ultra thick,->] (10,3) -- (10.5,3.5);
\draw[ultra thick,->] (-1,6) -- (0,6);
\node[left] at (-1,4) {$a_1$};
\node[left] at (-1,1.5) {$\vdots$};
\node[left] at (-1,0) {$a_{n-1}$};
\node[left] at (-1,6) {$a_n$};
\node[below] at (6,-1) {\tiny $m_1(\lambda)$};
\node[below] at (5,-1) {\tiny $m_2(\lambda)$};
\node[below] at (4,-1) {\tiny $m_3(\lambda)$};
\node[below] at (2,-1) {$\cdots$};
\node[above] at (6,7) {\tiny $0$};
\node[above] at (5,7) {\tiny $0$};
\node[above] at (4,7) {\tiny $0$};
\node[above] at (2,7) {$\cdots$};
\end{tikzpicture}
\end{equation*}

Repeating this process, we obtain
\begin{equation*}
    \sum_\lambda h_\lambda(q,t,\nu)\begin{tikzpicture}[scale=0.8,baseline=(current bounding box.center),>=stealth]
\foreach\x in {0,...,6}{
\draw[lred,line width=10pt] (\x,0) -- (\x,7);
}
\foreach\y in {1,...,6}{
\draw[lred,thick] (-1,\y) -- (7,\y);
\draw[ultra thick,->] (-1,\y) -- (0,\y);
}
\draw[thick, dotted] (7,6) -- (12.5,0.5); \node at (12.5,0.5) {$\bullet$};
\draw[thick, dotted] (12.5,0.5) -- (13,1) node[above right] {$s_n$}; \draw[ultra thick,->] (12.5,0.5) -- (13,1);
\draw[thick, dotted] (7,5) -- (11.5,0.5); \node at (11.5,0.5) {$\bullet$};
\draw[thick, dotted] (11.5,0.5) -- (12.5,1.5); \draw[ultra thick,->] (12,1) -- (12.5,1.5);
\draw[thick, dotted] (7,4) -- (10.5,0.5); \node at (10.5,0.5) {$\bullet$};
\draw[thick, dotted] (10.5,0.5) -- (12,2);
\draw[thick, dotted] (7,3) -- (9.5,0.5); \node at (9.5,0.5) {$\bullet$};
\draw[thick, dotted] (9.5,0.5) -- (11.5,2.5); \draw[ultra thick,->] (11,2) -- (11.5,2.5);
\draw[thick, dotted] (7,2) -- (8.5,0.5); \node at (8.5,0.5) {$\bullet$};
\draw[thick, dotted] (8.5,0.5) -- (11,3);
\draw[thick, dotted] (7,1) -- (7.5,0.5); \node at (7.5,0.5) {$\bullet$};
\draw[thick, dotted] (7.5,0.5) -- (10.5,3.5) node[above right] {$s_1$}; \draw[ultra thick,->] (10,3) -- (10.5,3.5);
\node[left] at (-1,6) {$a_n$};
\node[left] at (-1,3.5) {$\vdots$};
\node[left] at (-1,1) {$a_1$};
\node[below] at (6,0) {\tiny $m_1(\lambda)$};
\node[below] at (5,0) {\tiny $m_2(\lambda)$};
\node[below] at (4,0) {\tiny $m_3(\lambda)$};
\node[below] at (2,0) {$\cdots$};
\node[above] at (6,7) {\tiny $0$};
\node[above] at (5,7) {\tiny $0$};
\node[above] at (4,7) {\tiny $0$};
\node[above] at (2,7) {$\cdots$};
\end{tikzpicture}
\end{equation*}
where we note that the resulting constant after performing all these operations exactly cancels the $\Pi(a;q,t,\nu)$ factor.

Since the deformed boson portion is frozen as all arrows entering from the left must continue rightwards since no arrows exit from the top, the only non-zero summand is $\lambda=\emptyset$, for which the weight of the deformed boson portion is $1$. Moreover, the remaining portion is exactly the half space six vertex model where the boundary condition has arrows entering from the left, and so is equal to $\PP(S=(s_1,\dotsc,s_n))$.
\end{proof}

\section{An identity of symmetric functions}
\label{sec: identity}
The goal of this section is to prove the following symmetric function identity. This will be used in the following section to relate the half space Hall--Littlewood measure to the free boundary Schur measure. Recall that $o(\lambda)$ denotes the number of odd parts in $\lambda$.

\begin{theorem}
\label{thm: general identity}
We have as an identity in the ring of symmetric functions,
\begin{equation*}
    \sum_{l=0}^k\frac{q^lh_l(z^2/\sqrt{q};q)}{(q;q)_l}\sum_{\lambda: \lambda_1=k-l}h'_\lambda(q,z,w)\widehat{Q}_\lambda(x;q)=\sum_{\lambda,\rho:\lambda_1=k}z^{o(\lambda')+o(\rho')}w^{o(\lambda')-o(\rho')}q^{|\rho|/2}s_{\lambda/\rho}(x),
\end{equation*}
where
\begin{equation*}
    h_\lambda'(q,z,w)=\prod_{i \text{ even}}h_{\lambda_i-\lambda_{i-1}}(\sqrt{q}z^2)\prod_{i\text{ odd}}(zw)^{\lambda_i-\lambda_{i-1}}h_{\lambda_i-\lambda_{i-1}}(\sqrt{q} w^{-2}).
\end{equation*}
\end{theorem}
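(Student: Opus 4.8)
The plan is to prove Theorem \ref{thm: general identity} by reducing it to a known identity of \cite{IMS21} via a careful comparison of specializations. The right-hand side is, up to the extra bookkeeping parameters $z,w$ tracking odd parts, precisely the partition function of a free boundary Schur process; the left-hand side is the analogous quantity for the $q$-Whittaker (equivalently, Hall--Littlewood after conjugation) side. The bijection of Imamura--Mucciconi--Sasamoto that underlies Theorem \ref{thm: HL FBS same} provides a weight-preserving correspondence between the combinatorial objects indexing each side; my first step would be to recall that bijection precisely and to extract from it the identity of symmetric functions it proves, which should be the special case $z = w = 1$ (or some other symmetric specialization) of what is claimed here. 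Then the task is to insert the parameters $z$ and $w$ and check they are tracked correctly on both sides.

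\medskip

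The key steps, in order: (1) Recall the $q$-Whittaker/Hall--Littlewood branching rule and the definition of $h'_\lambda(q,z,w)$, noting (as in Section \ref{sec: HL meas}) that $h'_\lambda$ factors over the column multiplicities $\lambda_i - \lambda_{i-1}$ of the conjugate, with even columns weighted by a Rogers--Szeg\"o polynomial in $\sqrt{q}z^2$ and odd columns by $(zw)^{\lambda_i - \lambda_{i-1}} h_{\cdot}(\sqrt{q}w^{-2})$. (2) On the right, expand $s_{\lambda/\rho}$ combinatorially (via semistandard tableaux, or via the Pieri/branching rule), and observe that $o(\lambda') + o(\rho')$ and $o(\lambda') - o(\rho')$ count certain odd columns, so that $z^{o(\lambda')+o(\rho')} w^{o(\lambda')-o(\rho')}$ reorganizes into a product over columns of local weights $z$ or $w$ exactly matching the column-local structure of $h'_\lambda$. (3) Apply the IMS bijection of \cite{IMS21}, which matches the data $(l, \text{Rogers--Szeg\"o index } h_l(z^2/\sqrt q; q))$ and $\lambda$ with $\lambda_1 = k - l$ on the left to the pair $(\lambda, \rho)$ with $\lambda_1 = k$ on the right, in a way that preserves the $q^{|\rho|/2}$ (equivalently $q^l \cdot q^{(\text{rest})}$) weighting and the $q$-Whittaker variables $x$. (4) Verify the boundary/normalization constant $k$ (the pinned first part $\lambda_1$) is respected on both sides, and that the extra sum over $l$ on the left with weight $q^l h_l(z^2/\sqrt q;q)/(q;q)_l$ corresponds exactly to the "free" part of the free boundary Schur process (the degrees of freedom in $\rho$ beyond what is constrained by $\lambda_1 = k$). (5) Conclude equality of symmetric functions since both sides, being polynomial identities in the $x$-variables with matching coefficients, agree.

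\medskip

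I expect the main obstacle to be step (2)–(3): getting the two parameter-tracking conventions to line up. The function $h'_\lambda(q,z,w)$ attaches weights to \emph{column-length differences} $\lambda'_j$ broken up by parity of the row index, whereas the right side attaches $z,w$ to \emph{odd parts} of $\lambda'$ and $\rho'$. Reconciling "parity of the index $i$ in $\lambda_i - \lambda_{i-1}$" with "oddness of the part" requires care about whether $\lambda_0 = 0$ contributes, about empty columns, and about how the telescoping $\sum_i (\lambda_i - \lambda_{i-1})$ interacts with the odd/even split — this is exactly the kind of place where an off-by-one in the indexing would break the identity, and where the cited bijection of \cite{IMS21} does the real work. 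A secondary subtlety is the appearance of $z^2/\sqrt q$ versus $\sqrt q z^2$ and $\sqrt q w^{-2}$: the square roots of $q$ and the inversions $w \mapsto w^{-1}$ reflect the two boundary parameters and the $\nu \mapsto (-\nu t)^{-1}$ symmetry noted earlier, so I would double-check these by testing the identity at small $k$ (say $k = 1$ and $k = 2$) symbolically before trusting the general argument. Once the dictionary between the column-weight data and the odd-part data is pinned down, the remainder is a direct translation of the IMS bijection into the language of symmetric functions, with the sum over $l$ and the Rogers--Szeg\"o factor $h_l(z^2/\sqrt q;q)$ accounting for the free boundary.
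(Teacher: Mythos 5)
Your high-level strategy is the one the paper uses: expand the right-hand side over semistandard tableaux, pass through the Sagan--Stanley skew RSK correspondence and then the Imamura--Mucciconi--Sasamoto bijection $\widetilde{\Upsilon}$, and read off the Rogers--Szeg\"o and $q$-Whittaker factors on the left via Lemmas \ref{lem: rogers-szego expansion} and \ref{lem: q-whit expansion}. You also correctly flag where the difficulty lives (reconciling the column-parity weights $h'_\lambda$ with the odd-part statistics $o(\lambda'),o(\rho')$), and you correctly guess that the known special case from \cite{IMS21} is recovered by a degenerate choice of the extra parameter — though for the record it is $w=1$, not $z=w=1$.

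The gap is that you defer the hard part to the bijection itself: you say the cited bijection ``does the real work'' of tracking $z$ and $w$, but the bijection as proved in \cite{IMS21} does \emph{not} track the statistic needed for the new parameter $w$. Tracking $z$ amounts to the known identities $\fix(\overline\pi)+2o(\nu')=o(\lambda')+o(\rho')$ (Sagan--Stanley, Corollary 4.6 of \cite{SS90}) and $\fix(\overline\pi)=o(\kappa)+o(\kappa+\mu')$ (\cite{IMS21}, Lemma 10.5), which together give the $w=1$ case. Tracking $w$ requires introducing a new \emph{signed} fixed-point statistic $\fix^-(\overline\pi)=\sum_i(-1)^i\fix(w_i(\overline\pi))$ and proving two genuinely new equalities that mirror the unsigned ones: $\fix^-(\overline\pi)=o(\lambda')-o(\rho')$ for the skew RSK map (second half of Lemma \ref{lem: signed SS}), and $\fix^-(\overline\pi)=-o(\kappa)+o(\kappa+\mu')$ for $\widetilde\Upsilon$ (second half of Lemma \ref{lem: signed fix}). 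Proving these uses the explicit asymptotic description of the skew RSK dynamics (Lemma \ref{lem: skew rsk asymp}), and the parity of the dynamics time $t$ enters in an essential, not-obviously-bookkeeping way. Without supplying and proving these signed analogues, your step (3) cannot be carried out — the existing bijection simply does not furnish the data to track $o(\lambda')-o(\rho')$. Once that is added, the remainder of your outline, including the parity decomposition of $\kappa$ and the match with $h'_\lambda(q,z,w)$, goes through exactly as you sketch.
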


The proof of Theorem \ref{thm: general identity} follows that of Theorem 10.12 in \cite{IMS21}, who proved the $w=1$ case. We now introduce some of the results from \cite{IMS21} needed for the proof.

\subsection{Sagan--Stanley skew RSK correspondence}
A \emph{weighted biword} is a triple $\overline{\pi}=(\pi^{(1)},\pi^{(2)},\pi^{(3)})$ of words of the same (finite) length (with alphabets $\N$), such that the words are lexicographically ordered, with the reverse order for $\pi^{(3)}$ (i.e. $\pi^{(1)}_i\leq \pi^{(1)}_{i+1}$, and if they are equal, $\pi^{(2)}_i\leq \pi^{(2)}_{i+1}$, and if they are also equal, then $\pi^{(3)}_i\geq \pi^{(3)}_{i+1}$). We refer to $\pi^{(3)}_i$ as the \emph{weight} of that position. We let $\overline{\mathbb{A}}_n^+$ denote the set of weighted biwords where $\pi^{(1)}$ and $\pi^{(2)}$ use the alphabet $\{1,\dotsc, n\}$, and $\pi^{(3)}_i\geq 0$ for all $i$.

A \emph{biword} is simply a weighted biword where all positions have weight $0$, in which case we can forget about the weights. We will let $\pi$ denote the corresponding biword obtained by forgetting $\pi^{(3)}$. 

We define the transpose $\overline{\pi}^T$ by swapping $\pi^{(1)}$ and $\pi^{(2)}$, and sorting the entries of all three words to satisfy the above constraints. We say that $\overline{\pi}$ is \emph{symmetric} if $\overline{\pi}^T=\overline{\pi}$, and analogously for unweighted biwords. 

We let $w_i(\overline{\pi})$ denote the biword given by taking the subword of $\pi$ positions of weight $i$. A fixed point of $\overline{\pi}$ is a position $i$ such that $\pi^{(1)}_i=\pi^{(2)}_i$, and we let $\fix(\overline{\pi})$ denote the number of fixed points, and $\fix^-(\overline{\pi})=\sum_{i=0}^m (-1)^{i} \fix(w_i(\overline{\pi}))$ denote the signed count, where $m$ is the maximal weight appearing in $\overline{\pi}$. We let $\wt(\overline{\pi})=\sum_i \pi^{(3)}_i$ denote the sum of the weights in $\overline{\pi}$.

The \emph{Sagan--Stanley skew RSK correspondence}, introduced in \cite{SS90}, is a bijection between pairs $(\overline{\pi},\nu)$ of a symmetric weighted biword and a partition $\nu$, and a tableaux $P$. We refer the reader to \cite{SS90} for its definition and more details, and here we simply summarize its key properties that we will require. 

It is obtained via iterating an insertion operation on biwords, once for each weight starting from the maximal weight appearing in $\overline{\pi}$, and ending at $0$. This insertion operation takes a symmetric biword $\pi=(\pi^{(1)},\pi^{(2)})$ and a skew tableau $P$ of shape $\lambda/\rho$, and outputs a skew tableau $Q$ of shape $\alpha/\lambda$, where the numbers appearing in $Q$ are the union of the numbers appearing in $P$ and $\pi^{(1)}$ (and by symmetry also $\pi^{(2)}$).  The procedure begins with the empty tableaux on the shape $\nu/\nu$. Note that even if the biword is empty, this insertion operation results in a different output, unlike the usual RSK insertion operation. We will label the sequence of shapes for the tableau which appear by $\lambda^{(i)}/\lambda^{(i-1)}$, starting from $\lambda^{(-1)}/\lambda^{(-2)}=\nu/\nu$ and ending at $\lambda^{(m)}/\lambda^{(m-1)}$, which is the shape of the final tableaux after all insertions are completed.

\begin{lemma}
\label{lem: signed SS}
If the Stanley--Sagan correspondence sends $(\overline{\pi},\nu)$ to $T$, where $T$ has shape $\lambda/\rho$ and $\pi$ is a weighted biword with maximal weight $m$, then
\begin{equation*}
\begin{split}
    \fix(\overline{\pi})+2o(\nu')&=o(\lambda')+o(\rho'),
    \\\fix^-(\overline{\pi})&=o(\lambda')-o(\rho').
\end{split}
\end{equation*}
\end{lemma}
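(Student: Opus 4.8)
The plan is to reduce Lemma~\ref{lem: signed SS} to the corresponding statement for the ordinary (unweighted) symmetric RSK correspondence, applied weight-layer by weight-layer. Recall that the Sagan--Stanley procedure processes the weighted biword $\overline{\pi}$ one weight at a time, from the maximal weight $m$ down to $0$, and at each step $i$ it performs an insertion of the unweighted symmetric biword $w_i(\overline{\pi})$ into the current skew tableau, producing the next skew tableau in the chain with shapes $\lambda^{(-1)}/\lambda^{(-2)}=\nu/\nu,\ \lambda^{(0)}/\lambda^{(-1)},\ \dotsc,\ \lambda^{(m)}/\lambda^{(m-1)}=\lambda/\rho$. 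So the two identities should follow by telescoping once I know how a single insertion step changes the quantity $o(\cdot')$ of the two boundary shapes in terms of $\fix$ of the biword being inserted.

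First I would establish the single-step statement: if inserting the symmetric unweighted biword $\sigma = w_i(\overline\pi)$ into a skew tableau of shape $\lambda^{(i-1)}/\lambda^{(i-2)}$ produces a skew tableau of shape $\lambda^{(i)}/\lambda^{(i-1)}$, then
\begin{equation*}
\fix(\sigma) = o\bigl((\lambda^{(i)})'\bigr) - o\bigl((\lambda^{(i-2)})'\bigr).
\end{equation*}
This is essentially the content of the symmetry/fixed-point property of RSK in the skew setting as developed in \cite{SS90}: the number of fixed points of a symmetric biword equals the number of columns of odd length in the associated ``P-tableau increment'', which translates into the change in the number of odd columns of the outer/inner shape pair. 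I would extract this from the relevant lemmas in \cite{SS90} (the skew analogue of the classical Schützenberger result that for a symmetric permutation, $\fix$ equals the number of odd columns of the RSK shape), being careful that the insertion here grows the shape outward at the outer boundary by $\sigma$ while the inner boundary is inherited from the previous step. The parity bookkeeping is the place I expect the main friction: one has to match ``odd columns'' versus ``odd rows'' (note the lemma is stated with conjugate partitions $\lambda'$, $\rho'$), and track which of the two boundary shapes $\lambda^{(i-1)}$, $\lambda^{(i-2)}$ plays the role of inner shape at each stage, since the chain overlaps itself.

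Granting the single-step identity, the telescoping is mechanical. Summing over $i$ from $m$ down to $0$ with alternating signs gives, for the signed count,
\begin{equation*}
\fix^-(\overline\pi) = \sum_{i=0}^m (-1)^i\fix(w_i(\overline\pi)) = \sum_{i=0}^m (-1)^i\Bigl(o\bigl((\lambda^{(i)})'\bigr) - o\bigl((\lambda^{(i-2)})'\bigr)\Bigr),
\end{equation*}
and the right side collapses — because consecutive terms share the shape two steps back with opposite sign — to $o\bigl((\lambda^{(m)})'\bigr) - o\bigl((\lambda^{(-1)})'\bigr) + (\text{boundary correction from } \lambda^{(-2)})$; since $\lambda^{(-1)}=\lambda^{(-2)}=\nu$ and $\lambda^{(m)}/\lambda^{(m-1)}=\lambda/\rho$ one checks the telescoping leaves exactly $o(\lambda')-o(\rho')$, giving the second formula. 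For the unsigned count, summing without signs gives $\fix(\overline\pi)=\sum_i \fix(w_i(\overline\pi))$, and the same telescoping with all $+$ signs yields $o(\lambda')+o(\rho') - 2o(\nu')$ because now the two copies of $o(\nu')$ at the bottom of the chain are subtracted rather than cancelling; rearranging gives $\fix(\overline\pi)+2o(\nu')=o(\lambda')+o(\rho')$, the first formula. I would present the single-step lemma and its proof (citing \cite{SS90} for the underlying RSK symmetry fact) and then do the two telescoping computations explicitly, since keeping the index shifts and the $\nu$ boundary terms straight is the only subtle point.
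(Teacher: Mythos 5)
Your proposal follows exactly the route the paper takes: identify the single-step fixed-point identity for one skew RSK insertion (this is Corollary 3.6 of \cite{SS90}, which says $\fix(\pi) + o(\rho') = o(\lambda')$ when inserting a symmetric biword $\pi$ into shape $\alpha/\rho$ and producing shape $\lambda/\alpha$) and then telescope over the $m+1$ insertion steps of the Sagan--Stanley procedure, using $\lambda^{(-1)}=\lambda^{(-2)}=\nu$. The paper actually cites Corollary 4.6 of \cite{SS90} directly for the unsigned identity and only telescopes for the signed one; deriving both by telescoping, as you do, is equally fine.

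However, there is one concrete indexing error you need to repair, and it is precisely the parity-bookkeeping trap you flagged but did not fully resolve. The Sagan--Stanley procedure inserts the weight layers in \emph{decreasing} order, from $w_m(\overline\pi)$ first down to $w_0(\overline\pi)$ last; so the step producing shapes $\lambda^{(i)}/\lambda^{(i-1)}$ from $\lambda^{(i-1)}/\lambda^{(i-2)}$ inserts $w_{m-i}(\overline\pi)$, not $w_i(\overline\pi)$. Your single-step claim as written, $\fix(w_i) = o\bigl((\lambda^{(i)})'\bigr) - o\bigl((\lambda^{(i-2)})'\bigr)$, mislabels the biword, and substituting it into the definition of $\fix^-$ gives
\begin{equation*}
\sum_{i=0}^m(-1)^i\Bigl(o\bigl((\lambda^{(i)})'\bigr)-o\bigl((\lambda^{(i-2)})'\bigr)\Bigr)=(-1)^m\bigl(o(\lambda')-o(\rho')\bigr),
\end{equation*}
which is off by a sign whenever $m$ is odd. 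The repair is to use the correctly indexed identity $\fix(w_{m-i}(\overline\pi)) = o\bigl((\lambda^{(i)})'\bigr) - o\bigl((\lambda^{(i-2)})'\bigr)$, multiply by $(-1)^{m-i}$, and sum over $i$: the left side becomes $\fix^-(\overline\pi)$ after the change of index $j=m-i$, and the right side telescopes cleanly to $o(\lambda')-o(\rho')$. The unsigned identity is insensitive to this reindexing because $\sum_i\fix(w_i)$ is symmetric under $i\mapsto m-i$, which is why your first computation comes out right despite the mislabel.
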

\begin{proof}
The first equality is given by Corollary 4.6 of \cite{SS90}, and the proof is similar to that of the second equality given below.

From the definition of the Sagan--Stanley correspondence, we have that $S$ is obtained by starting from the empty diagram $\nu$, and repeatedly applying the basic skew RSK insertion on the subwords of $\pi$ by weight. The basic skew RSK insertion is a correspondence $(\pi,S)\Longleftrightarrow T$ where $S$ has shape $\alpha/\rho$ and $T$ has shape $\lambda/\alpha$. This satisfies $\fix(\pi)+o(\rho')=o(\lambda')$ (see Corollary 3.6 of \cite{SS90}). In our situation, if $\lambda^{(i)}$ are the outer shapes under applying RSK insertion, we have that we start with $\alpha=\rho=\nu$, and have $\fix(w_{m-i}(\overline{\pi}))+o((\lambda^{(i-2)})')=o((\lambda^{(i)})')$, where $\lambda^{(-1)}=\lambda^{(-2)}=\nu$. Then summing these with a factor of $(-1)^{m-i}$ gives $\sum_i \fix(w_i(\overline{\pi}))(-1)^{i}=o((\lambda^{(m)})')-o((\lambda^{(m-1)})')$, which is exactly what we want, since the final shape is $\lambda/\rho=\lambda^{(m)}/\lambda^{(m-1)}$.
\end{proof}

\subsection{A bijection of Imamura--Mucciconi--Sasamoto}
We now introduce the iterated skew RSK correspondence introduced in \cite{IMS21} which we need for the proof. Since we will not need the details of the construction, we will treat the bijection as a black box, and simply state the needed properties, all of which are proved in \cite{IMS21}, and we refer the reader there for more details.

A \emph{vertically strict tableaux (VST)} of shape $\mu$ is a filling of $\mu$ which is strictly increasing in the columns, and has no condition on the rows. We denote the set of VSTs of shape $\mu$ by $VST(\mu)$. We will require certain statistics on VSTs called the \emph{intrinsic energy}, denoted $\mathcal{H}(V)$, and the local energies, $\mathcal{H}_i(V)$, for a VST $V$. The exact definitions are unimportant, so we refer the reader to \cite{IMS21} for details. We also require a statistic, $\mu(\overline{\pi})$ called the \emph{Greene invariant}, for which we again refer the reader to \cite{IMS21} for the definition. Before we discuss the bijection itself, we record the following fact about the Sagan--Stanley RSK correspondence. It follows immediately from Theorem 1.2 and Corollary 1.3 of \cite{IMS21}.
\begin{lemma}
\label{lem: greene skew RSK}
If $(\overline{\pi},\nu)$ corresponds to $T$ under the Stanley--Sagan correspondence, where $T$ has shape $\lambda/\rho$, then
\begin{equation*}
    \lambda_1=\nu_1+\mu(\overline{\pi})_1.
\end{equation*}
\end{lemma}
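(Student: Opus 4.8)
The cleanest route is to import the structural results of \cite{IMS21} directly. Recall that the Greene invariant $\mu(\overline{\pi})$ is, by construction, the partition whose partial sums record the maximal total size of families of disjoint weakly increasing subsequences in $\overline{\pi}$ (a weighted, skew analogue of the Greene--Kleitman statistic), and that the Sagan--Stanley correspondence produces $T$ by iterating the basic skew RSK insertion once for each weight value, starting from the empty tableau on the seed shape $\nu/\nu$. The plan is to show that the first row of the outer shape behaves additively: it begins at $\nu_1$ and, after all insertions are performed, equals $\nu_1$ plus the length of a longest weakly increasing subsequence of $\overline{\pi}$, which is exactly $\mu(\overline{\pi})_1$.

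First I would reduce to the non-seeded, non-weighted case. Writing $\overline{\pi}$ as the concatenation, by decreasing weight, of the ordinary biwords $w_m(\overline{\pi}),\dotsc,w_0(\overline{\pi})$, the correspondence applies basic skew RSK insertion of $w_{m-i}(\overline{\pi})$ to the tableau produced at the previous stage, with outer shapes $\lambda^{(-1)}=\lambda^{(-2)}=\nu\subseteq \lambda^{(0)}\subseteq\dotsb\subseteq\lambda^{(m)}=\lambda$, exactly as in the proof of Lemma~\ref{lem: signed SS}. Greene's theorem for ordinary row insertion controls how $\lambda^{(i)}_1$ grows at each step in terms of longest weakly increasing subsequences of the piece $w_{m-i}(\overline{\pi})$ relative to the current first row; the content of Theorem~1.2 and Corollary~1.3 of \cite{IMS21} is precisely that these per-step contributions assemble into the first part of the Greene invariant of the whole weighted biword, so that $\lambda_1-\nu_1=\mu(\overline{\pi})_1$. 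Since the paper already treats the \cite{IMS21} bijection as a black box, I would simply cite these two results and note that the claimed identity is their immediate specialization to the first part of the shape.

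The main obstacle, were one to seek a self-contained proof, is establishing the skew, seeded Greene theorem underlying this: one must verify both that inserting into the empty tableau on $\nu/\nu$ rather than the truly empty tableau merely shifts the first-row count by $\nu_1$ (using that the seed occupies the first $\nu_1$ columns of row one and is never displaced downward), and that the multi-weight iteration does not break the longest-increasing-subsequence interpretation, i.e.\ that a longest weakly increasing subsequence of $\overline{\pi}$ can always be chosen compatibly with the weight ordering. Both points are handled in \cite{IMS21} through the Greene-invariant machinery, so invoking them is by far the most efficient approach, and I would not attempt to reprove them here.
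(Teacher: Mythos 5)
Your proposal ultimately rests on the same citation the paper uses: Lemma \ref{lem: greene skew RSK} is stated in the paper as following immediately from Theorem 1.2 and Corollary 1.3 of \cite{IMS21}, and you invoke exactly these two results. The additional framing you give (decomposing by weight, reducing to the seeded/unseeded question, relating the per-step growth to Greene's theorem) is a reasonable gloss on why the cited results apply, but it does not constitute a different route.
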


Now we turn to the bijection itself. For a partition $\mu$, we define the set
\begin{equation*}
    \mathcal{K}(\mu)=\{(\kappa_1,\dotsc, \kappa_{\mu_1})\mid \kappa_i\geq \kappa_{i+1}\text{ if }\mu'_i=\mu'_{i+1}\}.
\end{equation*}
This should be thought of as a collection of partitions $\kappa^{(i)}$ "hanging off of" $\mu$, in the sense that the length of $\kappa^{(i)}$ is $\mu_i-\mu_{i+1}$. In \cite{IMS21}, a bijection $\widetilde{\Upsilon}$ is given between symmetric weighted biwords with $\mu(\overline{\pi})=\mu$ and pairs $(V,\kappa)$ with $V\in VST(\mu)$ and $\kappa\in \mathcal{K}(\mu)$. First, use the Sagan--Stanley skew RSK correspondence to map $(\overline{\pi},\emptyset)$ to a tableau $P_1$. We then define dynamics by defining $P_{t+1}$ to be obtained from $P_t$ via insertion of an empty word into $P_t$. This means that under the skew RSK correspondence, $P_t$ corresponds to $(\overline{\pi}^{(t)},\emptyset)$, where $\overline{\pi}^{(t)}$ is simply $\overline{\pi}$ with the weights shifted up by $t-1$ (this has the effect of causing $t-1$ weights with no entries starting from $0$, which means we end with $t-1$ many empty insertions). We then obtain $V$ and $\kappa$ via studying the asymptotics of $P_t$ as $t\to\infty$. Their exact definitions are a little complicated, so we refer the reader to \cite{IMS21} and just state the needed results.

The following result is essentially given by Corollary 8.2 of \cite{IMS21}, together with the observation that since the Sagan--Stanley skew RSK correspondence restricts to a bijection between symmetric biwords and a single tableau, the map $\widetilde{\Upsilon}$, which is defined via iterating the Sagan--Stanley correspondence, does as well.

\begin{theorem}[{\hspace{1sp}\cite[Corollary 8.2]{IMS21}}]
\label{thm: IMS bij}
The map $\widetilde{\Upsilon}$ is a bijection between symmetric weighted biwords with weights in $\{0,\dotsc, n\}$ and $\mu(\overline{\pi})=\mu$, and pairs $(V,\kappa)$, where $V\in VST(\mu)$ and $\kappa\in \mathcal{K}(\mu)$, such that
\begin{equation*}
    \wt(\overline{\pi})=2\mathcal{H}(V)+|\kappa|.
\end{equation*}
\end{theorem}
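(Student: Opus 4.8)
The plan is to obtain this statement by restricting the \emph{general} (not necessarily symmetric) iterated skew RSK bijection of \cite{IMS21}, which underlies their Corollary 8.2, to the fixed‑point locus of the transposition involution $\overline{\pi}\mapsto\overline{\pi}^T$. Recall that \cite{IMS21} constructs, for each partition $\mu$, a bijection $\Upsilon$ between weighted biwords $\overline{\pi}$ with Greene invariant $\mu(\overline{\pi})=\mu$ and triples $(P,Q,\kappa)$ with $P,Q\in VST(\mu)$ and $\kappa\in\mathcal{K}(\mu)$, subject to the energy identity $\wt(\overline{\pi})=\mathcal{H}(P)+\mathcal{H}(Q)+|\kappa|$; here $\Upsilon$ is built exactly as in the construction of $\widetilde{\Upsilon}$ recalled above, namely by applying the Sagan--Stanley skew RSK correspondence to $(\overline{\pi},\emptyset)$, iterating the empty‑insertion dynamics $P_t\mapsto P_{t+1}$, and reading off $(P,Q)$ and $\kappa$ from the stabilized pictures. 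The fact that there is a single $\kappa\in\mathcal{K}(\mu)$, rather than one per side, is exactly why the target of $\widetilde{\Upsilon}$ is as clean as stated: $\kappa$ is attached to the common shape $\mu$, not to $P$ or $Q$. The content of the proof is then to show that $\Upsilon$ restricts along this symmetry and that the energy identity degenerates correctly on the symmetric diagonal.

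First I would record that every ingredient of $\Upsilon$ is equivariant for transposition, in the precise sense that $\Upsilon(\overline{\pi}^T)=(Q,P,\kappa)$ whenever $\Upsilon(\overline{\pi})=(P,Q,\kappa)$ (the two VSTs swap and $\kappa$ is unchanged, since $\mu(\overline{\pi}^T)=\mu(\overline{\pi})$). For the Sagan--Stanley correspondence this is the classical statement, recalled in the excerpt and proved in \cite{SS90}, that it restricts to a bijection between symmetric biwords and single skew tableaux — equivalently, that $SS(\overline{\pi}^T,\nu)$ is the transpose of $SS(\overline{\pi},\nu)$. The empty‑insertion dynamics and the iteration $P_t\mapsto P_{t+1}$ are defined entirely through this same correspondence, so they inherit the equivariance step by step, and the extraction of $(P,Q)$ and $\kappa$ is carried out componentwise on the two halves of the picture, hence commutes with the swap. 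The one point that genuinely needs care, and where I expect the main work to lie, is the \emph{stabilization}: unlike the finite manipulations, reading off the limit object from $P_t$ as $t\to\infty$ is a limiting procedure, and one must verify that this limit is natural with respect to transposition — i.e.\ that iterating a correspondence which itself respects symmetry keeps the construction on the symmetric diagonal even in the limit. Concretely I would trace through the stabilization argument of \cite{IMS21} and check that each auxiliary quantity used to define $(P,Q,\kappa)$ from the $P_t$ is transposition‑covariant.

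Granting equivariance, a standard fixed‑point argument finishes the proof: a symmetric weighted biword is precisely a fixed point of $\overline{\pi}\mapsto\overline{\pi}^T$, and since $\Upsilon$ is a bijection intertwining this involution with $(P,Q,\kappa)\mapsto(Q,P,\kappa)$, one has $\overline{\pi}$ symmetric if and only if $P=Q$, say $P=Q=V$. Thus $\Upsilon$ restricts to a bijection between symmetric weighted biwords with $\mu(\overline{\pi})=\mu$ and pairs $(V,\kappa)$ with $V\in VST(\mu)$, $\kappa\in\mathcal{K}(\mu)$, and by construction this restriction is exactly $\widetilde{\Upsilon}$; the bound $\{0,\dots,n\}$ on the weights (and any bound on the VST entries) passes through unchanged. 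Finally, putting $P=Q=V$ in $\wt(\overline{\pi})=\mathcal{H}(P)+\mathcal{H}(Q)+|\kappa|$ gives $\wt(\overline{\pi})=2\mathcal{H}(V)+|\kappa|$, as claimed. If a self‑contained treatment were preferred, the same steps can be run entirely inside the symmetric category — SS skew RSK already sends symmetric biwords to single tableaux, empty insertion preserves this, reversibility of the dynamics gives injectivity and surjectivity, and the energy bookkeeping of \cite{IMS21} applied to the single tableau directly yields $\wt=2\mathcal{H}(V)+|\kappa|$ — but this essentially reproves the relevant portions of \cite{IMS21} with no real savings, so I would present the restriction argument.
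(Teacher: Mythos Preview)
Your proposal is correct and matches the paper's own treatment: the paper does not give a standalone proof but simply cites \cite[Corollary~8.2]{IMS21} together with the observation that, since the Sagan--Stanley skew RSK correspondence restricts to a bijection between symmetric biwords and single tableaux, the iterated map $\widetilde{\Upsilon}$ inherits this restriction. Your argument via equivariance of the general bijection $\Upsilon$ under transposition and passage to fixed points is exactly this observation spelled out in more detail.
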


The following lemma is a restatement of Equations 10.17 and 10.18 in \cite{IMS21}.
\begin{lemma}
\label{lem: skew rsk asymp}
Let $P_t$ denote the skew RSK dynamics defining $\widetilde{\Upsilon}$ with $P_0$ corresponding to $(\overline{\pi},\emptyset)$ under the skew RSK correspondence. Let $\lambda^{(t)}/\rho^{(t)}$ denote the shape of $P_t$. Then for large enough $t$, we have
\begin{equation*}
\begin{split}
    (\lambda^{(t)})'_i&=2\mathcal{H}_i(V)+\kappa_i+t\mu_i',
    \\(\rho^{(t)})'_i&=2\mathcal{H}_i(V)+\kappa_i+(t+1)\mu_i',
\end{split}
\end{equation*}
where $\widetilde{\Upsilon}(\overline{\pi})=(V,\kappa)$.
\end{lemma}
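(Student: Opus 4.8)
As noted after the statement, this lemma is a direct transcription of \cite[Eqs.~(10.17)--(10.18)]{IMS21} into the present notation, so the proof consists of recalling the relevant constructions from \cite{IMS21} and checking that the normalizations agree. The plan is as follows. First, unwind the definition of the dynamics: by construction $P_t$ corresponds under the Sagan--Stanley correspondence to $(\overline{\pi}^{(t)},\emptyset)$, where $\overline{\pi}^{(t)}$ is $\overline{\pi}$ with every weight raised by $t$ (equivalently, $P_{t+1}$ is $P_t$ after one empty skew-RSK insertion). The key input from \cite{IMS21} is the structural theorem on skew RSK dynamics: the evolution $t\mapsto P_t$ is eventually \emph{ballistic}, i.e.\ there is an $N$ (depending on $\overline{\pi}$) such that for $t\geq N$ each conjugate column length $(\lambda^{(t)})'_i$ and $(\rho^{(t)})'_i$ is an affine-linear function of $t$, with common linear rate $\mu'_i$ in column $i$, where $\mu$ is the Greene invariant $\mu(\overline{\pi})$. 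Here one also uses that the dynamics conserves the Greene invariant, so $\mu(\overline{\pi}^{(t)})=\mu$ for all $t$; this is what makes a single partition $\mu$ appear in both displayed formulas, and Lemma~\ref{lem: greene skew RSK} gives the consistent first-row consequence $(\lambda^{(t)})_1=\mu_1$. This pins down the coefficient of $t$ in both equations.

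It remains to identify the constant terms. In \cite{IMS21} the vertically strict tableau $V\in VST(\mu)$ and the tuple $\kappa\in\mathcal{K}(\mu)$ output by $\widetilde{\Upsilon}$ are \emph{defined} from exactly this ballistic regime: $V$ records the stabilized arrangement of entries column by column, and $\kappa_i$, together with the local energy $\mathcal{H}_i(V)$, is extracted as the stabilized constant $(\lambda^{(t)})'_i-t\mu'_i$, with the splitting into $2\mathcal{H}_i(V)+\kappa_i$ dictated by \cite{IMS21}'s definition of the local and intrinsic energy statistics; the factor $2$ and this splitting are the column-wise refinement of the global identity $\wt(\overline{\pi})=2\mathcal{H}(V)+|\kappa|$ recorded in Theorem~\ref{thm: IMS bij}. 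This gives the first equation. The second equation then follows from the first by a single step of the dynamics: in the ballistic regime the inner shape $\rho^{(t)}$ and the outer shape $\lambda^{(t)}$ of $P_t$ are related by one application of the (empty) insertion, whose net effect is a translation by the velocity $\mu'$ in each column, so the two affine-linear profiles differ precisely by $\mu'_i$, yielding $(\rho^{(t)})'_i=2\mathcal{H}_i(V)+\kappa_i+(t+1)\mu'_i$.

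The mathematical content all lies in \cite{IMS21}; the only work here, and the main thing to get right, is the bookkeeping: matching \cite{IMS21}'s conventions for the energy functions $\mathcal{H},\mathcal{H}_i$ and for conjugation, fixing the precise meaning of $\overline{\pi}^{(t)}$ (whether weights are shifted by $t$ or by $t-1$, which only changes the threshold ``large enough $t$''), and keeping straight which of $\lambda^{(t)},\rho^{(t)}$ is the inner and which the outer shape of $P_t$, so that the $+\mu'_i$ (rather than $-\mu'_i$) in the second equation comes out with the correct sign. Once this dictionary is fixed, both equations are immediate.
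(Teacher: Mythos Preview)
Your proposal is correct and takes essentially the same approach as the paper: the paper simply records that this lemma ``is a restatement of Equations 10.17 and 10.18 in \cite{IMS21}'' and gives no further proof, so your unpacking of what that citation entails (ballistic stabilization of the skew RSK dynamics, with $V$ and $\kappa$ defined from the asymptotic column profiles) is exactly the intended justification. Your care in flagging the bookkeeping issues---the $t$ versus $t-1$ weight shift and the inner/outer shape labeling governing the sign in the second equation---is appropriate, since those conventions are precisely what one must align with \cite{IMS21} to read off the stated formulas.
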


\begin{lemma}
\label{lem: signed fix}
If $\mu(\overline{\pi})=\mu$ and $\widetilde{\Upsilon}(\overline{\pi})=(V,\kappa)$, we have
\begin{equation*}
\begin{split}
    \fix(\overline{\pi})&=o(\kappa)+o(\kappa+\mu')
    \\\fix^-(\overline{\pi})&=-o(\kappa)+o(\kappa+\mu').
\end{split}
\end{equation*}
\end{lemma}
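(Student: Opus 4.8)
The plan is to extract both fixed-point statistics of $\overline{\pi}$ from the shapes that appear in the skew RSK dynamics $P_t$ defining $\widetilde{\Upsilon}$, by applying Lemma \ref{lem: signed SS} at each stage of the dynamics. Recall that $P_t$ corresponds under the Sagan--Stanley skew RSK correspondence to $(\overline{\pi}^{(t)},\emptyset)$, where $\overline{\pi}^{(t)}$ is $\overline{\pi}$ with every weight raised by $t-1$, and write $\lambda^{(t)}/\rho^{(t)}$ for the shape of $P_t$, so that Lemma \ref{lem: skew rsk asymp} describes $(\lambda^{(t)})'$ and $(\rho^{(t)})'$ once $t$ is large.

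First I would record how $\fix$ and $\fix^-$ transform when every weight is raised by a constant. The multiset $\{(\pi^{(1)}_i,\pi^{(2)}_i)\}$ is unchanged, so $\fix(\overline{\pi}^{(t)})=\fix(\overline{\pi})$; and since $w_i(\overline{\pi}^{(t)})=w_{i-(t-1)}(\overline{\pi})$, grouping the signs yields $\fix^-(\overline{\pi}^{(t)})=(-1)^{t-1}\fix^-(\overline{\pi})$. Feeding $(\overline{\pi}^{(t)},\emptyset)\mapsto P_t$ into Lemma \ref{lem: signed SS} (here $\nu=\emptyset$, so the $o(\nu')$ term drops) then gives, for every $t$, the two relations $\fix(\overline{\pi})=o((\lambda^{(t)})')+o((\rho^{(t)})')$ and $(-1)^{t-1}\fix^-(\overline{\pi})=o((\lambda^{(t)})')-o((\rho^{(t)})')$.

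Finally I would take $t$ large (so Lemma \ref{lem: skew rsk asymp} applies) and of a fixed parity, say odd. Because $(\lambda^{(t)})'_i=2\mathcal{H}_i(V)+\kappa_i+t\mu'_i$ and $(\rho^{(t)})'_i=2\mathcal{H}_i(V)+\kappa_i+(t+1)\mu'_i$ with $2\mathcal{H}_i(V)$ even, the entry $(\lambda^{(t)})'_i$ is odd exactly when $\kappa_i+\mu'_i$ is odd and $(\rho^{(t)})'_i$ is odd exactly when $\kappa_i$ is odd, whence $o((\lambda^{(t)})')=o(\kappa+\mu')$ and $o((\rho^{(t)})')=o(\kappa)$. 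Substituting these (with $(-1)^{t-1}=1$) into the two relations gives precisely $\fix(\overline{\pi})=o(\kappa+\mu')+o(\kappa)$ and $\fix^-(\overline{\pi})=o(\kappa+\mu')-o(\kappa)$.

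The only delicate point is the parity bookkeeping in this last step: the parity of $t$, the sign $(-1)^{t-1}$ from the weight shift, and the parity of $t\mu'_i$ must be kept aligned. A reassuring check is that running the argument with $t$ even simultaneously interchanges $\lambda^{(t)}$ with $\rho^{(t)}$ and flips $(-1)^{t-1}$, producing the same two identities. I would also point out that $o(\kappa)$ and $o(\kappa+\mu')$ in the statement are the counts $\#\{i\le\mu_1:\kappa_i\text{ odd}\}$ and $\#\{i\le\mu_1:\kappa_i+\mu'_i\text{ odd}\}$ --- exactly the quantities produced by Lemma \ref{lem: skew rsk asymp} --- so no further reconciliation of indexing conventions for $\kappa$ is required.
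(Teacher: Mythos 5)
Your proof is correct and follows essentially the same approach as the paper's: iterate the skew RSK dynamics, use that shifting all weights by $t-1$ preserves $\fix$ and flips $\fix^-$ by $(-1)^{t-1}$, apply Lemma~\ref{lem: signed SS} to $(\overline{\pi}^{(t)},\emptyset)$, and then read off the parities from Lemma~\ref{lem: skew rsk asymp}. The only small difference is that the paper cites the first identity from Lemma~10.5 of \cite{IMS21} and runs the argument only for $\fix^-$, whereas you carry both statistics through the same computation, which is a slightly cleaner presentation; the parity bookkeeping you flag as delicate is handled correctly, and matches the paper's treatment of the two parities of $t$.
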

\begin{proof}
The first statement is given by Lemma 10.5 of \cite{IMS21}, and we follow their proof closely for the second statement.

Consider the skew RSK dynamics $(P_t,P_t)$ associated to the initial data $(P,P)$, where $P$ corresponds to $(\overline{\pi},\emptyset)$ under the Sagan--Stanley correspondence, and let $\lambda^{(t)}/\rho^{(t)}$ denote the shape of $P_t$. We let $\overline{\pi}^{(t)}$ correspond to $P_t$ in the same manner. Then $\overline{\pi}^{(t)}$ is given by $\overline{\pi}$ with the weights shifted by $t-1$. Thus, $\fix^-(\overline{\pi}^{(t)})=(-1)^{t-1}\fix^-(\overline{\pi})$. Now by Lemma \ref{lem: signed SS}, this implies that $\fix^-(\overline{\pi})=(-1)^{t-1}(o((\lambda^{(t)})')-o((\rho^{(t)})'))$.

On the other hand, by Lemma \ref{lem: skew rsk asymp}, we have for large enough $t$,
\begin{equation*}
\begin{split}
    (\lambda^{(t)})'_i&=2\mathcal{H}_i(V)+\kappa_i+t\mu_i'
    \\(\rho^{(t)})'_i&=2\mathcal{H}_i(V)+\kappa_i+(t+1)\mu_i'.
\end{split}
\end{equation*}
Thus, when $t$ is even, $o((\lambda^{(t)})')=o(\kappa)$ and $o((\rho^{(t)})')=o(\kappa+\mu')$, and when $t$ is odd, the opposite occurs, and in both cases our desired equality holds.
\end{proof}

\subsection{Proof of Theorem \ref{thm: general identity}}
We require the following expansions. First, recall that $s_{\lambda/\mu}(x_1,\dotsc, x_n)=\sum_{P\in SST(\lambda/\mu,n)}x^P$, where $SST(\lambda/\mu,n)$ denotes the set of \emph{semistandard Young tableau} of shape $\lambda/\mu$, filled with numbers $1,\dotsc, n$, and $x^P$ is the product of a factor of $x_i$ for each occurrence of $i$ in $P$.

We let $\widehat{P}_\mu(x;q)=\prod_{i}(q;q)_{\mu_i-\mu_{i+1}}\widehat{Q}_\mu(x;q)$.
\begin{lemma}[{\hspace{1sp}\cite[Proposition 10.1]{IMS21}}]
\label{lem: q-whit expansion}
We have
\begin{equation*}
    \widehat{P}_\mu(x;q)=\sum_{V\in VST(\mu)}q^{\mathcal{H}(V)}x^V.
\end{equation*}
\end{lemma}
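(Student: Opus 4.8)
The plan is to recognize this as the known combinatorial (one-dimensional configuration sum) formula for $q$-Whittaker polynomials, after first doing some bookkeeping to see what $\widehat{P}_\mu(x;q)$ actually is. Unwinding the defining relation $\widehat{Q}_\lambda(\widehat{x};q)=P_{\lambda'}(x;q)$ together with the Macdonald $\omega$-duality $\omega_{q,t}P_\lambda(\,\cdot\,;q,t)=Q_{\lambda'}(\,\cdot\,;t,q)$ specialized at $t=0$ --- noting that $\omega_{q,0}$ is precisely the automorphism $p_n\mapsto (-1)^{n-1}(1-q^n)p_n$ used in the paper to define the hat --- one gets $\widehat{Q}_\mu(x;q)=b_{\mu'}(q)^{-1}P_\mu(x;q,0)$, where $P_\mu(\,\cdot\,;q,0)$ is the $q$-Whittaker polynomial (the Macdonald polynomial at $t=0$ in $P$-normalization) and $b_{\mu'}(q)=\prod_i (q;q)_{m_i(\mu')}=\prod_i (q;q)_{\mu_i-\mu_{i+1}}$. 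Thus the normalizing factor $\prod_i (q;q)_{\mu_i-\mu_{i+1}}$ in the definition of $\widehat{P}_\mu$ cancels $b_{\mu'}(q)$ exactly, so $\widehat{P}_\mu(x;q)=P_\mu(x;q,0)$. (As a consistency check, for $\mu=(1^r)$ one has $P_{(1^r)}(x;q,0)=e_r(x)$ with no $q$-dependence, matching the single-column VSTs, for which $\mathcal{H}\equiv 0$.) So the statement to prove becomes $P_\mu(x;q,0)=\sum_{V\in VST(\mu)}q^{\mathcal{H}(V)}x^V$.

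I would prove this by induction on the number of variables, using the $q$-Whittaker branching rule
\begin{equation*}
    P_\mu(x_1,\dotsc,x_n;q,0)=\sum_{\nu}\left(\prod_{i\geq 1}\binom{\mu_i-\mu_{i+1}}{\nu_i-\mu_{i+1}}_q\right)x_n^{|\mu|-|\nu|}\,P_\nu(x_1,\dotsc,x_{n-1};q,0),
\end{equation*}
the sum over partitions $\nu$ with $\mu/\nu$ a horizontal strip. By the induction hypothesis it suffices to check that the generating function $\sum_V q^{\mathcal{H}(V)}x^V$ obeys the same recursion, i.e. to exhibit a weight-preserving correspondence between VSTs $V$ of shape $\mu$ with entries $\leq n$ and triples $(\nu,V_0,r)$ --- where $\nu\prec\mu$, $V_0\in VST(\nu)$ has entries $\leq n-1$, and $r$ is an ``interleaving datum'' --- such that $x^V=x^{V_0}x_n^{|\mu|-|\nu|}$, $\mathcal{H}(V)=\mathcal{H}(V_0)+\mathrm{inv}(r)$, and $\sum_r q^{\mathrm{inv}(r)}=\prod_{i\geq 1}\binom{\mu_i-\mu_{i+1}}{\nu_i-\mu_{i+1}}_q$. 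The correspondence is the standard ``largest-letter splitting'': since a VST of shape $\mu$ is exactly an element of the tensor product $B^{\mu'_1,1}\otimes\dotsm\otimes B^{\mu'_{\mu_1},1}$ of single-column Kirillov--Reshetikhin crystals, one applies combinatorial $R$-matrices to move the cells containing $n$ to a canonical position; the ``$n$-free part'' is then a genuine VST $V_0$ of a partition shape $\nu\prec\mu$, and the collection of $R$-matrix rearrangements producing a fixed $(\nu,V_0)$, graded by the change in energy, is counted by the displayed product of Gaussian binomials, precisely because these $q$-binomials arise as the inversion-generating functions of binary interleavings (one checks this on the nose for single-row $\mu$, where $\mathcal{H}$ is just the inversion number of the associated word and $\binom{\mu_1}{k}_q$ appears directly). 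The base case $n\leq 1$ is immediate: $P_\mu(x_1;q,0)=x_1^{|\mu|}$ if $\mu$ has at most one row and is $0$ otherwise, while $VST(\mu)$ on the alphabet $\{1\}$ is a single tableau with $\mathcal{H}=0$ in the first case and empty in the second.

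The main obstacle is exactly the energy bookkeeping in the inductive step: one must control precisely how the intrinsic energy $\mathcal{H}$ --- defined via iterated combinatorial $R$-matrices --- changes under extraction of the largest letter, which is a genuine but established fact of affine crystal theory. Rather than reconstruct it, one can alternatively invoke the corresponding one-dimensional configuration sum theorem (in the circle of results of Nakayashiki--Yamada and its single-column/$q$-Whittaker analogue), which identifies $\sum_{V\in VST(\mu)}q^{\mathcal{H}(V)}x^V$ with $P_\mu(x;q,0)$ directly; in either packaging the substantive content is the identification of the energy statistic with a charge-type (fermionic) statistic. In the present paper, however, none of this needs to be redone: the statement is precisely \cite[Proposition 10.1]{IMS21}, which one simply cites, the discussion above being the reason it holds.
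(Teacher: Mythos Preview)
Your proposal is correct and ultimately takes the same approach as the paper: the paper gives no proof at all, simply citing \cite[Proposition 10.1]{IMS21}, and you arrive at the same conclusion after (correctly) unwinding the normalizations to identify $\widehat{P}_\mu(x;q)$ with the Macdonald $P$-polynomial $P_\mu(x;q,0)$ and sketching the standard energy-function argument. Your bookkeeping and the proof sketch are sound supplementary context, but none of it is needed here since the lemma is stated purely as a quotation.
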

\begin{lemma}[{\hspace{1sp}\cite[Lemma 10.6]{IMS21}}]
\label{lem: rogers-szego expansion}
We have
 \begin{equation*}
     \frac{q^{n}h_n(z^2/\sqrt{q};q)}{(q;q)_n}=\sum_{\nu:\nu_1=n}q^{|\nu|/2}z^{2o(\nu')}
 \end{equation*}
 and
\begin{equation*}
    \frac{h_n(\sqrt{q}z^2;q)}{(q;q)_n}=\sum_{\nu:\nu_1\leq n}q^{|\nu|/2}z^{2o(\nu')}.
\end{equation*}
\end{lemma}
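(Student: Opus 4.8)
\textbf{Proof plan for Lemma \ref{lem: rogers-szego expansion}.}

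The plan is to prove both identities by a direct combinatorial decomposition of partitions $\nu$ according to their columns (equivalently, according to the conjugate partition $\nu'$), reading off each column's contribution to $q^{|\nu|/2}$ and to $z^{2o(\nu')}$. The key observation is that the parity of the column lengths of $\nu$ — i.e.\ the parity of the parts of $\nu'$, which is what $o(\nu')$ counts — interacts multiplicatively with the $q$-weight. Concretely, writing $\nu' = (\nu'_1 \geq \nu'_2 \geq \cdots)$ with $\nu'_1 = \nu_1 = n$, I would reparametrize by the multiplicities $m_j = m_j(\nu') = \#\{i : \nu'_i = j\}$, the number of columns of $\nu$ of height exactly $j$. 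Then $|\nu| = |\nu'| = \sum_j j\, m_j$ and $o(\nu') = \sum_{j \text{ odd}} m_j$, and the constraint $\nu'_1 = n$ translates (for the first identity) to "the total number of columns $\sum_j m_j$ is exactly $n$" — wait, more carefully, $\nu'_1 = n$ means the largest part of $\nu'$ is $n$, i.e.\ $\nu$ has exactly $n$ rows; but we want $\nu_1 = n$, i.e.\ the largest part of $\nu$ is $n$, i.e.\ $\nu'$ has exactly $n$ parts, i.e.\ $\sum_{j \geq 1} m_j = n$. So the first sum is over all $(m_1, m_2, \dots)$ of nonnegative integers with $\sum_j m_j = n$.

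With this setup, the first identity becomes a clean generating-function computation: $\sum_{\nu : \nu_1 = n} q^{|\nu|/2} z^{2 o(\nu')} = \sum_{\substack{m_1, m_2, \dots \geq 0 \\ \sum m_j = n}} \prod_{j \geq 1} \left( q^{j/2} \right)^{m_j} \left( z^2 \right)^{[j \text{ odd}] m_j}$, and grouping the columns of each fixed height $j$ one sees this equals the coefficient extraction realizing the Rogers--Szeg\"o polynomial. Indeed, a column of height $j$ contributes a factor $q^{j/2}$ always, and an extra $z^2$ iff $j$ is odd; so if $k$ of the $n$ columns have odd height, the $z$-power is $z^{2k}$, and the $q$-weight coming from those $k$ odd columns together with the $n-k$ even columns factors. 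Summing over which heights the columns take, with the constraint that there are $n$ columns total, produces $\sum_{k=0}^n \binom{n}{k}_q \cdot (\text{odd-column weight})^k \cdot (\text{even-column weight})^{n-k}$ — I would make this precise by recognizing $\sum_{\text{partition with} \leq n \text{ columns}}$-type sums as $q$-binomial generating functions (the standard fact $\sum_{\mu \subseteq (m^n)} q^{|\mu|} = \binom{m+n}{n}_q$, and its variants for odd/even restricted parts), so that the odd columns contribute $\frac{q^{1/2}}{1-q} \cdot$-type factors assembling into $h_n$. The cleanest route is probably to first prove the \emph{second} identity (the unrestricted $\nu_1 \leq n$ case) since there the columns are simply "at most $n$ columns, each of arbitrary height $\geq 1$," and then derive the first from the second via $\sum_{\nu_1 = n}(\cdots) = \sum_{\nu_1 \leq n}(\cdots) - \sum_{\nu_1 \leq n-1}(\cdots)$, using the recurrence $h_{n+1}(x;q) - h_n(x;q) = \cdots$ or directly the definition $h_n(x;q) = \sum_k \binom{n}{k}_q x^k$ together with the $q$-Pascal recurrence $\binom{n}{k}_q = \binom{n-1}{k}_q + q^{n-k}\binom{n-1}{k-1}_q$ to match the telescoped difference.

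Alternatively — and this may be shorter to write — I would identify both sums with specializations of a known Hall--Littlewood or principal-specialization identity: the generating function $\sum_{\nu} q^{|\nu|/2} z^{2 o(\nu')}$ restricted by $\nu_1 \leq n$ is, up to the substitution $\widehat{x}$, a principal specialization $\widehat{Q}_{(1^?)}$-type evaluation, or more elementarily the $q$-analogue $\sum_{\nu : \nu_1 \leq n} u^{\ell(\nu')_{\text{odd}}} q^{|\nu|} = \prod \cdots$ which Euler-factorizes. The main obstacle I anticipate is purely bookkeeping: getting the $\sqrt{q}$ versus $q$ normalization exactly right (the $q^{|\nu|/2}$ means columns of height $j$ weigh $q^{j/2}$, so odd heights carry half-integer powers and must be paired correctly with the $z^2$ to land on the argument $z^2/\sqrt{q}$ of $h_n$ in the first identity versus $\sqrt{q}z^2$ in the second), and making sure the constraint $\nu_1 = n$ (exactly) versus $\nu_1 \leq n$ is correctly translated into "exactly $n$ columns" versus "at most $n$ columns." Once the dictionary partition $\leftrightarrow$ column-multiplicity vector is fixed, both identities reduce to the elementary $q$-binomial theorem / Euler product formula, so there is no conceptual difficulty — the risk is entirely in sign and exponent arithmetic, which a symbolic check (as the authors do elsewhere, e.g.\ in Lemma \ref{lem: two vertex compat}) would confirm.
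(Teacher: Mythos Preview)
Your plan is correct, and in fact the paper does not supply its own proof of this lemma at all: it is simply quoted from \cite[Lemma 10.6]{IMS21} with no argument given. So there is nothing in the paper to compare against beyond the citation.

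Your column-decomposition idea is exactly the right one, and it can be executed more cleanly than your write-up suggests. Setting $\mu=\nu'$, the condition $\nu_1=n$ (resp.\ $\nu_1\le n$) becomes $\ell(\mu)=n$ (resp.\ $\ell(\mu)\le n$), and the weight is $q^{|\mu|/2}z^{2o(\mu)}$. Split $\mu$ into its odd parts $\mu^{(o)}$ and even parts $\mu^{(e)}$; this is a bijection onto pairs of partitions with all parts odd (resp.\ even), and $\ell(\mu)=\ell(\mu^{(o)})+\ell(\mu^{(e)})$ while $o(\mu)=\ell(\mu^{(o)})$. Fixing $k=o(\mu)$, the odd piece has exactly $k$ parts; subtracting $1$ from each and halving gives a partition with at most $k$ parts, so its generating function is $q^{k/2}/(q;q)_k$. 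For the second identity the even piece has at most $n-k$ even parts, and halving gives generating function $1/(q;q)_{n-k}$; summing over $k$ yields $\sum_k (\sqrt{q}z^2)^k/((q;q)_k(q;q)_{n-k}) = h_n(\sqrt{q}z^2;q)/(q;q)_n$. For the first identity the even piece has \emph{exactly} $n-k$ parts, all $\ge 2$, contributing an extra $q^{n-k}$, and one gets $q^n\sum_k (z^2/\sqrt{q})^k/((q;q)_k(q;q)_{n-k})$ as claimed. So there is no need for the telescoping or $q$-Pascal detour you mention, and the bookkeeping worry you flag is resolved by this odd/even split rather than by tracking individual column heights $m_j$.
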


\begin{proof}[Proof of Theorem \ref{thm: general identity}]
We follow the proof of Theorem 10.12 in \cite{IMS21}. Note that to prove the identity in the ring of symmetric functions, it suffices to prove the identity for an arbitrarily large finite collection of variables $x_1,\dotsc, x_n$, which we fix. Then
\begin{equation*}
    \begin{split}
        &\sum_{\lambda,\rho:\lambda_1=k}z^{o(\lambda')+o(\rho')}w^{o(\lambda')-o(\rho')}q^{|\rho|/2}s_{\lambda/\rho}(x)
        \\=&\sum_{\lambda,\rho:\lambda_1=k}\sum _{P\in SST(\lambda/\mu,n)}z^{o(\lambda')+o(\rho')}w^{o(\lambda')-o(\rho')}q^{|\rho|/2}x^P
        \\=&\sum_{\substack{\mu,\nu\\\mu_1+\nu_1=k}}\sum_{\substack{\overline{\pi}\in \overline{\mathbb{A}}_n^+\\\overline{\pi}^{-1}=\overline{\pi}, \mu(\overline{\pi})=\mu}}q^{|\nu|/2+\wt(\overline{\pi})/2}z^{\fix(\overline{\pi})+2o(\nu')}w^{\fix^-(\overline{\pi})}x^{p(\overline{\pi})}
        \\=&\sum_{l=0}^k\left(\sum_{\nu: \nu_1=l}q^{|\nu|/2}z^{2o(\nu')}\right)
        \\&\qquad \times\sum_{\mu: \mu_1=k-l}\left(\sum_{\kappa\in \mathcal{K}(\mu)}q^{|\kappa|/2}z^{o(\kappa)+o(\kappa+\mu')}w^{-o(\kappa)+o(\kappa+\mu')}\right)\left(\sum_{V\in VST(\mu)}q^{\mathcal{H}(V)}x^V\right),
    \end{split}
\end{equation*}
where the first equality is the usual expansion of skew Schur functions in terms of semistandard Young tableau, the second equality is given by the Sagan--Stanley skew RSK correspondence along with Lemmas \ref{lem: greene skew RSK} and \ref{lem: signed SS}, and the third equality is given by the bijection $\widetilde{\Upsilon}$ of Theorem \ref{thm: IMS bij} along with Lemma \ref{lem: signed fix}.

We then decompose $\kappa$ into partitions $\kappa^{(i)}$ and rewrite the sum over $\kappa$ as a product over $i$. Note that if $i$ is odd, then $o(\kappa^{(i)}+(i^{\mu_i-\mu_{i+1}}))=e(\kappa^{(i)})$ and if $i$ is even, $o(\kappa^{(i)}+(i^{\mu_i-\mu_{i+1}}))=o(\kappa^{(i)})$ (with the caveat that $\kappa^{(i)}$ has exactly $\mu_i-\mu_{i+1}$ rows, and so $0$'s are counted). Then the even factors are given by
\begin{equation*}
    \sum_{\kappa^{(i)}, l(\kappa^{(i)})\leq \mu_i-\mu_{i+1}}q^{|\kappa^{(i)}|/2}z^{2o(\kappa^{(i)})}
\end{equation*}
and the odd factors are given by
\begin{equation*}
    (zw)^{\mu_i-\mu_{i+1}}\sum_{\kappa^{(i)}, l(\kappa^{(i)})\leq \mu_i-\mu_{i+1}}q^{|\kappa^{(i)}|/2}w^{-2o(\kappa^{(i)})}.
\end{equation*}
Finally, we can compute all these quantities using Lemmas \ref{lem: rogers-szego expansion} and \ref{lem: q-whit expansion}, as well as the identity $\prod \frac{1}{(q;q)_{\mu_i-\mu_{i+1}}}\widehat{P}_\mu(x;q)=\widehat{Q}_\mu(x;q)$. The desired equality immediately follows.
\end{proof}

\begin{remark}
The special case of Theorem \ref{thm: general identity} when $w=1$ was given in \cite{IMS21}. Our proof follows the same ideas, the only difference being that we keep track of an extra statistic $o(\lambda')-o(\rho')$ through the variable $w$. Summing over $k$ gives the Littlewood identity of Warnaar \cite[Theorem 1.1]{W06}, and the above proof thus gives a fully bijective proof (the special case $w=1$ is Theorem 10.3 in \cite{IMS21}). This identity is also somewhat similar to certain bounded Littlewood identities proven in \cite{RW21}. It is unclear if there there is any direct relationship, and this seems to be an interesting avenue for further exploration.
\end{remark}

\section{Fredholm Pfaffian formulas}
\label{sec: pfaff}
In this section, we obtain Fredholm Pfaffian formulas for the half space six vertex model and the half space ASEP. This is done via the free boundary Schur process, which is introduced and related to these models in this section. However, while Fredholm Pfaffian formulas for the free boundary Schur process are available in \cite{BBNV18}, they require restrictions on the parameters, which require analytic continuation to lift. We will deal with this in later sections.

\subsection{Free boundary Schur process}
\label{sec: FBS}
We recall that $s_\lambda$ and $p_\lambda$ denote the Schur and power sum symmetric functions respectively. Let $a,b$ denote specializations, and let $q,t,\nu,z$ be parameters. We will let
\begin{equation*}
    H(a;b)=\exp\left(\sum_{n\geq 1}\frac{1}{n}p_n(a)p_n(b)\right),
\end{equation*}
and
\begin{equation*}
    \widetilde{H}(a)=\exp\left(\sum_{n\geq 1}\frac{p_{2n-1}(a)}{2n-1}+\frac{p_n(a)^2}{2n}\right).
\end{equation*}
For later use, we will note that if $a=(a_1,\dotsc)$, then
\begin{equation}
\label{eq: H spec}
    H(\widehat{a};z)=\prod\frac{1+a_i z}{1+qa_i z}.
\end{equation}

We let
\begin{equation*}
    \Phi(a;q,t,\nu)=\frac{1}{(q;q)_\infty(-t;q)_\infty}\prod_{n\geq 0}\widetilde{H}(q^na) H(q^na;\{\nu t,-\nu^{-1}\}/(1-q)),
\end{equation*}
where $\{\nu t,-\nu^{-1}\}/(1-q)$ is defined to be the specialization
\begin{equation*}
    \{\nu t,-\nu^{-1}\}/(1-q)=(\nu t,-\nu^{-1},q\nu t,-q\nu^{-1},q^2\nu t,-q^2\nu^{-1},\dotsc).
\end{equation*}
With the specialization $\widehat{a}$ given by $p_n(\widehat{a})=(-1)^{n-1}(1-q^n)p_n(a)$, which will ultimately correspond to the six vertex model and the ASEP, we have
\begin{equation*}
    \Phi(\widehat{a};q,t,\nu)=\frac{1}{(q;q)_\infty(-t;q)_\infty}\Pi(a;q,t,\nu),
\end{equation*}
where we recall that $\Pi$ is the partition function for the half space Hall--Littlewood measure.

For convenience, we will introduce parameters $\gamma_1=\nu^{-1}\sqrt{q}^{-1}$ and $\gamma_2=-t\nu$, and let $a$ be a specialization. The \emph{free boundary Schur measure} $\mathbb{FBS}_a^{(q,t,\nu)}$ is the probability on partitions defined by
\begin{equation*}
 \mathbb{FBS}_a^{(q,t,\nu)}(\lambda)=\frac{1}{\Phi(a;q,t,\nu)}\sum_{\rho}\gamma_1^{o(\rho')}\gamma_2^{o(\lambda')}(q^{1/2})^{|\rho|}s_{\lambda/\rho}(a).
\end{equation*}
This was introduced in a more general fashion in \cite{BBNV18}. 
The fact that $\Phi(a;q,t,\nu)$ is the correct normalization is shown in \cite{BBNV18}. Alternatively, Theorem \ref{thm: general identity} actually gives a new proof of this.

\begin{remark}
Note that due to the sign, the free boundary Schur process we consider is not really a probability measure on partitions. However, the marginal $\lambda_1$ is a genuine random variable, at least when $t\leq q$ (this can be seen from Theorems \ref{thm: 6vm HL ident main} and \ref{thm: HL FBS same}). Moreover, as long as $t<1$ and $\nu^{-1}<1$, the definition gives an absolutely summable signed probability measure, and the expression $\PP(\lambda_1\leq s)=\sum_{x\leq s}\mathbb{FBS}_a^{(q,t,\nu)}(x)$ is analytic in $t$ and $\nu$. Thus, the formulas we will derive for $\PP(\lambda_1\leq s)$, which are also analytic functions of $t$ and $\nu$, will continue to apply even when $\lambda_1$ is not a random variable.
\end{remark}

Recall that $\chi\sim RS(q,t)$ if
\begin{equation*}
    \PP(\chi=k)=(q;q)_\infty(-t;q)_\infty\frac{q^kh_k(-t/q;q)}{(q;q)_k}.
\end{equation*}
Note that this always sums to $1$ by the $q$-binomial theorem. If $t\leq q$, then this defines a genuine probability distribution, since the probabilities are always non-negative. If $1>t>q$, then these probabilities may be negative, but still decay as $k\to\infty$. For a random variable $X$, we will continue to write $\PP(X+\chi\leq s)=\sum_{s_1+s_2\leq s}\PP(\chi=s_1)\PP(X\leq s_2)$ even if $1>t>q$. We note that there will be no convergence issues since $\PP(\chi=s)$ is absolutely summable if $t<1$. When $t<1$, one can recover the distribution of $X$ from $\PP(X+\chi\leq s)$, but when $t\geq 1$, this distribution no longer decays, and it is not clear whether one could extract asymptotic information after convolution against it. See Section \ref{sec: conv inv} for details.

We now give a proof of Theorem \ref{thm: HL FBS same}, which follows easily from Theorem \ref{thm: general identity}.
\begin{proof}[Proof of Theorem \ref{thm: HL FBS same}]
This follows immediately from Theorem \ref{thm: general identity}, taking $zw=-t\nu$ and $z/w=(\nu\sqrt{q})^{-1}$. In particular, since the identity holds at the level of symmetric functions, we can simply apply the involution $p_n(a)\mapsto p_n(\widehat{a})$ which takes $\widehat{Q}_{\lambda}(a;q)$ to $P_{\lambda'}(a;q)$. In particular, this swaps the roles of $\lambda_1$ and $l(\lambda)$.
\end{proof}

\subsection{Fredholm Pfaffian formulas}
We now present Fredholm Pfaffian formulas for the free boundary Schur process, and thus the half space six vertex model and ASEP. For more details on Fredholm Pfaffians, see Appendix \ref{sec: app}. Let $\zeta\in (0,1)$ be an auxiliary parameter. We write $S\sim \Thetad(\zeta^2,q^2)$ to indicate that $S$ is distributed according to
\begin{equation*}
    \PP(S=k)=\frac{q^{k^2}\zeta^{2k}}{\theta_3(\zeta^2;q^2)}.
\end{equation*}
Here, $\theta_3(\zeta;q)=(q;q)_\infty(-\sqrt{q}\zeta;q)_\infty(-\sqrt{q}/\zeta;q)_\infty$ is the Jacobi theta function, and the fact that this sums to $1$ is the Jacobi triple product identity.

We will let $D_x^{(\gamma_2)}$ denote the difference operator defined by
\begin{equation}
\label{eq: diff op}
    D_x^{(\gamma_2)}f(x)=\frac{(1-\gamma_2)^2}{2}\sum_{i=1}^\infty \gamma_2^{i-1}(f(x+i)-f(x-i))
\end{equation}
in the (discrete) variable $x$. Note that this definition converges absolutely for any function with a bound of the form $|f(x)|\leq C \gamma^{-|x|}$ for some $\gamma>|\gamma_2|$ (and in particular can be defined even if the function does not decay).

\begin{theorem}
\label{thm: pfaff formula}
Assume that $|\gamma_1\sqrt{q}|<1$, $|\gamma_2|<1$, and that $p_n(a)=O(c^n)$ for some $c<1$. Let $\lambda\sim \mathbb{FBS}_{a}^{(q,t,\nu)}$, and let $S\sim \Thetad(\zeta^2,q^2)$. Then if $1<r'<r$ are close enough to $1$, 
\begin{equation*}
    \PP(\lambda_1+2S\leq s)=\Pf(J-K)_{l^2(\Z_{>s})},
\end{equation*}
where the correlation kernel $K$ is given by
\begin{equation*}
    K=\begin{pmatrix}
    k(x,y)&-2D^{(\gamma_2)}_y k(x,y)
    \\-2D^{(\gamma_2)}_x k(x,y)& 4D^{(\gamma_2)}_x D^{(\gamma_2)}_y k(x,y)+(D^{(\gamma_2)}_x-D^{(\gamma_2)}_y)\Delta(x,y),
    \end{pmatrix}
\end{equation*}
where $\Delta(x,y)$ is the indicator that $x=y$, $D_x^{(\gamma_2)}$ is defined by \eqref{eq: diff op},
\begin{equation*}
    k(x,y)=\frac{(1-\gamma_2)^{-2}}{(2\pi i)^2}\oint_{|z|=r}\oint_ {|w|=r'} z^{-x-3/2}w^{-y-5/2}F(z)F(w)K(z,w)dzdw
\end{equation*}
with
\begin{equation*}
    F(z)=\prod_{n\geq 1}\frac{H(q^na;z)}{H(q^na;z^{-1})}
\end{equation*}
and
\begin{equation*}
    K(z,w)=\frac{(q,q,w/z,qz/w;q)_\infty}{(1/z^2,1/w^2,1/zw,qwz;q)_\infty}\frac{\theta_3(\zeta^2z^2w^2;q^2)}{\theta_3(\zeta^2;q^2)}g(z)g(w),
\end{equation*}
with
\begin{equation*}
    g(z)=\frac{(\gamma_1\sqrt{q}/z,\gamma_2/z;q)_\infty}{(\gamma_1\sqrt{q}z,\gamma_2qz;q)_\infty}.
\end{equation*}
\end{theorem}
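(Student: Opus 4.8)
The plan is to reduce the claim to the known Fredholm Pfaffian formula for the free boundary Schur process established in \cite{BBNV18}, and then to manipulate the resulting expression into the stated form. The first step is to recall from \cite{BBNV18} that for the free boundary Schur measure (in the generality allowing the two boundary specializations $\gamma_1\sqrt{q}$ and $\gamma_2$), the shifted first part $\lambda_1 + 2S$, with $S\sim\Thetad(\zeta^2,q^2)$ an independent theta-distributed random variable, has a gap probability given by a Fredholm Pfaffian $\Pf(J-K)_{l^2(\Z_{>s})}$; the role of the independent $2S$ shift is precisely the standard device (as in \cite{BBNV18}) that turns the correlation functions of the free boundary Schur process into a Pfaffian point process, absorbing the theta function that appears in the free-boundary setting. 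I would quote this formula with its kernel expressed as a double contour integral, taking care to match the specializations: the single-variable specialization $\widehat a$ enters through $F(z) = \prod_{n\ge 1} H(q^n a; z)/H(q^n a;z^{-1})$, using the identity \eqref{eq: H spec} for $H(\widehat a; z)$, and the boundary data enters through $g(z)$ and the theta factor $\theta_3(\zeta^2 z^2 w^2; q^2)/\theta_3(\zeta^2;q^2)$.

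\textbf{Key steps.} Second, I would identify the free boundary Schur measure $\mathbb{FBS}_a^{(q,t,\nu)}$ with the specialization of the Betea--Bouttier--Nejjar--Vuleti\'c process for which their Pfaffian formula was proven, reading off $\gamma_1 = \nu^{-1}q^{-1/2}$, $\gamma_2 = -t\nu$, $q^{1/2}$ as the "Schur process parameter", and the base specialization $a$. Third, the convergence conditions: the hypotheses $|\gamma_1\sqrt q|<1$, $|\gamma_2|<1$, and $p_n(a) = O(c^n)$ with $c<1$ are exactly what is needed for the relevant power series and contour integrals to converge absolutely and for the contours $|z|=r$, $|w|=r'$ with $1<r'<r$ sufficiently close to $1$ to separate the poles correctly — the poles of $K(z,w)$ at $z^2=1$, $w^2=1$, $zw=1$, $qwz=1$ and the poles coming from $g(z)$ (at $z=\gamma_1\sqrt q\, q^k$ and $z = \gamma_2 q^{k-1}$) must lie strictly inside or strictly outside the contours, and $F(z)$ is analytic in the relevant annulus by the decay of $p_n(a)$. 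Fourth, I would carry out the passage from the "bare" kernel to the stated form involving the difference operator $D_x^{(\gamma_2)}$: in \cite{BBNV18} the Pfaffian kernel naturally has entries built from $k(x,y)$ together with factors that, after the shift by $2S$ and the substitution turning sums over $\Z$ into the discrete difference operator \eqref{eq: diff op}, produce the $-2D_y^{(\gamma_2)}k$, $-2D_x^{(\gamma_2)}k$, and $4D_x^{(\gamma_2)}D_y^{(\gamma_2)}k + (D_x^{(\gamma_2)} - D_y^{(\gamma_2)})\Delta(x,y)$ entries; the $(1-\gamma_2)^{\pm 2}$ normalizations are bookkeeping to make these operators come out cleanly. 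The geometric series $\sum_i \gamma_2^{i-1}(f(x+i)-f(x-i))$ converges because $F(z)g(z)$ decays appropriately on the contours, which is where the hypothesis $|\gamma_2|<1$ together with $r,r'$ close to $1$ is used.

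\textbf{Main obstacle.} The hard part will be the precise matching of normalizations and contour choices between the \cite{BBNV18} formulation and the one stated here: the free boundary Schur process in \cite{BBNV18} is set up with its own conventions (fermionic Fock space, specific choice of principal specializations, a particular auxiliary parameter playing the role of $\zeta$), and the present statement repackages everything in terms of $F$, $g$, $K(z,w)$, and the $D^{(\gamma_2)}$ operators. Verifying that the theta-function factors, the $q$-Pochhammer prefactors in $K(z,w)$, the half-integer shifts $z^{-x-3/2}w^{-y-5/2}$, and the operator structure all line up correctly — rather than any single deep idea — is the bulk of the work. A secondary but genuine technical point is justifying that the difference operators $D^{(\gamma_2)}$ may be applied termwise under the contour integrals and that the resulting object is a trace-class (or Pfaffian-class) kernel on $l^2(\Z_{>s})$, which follows from the exponential decay of $k(x,y)$ in $x$ and $y$ guaranteed by $r,r'>1$, but must be checked to make the Fredholm Pfaffian well-defined.
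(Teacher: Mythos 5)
Your overall strategy — start from the Fredholm Pfaffian formula for the free boundary Schur process in \cite{BBNV18}, match the boundary specialization data $\gamma_1=\nu^{-1}q^{-1/2}$, $\gamma_2=-t\nu$ and the $q$-beta specialization $\widehat a$, note the role of the independent $\Theta$-shift $2S$, and then repackage the kernel into the stated $D^{(\gamma_2)}$ form — is the route the paper actually takes (the paper cites \cite[Theorem 2.5]{BBNV18} with $u=\sqrt q$, $v=1$, $\rho^+=a$, $\rho^-=\emptyset$, and notes it follows the manipulations of Theorem 4.5 in \cite{IMS22}). You also correctly place the hypotheses on $\gamma_1\sqrt q$, $\gamma_2$, and the decay of $p_n(a)$ as what keeps the contours pole-free at radius close to $1$.

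However, the proposal glosses over the one nontrivial mechanism in the repackaging, and as written your sketch would not produce the $(D^{(\gamma_2)}_x-D^{(\gamma_2)}_y)\Delta(x,y)$ term in the $(2,2)$ entry of $K$. You attribute the appearance of the $D^{(\gamma_2)}$ operators to a ``substitution turning sums over $\Z$ into the discrete difference operator,'' but that is not what happens. Coming out of \cite{BBNV18}, the entries $K_{12}$ and $K_{22}$ have a \emph{different} integrand (involving $F(1/w)$, $K(z,1/w)$, etc.) from $K_{11}=k(x,y)$; to express everything through the single function $k(x,y)$ one must \emph{deform} the $w$-contour from $|w|=r'$ to $|w|=1/r'$ and change variables $w\mapsto w^{-1}$ (and likewise for $z$ in $K_{22}$). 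The extra rational factor picked up in this change of variables,
$-(1-\gamma_2)^2\frac{w-w^{-1}}{(1-\gamma_2 w)(1-\gamma_2 w^{-1})}$,
is then identified via partial fractions with the Fourier multiplier of $2D^{(\gamma_2)}_y$ acting on $w^{-y}$; that is the precise origin of the $-2D^{(\gamma_2)}_y k$ and $-2D^{(\gamma_2)}_x k$ entries. The $(D^{(\gamma_2)}_x-D^{(\gamma_2)}_y)\Delta(x,y)$ term is not part of the BBNV18 kernel at all: it is the residue picked up at the pole $z=w^{-1}$ (from the $1/(1-zw)$ factor in $K(1/z,1/w)$) when one deforms \emph{both} contours in $K_{22}$. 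Evaluating that residue, using $F(z^{-1})=F(z)^{-1}$ and $K(w,1/w)=(1-\gamma_2 w)(1-\gamma_2/w)/(1-w^2)$, yields $\frac{1}{2\pi i}\oint w^{y-x-1}\frac{w^{-1}-w}{(1-\gamma_2 w)(1-\gamma_2/w)}dw=\gamma_2^{|x-y|-1}$ for $|x-y|\geq1$ and $0$ for $x=y$, which is exactly $(D^{(\gamma_2)}_x-D^{(\gamma_2)}_y)\Delta(x,y)$. Without spotting this residue contribution (and the $w\mapsto w^{-1}$ deformation forcing it), you cannot arrive at the stated kernel, so this is a genuine gap in the argument rather than mere bookkeeping.
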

\begin{proof}
This is essentially given in \cite{BBNV18, BBNV20} and the manipulations follow those in Theorem 4.5 of \cite{IMS22}. We start with \cite[Theorem 2.5]{BBNV18} (see Theorem 2.10 along with Section 3.3.2 in the preprint version for the most general statement needed in our setting), although see also \cite{BBNV20}, with the specializations $u=\sqrt{q}$, $v=1$, $\gamma_1=\nu^{-1}/\sqrt{q}$, $\gamma_2=-t\nu$, $\rho^+=a$, and $\rho^-=\emptyset$, which gives after some elementary manipulations (including multiplying first/second row and column by $(1-\gamma_2)^{-1}$ and $1-\gamma_2$ respectively) the formulas
\begin{equation*}
\begin{split}
    K_{11}(x,y)&=\frac{(1-\gamma_2)^{-2}}{(2\pi i)^2}\oint_{|z|=r}\oint_ {|w|=r'}z^{-x-3/2}w^{-y-5/2}F(z)F(w)K(z,w)dzdw
    \\K_{12}(x,y)&=\frac{1}{(2\pi i)^2}\oint_{|z|=r}\oint_ {|w|=r'}z^{-x-3/2}w^{y-1/2}\frac{F(z)}{F(w)}K(z,1/w)\frac{(1-w^2)}{(1-\gamma_2w)(1-\gamma_2/w)}dzdw
    \\K_{22}(x,y)&=\frac{(1-\gamma_2)^2}{(2\pi i)^2}\oint_{|z|=r}\oint_ {|w|=r'}z^{x-3/2}w^{y-1/2}\frac{1}{F(z)F(w)}K(1/z,1/w)
    \\&\qquad\qquad\qquad\qquad\qquad\qquad \times\frac{(1-w^2)(1-z^2)}{(1-\gamma_2w)(1-\gamma_2/w)(1-\gamma_2z)(1-\gamma_2/z)}dzdw,
\end{split}
\end{equation*}
and $K_{21}(x,y)=-K_{12}(y,x)$.

Then the $K_{11}$ entry is equal to $k(x,y)$. The $K_{12}$ entry is obtained by deforming the contour to $|w|=1/r'$ and substituting $w\mapsto w^{-1}$. Note that the differences in the integral to the one defining the $K_{11}$ entry amount to a factor of
\begin{equation}
\label{eq: part frac}
    -(1-\gamma_2)^2\frac{w-w^{-1}}{(1-\gamma_2w)(1-\gamma_2w^{-1})}=\frac{(1-\gamma_2)^2}{\gamma_2}\left(\frac{1}{1-\gamma_2w}-\frac{1}{1-\gamma_2w^{-1}}\right),
\end{equation}
which matches the factor given by
\begin{equation*}
    2D_y^{(\gamma_2)}w^{-y}=\frac{(1-\gamma_2)^2}{\gamma_2}\left(\frac{1}{1-\gamma_2w^{-1}}-\frac{1}{1-\gamma_2w}\right)w^{-y}.
\end{equation*}
It is easy to see that the claimed formula for $K_{21}(x,y)$ equals $-K_{12}(y,x)$.

To check the formula for $K_{22}$, we perform contour deformations and substitutions for both $z$ and $w$. However, when we deform the contours, we encounter a pole at $z=w^{-1}$ due to the $1-zw$ factor in the denominator of $K(1/z,1/w)$, after which the same computations as above show the differences in the integrals amount to a factor matching the ones obtained from the difference operators. Since $F(z^{-1})=F(z)^{-1}$ and $K(w,1/w)=(1-\gamma_2w)(1-\gamma_2/w)/(1-w^2)$, the residue at $z=w^{-1}$ is
\begin{equation*}
    \frac{1}{2\pi i}\oint_{|w|=r'} w^{y-x-1}\frac{w^{-1}-w}{(1-\gamma_2w)(1-\gamma_2/w)}dw.
\end{equation*}
By assumption, $|\gamma_2|<1/r'$ and so the partial fraction decomposition \eqref{eq: part frac} can be expanded into two power series, giving a residue of $a_i^{|x-y|-1}$ if $|x-y|\geq 1$ and $0$ if $x=y$, which matches $(D^{(\gamma_2)}_x-D^{(\gamma_2)}_y)\Delta(x,y)$. In particular, $|\gamma_2|<1/r'$ means the exponential growth rate of $k(x,y)$ is strictly smaller than $\gamma_2^{-1}$ so $D_x^{(\gamma_2)}$ is well-defined in the all the claimed expressions.
\end{proof}
\begin{remark}
Let us note that one needs to take
\begin{equation*}
    1<r'<r<\min(\nu,|\gamma_2|^{-1},\sqrt{q}^{-1})
\end{equation*}
That is, all poles within the unit ball should be contained in the contours, and all poles outside the unit ball should be outside these contours (and no poles should lie on the unit sphere). From now on, we will always implicitly assume this when using Theorem \ref{thm: pfaff formula}, and will simply say that $r,r'$ are sufficiently close to $1$. This is quite restrictive, and we will want to relax these assumptions to obtain formulas in the $\rho\leq \frac{1}{2}$ regimes. This will require analytic continuation.
\end{remark}

Let us record for later use the two specific cases we are interested in. 
\begin{corollary}
\label{cor: 6vm formula}
Let $h(n,n)$ denote the height function in the six vertex model at $(n,n)$. Then
\begin{equation*}
    \PP(h(n,n)+\chi+2S\leq s)=\Pf(J-K)_{l^2(\Z_{>s})},
\end{equation*}
with
\begin{equation}
\label{eq: 6vm F}
    F(z)=\prod \frac{1+a_iz}{1+a_i/z}.
\end{equation}
\end{corollary}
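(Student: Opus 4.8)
The plan is to assemble the three distributional identities already established — Theorem~\ref{thm: general 6vm HL distr equality} (or its one-point consequence, Theorem~\ref{thm: 6vm HL ident main}), Theorem~\ref{thm: HL FBS same}, and Theorem~\ref{thm: pfaff formula} — into a single chain, and then to simplify the function $F$ appearing in the kernel for the $q$-beta specialization attached to the six vertex model.

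First I would record that, by Theorem~\ref{thm: 6vm HL ident main}, $h(n,n)\deq l(\lambda)$ for $\lambda\sim\mathbb{HL}_a^{(q,t,\nu)}$ with $a=(a_1,\dots,a_n)$. Theorem~\ref{thm: HL FBS same} then gives $\PP(l(\lambda)+\chi\le s)=\PP(\mu_1\le s)$, where $\chi\sim RS(q,t)$ is independent of $\lambda$ and $\mu\sim\mathbb{FBS}_{\widehat a}^{(q,t,\nu)}$; since $\chi$ may equally be taken independent of a copy of $h(n,n)$, this yields $h(n,n)+\chi\deq\mu_1$. Convolving both sides with an independent $S\sim\Thetad(\zeta^2,q^2)$ scaled by $2$ gives $h(n,n)+\chi+2S\deq\mu_1+2S$.

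Next I would apply Theorem~\ref{thm: pfaff formula} to $\mu\sim\mathbb{FBS}_{\widehat a}^{(q,t,\nu)}$, i.e.\ with the generic specialization there taken to be $\widehat a$. This is admissible under the stated hypotheses: since the $a_i\in(0,1)$ and there are finitely many, $p_n(\widehat a)=(-1)^{n-1}(1-q^n)\sum_i a_i^n=O(c^n)$ with $c=\max_i a_i<1$, while $|\gamma_1\sqrt q|<1$ and $|\gamma_2|<1$ are the conditions $\nu^{-1}<1$ and $t\nu<1$ (the remaining cases being reached by the analytic continuation discussed after Theorem~\ref{thm: pfaff formula}). This gives $\PP(\mu_1+2S\le s)=\Pf(J-K)_{l^2(\Z_{>s})}$ with the kernel of Theorem~\ref{thm: pfaff formula}; the pieces $g(z)$, $K(z,w)$ and the difference operators are independent of the specialization and carry over verbatim, while $F(z)=\prod_{n\ge 1}H(q^n\widehat a;z)/H(q^n\widehat a;z^{-1})$. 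It then remains to simplify $F$: by \eqref{eq: H spec} one has $H(q^n\widehat a;z)=H(\widehat a;q^nz)=\prod_i(1+a_iq^nz)/(1+a_iq^{n+1}z)$, the product over $n$ telescopes, and dividing the $z$-part by the $z^{-1}$-part leaves exactly \eqref{eq: 6vm F}, $F(z)=\prod_i(1+a_iz)/(1+a_i/z)$. Substituting this back into the kernel finishes the proof.

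I do not expect a real obstacle here, since the three ingredient theorems carry the analytic and combinatorial weight. The points requiring care are purely bookkeeping: ensuring the independent shifts $\chi$ and $2S$ are convolved consistently on both sides of each distributional equality, correctly identifying $\widehat a$ (rather than $a$) as the specialization fed into Theorem~\ref{thm: pfaff formula}, and performing the telescoping reduction of $F$ to \eqref{eq: 6vm F}; one should also keep in mind that for general $\nu$ the hypotheses of Theorem~\ref{thm: pfaff formula} fail outright, so the formula is only justified after the analytic continuation carried out later in the paper.
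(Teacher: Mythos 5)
Your proposal is correct and follows the same route as the paper's own proof: chain Theorem~\ref{thm: general 6vm HL distr equality} (via Theorem~\ref{thm: 6vm HL ident main}), Theorem~\ref{thm: HL FBS same}, and Theorem~\ref{thm: pfaff formula}, then evaluate $F$ at the $q$-beta specialization $\widehat a$ using \eqref{eq: H spec}. You simply make explicit the hypothesis checks and the telescoping of $\prod_n H(q^n\widehat a;z)/H(q^n\widehat a;z^{-1})$ that the paper leaves implicit, and you correctly flag that validity beyond $\nu^{-1}<1$ (resp.\ $\nu^2 t<1$) rests on the analytic continuation carried out elsewhere in the paper.
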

\begin{proof}
By Theorems \ref{thm: general 6vm HL distr equality} and \ref{thm: HL FBS same}, we have that $h(n,n)+\chi$ is distributed as $\lambda_1$ in a free boundary Schur process $\mathbb{FBS}_{\widehat{a}}^{(q,t,\nu)}$. The relevant specialization of $H(a;z)$ is given by \eqref{eq: H spec}, giving
\begin{equation*}
    H(\widehat{a};z)=\prod _{i}\frac{1+a_iz}{1+qa_iz},
\end{equation*}
which gives the stated formula for $F(z)$. Furthermore, the restriction that $\nu^2 t<1$ can be removed, as both sides are analytic in $\nu$ and $t$, and make sense even without this restriction.
\end{proof}

\begin{corollary}
\label{cor: ASEP formula}
Let $N(\tau)$ denote the current at $0$ in the half space ASEP. Then
\begin{equation*}
    \PP(-N(\tau)+\chi+2S\leq s)=\Pf(J-K)_{l^2(\Z_{>s})},
\end{equation*}
where
\begin{equation}
\label{eq: asep F}
    F(z)=e^{(1-q)\tau\frac{1}{2}\frac{1-z}{1+z}}.
\end{equation}
\end{corollary}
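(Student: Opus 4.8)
\emph{Proof strategy.} The plan is to obtain Corollary~\ref{cor: ASEP formula} as a degeneration of Corollary~\ref{cor: 6vm formula}, using the scaling limit of the half space six vertex model to the half space ASEP recalled in Section~\ref{sec: 6vm}. First I would specialize all rapidities to a common value $a_i = a$ for $i = 1,\dots,n$, and let $a\uparrow 1$, $n\to\infty$ with $n(1-a)\to\frac{(1-q)\tau}{2}$ held fixed. Under this scaling the bulk and boundary vertex weights converge to the ASEP bulk and boundary rates, so the boundary height function obeys $n-h(n,n)\xrightarrow{d} N(\tau)$, the number of particles in the half space ASEP at time $\tau$ (consistent with $h(n,n)\approx\frac{2a}{1+a}n$ and $N(\tau)\approx\frac{(1-q)\tau}{4}$). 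Since $\chi\sim RS(q,t)$ and $S\sim\Thetad(\zeta^2,q^2)$ do not depend on $a$ or $n$ and are independent of everything, this gives $\PP\big(h(n,n)+\chi+2S\le s+n\big)=\PP\big(-(n-h(n,n))+\chi+2S\le s\big)\to\PP\big(-N(\tau)+\chi+2S\le s\big)$ for each fixed $s$, all quantities being integer valued with absolutely summable convolution.

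It remains to take the matching limit on the Pfaffian side. By Corollary~\ref{cor: 6vm formula}, $\PP\big(h(n,n)+\chi+2S\le s+n\big)=\Pf(J-K)_{l^2(\Z_{>s+n})}$ with $F(z)=\prod_{i=1}^n\frac{1+az}{1+a/z}=z^n\big(\frac{1+az}{z+a}\big)^n$. Conjugating by the index shift $x\mapsto x+n$ (which commutes with $J$ and with the translation invariant operators $D^{(\gamma_2)}$) rewrites this as $\Pf(J-K^{(n)})_{l^2(\Z_{>s})}$ with $K^{(n)}(x,y)=K(x+n,y+n)$; on the double contour integrals the effect is exactly to absorb the factors $z^{\mp n}$, $w^{\mp n}$, i.e.\ to replace $F(z)$ by $\big(\frac{1+az}{z+a}\big)^{n}$ in the $(1,1)$ and $(1,2)$ entries and by $\big(\frac{z+a}{1+az}\big)^{n}$ in the $(2,2)$ entry, all other factors ($K(z,w)$, $g$, $\theta_3$) being untouched. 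The contours may be kept fixed with $|z|=r$, $|w|=r'$ slightly larger than $1$: the constraints on $r,r'$ from Theorem~\ref{thm: pfaff formula} do not involve $a$, and the only $a$-dependent singularities of $\big(\frac{1+az}{z+a}\big)^{n}$, at $z=-a$ and $z=-1/a$, stay bounded away from these contours as $a\uparrow 1$. There $\big(\frac{1+az}{z+a}\big)^{n}=\exp\!\big(n\log\!\big(1+\frac{(1-a)(1-z)}{z+a}\big)\big)\to\exp\!\big(\frac{(1-q)\tau}{2}\cdot\frac{1-z}{1+z}\big)$ uniformly, and hence each entry of $K^{(n)}$ converges to the corresponding entry of the kernel of Theorem~\ref{thm: pfaff formula} with $F(z)=e^{(1-q)\tau\frac12\frac{1-z}{1+z}}$.

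To conclude one interchanges $\lim_n$ with the Fredholm Pfaffian expansion $\sum_l\frac{(-1)^l}{l!}\sum_{x_1,\dots,x_l>s}\Pf[K^{(n)}(x_i,x_j)]_{i,j\le l}$ by dominated convergence: the uniform convergence of $\big(\frac{1+az}{z+a}\big)^{\pm n}$ on the fixed contours yields bounds on the entries of $K^{(n)}$ that are uniform in $n$ and of the same shape as those guaranteeing convergence of the Fredholm Pfaffian, so Hadamard's inequality produces an $n$-independent summable majorant, while termwise convergence follows from convergence of the kernel entries. Since the hypotheses of Theorem~\ref{thm: pfaff formula} require $\nu>1$ (so that $r$ can be taken in $(1,\nu)$), I would run the limit first in that range and then remove the restriction by analytic continuation in $\nu$ and $t$, exactly as in Corollary~\ref{cor: 6vm formula}, both sides being analytic where the series converge. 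The main obstacle is this last interchange of limits, i.e.\ securing the uniform-in-$n$ control needed for dominated convergence (together with, if it is not already available for the half space boundary, the process-level six vertex-to-ASEP convergence, which follows from the choice of boundary weights by a generator computation); the limit $\lim_n z^{-n}F(z)$ itself is elementary.
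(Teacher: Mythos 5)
Your proposal is correct and follows essentially the same route as the paper: specialize all rapidities to a common value, take the scaling limit $a\uparrow 1$, $n\to\infty$ with $n(1-a)\to\frac{(1-q)\tau}{2}$ (the paper parametrizes this as $a_i=1-(1-q)\varepsilon/2$, $n=\tau/\varepsilon$), use the known convergence $n-h(n,n)\to N(\tau)$, shift the index so $z^{-n}F(z)$ appears, compute that $z^{-n}F(z)\to e^{(1-q)\tau\frac{1}{2}\frac{1-z}{1+z}}$, and pass to the limit in the Pfaffian via dominated convergence since no poles cross the fixed contours. The only difference is that the paper simply cites Proposition 4.1 of \cite{H22} for the six vertex-to-ASEP boundary convergence, while you flag it as possibly needing a separate generator argument; otherwise the two proofs coincide.
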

\begin{proof}
The ASEP can be obtained from the six vertex model. Specifically, setting $a_i=1-(1-q)\varepsilon/2$, if $n=\tau/\varepsilon$, we have as $\varepsilon\to 0$,
\begin{equation*}
    n-h(n,n)\to N(\tau),
\end{equation*}
see for example Proposition 4.1 of \cite{H22}. Starting from \eqref{eq: 6vm F}, we can compute that $z^{-n}F(z)$ converges to \eqref{eq: asep F}. The shift in $z$ is due to the corresponding shift in $x,y$ to cancel the shift in $s$. As the integrands are bounded in $\varepsilon$ (and in particular no poles cross the contour as $\varepsilon\to 0$), taking a limit of Corollary \ref{cor: 6vm formula} and applying dominated convergence gives the desired result. In particular, note that the parameters $\nu$ and $t$ in the six vertex model determine $\alpha$ and $\beta$, the boundary rates in the ASEP, in such a way that the formula for $\nu^{-1}$ for the ASEP is satisfied.
\end{proof}
\begin{remark}
\label{rem: t=0 special}
There is some subtlety in Corollary \ref{cor: ASEP formula} when $t=0$. Normally, the correspondence between the parameters $(\alpha,\beta)$ and $(t,\nu)$ in the ASEP is bijective, but this fails when $t=0$. Indeed, for any $\alpha\geq 1-q$, we have the density is $1$ at the boundary. Thus, while the formula given works for $\alpha< 1-q$, it doesn't make sense beyond this. Instead, one must allow $\nu\in (-\infty,-1)$ to recover all values of $\alpha$. Normally, this would correspond to the symmetry exchanging $-\nu t$ and $\nu^{-1}$ in the six vertex model, but in this degenerate situation, it instead allows access to the full range of parameters. Note that there are no issues with this in the formula (or in the six vertex model) since $t=0$, and in particular, the $\nu\to\infty$ and $\nu\to -\infty$ limits match, both giving the ASEP at $\alpha=1-q$. This ultimately has no effect on the asymptotic analysis, and so we will ignore this subtlety from now on.
\end{remark}

\section{Asymptotics: Preliminaries}
Having established the algebraic identities needed to connect the half space ASEP and six vertex model with free boundary Schur processes and obtaining exact formulas, we now turn to the problem of computing asymptotics with these formulas. In fact, our formulas currently only hold when $\nu>1$, and so much of the difficulty will be to analytically continue this to all $\nu>0$. We first begin by giving Fredholm Pfaffian formulas for our limiting distributions, and then establish some preliminary tools. In the following two sections, we establish Theorems \ref{thm: main} and \ref{thm: main 6vm}, first in the $\rho\geq \frac{1}{2}$ regimes, and then the $\rho<\frac{1}{2}$ regime.

\label{sec: asymp prelim}
\subsection{Fredholm Pfaffian formulas for limiting distributions}
The Tracy--Widom GOE and GSE distributions arise as the distribution of the largest eigenvalue of certain matrix ensembles. Introduced in \cite{TW96}, they appear ubiquitously in the study of random matrices and stochastic models in the KPZ universality class as a universal scaling limit. The Baik--Rains crossover distribution was introduced in \cite{BR01a} as a family of distributions interpolating between the GOE and GSE distributions.

We first give the limiting Fredholm Pfaffian kernels that we wish to show as limits. These formulas were introduced in \cite{IMS22}, although many other formulas are known. For the purposes of this paper, these formulas will serve as definitions for the Tracy--Widom GOE, GSE, and Baik--Rains crossover distributions.

For $\delta\in \R$, we let $C_\delta$ denote the contour given by two lines starting at $\delta$ and making a $\pm\pi/3$ angle with the positive real line, oriented to start below and end above the real axis. 

\begin{definition}
\label{def: BR TW}
The \emph{Baik--Rains crossover distribution} is a family of distributions $F_{cross}(s;\xi)$ indexed by $\xi\in (0,\infty)$, with
\begin{equation*}
    F_{cross}(s;\xi)=\Pf(J-K^{(\xi)}_{cross})_{L^2(s,\infty)},
\end{equation*}
with
\begin{equation*}
    K^{(\xi)}_{cross}=\begin{pmatrix}
    k^{(\xi)}_{cross}(u,v)&-\partial_v k^{(\xi)}_{cross}(u,v)
    \\-\partial_u k^{(\xi)}_{cross}(u,v)&\partial_u\partial_v k^{(\xi)}_{cross}(u,v)
    \end{pmatrix},
\end{equation*}
where
\begin{equation*}
    k_{cross}^{(\xi)}(u,v)=\frac{1}{4}\frac{1}{(2\pi i)^2}\oint_{C_\delta} \oint_{C_\delta}\frac{(\alpha-\beta)(\xi+\alpha)(\xi+\beta)}{\alpha\beta(\alpha+\beta)(\xi-\alpha)(\xi-\beta)}e^{\frac{\alpha^3}{3}-\alpha u+\frac{\beta^3}{3}-\beta v}d\alpha d\beta,
\end{equation*}
and $\xi>\delta>0$. We define $F_{GSE}(s)=F_{cross}(s;\infty)$, given by
\begin{equation*}
    F_{GSE}(s)=\Pf(J-K_{GSE})_{L^2(s,\infty)},
\end{equation*}
with
\begin{equation*}
    K_{GSE}=\begin{pmatrix}
    k_{GSE}(u,v)&-\partial_v k_{GSE}(u,v)
    \\-\partial_u k_{GSE}(u,v)&\partial_u\partial_v k_{GSE}(u,v)
    \end{pmatrix},
\end{equation*}
where
\begin{equation*}
    k_{GSE}(u,v)=\frac{1}{4}\frac{1}{(2\pi i)^2}\oint_{C_1} \oint_{C_1}\frac{\alpha-\beta}{\alpha\beta(\alpha+\beta)}e^{\frac{\alpha^3}{3}-\alpha u+\frac{\beta^3}{3}-\beta v}d\alpha d\beta.
\end{equation*}
Finally, we define $F_{GOE}(s)$ as $F_{cross}(s;0)$, given by the formula
\begin{equation*}
    F_{GOE}(s)=\Pf(J-K_{GOE})_{L^2(s,\infty)},
\end{equation*}
with
\begin{equation*}
    K_{GOE}=\begin{pmatrix}
    k_{GOE}(u,v)&-\partial_v k_{GOE}(u,v)
    \\-\partial_u k_{GOE}(u,v)&\partial_u\partial_v k_{GOE}(u,v)
    \end{pmatrix},
\end{equation*}
where
\begin{equation*}
    k_{GOE}(u,v)=\frac{1}{4}\frac{1}{(2\pi i)^2}\oint_{C_1} \oint_{C_1}\frac{\alpha-\beta}{\alpha\beta(\alpha+\beta)}e^{\frac{\alpha^3}{3}-\alpha u+\frac{\beta^3}{3}-\beta v}d\alpha d\beta+b(u)-b(v)
\end{equation*}
and
\begin{equation*}
    b(u)=\frac{1}{2\pi i}\oint_{C_1}\frac{e^{\frac{\alpha^3}{3}-\alpha u}}{2\alpha}d\alpha.
\end{equation*}
\end{definition}
\begin{remark}
The extra summands in the definition of $F_{GOE}$ come from poles which cross the contour as $\xi\to 0$, see \cite{IMS22} for details. We will see the same phenomenon for the pre-limiting kernels when we study asymptotics in the $\rho=\frac{1}{2}$ regime.
\end{remark}

\subsection{Summation by parts}
We first establish some summation by parts formulas for $D_x^{(\gamma)}$, which we recall was defined by \eqref{eq: diff op}. The following result is easily seen by comparing the two sides of the equality.
\begin{lemma}
Let $f,g:\Z\to\R$ be of sufficient decay. We have the summation by parts formula
\begin{equation*}
    \sum_{x> s}(D_x^{(\gamma)}f)(x)g(x)=-\sum_{x>s}f(x)(D_x^{(\gamma)}g)(x)+B^{(\gamma)}(f,g)(s)+B^{(\gamma)}(g,f)(s),
\end{equation*}
where
\begin{equation*}
    B^{(\gamma)}(f,g)(s)=\frac{(1-\gamma)^2}{2}\sum_{i\geq 1}\sum_{1\leq j\leq i}\gamma^{i-1}f(s+j)g(s+j-i).
\end{equation*}
\end{lemma}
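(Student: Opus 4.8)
The statement to prove is the summation-by-parts identity for the difference operator $D_x^{(\gamma)}$, namely
\begin{equation*}
    \sum_{x> s}(D_x^{(\gamma)}f)(x)g(x)=-\sum_{x>s}f(x)(D_x^{(\gamma)}g)(x)+B^{(\gamma)}(f,g)(s)+B^{(\gamma)}(g,f)(s).
\end{equation*}
The plan is to simply expand both sides using the definition \eqref{eq: diff op} and reorganize the resulting double sums, tracking carefully the ``boundary'' terms that fail to cancel. First I would substitute $D_x^{(\gamma)}f(x)=\frac{(1-\gamma)^2}{2}\sum_{i\geq 1}\gamma^{i-1}(f(x+i)-f(x-i))$ into the left-hand side, obtaining a double sum over $x>s$ and $i\geq 1$ of $\frac{(1-\gamma)^2}{2}\gamma^{i-1}(f(x+i)-f(x-i))g(x)$. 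The goal is to shift the summation index $x$ so that the argument of $f$ is isolated and then compare with $-\sum_{x>s}f(x)(D_x^{(\gamma)}g)(x)=-\frac{(1-\gamma)^2}{2}\sum_{x>s}\sum_{i\geq 1}\gamma^{i-1}f(x)(g(x+i)-g(x-i))$.

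The key computation is the index manipulation. In the term $\sum_{x>s}f(x+i)g(x)$, substitute $x\mapsto x-i$ to get $\sum_{x>s+i}f(x)g(x-i)$; in the term $\sum_{x>s}f(x-i)g(x)$, substitute $x\mapsto x+i$ to get $\sum_{x>s-i}f(x)g(x+i)$. Thus the left-hand side becomes
\begin{equation*}
    \frac{(1-\gamma)^2}{2}\sum_{i\geq 1}\gamma^{i-1}\left(\sum_{x>s+i}f(x)g(x-i)-\sum_{x>s-i}f(x)g(x+i)\right).
\end{equation*}
Comparing with $-\sum_{x>s}f(x)(D_x^{(\gamma)}g)(x)=\frac{(1-\gamma)^2}{2}\sum_{i\geq 1}\gamma^{i-1}\left(\sum_{x>s}f(x)g(x-i)-\sum_{x>s}f(x)g(x+i)\right)$, the difference is supported on the index ranges where the sums disagree: for the first pair, the sum $\sum_{x>s+i}$ versus $\sum_{x>s}$ differs by the finite range $s<x\leq s+i$, i.e.\ $x=s+j$ for $1\leq j\leq i$, contributing $-\frac{(1-\gamma)^2}{2}\sum_{i\geq1}\sum_{1\leq j\leq i}\gamma^{i-1}f(s+j)g(s+j-i)$; for the second pair, $\sum_{x>s-i}$ versus $\sum_{x>s}$ differs by $s-i<x\leq s$, which after the substitution $x=s-i+j'$ (equivalently relabeling) produces a matching contribution with the roles of $f$ and $g$ swapped. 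Collecting these finite boundary pieces yields exactly $B^{(\gamma)}(f,g)(s)+B^{(\gamma)}(g,f)(s)$, up to verifying the signs and the relabeling in the second term reproduces $B^{(\gamma)}(g,f)(s)=\frac{(1-\gamma)^2}{2}\sum_{i\geq 1}\sum_{1\leq j\leq i}\gamma^{i-1}g(s+j)f(s+j-i)$.

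There is no real obstacle here beyond bookkeeping: the only point requiring care is the justification of rearranging the double series, which is why the hypothesis ``$f,g$ of sufficient decay'' is imposed — one needs absolute convergence of $\sum_{x,i}|\gamma|^{i-1}|f(\cdot)g(\cdot)|$ so that Fubini applies and the index shifts are legitimate. Given the exponential-type decay bounds available for the functions to which this lemma will be applied (kernels of the form $|f(x)|\leq C\gamma'^{-|x|}$ with $\gamma'>|\gamma|$, as noted after \eqref{eq: diff op}), this is immediate, and I would state it as such. The most error-prone step in writing it out is matching the second boundary term to $B^{(\gamma)}(g,f)(s)$ after the index substitution; I would double-check this by evaluating both sides at a simple test case (e.g.\ $f,g$ indicator functions of single points) to fix all signs.
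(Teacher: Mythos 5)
Your approach — substituting the definition of $D_x^{(\gamma)}$, shifting the summation index in each term, and reading off the ``leftover'' finite ranges — is the natural one, and since the paper itself gives no proof (it only remarks that the identity is ``easily seen by comparing the two sides''), this is essentially the intended argument. The index-shift bookkeeping in your second paragraph is correct.

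However, the sign check you defer at the end is exactly where the issue lies, and it does not resolve in favor of the statement as printed. You correctly found that the first leftover piece contributes $-\tfrac{(1-\gamma)^2}{2}\sum_{i\geq 1}\sum_{1\leq j\leq i}\gamma^{i-1}f(s+j)g(s+j-i) = -B^{(\gamma)}(f,g)(s)$. If you carry out the same relabelling $x=s+j-i$ for the second range $s-i<x\leq s$, you get $-\tfrac{(1-\gamma)^2}{2}\sum_{i\geq 1}\sum_{1\leq j\leq i}\gamma^{i-1}f(s+j-i)g(s+j) = -B^{(\gamma)}(g,f)(s)$, also with a minus sign; the two ranges are $(\sum_{x>s+i}-\sum_{x>s})$ and $(\sum_{x>s}-\sum_{x>s-i})$, and both produce a net minus. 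So the identity that your computation actually proves is
\begin{equation*}
    \sum_{x>s}(D_x^{(\gamma)}f)(x)g(x) = -\sum_{x>s}f(x)(D_x^{(\gamma)}g)(x) - B^{(\gamma)}(f,g)(s) - B^{(\gamma)}(g,f)(s),
\end{equation*}
i.e.\ the boundary terms enter with the opposite sign from what the lemma states. You can confirm this with the test case you suggested: take $f=g=\delta_0+\delta_1$ and $s=0$; then $\sum_{x>0}(D_x^{(\gamma)}f)(x)f(x)=(D^{(\gamma)}f)(1)=-\tfrac{(1-\gamma)^2}{2}$, while $B^{(\gamma)}(f,f)(0)=+\tfrac{(1-\gamma)^2}{2}$, so the identity with a plus sign would force $A=-A+2B$ with $A=-B$, a contradiction, whereas $A=-A-2B$ holds. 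In short, your proof strategy is right and, completed carefully, it reveals a sign error in the stated lemma (equivalently, $B^{(\gamma)}$ should carry an overall minus sign). This is harmless for the rest of the paper, since in Proposition~\ref{prop: approx distr pf identity} and thereafter only the magnitude of the boundary terms is used, but your write-up should not assert that the pieces ``yield exactly $B^{(\gamma)}(f,g)(s)+B^{(\gamma)}(g,f)(s)$'' without noting the sign.
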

We will only need to use the following special case.
\begin{lemma}
\label{lem: sum by parts}
Let $f:\Z^2\to\R$ be of sufficient decay, and let $\Delta(x,y)$ be the indicator that $x=y$. Then
\begin{equation*}
    \sum_{x,y>s}((D_x^{(\gamma)}-D_y^{(\gamma)})\Delta(x,y))f(x,y)=-\sum_{x>s}((D_y^{(\gamma)}-D_x^{(\gamma)})f(x,y))|_{y=x}+(B^{(\gamma)}_x-B^{(\gamma)}_y)f(s,s),
\end{equation*}
where $|_{y=x}$ means we substitute $y=x$ into the expression, and
\begin{equation*}
    (B^{(\gamma)}_x-B^{(\gamma)}_y)f(s,s)=\frac{(1-\gamma)^2}{2}\sum_{i\geq 1}\sum_{1\leq j\leq i}\gamma^{i-1}(f(s+j-i,s+j)-f(s+j,s+j-i)).
\end{equation*}
\end{lemma}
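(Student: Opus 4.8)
The plan is to obtain Lemma~\ref{lem: sum by parts} from the one-variable summation by parts formula in the preceding lemma, applied separately in each of the two coordinates with $\Delta$ in the role of one of the two test functions. First I would fix $y$ and apply that lemma in the variable $x$, with its $f$ taken to be $x\mapsto\Delta(x,y)$ and its $g$ taken to be $x\mapsto f(x,y)$, to rewrite $\sum_{x>s}(D_x^{(\gamma)}[\Delta(\cdot,y)])(x)f(x,y)$; then I would sum the result over $y>s$, which handles the $D_x^{(\gamma)}\Delta$ piece of the left-hand side. The mirror-image computation in the variable $y$ handles the $D_y^{(\gamma)}\Delta$ piece, and subtracting the two expressions gives the claimed identity.

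The reason everything collapses to a sum over the diagonal is that $\Delta(x,y)=I_{x=y}$: whenever a $\Delta$ survives a manipulation, the associated double sum reduces via $\sum_{y>s}\Delta(x,y)h(x,y)=h(x,x)$ for $x>s$. Thus the ``$-\sum f(Dg)$'' contributions coming out of the one-variable formula become, after the outer summation, $-\sum_{x>s}\left((D_y^{(\gamma)}-D_x^{(\gamma)})f(x,y)\right)|_{y=x}$, which is the first term on the right. For the boundary parts, exactly one of the two $B^{(\gamma)}$-terms in each application should drop out: it carries a factor $\Delta(s+j-i,\cdot)$, forcing a free index to equal $s+j-i$, and since $1\leq j\leq i$ this is $\leq s$, i.e.\ outside the summation region; the surviving terms, namely $\tfrac{(1-\gamma)^2}{2}\sum_{i\geq1}\sum_{1\leq j\leq i}\gamma^{i-1}f(s+j-i,s+j)$ from the $x$-application and $\tfrac{(1-\gamma)^2}{2}\sum_{i\geq1}\sum_{1\leq j\leq i}\gamma^{i-1}f(s+j,s+j-i)$ from the $y$-application, combine to $(B_x^{(\gamma)}-B_y^{(\gamma)})f(s,s)$.

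An equivalent and arguably cleaner route, in the spirit of how the preceding one-variable lemma itself is established (by comparing the two sides), is to verify the identity directly. Unwinding the definition of $D^{(\gamma)}$ gives $(D_x^{(\gamma)}-D_y^{(\gamma)})\Delta(x,y)=(1-\gamma)^2\gamma^{y-x-1}$ for $y>x$, its negative for $y<x$, and $0$ on the diagonal, so the left-hand side equals $(1-\gamma)^2\sum_{s<x<y}\gamma^{y-x-1}\left(f(x,y)-f(y,x)\right)$ with all indices exceeding $s$. Then I would expand $-\sum_{x>s}\left((D_y^{(\gamma)}-D_x^{(\gamma)})f(x,y)\right)|_{y=x}$ using the definition and reindex the resulting geometric sums, pairing the forward increments as $(x,x+i)$ and the backward ones as $(x-i,x)$; this recovers exactly the bulk sum above together with a leftover double sum over $\{u\leq s<v\}$, which coincides with $(B_x^{(\gamma)}-B_y^{(\gamma)})f(s,s)$ after the substitution $v=s+j$, $u=s+j-i$. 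I do not expect any genuine obstacle: the whole computation is elementary, and the only point needing care is the book-keeping of signs and of which geometric series lie inside versus outside the region $\{x,y>s\}$. The decay hypothesis on $f$ is used only to make each rearrangement absolutely convergent, exactly as it was used to make $D^{(\gamma)}$ well defined in the first place.
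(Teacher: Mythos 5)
Your strategy matches the paper's: apply the one-variable summation-by-parts lemma once in $x$ (to $\Delta(\cdot,y)$ and $f(\cdot,y)$), once in $y$, notice that one boundary term from each application vanishes because its $\Delta$ factor forces an index to lie at or below $s$, and assemble what is left. That structure is correct, and so is your analysis of which boundary terms survive. The gap is a sign, and it sits at exactly the step you flagged as the only delicate one. Tracking the two ``$-\sum f(Dg)$'' pieces, and remembering that the $D_y^{(\gamma)}\Delta$ part of the left side enters with a minus sign, they combine to $-\sum_{x>s}(D_x^{(\gamma)}f)(x,y)|_{y=x}+\sum_{x>s}(D_y^{(\gamma)}f)(x,y)|_{y=x}=+\sum_{x>s}\bigl((D_y^{(\gamma)}-D_x^{(\gamma)})f(x,y)\bigr)|_{y=x}$, not $-\sum_{x>s}\bigl((D_y^{(\gamma)}-D_x^{(\gamma)})f(x,y)\bigr)|_{y=x}$ as stated. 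Your alternative direct-verification route would surface the same thing: take $\gamma=0$, $s=0$, and $f$ supported on the single point $(1,2)$; the left side equals $1$, while the displayed right side evaluates to $-1$.

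What is actually true is that the whole right side should be negated, i.e.\ $\sum_{x,y>s}\bigl((D_x^{(\gamma)}-D_y^{(\gamma)})\Delta(x,y)\bigr)f(x,y)=\sum_{x>s}\bigl((D_y^{(\gamma)}-D_x^{(\gamma)})f(x,y)\bigr)|_{y=x}-(B_x^{(\gamma)}-B_y^{(\gamma)})f(s,s)$, and, consistently, the boundary terms in the preceding one-variable lemma should carry a minus sign (check $f(x)=I_{x=0}$, $g(x)=I_{x=1}$ at $\gamma=0$, $s=0$). These are compensating typos in the source, harmless downstream where only the magnitude of the $B$-terms matters, but your writeup asserts that the combination ``gives the claimed identity'' without carrying it through, and that final bookkeeping is precisely where it fails; you would need to either fix the signs or flag them.
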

\begin{proof}
We simply apply the previous lemma. Note that $B^{(\gamma)}_x(f(x,y),\Delta(x,y))=0$ since each term contains a factor $\Delta(s+j-i,y)$ with $j\leq i$ and $y>s$, and similarly for $B_y^{(\gamma)}(f(x,y),\Delta(x,y))$. The expression for $(B^{(\gamma)}_x-B^{(\gamma)}_y)f(s)$ comes from evaluating the remaining boundary terms.
\end{proof}

More generally, for variables $x_1,\dotsc, x_n$, we define analogously $D_M^{(\gamma)}$ and $B_M^{(\gamma)}$ for any matching $M$ (i.e. a set of disjoint pairs from $[n]$), with the convention that the smaller index always appears as the first term.

We will need to repeatedly use a multivariate version of this integration by parts. First, we introduce some notation. We let $x=(x_1,\dotsc, x_n)$ and $y=(y_1,\dotsc, y_n)$ denote a set of $2n$ variables. For a subset $I\subseteq [n]=\{1,\dotsc, n\}$, we let $D_{I}^{(\gamma)}=\prod _{i\in I}(D_{x_i}^{(\gamma)}-D{y_i}^{(\gamma)})$ and $B_I^{(\gamma)}=\prod_{i\in I}(B_{x_i}^{(\gamma)}-B_{y_i}^{(\gamma)})$. Note that since the different operators act on disjoint sets of variables, they all commute.
\begin{lemma}
\label{lem: int by parts multivariate}
We let $f:\Z^{2n}\to \R$ be of sufficient decay. Then
\begin{equation*}
    \sum_{\substack{x_1,\dotsc, x_n>s\\y_1,\dotsc, y_n>s}}(D_{[n]}^{(\gamma)}\prod \Delta(x_i,y_i))f(x,y)=\sum_{I\subseteq [n]}(-1)^{|I|}\sum_{x_i>s, i\in I}D_I^{(\gamma)}B_{I^c}^{(\gamma)}f(x,y)\Bigg|_{\substack{y_i=x_i, i\in I\\ x_i=y_i=s, i\in I^c}}.
\end{equation*}
\end{lemma}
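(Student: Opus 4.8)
The plan is to prove the identity by induction on $n$, stripping off one pair $(x_i,y_i)$ of variables at a time via the single-variable summation by parts of Lemma \ref{lem: sum by parts}. For $n=1$ the claim is precisely that lemma: the subset $I=\{1\}$ accounts for the interior term $-\sum_{x_1>s}(D_{x_1}^{(\gamma)}-D_{y_1}^{(\gamma)})f\big|_{y_1=x_1}$ and $I=\emptyset$ for the boundary term $(B_{x_1}^{(\gamma)}-B_{y_1}^{(\gamma)})f(s,s)$.

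For the inductive step, the first observation is a factorization: since $D_{x_i}^{(\gamma)}$ and $D_{y_i}^{(\gamma)}$ are linear operators acting only on $x_i,y_i$, while $\Delta(x_j,y_j)$ depends on neither of these for $j\neq i$,
\[
    D_{[n]}^{(\gamma)}\prod_{i=1}^n\Delta(x_i,y_i)=\prod_{i=1}^n\Big((D_{x_i}^{(\gamma)}-D_{y_i}^{(\gamma)})\Delta(x_i,y_i)\Big),
\]
the $i$-th factor depending only on $(x_i,y_i)$. Pulling out the index-$n$ factor, the left-hand side of the lemma becomes $\sum_{x_n,y_n>s}\big((D_{x_n}^{(\gamma)}-D_{y_n}^{(\gamma)})\Delta(x_n,y_n)\big)\,G(x_n,y_n)$, where $G(x_n,y_n)=\sum_{x_i,y_i>s,\,i<n}\big(\prod_{i<n}(D_{x_i}^{(\gamma)}-D_{y_i}^{(\gamma)})\Delta(x_i,y_i)\big)f(x,y)$ is, for fixed $(x_n,y_n)$, a sum over $2(n-1)$ variables of exactly the type covered by the inductive hypothesis (with $(x_n,y_n)$ as inert parameters). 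Applying Lemma \ref{lem: sum by parts} in the pair $(x_n,y_n)$ writes this as an interior term $-\sum_{x_n>s}(D_{x_n}^{(\gamma)}-D_{y_n}^{(\gamma)})G\big|_{y_n=x_n}$ plus a boundary term $(B_{x_n}^{(\gamma)}-B_{y_n}^{(\gamma)})G(s,s)$; one then substitutes the inductive expansion of $G$, a sum over subsets of $[n-1]$, into each.

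The step that makes the pieces assemble is that the index-$n$ operations $D_{x_n}^{(\gamma)}$, $D_{y_n}^{(\gamma)}$, $B_{x_n}^{(\gamma)}$, $B_{y_n}^{(\gamma)}$, the outer sum $\sum_{x_n>s}$, and the substitutions $y_n=x_n$ and $x_n=y_n=s$ all act on variables disjoint from those appearing in the index-$<n$ data, hence commute with everything in the inductive expansion of $G$. Consequently the interior term yields precisely the terms of the asserted right-hand side indexed by subsets $I\ni n$: the extra $-1$ merges with $(-1)^{|I\setminus\{n\}|}$ to give $(-1)^{|I|}$, the factor $D_{x_n}^{(\gamma)}-D_{y_n}^{(\gamma)}$ extends $D_{I\setminus\{n\}}^{(\gamma)}$ to $D_I^{(\gamma)}$, and the substitution $y_n=x_n$ together with the sum over $x_n$ is exactly the additional data attached to having $n\in I$. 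Likewise the boundary term yields the terms indexed by $I\not\ni n$, with $B_{x_n}^{(\gamma)}-B_{y_n}^{(\gamma)}$ extending $B_{([n-1]\setminus I)}^{(\gamma)}$ to $B_{([n]\setminus I)}^{(\gamma)}$ and the substitution $x_n=y_n=s$ matching the data attached to $n\in I^c$. Summing over the two cases gives the identity.

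I expect the only genuine work to be organizational: keeping the signs $(-1)^{|I|}$ and the subset bookkeeping consistent through the induction and double-checking them against the single-variable lemma, plus the routine point that "sufficient decay" is preserved under each application of $D^{(\gamma)}$ and each boundary operation — automatic here since both are controlled by a geometric factor $\gamma$ with $|\gamma|<1$ (in the applications $f$ is a kernel decaying exponentially). There is no analytic obstacle; the content is entirely in the combinatorics of the induction.
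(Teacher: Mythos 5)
Your proof is correct and takes essentially the same route as the paper, which simply says this ``follows immediately from applying the previous lemma to each pair of variables and expanding'' --- your induction is just a careful spelling-out of that expansion, exploiting exactly the factorization $D_{[n]}^{(\gamma)}\prod\Delta(x_i,y_i)=\prod_i\bigl((D_{x_i}^{(\gamma)}-D_{y_i}^{(\gamma)})\Delta(x_i,y_i)\bigr)$ and the independence of the per-pair operators. One caveat: you say the $n=1$ case is ``precisely'' Lemma~\ref{lem: sum by parts}, but that lemma as printed has interior term $-\sum_{x>s}\bigl((D_y^{(\gamma)}-D_x^{(\gamma)})f\bigr)|_{y=x}$, which differs by a sign from the $-\sum_{x>s}\bigl((D_x^{(\gamma)}-D_y^{(\gamma)})f\bigr)|_{y=x}$ that you use and that Lemma~\ref{lem: int by parts multivariate} asserts at $n=1$; your sign is the one consistent with deriving Lemma~\ref{lem: sum by parts} from Lemma~\ref{lem: boundary bound}'s predecessor, so Lemma~\ref{lem: sum by parts} appears to carry a typo that you have (perhaps unknowingly) corrected.
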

\begin{proof}
This follows immediately from applying the previous lemma to each pair of variables and expanding.
\end{proof}

Finally, we will eventually want to show that asymptotically, these boundary terms vanish on functions which are skew-symmetric. The following simple estimate, which follows from splitting the sum at $i=N+1$, will suffice.

\begin{lemma}
\label{lem: boundary bound}
Let $f:\Z^2\to\R$ such that $|f(x)|\leq C((1-c)\gamma^{-1})^{|x|}$ for some $c$. Then for all $N\geq 0$,
\begin{equation*}
    |B^{(\gamma)}f(s,s)|\leq (1-\gamma)^{-2}(C(1-c)^N+\|f|_{[s-N,s+N]^2}\|_\infty).
\end{equation*}
\end{lemma}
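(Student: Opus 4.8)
The statement to prove is Lemma~\ref{lem: boundary bound}: for $f:\Z^2\to\R$ with $|f(x)|\leq C((1-c)\gamma^{-1})^{|x|}$ and any $N\geq 0$,
\[
    |B^{(\gamma)}f(s,s)|\leq (1-\gamma)^{-2}\bigl(C(1-c)^N+\|f|_{[s-N,s+N]^2}\|_\infty\bigr).
\]
Recall $B^{(\gamma)}f(s,s)=\frac{(1-\gamma)^2}{2}\sum_{i\geq 1}\sum_{1\leq j\leq i}\gamma^{i-1}\bigl(f(s+j-i,s+j)-f(s+j,s+j-i)\bigr)$, and that here $|x|$ should be read as the $\ell^\infty$-norm (or $\ell^1$-norm) of the pair, so that a summand indexed by $(i,j)$ with $1\leq j\leq i$ has both coordinates $s+j$ and $s+j-i$ within distance $i$ of $s$. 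First I would split the double sum at $i=N+1$: write $B^{(\gamma)}f(s,s)=\Sigma_{\leq N}+\Sigma_{>N}$ where $\Sigma_{\leq N}$ collects the terms with $i\leq N$ and $\Sigma_{>N}$ the terms with $i\geq N+1$.

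For the near-diagonal part $\Sigma_{\leq N}$: when $1\leq j\leq i\leq N$, both arguments $s+j$ and $s+j-i$ lie in $[s-N,s+N]$, so each of the (at most two) evaluations of $f$ is bounded by $\|f|_{[s-N,s+N]^2}\|_\infty$. Bounding the difference by twice the sup and using $\frac{(1-\gamma)^2}{2}\sum_{i\geq 1} i\gamma^{i-1}=\frac{(1-\gamma)^2}{2}\cdot\frac{1}{(1-\gamma)^2}=\tfrac12$ for the inner count (there are $i$ values of $j$), which is bounded by $\tfrac12 (1-\gamma)^{-2}(1-\gamma)^2 = \tfrac12$ — more crudely, $\frac{(1-\gamma)^2}{2}\sum_{i\geq 1}i\gamma^{i-1}=\tfrac12\leq (1-\gamma)^{-2}$ since $(1-\gamma)^{-2}\geq 1$ — gives $|\Sigma_{\leq N}|\leq (1-\gamma)^{-2}\|f|_{[s-N,s+N]^2}\|_\infty$. (Here one uses $0\le\gamma<1$ so $(1-\gamma)^{-2}\ge 1$; this is the regime in which $B^{(\gamma)}$ is being applied.)

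For the tail part $\Sigma_{>N}$: for $i\geq N+1$ and $1\leq j\leq i$, the argument $s+j-i$ satisfies $|j-i|\le i-1$, and the decay hypothesis gives $|f(\cdot)|\le C((1-c)\gamma^{-1})^{m}$ where $m$ is the max-coordinate distance to $s$; since $i\geq N+1$, each such value is at most $C(1-c)^{i}\gamma^{-i}$... I would arrange the bound so that the surplus $(1-c)^i\le (1-c)^N\cdot(1-c)^{i-N}$ and the factors of $\gamma^{-i}$ cancel against $\gamma^{i-1}$ up to a $\gamma^{-1}$, keeping things convergent via $0\le 1-c<1$. Concretely: $\frac{(1-\gamma)^2}{2}\sum_{i\geq N+1}\gamma^{i-1}\cdot i\cdot 2C((1-c)\gamma^{-1})^{i}\le C(1-\gamma)^2\sum_{i\ge N+1} i (1-c)^i\gamma^{-1}$, and choosing the decay constant $(1-c)\gamma^{-1}$ to already absorb a factor of $\gamma$ (as stated in the hypothesis, $|f(x)|\le C((1-c)\gamma^{-1})^{|x|}$ with the $\gamma^{-1}$ built in so the series telescopes cleanly) yields $|\Sigma_{>N}|\le C(1-\gamma)^{-2}(1-c)^N$ after summing the geometric-type series $\sum_{i}i(1-c)^{i-N}\le (1-\gamma)^{-2}\cdot\text{const}$ — actually one can just bound $(1-\gamma)^2\sum i(1-c)^i\le$ a constant times $(1-c)^N$ absorbed into $(1-\gamma)^{-2}C(1-c)^N$.

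Adding the two pieces gives the claimed inequality. The only real subtlety — and the step I would be most careful about — is the bookkeeping of constants in $\Sigma_{>N}$: one must verify that the geometric sum over $i\ge N+1$ contributes exactly a clean $(1-c)^N$ with the remaining constant absorbable into $(1-\gamma)^{-2}$, which forces the precise normalization $((1-c)\gamma^{-1})^{|x|}$ in the hypothesis (the $\gamma^{-1}$ inside the base is what makes the $\gamma^{i-1}$ weight harmless). Everything else is the routine "split the sum, bound near-diagonal terms by the local sup and far terms by the decay" argument flagged in the text. I would state it in two short displayed estimates and conclude.
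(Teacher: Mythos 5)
Your proof follows exactly the route the paper itself indicates — the paper's ``proof'' is literally the one-line remark that the lemma ``follows from splitting the sum at $i=N+1$,'' and you implement precisely that split, bounding the near range by the local supremum and the far range by the decay hypothesis. The near-diagonal estimate is correct and cleanly gives $|\Sigma_{\leq N}|\leq \|f|_{[s-N,s+N]^2}\|_\infty\leq(1-\gamma)^{-2}\|f|_{[s-N,s+N]^2}\|_\infty$ via $\sum_{i\geq 1}i\gamma^{i-1}=(1-\gamma)^{-2}$.

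Two small slips in the tail estimate are worth flagging. First, the interjected inequality ``$\sum_{i}i(1-c)^{i-N}\leq(1-\gamma)^{-2}\cdot\text{const}$'' cannot be right as written: the left side depends on $c$ and $N$, not on $\gamma$; you correct course immediately after, but the displayed claim is wrong. Second, if one tracks the $j$-sum carefully (there are $i$ values of $j$), the tail sums to $(1-\gamma)^2 C\gamma^{-1}\sum_{i>N}i(1-c)^i$, which is of order $N(1-c)^{N+1}/c$ rather than exactly $(1-c)^N$; the extra linear-in-$N$ prefactor has to be absorbed by passing to a slightly smaller decay rate (replace $c$ by $c'<c$, use $N(1-c)^N\leq C_{c,c'}(1-c')^N$), which is harmless under the paper's convention that $c$ and $C$ may change from line to line, and is what the application at $N=\tau^{1/6}$ actually uses. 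Making that absorption explicit would tighten the write-up, but the argument as a whole is the intended one.
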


\subsection{Extracting asymptotics from signed measures}
\label{sec: conv inv}
We needed to add random shifts to our observables of interest to obtain exact formulas. Asymptotically, these random shifts do not matter since they are of constant order and we scale non-trivially. However, when $t>q$, $\chi\sim RS(q,t)$ is not actually a random variable, but rather a signed measure. We thus show that asymptotics can still be extracted in this case.

We first show that $RS(q,t)$ is itself the convolution of two simpler signed measures, and that the signed portion has a convolutional inverse.
\begin{lemma}
We have
\begin{equation*}
    \frac{q^nh_n(-t/q;q)}{(q;q)_n}=\sum_{k+l=n}\frac{q^k}{(q;q)_k}\frac{(-t)^l}{(q;q)_l}.
\end{equation*}
\end{lemma}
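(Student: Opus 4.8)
The plan is to prove this by expanding the left-hand side directly from the definition of the Rogers--Szeg\"o polynomials. Recall that $h_n(x;q)=\sum_{k=0}^n\binom{n}{k}_qx^k$ with $\binom{n}{k}_q=\frac{(q;q)_n}{(q;q)_k(q;q)_{n-k}}$. First I would substitute $x=-t/q$ and multiply through by $q^n/(q;q)_n$, which cancels the factor $(q;q)_n$ and produces
\begin{equation*}
\frac{q^nh_n(-t/q;q)}{(q;q)_n}=\sum_{k=0}^n\frac{q^{n-k}(-t)^k}{(q;q)_k(q;q)_{n-k}}.
\end{equation*}
Then I would re-index: writing $l=k$ for the exponent of $-t$ and $k'=n-l$ for the exponent of $q$, so that $k'+l=n$, the summand becomes $\frac{q^{k'}}{(q;q)_{k'}}\cdot\frac{(-t)^l}{(q;q)_l}$, which is exactly the claimed right-hand side.

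Since the identity is simply a rearrangement of the definition of the $q$-binomial coefficient together with a change of summation variable, there is no genuine obstacle; the only thing to watch is the bookkeeping in the re-indexing. As an alternative one can argue at the level of generating functions: from $\sum_{n\geq0}\frac{h_n(x;q)}{(q;q)_n}z^n=\frac{1}{(z;q)_\infty(xz;q)_\infty}$ with $x=-t/q$, the substitution $z\mapsto qz$ shows that $\frac{q^nh_n(-t/q;q)}{(q;q)_n}$ is the coefficient of $z^n$ in $\frac{1}{(qz;q)_\infty(-tz;q)_\infty}$; expanding each factor via $\frac{1}{(az;q)_\infty}=\sum_{k}\frac{a^kz^k}{(q;q)_k}$ and reading off the coefficient of $z^n$ in the resulting Cauchy product gives $\sum_{k+l=n}\frac{q^k}{(q;q)_k}\frac{(-t)^l}{(q;q)_l}$. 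Either route settles the statement in a couple of lines.
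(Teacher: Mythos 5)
Your first argument is exactly the paper's proof: expand $h_n(-t/q;q)$ via the $q$-binomial definition, cancel $(q;q)_n$, and re-index the sum. The generating-function alternative you sketch is a valid second route, but the main line of reasoning is the same as the paper's.
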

\begin{proof}
We compute
\begin{equation*}
    \frac{q^nh_n(-t/q;q)}{(q;q)_n}=\sum_{k}\frac{q^{n-k}(-t)^k}{(q;q)_k(q;q)_{n-k}}
\end{equation*}
and the result follows.
\end{proof}

\begin{lemma}
We have
\begin{equation*}
    \sum_{k+l=n}\frac{(-t)^k}{(q;q)_k}\frac{t^lq^{l\choose 2}}{(q;q)_l}=\begin{cases} 1&\text{ if }n=0
    \\0 &\text{ if }n>0
    \end{cases}
\end{equation*}
\end{lemma}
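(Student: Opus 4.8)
The identity to prove is the statement that the sequence $a_n = \frac{(-t)^n}{(q;q)_n}$ has convolutional inverse $b_n = \frac{t^n q^{\binom{n}{2}}}{(q;q)_n}$, i.e.\ $\sum_{k+l=n} a_k b_l = \delta_{n,0}$. The natural approach is to encode both sequences as formal power series and recognize the resulting generating functions as reciprocal $q$-series via classical $q$-exponential identities.

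First I would pass to generating functions in an auxiliary formal variable $x$. Define $A(x) = \sum_{k\ge 0} \frac{(-t)^k}{(q;q)_k} x^k$ and $B(x) = \sum_{l\ge 0} \frac{t^l q^{\binom{l}{2}}}{(q;q)_l} x^l$. The claimed identity is exactly $A(x) B(x) = 1$ as formal power series in $x$. Now I would invoke the two standard Euler $q$-exponential identities (see e.g.\ \cite{M79} or any reference on the $q$-binomial theorem):
\begin{equation*}
    \sum_{k\ge 0} \frac{y^k}{(q;q)_k} = \frac{1}{(y;q)_\infty}, \qquad \sum_{l\ge 0} \frac{q^{\binom{l}{2}} y^l}{(q;q)_l} = (-y;q)_\infty,
\end{equation*}
valid as identities of formal power series in $y$. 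Applying the first with $y = -tx$ gives $A(x) = \frac{1}{(-tx;q)_\infty}$, and applying the second with $y = tx$ gives $B(x) = (-tx;q)_\infty$. Hence $A(x)B(x) = 1$, and comparing coefficients of $x^n$ yields the lemma: the coefficient of $x^0$ is $1$, and the coefficient of $x^n$ for $n > 0$ is $0$.

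There is essentially no obstacle here; the only point requiring a word of care is the purely formal nature of the manipulation — the Euler identities are being used as identities of formal power series (equivalently, of coefficient sequences), so no convergence hypothesis on $q$ or $t$ is needed, consistent with the paper's convention of treating such statements formally. One could equally give a self-contained induction: the convolution $\sum_{k+l=n} a_k b_l$ for $n \ge 1$ can be reorganized using the $q$-Pascal recurrence $\binom{n}{k}_q = \binom{n-1}{k-1}_q + q^k\binom{n-1}{k}_q$ after clearing the $(q;q)_n$ denominators, but the generating-function argument above is cleaner and I would present that.
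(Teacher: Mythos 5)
Your proof is correct and takes essentially the same route as the paper: both pass to the generating function in an auxiliary variable, factor the double sum into a product of two $q$-exponential series, and apply the $q$-binomial (Euler) identities to see the product is $1$. You have merely spelled out explicitly which two Euler identities are meant where the paper just says "the $q$-binomial theorem on each factor."
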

\begin{proof}
We compute the generating function
\begin{equation*}
    \sum_{k,l}\frac{(-t)^k}{(q;q)_k}\frac{t^lq^{l\choose 2}}{(q;q)_l}z^{k+l}=\sum_{k}\frac{(-tz)^k}{(q;q)_k}\sum_l\frac{(tz)^lq^{l\choose 2}}{(q;q)_l}=1
\end{equation*}
using the $q$-binomial theorem on each factor. Taking the coefficient of $z^n$ gives the desired result.
\end{proof}

Next, we show that convolution against a fixed signed measure of sufficient decay has no effect on the asymptotics.
\begin{lemma}
\label{lem: signed measure asymp}
Let $f:\N\to \R$ be a distribution function on $\N$, and let $F_n$ denote a sequence of functions on $\Z$ such that $\sum_{x} |F_n(x)|$ is uniformly bounded, $\sum F_n(x)=1$, and $\sup_{x\geq a(n)+b(n)(s-1)}|F_n(x)|\to 0$ as $n\to\infty$, and
\begin{equation*}
    \sum_{x\leq  a(n)+b(n)s}F_n(x)\to \mu(-\infty, s)
\end{equation*}
for functions $a,b:\N\to\N$ with $b(n)\to\infty$ and $\mu$ a probability measure on $\R$. Then
\begin{equation*}
    \sum_{x+y\leq  a(n)+b(n)s}f(x)F_n(y)\to \mu(-\infty, s)
\end{equation*}
\end{lemma}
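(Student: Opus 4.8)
The plan is to show that convolving the fixed distribution $f$ against the sequence $F_n$ does not disturb the limit, by splitting the convolution sum according to whether the "$f$-coordinate" $x$ is small or large. Concretely, fix $\varepsilon>0$ and choose $M$ so that $\sum_{x>M}f(x)<\varepsilon$ (possible since $f$ is a distribution function on $\N$, hence $\sum_x f(x)=1$). Write
\begin{equation*}
    \sum_{x+y\leq a(n)+b(n)s}f(x)F_n(y)=\sum_{x\leq M}f(x)\sum_{y\leq a(n)+b(n)s-x}F_n(y)+\sum_{x>M}f(x)\sum_{y\leq a(n)+b(n)s-x}F_n(y).
\end{equation*}
The second term is bounded in absolute value by $\bigl(\sup_n\sum_y|F_n(y)|\bigr)\sum_{x>M}f(x)\leq C\varepsilon$, using the uniform bound on $\sum_y|F_n(y)|$. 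So it suffices to control the first term.

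For the first term, for each fixed $x\le M$ I would like to say $\sum_{y\leq a(n)+b(n)s-x}F_n(y)\to\mu(-\infty,s)$. Write $a(n)+b(n)s-x = a(n)+b(n)\bigl(s-\tfrac{x}{b(n)}\bigr)$; since $b(n)\to\infty$ and $x\le M$ is fixed, $s-\tfrac{x}{b(n)}\to s$. The hypothesis gives convergence of $\sum_{y\le a(n)+b(n)s'}F_n(y)$ to $\mu(-\infty,s')$ only for fixed $s'$, so I need to interpolate: for $n$ large, $s-\tfrac{x}{b(n)}$ lies between two fixed values $s'< s< s''$, and by the monotone-in-threshold structure of the partial sums of a (signed, but absolutely summable) measure one can sandwich. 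A cleaner route: note that $\sum_{y\le a(n)+b(n)s - x}F_n(y) - \sum_{y\le a(n)+b(n)s}F_n(y) = \sum_{a(n)+b(n)s-x < y \le a(n)+b(n)s}F_n(y)$ (or minus this, if $x<0$ — but $x\ge 0$ here), a sum of at most $M$ terms each with index $\ge a(n)+b(n)(s-1)$ for $n$ large (since $b(n)\to\infty$ makes $a(n)+b(n)s-x \ge a(n)+b(n)(s-1)$ eventually). By the hypothesis $\sup_{x\ge a(n)+b(n)(s-1)}|F_n(x)|\to 0$, this difference is at most $M\cdot o(1)\to 0$. Hence $\sum_{y\le a(n)+b(n)s-x}F_n(y)\to\mu(-\infty,s)$ for each fixed $x$.

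Therefore, by dominated convergence on the finite sum over $x\le M$ (dominated by $f(x)\cdot C$, summable),
\begin{equation*}
    \sum_{x\leq M}f(x)\sum_{y\leq a(n)+b(n)s-x}F_n(y)\longrightarrow \Bigl(\sum_{x\le M}f(x)\Bigr)\mu(-\infty,s),
\end{equation*}
and $\sum_{x\le M}f(x)\ge 1-\varepsilon$, so this limit lies within $\varepsilon\,|\mu(-\infty,s)|\le \varepsilon$ of $\mu(-\infty,s)$. Combining the two terms, $\limsup_n\bigl|\sum_{x+y\le a(n)+b(n)s}f(x)F_n(y)-\mu(-\infty,s)\bigr|\le (C+1)\varepsilon$; letting $\varepsilon\downarrow 0$ finishes. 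The only mild subtlety — and the one place to be careful — is the shift-of-threshold argument in the previous paragraph: one must use both the pointwise threshold convergence and the uniform smallness of $F_n$ on the far tail to absorb the $O(1/b(n))$ perturbation of $s$; everything else is bookkeeping with the uniform $\ell^1$ bound.
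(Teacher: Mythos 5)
Your proof is correct and takes essentially the same approach as the paper: truncate the sum in $x$, then control the $O(x)$ shift of the $y$-threshold using the uniform smallness of $F_n$ on the far tail. The only cosmetic difference is that the paper uses a slowly growing truncation window $\omega(n)\to\infty$ (chosen so that $\omega(n)^2$ is $o(b(n))$ and $o$ of the reciprocal of the tail sup) to obtain convergence directly, whereas you fix a window $M=M(\varepsilon)$ and pass through a $\limsup$/$\varepsilon$ argument; both routes rest on the same two estimates.
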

\begin{proof}
It suffices to show
\begin{equation*}
    \sum_{x\leq \omega(n)}f(x)\sum_{y\leq a(n)+b(n)s-x}F_n(y)\to \mu(-\infty, s)
\end{equation*}
for $\omega(n)$ a function with $\omega(n)\to\infty$, since
\begin{equation*}
    \sum_{x>\omega(n)}f(x)\left|\sum_{y\leq a(n)+b(n)s-x}F(y)\right|\to 0.
\end{equation*}
Take $\omega(n)^2$ to grow slower than $(\sup_{x\geq a(n)+b(n)(s-1)}|F_n(x)|)^{-1}$, and slower than $b(n)$. Then we may replace the sum over $y$ with a sum up to $a(n)+b(n)s$ for all $x$, with an error bounded by
\begin{equation*}
    \omega(n)^{2}\sup_{a(n)+b(n)s-\omega(n)\leq y}|F_n(y)|,
\end{equation*}
which goes to $0$ by the choice of $\omega(n)$. This then implies the desired result.
\end{proof}

\section{Asymptotics: Crossover, GSE, and GOE asymptotics}
\label{sec: cross}
In this section, we establish GSE, GOE, and crossover asymptotics for $\nu^{-1}\geq 1$. The crossover and GSE regimes are very similar, while the GOE regime requires a little extra work due to the crossing of poles through the contours. We leave the Gaussian asymptotics for Section \ref{sec: Gauss} as they are much more involved.

\subsection{Crossover and GSE}
Unfortunately, the kernel from Theorem \ref{thm: pfaff formula} is not of the same form as the kernels defining the limiting distributions. We must first use summation by parts to obtain a kernel of the same form, and show that the boundary terms appearing are negligible.

Let us now state the assumptions that we will require in this section. These assumptions will need to be suitably modified in the other regimes, but we will explain how to adapt the arguments here in each other case separately.
\begin{assumption}
\label{asmp: kernels}
Let $k(x,y):\Z^2\to \R$ be a family of skew symmetric functions indexed by $\tau$ (or $n$ with the obvious adjustments below). Fix $s\in \R$. We introduce new variables $u,v$ via $x=\mu\tau+\sigma\tau^{1/3}u$ and $y=\mu\tau+\sigma\tau^{1/3}v$ for some constants $\mu,\sigma$. We will assume that there exists $c,C>0$ such that
\begin{equation}
\label{eq: asmp exp bound}
    \left|\left(\sigma\tau^{1/3}D_x^{(\gamma_2)}\right)^i\left(\sigma\tau^{1/3}D_y^{(\gamma_2)}\right)^jk(u,v)\right|\leq Ce^{-c(u+v)}
\end{equation}
for $i,j\leq 2$ and $u,v>s$, and
\begin{equation}
\label{eq: asmp perturb bound}
    \left|\left(\sigma\tau^{1/3}D_x^{(\gamma_2)}\right)^i\left(\sigma\tau^{1/3}D_y^{(\gamma_2)}\right)^j(k(u+du,v+dv)-k(u,v))\right|\leq C\tau^{-1/6}
\end{equation}
for all $i,j\geq 0$, $u,v>s$, and $du, dv\in [-\tau^{-1/6}, \tau^{-1/6}]$.

We study the ASEP and the six vertex model in this paper. For the half space ASEP, $\mu=-\frac{1-q}{4}$ and $\sigma=2^{-4/3}(1-q)^{1/3}\tau^{1/3}$. For the homogeneous six vertex model with $a_i=a$ for all $i$, we have $\mu=\frac{2a}{1+a}$ and $\sigma=\frac{(a(1-a))^{1/3}}{1+a}$.
\end{assumption}

\begin{lemma}
\label{lem: boundary bound complicated}
Let $k(x,y):\Z^2\to \R$ be a family of skew-symmetric kernels satisfying Assumption \ref{asmp: kernels}. Then for any $M$ a perfect matching of some even subset $J$ of $\{1,\dotsc, n\}$ and $M'\subseteq M$, we have
\begin{equation*}
\begin{split}
    &\sum_{\substack{u_i>s\\i\not\in J\text{ or }(i,j)\in M'}}\left|B^{(\gamma_2)}_{(M')^c}D_{M'}^{(\gamma_2)}\Pf(k(u_i,u_j))_{(2J)^c}\bigg|_{\substack{u_i=u_j=s, (i,j)\in (M')^c\\ u_i=u_j, (i,j)\in M'}}\right|
    \\\leq &C(2(n-|J|))^{(n-|J|)/2}\tau^{-|(M')^c|/12}C^n,
\end{split}
\end{equation*}
where the convention is that the Pfaffian is of the submatrix obtained by removing rows and columns $2j$ for $j\in J$.
\end{lemma}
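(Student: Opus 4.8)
The plan is to expand the Pfaffian, classify the matrix entries that the operators $D_{M'}^{(\gamma_2)}$ and $B_{(M')^c}^{(\gamma_2)}$ produce after the specialization according to whether they stay ``in the bulk'' or get pushed to the boundary point $s$, bound each class using one of the two estimates in Assumption~\ref{asmp: kernels}, and then recombine.

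First I would record the effect of the operators on the Pfaffian. Each factor $D_{x_i}^{(\gamma_2)}-D_{y_i}^{(\gamma_2)}$ in $D_{M'}^{(\gamma_2)}$ and each factor $B_{x_i}^{(\gamma_2)}-B_{y_i}^{(\gamma_2)}$ in $B_{(M')^c}^{(\gamma_2)}$ acts on the variables carried by the rows and columns of $(k(u_i,u_j))$ attached to the pair $(i,j)$; by multilinearity of the Pfaffian in its rows, this distributes over the perfect matching expansion of $\Pf(k(u_i,u_j))_{(2J)^c}$. After performing the specializations $u_i=u_j=s$ on $(M')^c$ and $u_i=u_j$ on $M'$, a single matching term factors into a product of boundary scalars $\beta_{ij}$, one for each pair in $(M')^c$ and coming from the $B$-operator evaluated near $(s,s)$, times a product of entries $k(u_a,u_b)$ on the remaining free indices, each carrying at most two $\sigma\tau^{1/3}D^{(\gamma_2)}$-operators. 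To avoid summing over matchings individually I would apply the Hadamard-type bound $|\Pf(A)|\le\prod_j\|A_{\cdot,j}\|_2^{1/2}$ to the surviving submatrix on the free indices: by \eqref{eq: asmp exp bound} its entries are bounded by $Ce^{-c(u_a+u_b)}$ and the submatrix has at most $2(n-|J|)$ rows, so its Pfaffian is bounded by $C^{n}(2(n-|J|))^{(n-|J|)/2}$ times a product, over the free indices, of exponentially decaying factors $e^{-cu_a}$ that are summable over the $u_a>s$ appearing in the outer sum. This is where the combinatorial factor $(2(n-|J|))^{(n-|J|)/2}$ and the $C^n$ come from.

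The heart of the argument is to show each boundary scalar satisfies $|\beta_{ij}|\le C\tau^{-1/12}$ (in fact $O(\tau^{-1/6})$, but the weaker bound is enough). By the summation-by-parts formulas of Lemmas~\ref{lem: sum by parts} and~\ref{lem: int by parts multivariate}, applying a $B$-operator to the diagonal term $\Delta(x,y)$ in $K_{22}$ produces, for each collapsed pair, a \emph{skew-symmetric} combination of values of $k$ (possibly with $\sigma\tau^{1/3}D^{(\gamma_2)}$-operators applied) near the diagonal point $(s,s)$, of the form $\tfrac{(1-\gamma_2)^2}{2}\sum_{i\ge1}\sum_{1\le j\le i}\gamma_2^{i-1}\big(k(s+j-i,s+j)-k(s+j,s+j-i)\big)$. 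Since $k$ is skew-symmetric it vanishes on the diagonal, so each bracket equals $k(s+j-i,s+j)-k(s,s)$ up to sign, a displacement of order $i$ in the original variable, hence of order $i\tau^{-1/3}$ in the rescaled variable. I would then split the $i$-sum at $i=\lfloor\tau^{1/6}\rfloor$ exactly as in Lemma~\ref{lem: boundary bound}: for $i\le\tau^{1/6}$ the displacement lies in $[-\tau^{-1/6},\tau^{-1/6}]$, so \eqref{eq: asmp perturb bound} bounds each bracket by $C\tau^{-1/6}$, while the tail $i>\tau^{1/6}$ is bounded by $C\sum_{i>\tau^{1/6}}\gamma_2^{i-1}$, which is super-polynomially small; the same works with up to two $\sigma\tau^{1/3}D^{(\gamma_2)}$-operators in front because \eqref{eq: asmp perturb bound} is stated with these operators already applied. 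Multiplying over the $|(M')^c|$ collapsed pairs yields the factor $\tau^{-|(M')^c|/12}$, and assembling this with the Hadamard bound, the $C^n$ from the bulk entries, and the summability of the free-variable exponentials gives the claim.

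The main obstacle is this boundary step: one must verify that the summation-by-parts genuinely yields skew-symmetric combinations of $k$ (without the skew-symmetry the $\beta_{ij}$ would be $O(1)$ and the lemma false), while simultaneously tracking the interaction of the $D_{M'}^{(\gamma_2)}$-operators, the $B_{(M')^c}^{(\gamma_2)}$-operators and the Pfaffian expansion so that the specialized object only ever involves $k$ with at most two $\sigma\tau^{1/3}D^{(\gamma_2)}$-operators applied --- the precise regime in which Assumption~\ref{asmp: kernels} supplies estimates. A secondary technical point, handled uniformly by Lemma~\ref{lem: boundary bound}, is the truncation of the geometric tails in the $B$-operators.
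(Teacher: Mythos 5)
Your proposal hinges on the claim that ``a single matching term factors into a product of boundary scalars $\beta_{ij}$, one for each pair in $(M')^c$, \dots\ times a product of entries $k(u_a,u_b)$ on the remaining free indices,'' after which you apply Hadamard to the \emph{free-index} submatrix. This factorization is not correct and the argument does not close. In the matching expansion of $\Pf(k(u_i,u_j))_{(2J)^c}$, an index $i$ with $(i,j)\in (M')^c$ need not be matched to its partner $j$; it can be matched to a free index $a\notin J$, producing a factor $k(u_i,u_a)$ with $u_i$ near $s$ and $u_a$ free. Such a factor is $O(e^{-cu_a})$ but certainly not $O(\tau^{-1/6})$, and it is not a ``scalar'' --- it depends on $u_a$. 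Consequently there is no decomposition of the Pfaffian into boundary scalars times a Pfaffian on free indices. Applying Hadamard only to the free-index block ignores these cross-entries, and applying Hadamard to the full matrix with the naive bounds ($\tau^{-1/6}$ on the $(M')^c\times(M')^c$ block, $e^{-c(u_a+u_b)}$ elsewhere) does not yield any factor of $\tau^{-1/12}$ per pair, because the row/column weights $a_i,b_i$ for $(M')^c$ indices must also accommodate the $O(1)$ cross-entries. Expanding into matchings and bounding term by term rescues the $\tau$-power but throws away the $\sqrt{m!}$ saving of Hadamard, giving a combinatorial factor like $(2(n-|J|))!/(2^{n-|J|}(n-|J|)!)$ instead of $(2(n-|J|))^{(n-|J|)/2}$, which is not summable against $1/n!$.

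The missing idea is the column-subtraction trick the paper uses. For each pair $(i,j)\in (M')^c$ one subtracts column $i$ from column $j$ in the matrix (determinant and hence Pfaffian modulus are unchanged). Because $|u_i-u_j|\le 2\tau^{-1/6}$, every entry of the new column $j$ is a difference $k(u_a,u_j)-k(u_a,u_i)$, bounded by $C\tau^{-1/6}$ via \eqref{eq: asmp perturb bound}; the $(i,j)$ entry itself equals $k(u_i,u_j)=k(u_i,u_j)-k(u_i,u_i)$ by skew-symmetry of $k$, so it is small as well. Now Hadamard applies with $b_j\le C\tau^{-1/6}$ for the subtracted columns and $C e^{-cu_a/2}$ factors for the free columns, and the geometric mean $\sqrt{a_jb_j}$ produces exactly $\tau^{-1/12}$ per pair with the combinatorial factor $(2(n-|J|))^{(n-|J|)/2}$ intact. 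This captures algebraically the cancellation between matching terms (e.g.\ $K_{ia}K_{jb}-K_{ja}K_{ib}=K_{jb}(K_{ia}-K_{ja})+K_{ja}(K_{jb}-K_{ib})$) that your termwise bound discards. Your discussion of skew-symmetric brackets coming from the $B$-operator and the split at $i=\tau^{1/6}$ are correct and do correspond to the paper's use of Lemma~\ref{lem: boundary bound} with $N=\tau^{1/6}$, but they bound the wrong object: they control the $B$-operator perturbation sum, not the Pfaffian at near-$s$ arguments, which is where the crux lies.
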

\begin{proof}
We can move the derivatives $D_{M'}^{(\gamma_2)}$ inside the Pfaffian since $D_{M'}^{(\gamma_2)}$ affects only variables for which one row has been removed from $k$, which means that the variables appear in exactly one row/column. Thus, we obtain a new skew-symmetric kernel of a very similar form, which in particular still satisfies the decay assumptions of Assumption \ref{asmp: kernels}. We now apply Lemma \ref{lem: boundary bound}, with $N=\tau^{1/6}$. Since the Pfaffian has an exponential growth rate less than $\gamma_2^{-1}$ due to the radius of the contour defining $k$, the first term can essentially be ignored, so we focus on bounding the second term. For this, we need a uniform estimate on 
\begin{equation*}
    |D_{M'}^{(\gamma_2)}\Pf(k(x_i,x_j))_{J^c}|_{u_i=u_j, (i,j)\in M'}
\end{equation*}
for $u_i,u_j\in [s-\tau^{-1/6},s+\tau^{-1/6}]$ for $(i,j)\in (M')^c$. 

We subtract column $i$ from column $j$ if $(i,j)\in (M')^c/2$. Since $|u_i-u_j|\leq 2\tau^{-1/6}$, we know the corresponding matrix entries in column $j$ are bounded by $C\tau^{-1/6}$ for some constant $C$. We also know that the other matrix entries are bounded by $Ce^{-cu_i}$ where $u_i$ is the variable corresponding to the column (we're absorbing the row dependence into $C$). Then applying Lemma \ref{lem: Hadamard}, and taking a geometric average with the bound obtained by subtracting column $j$ from column $i$ gives a bound of
\begin{equation*}
    (2(n-|J|))^{(n-|J|)/2}\tau^{-|(M')^c|/12}e^{-c\sum u_i},
\end{equation*}
where the sum over $i$ includes all variables in $J^c$ and $i$ for $(i,j)\in M'$, and summing over the remaining variables gives the desired bound.
\end{proof}

We now prove an approximate discrete analogue of an identity of Fredholm Pfaffians (see Proposition 5.7 of \cite{BBCS18}). This lets us avoid working with distribution-valued kernels.
\begin{proposition}
\label{prop: approx distr pf identity}
Let $k(x,y):\Z^2\to \R$ be a family of skew-symmetric kernels satisfying Assumption \ref{asmp: kernels}. Let
\begin{align*}
    K&=\begin{pmatrix}
    k(x,y)&-D_y^{(\gamma)}k(x,y)\\
    -D_x^{(\gamma)}k(x,y)&D_x^{(\gamma)}D_y^{(\gamma)} k(x,y)
    \end{pmatrix},
    \\K'&=\begin{pmatrix}
    k(x,y)&-2D_y^{(\gamma)}k(x,y)\\
    -2D_x^{(\gamma)}k(x,y)&4D_x^{(\gamma)}D_y^{(\gamma)} k(x,y)+(D_x^{(\gamma)}-D_y^{(\gamma)})\Delta
    \end{pmatrix}.
\end{align*}
Assume that the series expansion for $\Pf(J-K)_{l^2(\Z_{>s})}$ converges absolutely. Then for all $s$, the series expansion for $\Pf(J-K')_{l^2(\Z_{>s})}$ converges absolutely, and
\begin{equation*}
    \left|\Pf(J-K)_{l^2(\Z_{>s})}-\Pf(J-K')_{l^2(\Z_{>s})}\right|\leq C\tau^{-1/12}.
\end{equation*}
\end{proposition}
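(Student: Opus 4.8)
The plan is to expand both Fredholm Pfaffians as their defining series over $n\geq 0$ of $n$-fold sums of $2n\times 2n$ Pfaffian minors, and compare them term by term. The key point is that the two kernels $K$ and $K'$ differ in a structured way: modulo the overall factors of $2$, the $(2,2)$-block of $K'$ carries the extra summand $(D_x^{(\gamma)}-D_y^{(\gamma)})\Delta$, which is exactly the ``distributional'' piece that Proposition 5.7 of \cite{BBCS18} identifies as inert at the level of the Fredholm Pfaffian in the continuum. So first I would record the precise combinatorial expansion: writing $J-K'$ as a $2n\times 2n$ matrix over $l^2(\Z_{>s})$ and using multilinearity of the Pfaffian in the rows/columns carrying the $\Delta$-term, each term in the series for $\Pf(J-K')$ decomposes as a sum over subsets (equivalently matchings $M$ of an even subset $J\subseteq\{1,\dots,n\}$, together with $M'\subseteq M$) of a ``bulk'' Pfaffian minor of the $k$-kernel in the remaining variables, acted on by difference operators $D_{M'}^{(\gamma)}$ and then evaluated using the discrete summation-by-parts identity of Lemma \ref{lem: int by parts multivariate}, which converts $\sum_{x,y>s}(D^{(\gamma)}_x - D^{(\gamma)}_y)\Delta(x,y) f$ into a diagonal sum plus boundary terms $B^{(\gamma)}$.

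Next I would organize the resulting terms into two groups. The terms with $M=M'=\emptyset$ (no $\Delta$-factor selected) reassemble, after the factor-of-$2$ bookkeeping in $K'$ versus $K$, into the series for $\Pf(J-K)_{l^2(\Z_{>s})}$ up to the replacement of genuine sums by their summation-by-parts rearrangements; the main content here is that the ``interior'' part of the summation-by-parts identity (the $-\sum_{x>s}(D^{(\gamma)}_y-D^{(\gamma)}_x)f|_{y=x}$ piece) precisely reproduces the diagonal structure of the $D_x^{(\gamma)}D_y^{(\gamma)}k$ entries of $K$, so those contributions cancel exactly against $\Pf(J-K)$. Everything that is left over is a sum of boundary terms of the shape controlled by Lemma \ref{lem: boundary bound complicated}: for each choice of $M$ and $M'$ with $(M')^c\neq\emptyset$ (for $M'=M$ one still has boundary contributions coming from the diagonal evaluation, but these too are governed by the same estimate after an application of Lemma \ref{lem: boundary bound} with $N=\tau^{1/6}$).

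Then I would invoke Lemma \ref{lem: boundary bound complicated} to bound each such leftover term by $C\,(2(n-|J|))^{(n-|J|)/2}\tau^{-|(M')^c|/12}C^n$, and sum over $n$, over matchings $M$, and over $M'\subseteq M$. Since at least one ``open'' boundary pair is always present in the error terms, every surviving term carries a factor $\tau^{-1/12}$, and the combinatorial weights $(2m)^{m/2}C^m/(\text{factorials})$ coming from the number of matchings and the Hadamard-type bound on the bulk Pfaffian minors (Lemma \ref{lem: Hadamard}) are summable — this is the same Hadamard/Gaussian-growth bookkeeping that makes the series for $\Pf(J-K)$ converge absolutely by hypothesis — so the total error is $O(\tau^{-1/12})$. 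This also proves absolute convergence of the series for $\Pf(J-K')$ as a by-product, since it is term-wise dominated by the (convergent) series for $\Pf(J-K)$ plus the absolutely summable boundary corrections.

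\textbf{Main obstacle.} The delicate step is the bookkeeping that identifies, among all the terms produced by expanding the $\Delta$-entry and applying multivariate summation by parts, exactly which ones reconstitute $\Pf(J-K)$ and which are genuine boundary errors — in particular keeping the factors of $2$ and the orientation/sign conventions in the Pfaffian straight when moving $D^{(\gamma)}$ operators inside minors and when pairing $B^{(\gamma)}_x$ against $B^{(\gamma)}_y$. The analytic estimates themselves are then routine given Lemmas \ref{lem: boundary bound complicated}, \ref{lem: boundary bound}, and \ref{lem: Hadamard} together with the exponential-decay and local-continuity bounds \eqref{eq: asmp exp bound}–\eqref{eq: asmp perturb bound} of Assumption \ref{asmp: kernels}; the only place the $\tau^{-1/6}$ in \eqref{eq: asmp perturb bound} is used is to control the ``column subtraction'' argument inside Lemma \ref{lem: boundary bound complicated}, which ultimately yields the stated $\tau^{-1/12}$ after taking the geometric mean over the two ways of subtracting columns.
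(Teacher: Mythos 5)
Your proposal follows essentially the same route as the paper: decompose $K'$ into a ``bulk'' kernel plus the $\Delta$-term via Lemma~\ref{lem: pf expansion}, apply the multivariate summation by parts of Lemma~\ref{lem: int by parts multivariate}, identify the main terms with $\Pf(J-K)$ as in Proposition~5.7 of \cite{BBCS18}, and bound the boundary remainders via Lemma~\ref{lem: boundary bound complicated}, Lemma~\ref{lem: boundary bound} and the Hadamard-type estimate. One wording issue worth correcting: the $M=M'=\emptyset$ terms alone do \emph{not} reproduce $\Pf(J-K)$ --- they give $\Pf(A)$, which carries the extraneous factors of $2$ and $4$. The reassembly requires the ``interior'' (diagonal, $M'=M$) contributions from the nonempty-$M$ terms to \emph{combine with} $\Pf(A)$ so that, variable by variable, the coefficients collapse to those of $K$; saying these ``cancel exactly against $\Pf(J-K)$'' is misleading, and the parenthetical claiming boundary contributions for $M'=M$ is not right (the $M'=M$ terms carry no $B^{(\gamma)}$ operators). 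These are expository slips rather than genuine gaps; the structure of the argument, the role of \eqref{eq: asmp perturb bound} and the geometric-mean column-subtraction yielding $\tau^{-1/12}$, and the by-product absolute convergence of $\Pf(J-K')$ all match the paper.
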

\begin{proof}
The idea is that if summation by parts held, the two kernels have the same Fredholm Pfaffian. We thus show that the boundary terms appearing in Lemma \ref{lem: int by parts multivariate} are small by Lemma \ref{lem: boundary bound complicated}. The details are essentially the same as in Proposition 5.7 of \cite{BBCS18}, and so we will only give the details when showing that the boundary terms are small.

Write $K'=A+E$, with $E$ being a matrix containing only the terms $(D_x^{(\gamma)}-D_y^{(\gamma)})\Delta$. Using Lemma \ref{lem: pf expansion} and the expansion of $\Pf(E_J)$ in terms of matchings, we have that
\begin{equation*}
\begin{split}
    \sum_{x_1,\dotsc, x_n>s}\Pf(K'(x_i,x_j))_{i,j=1}^n&=\sum_{x_1,\dotsc, x_n>s}\sum_{J}\sum_M \Pf(A_{J^c})D_J^{(\gamma_2)}\prod _{(i,j)\in M}\Delta(x_i,x_j),
\end{split}
\end{equation*}
where the sum is over even subsets $J$ and matchings $M$ of $J$. Note in particular that $J$ must be a subset of the even integers, since the odd rows/columns of $E$ are $0$.

Now we can use the summation by parts given by Lemma \ref{lem: int by parts multivariate}, giving a main term where no boundary operators $B$ are present, as well as the error term bounded by
\begin{equation*}
    \sum_{J}\sum_{M}\sum_{M'\subsetneq M}\left|D_{M'}^{(\gamma)}B_{(M')^c}^{(\gamma)}\Pf(A_{J^c})\Bigg|_{\substack{x_i=x_j, (i,j)\in M'\\ x_i=y_j=s, (i,j)\in (M')^c}}\right|,
\end{equation*}
where $M'\neq M$.

First, let us briefly explain why the main terms sum to $\Pf(J-K)_{l^2(\Z_{>s})}$. Since this is identical to the proof of Proposition 5.7 in \cite{BBCS18}, we will not give all the details. The idea is that each pair of difference operator acts on a pair of variables which each appear in exactly one row/column. For this reason, these operators may be moved inside the Pfaffian. But notice that after setting the two variables equal, we end up with two rows/columns for that variable of the same type as in $K$ (up to some factors of $2$). The proof proceeds by then collecting and summing the different coefficients for the terms corresponding to the same term in $\Pf(J-K)_{l^2(\Z_{>s})}$, and showing that this is $1$.

We now turn to the error terms. Note that all error terms are of the form appearing in Lemma \ref{lem: boundary bound complicated}, which applies since we assume Assumption \ref{asmp: kernels}. The exponential bound allows the integrals to be uniformly bounded in each variable. Then the total error in the $n$th term from the boundary terms after integrating is bounded by
\begin{equation*}
\begin{split}
    &\sum_{J}\sum_{M}\sum_{M'\subsetneq M}C_1\tau^{-1/12}(2|J^c|)^{|J^c|/2}C_2^{n}
    \\\leq &C_1\tau^{-1/12}\sum_{k=0}^{n/2} {n\choose 2k}\frac{(2k)!}{2^kk!}2^k(2n-2k)^{(n-k)/2}C_2^{n}
    \\\leq &C\tau^{-1/12}n! \sum_{k=0}^{n/2}\frac{e^{2k}}{k!}\left(\frac{\sqrt{2n-2k}}{n-2k}\right)^{n-k}C_2^n
    \\\leq &C\tau^{-1/12}n!c^n,
\end{split}
\end{equation*}
for some small constant $c>0$, using the fact that there are $n\choose 2k$ ways to pick the even subset $J$ and $(2k)!/2^kk!$ many matchings $M$ of $J$. Summing this over $n$ (with the $1/n!$ factor), we have the error is bounded by
\begin{equation*}
    C\tau^{-1/12}\sum_{n\geq 0}c^n=O(\tau^{-1/12}),
\end{equation*}
as desired.

Moreover, it's clear from the above proof that the error terms are absolutely summable, and since the series expansion for $\Pf(J-K')_{l^2(\Z_{>s})}$ converges absolutely, this implies the same for $\Pf(J-K)_{l^2(\Z_{>s})}$.
\end{proof}

\begin{remark}
Note that the proof of Proposition \ref{prop: approx distr pf identity} only relies on the fact that the difference operator $D^{(\gamma)}$ satisfies an approximate summation by parts formula with the uniform error estimates given by Lemma \ref{lem: boundary bound}. Thus, it applies to any operator satisfying these bounds.
\end{remark}

We now show that the kernels in the Fredholm Pfaffian formulas for the ASEP and the six vertex model converge to $K^{(\xi)}_{cross}$ and $K_{GSE}$ in their respective regimes. We then use a dominated convergence argument to show that this implies convergence of the Fredholm Pfaffians themselves. To show convergence of the kernels, we do a steepest descent analysis. This argument is fairly standard, see e.g. \cite{BBCS18}. Note that we also do the $\nu=1$ limit here, because the argument is essentially identical. However, obtaining asymptotics for the distribution function requires additional work, and so is delayed until Section \ref{sec: GOE}.
\begin{lemma}
\label{lem: GSE cross ptwise conv}
Let $x=-\frac{1-q}{4}\tau+2^{-4/3}(1-q)^{1/3}\tau^{1/3}u$ and $y=-\frac{1-q}{4}\tau+2^{-4/3}(1-q)^{1/3}\tau^{1/3}v$. For $i,j\in \N$, we have
\begin{equation*}
    \left(2^{-4/3}\tau^{1/3}D_x^{(\gamma_2)}\right)^i\left(2^{-4/3}\tau^{1/3}D_y^{(\gamma_2)}\right)^jk(x,y)\to \partial_u^i\partial_v^j k_{GSE}
\end{equation*}
if $\nu\geq 1$ is fixed, and
\begin{equation*}
    \left(2^{-4/3}\tau^{1/3}D_x^{(\gamma_2)}\right)^i\left(2^{-4/3}\tau^{1/3}D_y^{(\gamma_2)}\right)^jk(x,y)\to  \partial_u^i\partial_v^j k_{cross}^{\xi}(u,v)
\end{equation*}
if $\frac{1}{\nu}=1-\frac{2^{4/3}\xi}{\tau^{1/3}}$. Moreover, $k(u,v)$ satisfies Assumption \ref{asmp: kernels}.
\end{lemma}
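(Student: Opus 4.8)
The plan is to carry out a standard steepest-descent analysis on the double contour integral defining $k(x,y)$ from Theorem \ref{thm: pfaff formula}, with the ASEP specialization $F(z)=e^{(1-q)\tau\frac{1}{2}\frac{1-z}{1+z}}$ from \eqref{eq: asep F}. First I would substitute the scaling $x=-\frac{1-q}{4}\tau+2^{-4/3}(1-q)^{1/3}\tau^{1/3}u$ into the integrand and isolate the $\tau$-dependent exponential factor. Writing $z^{-x-3/2}F(z)=\exp\big(\tau\,\Psi(z)+2^{-4/3}(1-q)^{1/3}\tau^{1/3}u(-\log z)+O(1)\big)$ with $\Psi(z)=\frac{1-q}{4}\log z+\frac{1-q}{2}\frac{1-z}{1+z}$, one checks that $\Psi'(z)$ has a double critical point at $z=1$ (i.e. $\Psi'(1)=\Psi''(1)=0$, $\Psi'''(1)\neq 0$), which is exactly what produces the Airy-type cubic $\frac{\alpha^3}{3}$ after the local change of variables $z=1-2^{-1/3}(1-q)^{-1/3}\tau^{-1/3}\alpha$ (and similarly $w$ with $\beta$). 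Under this substitution, the prefactor $K(z,w)$ degenerates: the block $\frac{(q,q,w/z,qz/w;q)_\infty}{(1/z^2,1/w^2,1/zw,qwz;q)_\infty}$ and the theta quotient both tend to constants, while $g(z)g(w)$ contributes the rational factor. Specifically, near $z=w=1$ the combination $\frac{(w/z;q)_\infty}{(1/zw;q)_\infty}\cdot(\text{theta}) $ behaves like $\frac{\alpha-\beta}{\alpha+\beta}$ up to constants, and $g(z)=\frac{(\gamma_1\sqrt q/z,\gamma_2/z;q)_\infty}{(\gamma_1\sqrt q z,\gamma_2 q z;q)_\infty}$ — when $\gamma_1\sqrt q=\nu^{-1}$ is fixed $<1$, contributes regular factors $\frac{\xi+\alpha}{\xi-\alpha}$-type terms only in the crossover scaling $\nu^{-1}=1-2^{4/3}\xi\tau^{-1/3}$ (where the pole of $g$ at $z=\nu$ collides with the critical point at the right rate), and gives a trivial constant in the fixed-$\nu\geq 1$ (GSE) case. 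Matching all constants against Definition \ref{def: BR TW} gives the claimed limits $\partial_u^i\partial_v^j k_{GSE}$ and $\partial_u^i\partial_v^j k_{cross}^{(\xi)}$; the $i,j$ derivatives come for free because $2^{-4/3}\tau^{1/3}D_x^{(\gamma_2)}$ applied to $z^{-x}$ multiplies the integrand by $2^{-4/3}\tau^{1/3}\cdot\frac{(1-\gamma_2)^2}{\gamma_2}\big(\frac{1}{1-\gamma_2 z^{-1}}-\frac{1}{1-\gamma_2 z}\big)$, which under $z=1-2^{-1/3}(1-q)^{-1/3}\tau^{-1/3}\alpha$ converges to $-\alpha$ (since $\frac{(1-\gamma_2)^2}{\gamma_2}(\frac{1}{1-\gamma_2 z^{-1}}-\frac{1}{1-\gamma_2 z})\to -(z-1)$ as $z\to 1$, to leading order), reproducing $\partial_u$ acting on $e^{-\alpha u}$.

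Next I would justify the deformation of the contours $|z|=r$, $|w|=r'$ (with $1<r'<r$ close to $1$) to the steepest-descent contours passing through $z=w=1$ along directions making angle $\pm\pi/3$ with the real axis — i.e. the images of $C_\delta$ under the local change of variables — and argue that the contribution away from a $\tau^{-1/3+\epsilon}$ neighborhood of $z=w=1$ is exponentially negligible because $\mathrm{Re}\,\Psi(z)$ is strictly negative there. In the crossover regime one must check that the pole of $g(z)$ at $z=\gamma_1\sqrt q\cdot(\text{something})=\nu$ sits just outside the deformed contour (this is why $\xi>\delta>0$ is imposed in Definition \ref{def: BR TW}) so no residue is picked up. In the fixed $\nu>1$ (GSE) case this pole is at macroscopic distance from $1$ and causes no trouble; the $\nu=1$ (GOE) boundary case is deliberately deferred to Section \ref{sec: GOE} since there a pole genuinely crosses the contour and produces the extra $b(u)-b(v)$ terms.

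Finally, I would verify Assumption \ref{asmp: kernels}: the exponential bound \eqref{eq: asmp exp bound} follows from the steepest-descent estimate, since on the deformed contour $|z^{-x}F(z)|\leq C e^{-c\,\mathrm{Re}(\alpha)}$ and $\mathrm{Re}(\alpha)\gtrsim u$ on the tilted contour through $\delta$, uniformly for $u,v>s$ and for the finitely many $i,j\leq 2$; the factor $(\sigma\tau^{1/3}D^{(\gamma_2)})^i$ contributes at most polynomial growth in $|\alpha|$ which is absorbed into the Gaussian-type decay $e^{-c|\alpha|^3}$ on the tails. The perturbation bound \eqref{eq: asmp perturb bound}: shifting $u\to u+du$ with $|du|\leq \tau^{-1/6}$ multiplies the integrand by $z^{-\sigma\tau^{1/3}du}=e^{\alpha\,du\,(1+o(1))}$, whose difference from $1$ is $O(|\alpha|\tau^{-1/6})$, and integrating against the $e^{-c|\alpha|^3}$-decaying integrand gives the claimed $O(\tau^{-1/6})$ — uniformly in $i,j$ since each derivative only inserts bounded-on-the-contour factors. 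The main obstacle I anticipate is bookkeeping: correctly tracking every constant through the cubic change of variables so that the degenerate limits of the $q$-Pochhammer block, the theta quotient, and $g(z)g(w)$ assemble precisely into the rational prefactors $\frac{\alpha-\beta}{\alpha\beta(\alpha+\beta)}$ and $\frac{(\alpha-\beta)(\xi+\alpha)(\xi+\beta)}{\alpha\beta(\alpha+\beta)(\xi-\alpha)(\xi-\beta)}$ of Definition \ref{def: BR TW}, including the $\frac14$ prefactor and the $(1-\gamma_2)^{-2}$ normalization; the analytic estimates themselves are routine once the contours are chosen.
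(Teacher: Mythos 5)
Your approach matches the paper's — a steepest-descent analysis with the same phase function (your $\Psi$ is $(1-q)$ times the paper's $G$, since you do not rescale $\tau$), the same cubic critical point at $z=w=1$, and the same decomposition of the limits of $K(z,w)$ and $g(z)g(w)$ into the rational prefactors of Definition \ref{def: BR TW}; your verification of Assumption \ref{asmp: kernels} is also the right sketch. The change-of-variables constant you wrote is off (it should be $z=1+2^{4/3}(1-q)^{-1/3}\tau^{-1/3}\alpha$ so that $\tau\Psi(z)\to\alpha^3/3$ and $-\sigma\tau^{1/3}u\log z\to -\alpha u$, not $z=1-2^{-1/3}(1-q)^{-1/3}\tau^{-1/3}\alpha$, and your expression for the factor introduced by $D_x^{(\gamma_2)}$ is missing a $\tfrac12$), but that is bookkeeping, not a wrong method.

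However, you explicitly defer $\nu=1$ to Section \ref{sec: GOE}, while the lemma is stated for all fixed $\nu\geq 1$; this leaves a real gap. You have conflated two distinct issues. What forces the GOE distribution-function analysis into its own section is that the Fredholm Pfaffian formula of Theorem \ref{thm: pfaff formula} requires the contour radius to satisfy $r<\nu$, which fails at $\nu=1$; that is why Theorem \ref{thm: analytic cont fredholm pfaff} subtracts the residue terms $A(x)B(y)-A(y)B(x)$ to define $\widetilde{k}$. But the present lemma concerns only pointwise convergence of the kernel $k(x,y)$ itself, and $k$ is perfectly well-defined and regular at $\nu=1$: the zero of $(\nu^{-1}/z;q)_\infty$ at $z=\nu^{-1}=1$ in the numerator of $g(z)$ exactly cancels the zero of $(\nu^{-1}z;q)_\infty$ at $z=\nu=1$ in the denominator, so no pole of $g$ hits the contour, and one computes $g(z)\to -(1-\gamma_2)$ as $z\to 1$. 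Since $g$ enters $K(z,w)$ only through the product $g(z)g(w)$, the extra sign squares away, and $k(x,y)\to k_{GSE}$ exactly as for $\nu>1$. This is a one-sentence observation in the paper's proof, but it is needed to cover $\nu=1$, and your proposal as written omits it.
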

\begin{proof}
We proceed by a steepest descent analysis. Since these statements are all similar, we will show $k(x,y)\to k_{cross}^{\xi}(u,v)$ in detail, and note the differences for the other statements. It should be noted that $D_x^{(\gamma_2)}$ in some sense converges to $\partial_u$, but this does not immediately imply the claimed statements, since the functions the difference operators are applied to also vary with $n$. Instead, we directly show convergence of the functions themselves.

\begin{figure}
    \centering
    \includegraphics[scale=0.7]{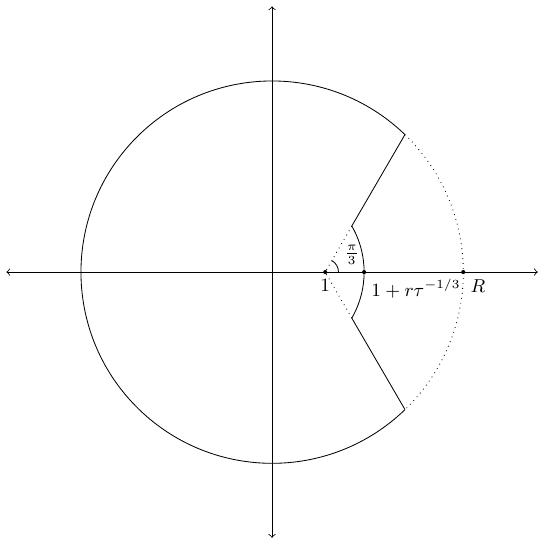}
    \caption{The contour $C_{r,R}$.}
    \label{fig: contour}
\end{figure}

Let $C_{r,R}$ be the contour consisting of the shorter arc of the circle $|z|=1+r\tau^{-1/3}$ for some $1.5>r\tau^{-1/3}>0$ (if needed just take $\tau$ large enough) between the lines $y=\pm \sqrt{3}(x-1)$, line segments along the lines $y=\pm\sqrt{3}(x-1)$ between the endpoints of the arc and the circle $|z|=R$, and the longer arc of the circle $|z|=R$ connecting the endpoints of the line segments (see Figure \ref{fig: contour}). 

For simplicity, let us rescale $\tau$ by $1/(1-q)$, which has no effect on the $\tau\to\infty$ limit. We begin by deforming the contours $|z|=r$ and $|w|=r'$ to contours $C_{r,R}$. Here, we take $r<2^{4/3}\xi$ and $R>1$ but small enough to avoid any poles.

Let
\begin{equation*}
    G(z)=\frac{1}{2}\frac{1-z}{1+z}+\frac{1}{4}\log(z),
\end{equation*}
where the branch cut is the negative real axis. We note that $G(1)=G'(1)=G''(1)=0$ and $G'''(1)=\frac{1}{8}$, and that (recalling that $\tau$ has been rescaled)
\begin{equation}
\label{eq: int first formula}
\begin{split}
    k(x,y)=&\frac{(1-\gamma_2)^{-2}}{(2\pi i)^2}\oint_{C_{r,R}} \oint_{C_{r,R}}z^{-3/2}w^{-5/2}K(z,w)\\&\qquad\times\exp\left(\tau(G(z)+G(w))-2^{-4/3}\tau^{1/3}(u\log z+v\log w)\right)dzdw.
\end{split}
\end{equation}
The idea will be to show that the integral localizes near the critical point $z=1$ and $w=1$, and near this point it converges to the desired limit as $\tau\to\infty$.

If $z=re^{i\theta}$, we have
\begin{equation*}
    \Re(G(z))=\frac{1}{2}\frac{1-r^2}{(1-r)^2+2r(\cos(\theta)+1)}+\frac{1}{4}\log r.
\end{equation*}
We can compute
\begin{equation*}
    \frac{d}{d\theta}\Re(G(z))=-\frac{r(r^2-1)\sin(\theta)}{(r^2+2r\cos(\theta)+1)^2},
\end{equation*}
which implies that $\Re(G(z))$ is decreasing as you move away from the positive real axis along the circular portions of $C_{r,R}$. Similarly, if $z=x+iy$, the real part of $G$ on the line $y=\sqrt{3}(x-1)$ is given by
\begin{equation*}
    -\frac{1}{2}\frac{2x^2-3x+1}{2x^2-2x+2}+\frac{1}{8}\log(4x^2-6x+3),
\end{equation*}
and so on this line,
\begin{equation*}
    \frac{d}{dx}\Re(G(z))=\frac{(x-1)^2(4x^3-7x^2-2x+3))}{4(x^2-x+1)^2(4x^2-6x+3)}<0
\end{equation*}
for $1<x<3/2$, and so $\Re(G(z))$ is also decreasing along the line segments of $C_r$. Thus, we can conclude that $\Re(G(z))$ attains a single maximum on the positive real line, and is strictly decreasing along the upper and lower portions of the contour, i.e. that $C_r$ is steepest descent for $\Re(G(z))$.

Plugging in any fixed $x$, we see that $\Re(G(z))$ is uniformly bounded below $0$, and so the integrand decays exponentially in $\tau$ outside of any fixed $\varepsilon$ neighbourhood of $1$ and has exponential decay in $u,v$ (the rest of the integrand has at most polynomial growth in $\tau$). Thus, we can neglect the contribution of that portion of the contour. We let $C_{r,\varepsilon}$ denote the portion of $C_{r,R}$ excluding this negligible portion.

We now rescale by $z=1+\frac{2^{4/3}\alpha}{\tau^{1/3}}$ and $w=1+\frac{2^{4/3}\beta}{\tau^{1/3}}$, and we have $G(z)=\tau^{-1}\alpha^3/3+O(|\alpha|^4\tau^{-4/3})$ and $\log(z)=2^{4/3}\tau^{-1/3}\alpha+O(|\alpha|^2\tau^{-2/3})$, where the constants depends only on $\varepsilon$. The error term in the exponent is thus bounded by $c\tau^{-1/3}(|\alpha|^4+|\beta|^4+|\alpha|^2+|\beta|^2)$ for some small $c>0$. Note also that if $\varepsilon$ is chosen small enough, we can ensure $\tau^{-1/3}|\alpha|$ is small.

Using the bound $|e^x-1|\leq |x|e^{|x|}$, this approximation has an error bounded by
\begin{equation}
\label{eq: int error}
\begin{split}
    &C\oint_{\widetilde{C}_{r,\varepsilon}} \oint_{\widetilde{C}_{r,\varepsilon}}c\tau^{-1/3}\left(|\alpha|^4+|\beta|^4+|\alpha|^2+|\beta|^2\right)\exp\left(c(|\alpha|^3+|\beta|^3+|\alpha|+|\beta|)\right)
    \\\qquad \qquad&\times\exp\left(\frac{\alpha^3}{3}-\alpha u+\frac{\beta^3}{3}-\beta v\right)|z^{-3/2}w^{-5/2}\tau^{-2/3}K(z,w)|d\alpha d\beta,
\end{split}
\end{equation}
where we have changed variables and $\widetilde{C}_{r,\varepsilon}$ denotes the rescaled contour. It's easy to see that
\begin{equation*}
    g(z)\longrightarrow -\frac{(1-\gamma_2)(\alpha+\xi)}{(\alpha-\xi)},
\end{equation*}
and so
\begin{equation*}
    2^{8/3}\tau^{-2/3}K(z,w)\longrightarrow \frac{(\alpha-\beta)}{4\alpha\beta(\alpha+\beta)}\frac{(1-\gamma_2)(\alpha+\xi)}{(\alpha-\xi)}\frac{(1-\gamma_2)(\beta+\xi)}{(\beta-\xi)},
\end{equation*}
where the extra factor comes from the change of variables. Thus, as $\tau\to \infty$, the rest of the integrand is bounded. Thus, the integral converges uniformly in $\tau$, so the error goes to $0$. Moreover, the tails of the integral decay exponentially, and so can be continued from $\widetilde{C}_{r,\varepsilon}$ to $C_\delta$ with a uniformly decaying error. We can then take a limit as $\tau\to \infty$ in
\begin{equation}
\label{eq: int after taylor series}
\begin{split}
    &\frac{(1-\gamma_2)^{-2}}{(2\pi i)^2}\oint_{\widetilde{C}_{r,\varepsilon}} \oint_{\widetilde{C}_{r,\varepsilon}}\exp\left(\frac{\alpha^3}{3}-\alpha u+\frac{\beta^3}{3}-\beta v\right)\frac{2^{8/3}K(z,w)}{z^{3/2}w^{5/2}\tau^{2/3}}d\alpha d\beta,
\end{split}
\end{equation}
again using exponential decay of the tails to apply dominated convergence to obtain the desired limit. Note that the limiting contour can be deformed to the one defining $K_{GSE}$, again due to exponential decay in the tails. 

Note that the difference operators act by introducing a factor of
\begin{equation*}
    \left(-\frac{\tau^{1/3}(1-\gamma_2)^2}{2^{7/3}}\frac{z-z^{-1}}{(1-\gamma_2z)(1-\gamma_2z^{-1})}\right)^i \left(-\frac{\tau^{1/3}(1-\gamma_2)^2}{2^{7/3}}\frac{w-w^{-1}}{(1-\gamma_2w)(1-\gamma_2w^{-1})}\right)^j,
\end{equation*}
which converges to the corresponding factor $(-\alpha)^i(-\beta)^j$ coming from differentiating $k_{cross}^{(\xi)}$ by $\partial_u^i\partial_v^j$, and the rest of the asymptotic analysis is identical. If $\nu>1$ is fixed, the same argument gives convergence to $k_{GSE}$, with the only difference being that
\begin{equation*}
    g(z)\to 1-\gamma_2,
\end{equation*}
which what is expected when setting $\xi=\infty$. If $\nu=1$, again, the only difference is
\begin{equation*}
    g(z)\to -(1-\gamma_2),
\end{equation*}
which corresponds to $\xi=0$.

The exponential decay of Assumption \ref{asmp: kernels} comes from the fact that in \eqref{eq: int first formula}, we have a uniform bound on the real parts of $G(z)$ and $\log(z)$ outside of a $\varepsilon$ neighbourhood of $1$, and within this neighbourhood, \eqref{eq: int error} and \eqref{eq: int after taylor series} both clearly decay exponentially in $u,v$ as $\alpha,\beta$ are bounded away from $0$.

Finally, if we perturb $u$ (or $v$) by a term $du$ of order at most $\tau^{-1/6}$, the same asymptotic analysis applies, except after reaching \eqref{eq: int error}, there is an extra factor of the form $z^{-d}=\exp(-d\log z)$, which is an additive $\tau^{-1/6}$ error term. Thus,
\begin{equation*}
    |k(u+du,v)-k(u,v)|\leq C\tau^{-1/6},
\end{equation*}
and similarly even if difference operators are applied, or $v$ is perturbed instead of $u$. Again, this can be made uniform in $u,v>s$ due to the exponential decay in $u,v$.
\end{proof}

\begin{lemma}
\label{lem: GSE cross ptwise conv 6vm}
Let $\sigma=\frac{(a(1-a))^{1/3}}{(1+a)}$, $x=\frac{2a}{1+a}n+\sigma n^{1/3} u$ and $y=\frac{2a}{1+a}n+\sigma n^{1/3}v$. For $i,j\in \N$, we have
\begin{equation*}
    \left(\sigma n^{1/3} D_x^{(\gamma_2)}\right)^i\left(\sigma n^{1/3} D_y^{(\gamma_2)}\right)^jk(x,y)\to \partial_u^i\partial_v^j k_{GSE}
\end{equation*}
if $\nu\geq 1$ is fixed, and
\begin{equation*}
    \left(\sigma n^{1/3} D_x^{(\gamma_2)}\right)^i\left(\sigma n^{1/3} D_y^{(\gamma_2)}\right)^jk(x,y)\to  \partial_u^i\partial_v^j k_{cross}^{\xi}(u,v)
\end{equation*}
if $\frac{1}{\nu}=1-\frac{\xi}{\sigma n^{1/3}}$. Moreover, $k(u,v)$ satisfies Assumption \ref{asmp: kernels}.
\end{lemma}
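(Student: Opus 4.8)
The plan is to run the steepest-descent argument of Lemma \ref{lem: GSE cross ptwise conv} verbatim, with the ASEP action function replaced by the one coming from \eqref{eq: 6vm F} with $a_i=a$. By Corollary \ref{cor: 6vm formula} we have $F(z)=\left(\frac{1+az}{1+a/z}\right)^n$, so after substituting $x=\frac{2a}{1+a}n+\sigma u$, $y=\frac{2a}{1+a}n+\sigma v$ into the double contour integral defining $k(x,y)$, the part of the integrand depending on $n$ exponentially is $\exp\!\left(n(G(z)+G(w))-\sigma(u\log z+v\log w)\right)$ with
\begin{equation*}
    G(z)=\log(1+az)-\log(1+a/z)-\frac{2a}{1+a}\log z,
\end{equation*}
branch cut along the negative reals. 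First I would record the critical-point data: a direct computation (write $G'(z)=\frac{a}{1+az}-\frac{1}{z+a}+\frac{1-a}{(1+a)z}$ and differentiate) gives $G(1)=G'(1)=G''(1)=0$ and $G'''(1)=\frac{2a(1-a)}{(1+a)^3}>0$ for $a\in(0,1)$. This pins down the scaling: with $\sigma_0=\bigl(2/G'''(1)\bigr)^{1/3}=\frac{1+a}{(a(1-a))^{1/3}}$, so that $\sigma=n^{1/3}/\sigma_0=\frac{(a(1-a)n)^{1/3}}{1+a}$ exactly as in the statement, the substitution $z=1+\sigma_0\alpha n^{-1/3}$, $w=1+\sigma_0\beta n^{-1/3}$ Taylor-expands $nG(z)$ to $\frac{\alpha^3}{3}$ and $\sigma\log z$ to $\alpha$, with the same $O(n^{-1/3})$ error estimates on the exponent as in the ASEP case.

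Next I would perform the contour deformation exactly as before: deform $|z|=r$ and $|w|=r'$ to the contour $C_{r,R}$ made of the short arc of $|z|=1+rn^{-1/3}$, the two segments along $y=\pm\sqrt{3}(x-1)$, and the long outer arc of $|z|=R$, and verify that this contour is steepest descent for $\Re G$ — that is, $\Re G$ attains its unique maximum on the positive real axis and strictly decreases as one moves away from it, both along the circular pieces (examine $\frac{d}{d\theta}\Re G(re^{i\theta})$) and along the $\pm\pi/3$ segments (examine $\frac{d}{dx}\Re G$ on the line $y=\sqrt3(x-1)$). I expect this monotonicity verification to be the main obstacle: since $G$ is now built from $\log(1+az)$, $\log(1+a/z)$ and $\log z$ rather than the ASEP's rational $\frac12\frac{1-z}{1+z}+\frac14\log z$, the derivatives are new rational functions whose sign on $x\in(1,1+\varepsilon)$ and on the arc must be checked; the positivity of $G'''(1)$ guarantees the correct local picture near $z=1$, and I anticipate the global statement reduces, as in Lemma \ref{lem: GSE cross ptwise conv}, to a single rational function having constant sign on the relevant range.

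Granting the steepest-descent property, the remainder of the proof is identical to that of Lemma \ref{lem: GSE cross ptwise conv}. Away from an $\varepsilon$-neighbourhood of $(1,1)$ the integrand is exponentially small in $n$, uniformly and with exponential decay in $u,v$, hence negligible; on the neighbourhood, rescaling and using $|e^x-1|\le|x|e^{|x|}$ shows the error from the cubic/linear approximations vanishes, and the remaining factors $g(z)$, $K(z,w)$, and the theta quotient are the \emph{same} as in the ASEP formula of Theorem \ref{thm: pfaff formula}, so their limits are unchanged: $g(z)\to 1-\gamma_2$ when $\nu>1$ is fixed (yielding $k_{GSE}$), and $g(z)\to-\frac{(1-\gamma_2)(\alpha+\xi)}{\alpha-\xi}$ when $\nu^{-1}=1-\xi/\sigma$ (yielding $k^{\xi}_{cross}$); the case $\nu=1$ gives $g(z)\to-(1-\gamma_2)$, i.e.\ $\xi=0$. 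The difference operators contribute the factor $-\frac{\sigma(1-\gamma_2)^2}{2}\cdot\frac{z-z^{-1}}{(1-\gamma_2 z)(1-\gamma_2/z)}$ which tends to $-\alpha$ (resp.\ $-\beta$), matching $\partial_u$ (resp.\ $\partial_v$) applied to the limiting kernel, so the statement with derivatives follows at once. Finally, Assumption \ref{asmp: kernels} is verified as in the ASEP case: the exponential bounds \eqref{eq: asmp exp bound} come from the uniform negativity of $\Re G$ outside the neighbourhood of $1$ together with the explicit exponential decay in $u,v$ of the localized integral (the rescaled integrand being bounded away from $\alpha,\beta=0$), and the perturbation bounds \eqref{eq: asmp perturb bound} follow because an $O(n^{-1/6})$ shift in $u$ or $v$ inserts a factor $z^{-d}=\exp(-d\log z)$ which is an additive $O(n^{-1/6})$ error, uniformly in $u,v>s$ by the same exponential decay.
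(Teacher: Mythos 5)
Your proposal is correct and matches the paper's approach essentially verbatim: same $G(z)=\log(1+az)-\log(1+a/z)-\frac{2a}{1+a}\log z$, same critical-point data $G(1)=G'(1)=G''(1)=0$, $G'''(1)=\frac{2a(1-a)}{(1+a)^3}$, same contour $C_{r,R}$, and the observation that all factors other than $F(z),F(w)$ are identical to the ASEP kernel so the local limit and the $g(z)$ asymptotics carry over unchanged. One small point where the paper is slightly more economical than you anticipated: for the steepest-descent property on the $\pm\pi/3$ line segments, the paper does not establish a global sign condition for $\frac{d}{dx}\Re G$; it only checks $\frac{d^2}{dx^2}\Re G=0$ and $\frac{d^3}{dx^3}\Re G=-32a(1-a)/(1+a)^3<0$ at $x=1$, giving local decrease, and then takes the outer radius $R$ close enough to $1$ so the line segments stay in that neighbourhood — avoiding the global sign verification you flagged as the main obstacle.
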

\begin{proof}
Since the analysis is extremely similar to that of Theorem \ref{lem: GSE cross ptwise conv}, we will simply explain the essential differences. Note that only the $F(z)$ and $F(w)$ factors in the integrals change. They are now of the form $F(z)=\left(\frac{1+az}{1+a/z}\right)^n$, and so we pick
\begin{equation*}
    G(z)=\log(1+az)-\log(1+a/z)-\frac{2a}{1+a}\log z.
\end{equation*}
Again, we can compute $G(1)=G'(1)=G''(1)=0$, and $G'''(1)=\frac{2a(1-a)}{(1+a)^3}$.

If $z=re^{i\theta}$, we have
\begin{equation*}
    \Re(G(z))=\frac{1}{2}\left(\log(1+2ar\cos\theta+a^2r^2)-\log(1+2a/r\cos\theta+a^2/r^2)-\frac{2a}{1+a}\log(r^2)\right),
\end{equation*}
and so
\begin{equation*}
\begin{split}
    \frac{d}{d\theta}\Re(G(z))&=-\frac{a(1-a^2)r(r^2-1)\sin(\theta)}{(a^2+2ar\cos(\theta)+r^2)(a^2r^2+2ar\cos(\theta)+1)}
    \\&\leq -\frac{a(1-a^2)r(r^2-1)\sin(\theta)}{(a-r)^2(1-ar)^2},
\end{split}
\end{equation*}
which implies that $\Re(G(z))$ is decreasing along the circular portions of $C_{r,R}$. If $z=x+iy$, then on the line $y=\pm\sqrt{3}x$,
\begin{equation*}
    \Re(G(z))=\frac{1}{2}\left(\log(1+2ax+4a^2x^2)-\log(a^2+2ax+4x^2)-\frac{1-a}{1+a}\log(4x^2)\right).
\end{equation*}
We have that $\frac{d^2}{dx^2}\Re(G(z))$ at $x=1$ is equal to $0$, and $\frac{d^3}{dx^3}\Re(G(z))$ at $x=1$ is equal to $-32(1-a)a/(1+a)^3<0$. Thus, we have that $\Re(G(z))$ is decreasing along these lines in a neighbourhood of $z=1$ (depending only on $a$, which is fixed). Thus, for $R$ close enough to $0$, we have that $C_{r,R}$ is steepest descent for $\Re(G(z))$. The rest of the argument is entirely analogous, and involves Taylor expanding $G(z)$ and $\log(z)$. Note in particular that everything in the integrand except for the $F(z)$ and $F(w)$ are identical.
\end{proof}

We now show that the convergence of the kernels implies convergence of the Fredholm Pfaffians.
\begin{proposition}
\label{prop: GSE cross pf conv}
Under the scalings of Assumption \ref{asmp: kernels}, we have
\begin{equation*}
    \Pf(J-K)_{l^2(\Z_{>s})}\to \Pf(J-K_{GSE})_{L^2(s,\infty)}
\end{equation*}
if $\nu>1$ is fixed, and
\begin{equation*}
    \Pf(J-K)_{l^2(\Z_{>s})}\to \Pf(J-K_{cross}^{(\xi)})_{L^2(s,\infty)}
\end{equation*}
if $\frac{1}{\nu}=1-\frac{2^{4/3}\xi}{\tau^{1/3}}$.
\end{proposition}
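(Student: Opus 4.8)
The plan is to expand both Fredholm Pfaffians as their defining series in the number of variables and prove convergence term by term, together with a bound uniform in $\tau$ (or in $n$, for the six vertex model) that lets one interchange the limit with the series. I treat the GSE case $\nu>1$ in detail; the crossover case $\nu^{-1}=1-2^{4/3}\xi/\tau^{1/3}$ (and the six vertex analogues) is handled identically, replacing $k_{GSE}$ by $k_{cross}^{(\xi)}$ and invoking the corresponding part of Lemma \ref{lem: GSE cross ptwise conv} or Lemma \ref{lem: GSE cross ptwise conv 6vm}.

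Recall that $\Pf(J-K)_{l^2(\Z_{>s})}=\sum_{n\geq 0}\frac{(-1)^n}{n!}\sum_{x_1,\dots,x_n>s}\Pf\big[K(x_i,x_j)\big]_{i,j=1}^n$, where $\Pf\big[K(x_i,x_j)\big]_{i,j=1}^n$ denotes the Pfaffian of the $2n\times 2n$ antisymmetric matrix whose $(i,j)$ block equals $K(x_i,x_j)=\begin{pmatrix}k&-D^{(\gamma_2)}_{y}k\\-D^{(\gamma_2)}_{x}k&D^{(\gamma_2)}_xD^{(\gamma_2)}_y k\end{pmatrix}(x_i,x_j)$. First I would set $c=\sigma\tau^{1/3}$ and conjugate this $2n\times 2n$ matrix by $D=\mathrm{diag}(1,c,1,c,\dots,1,c)$; since $\Pf(DAD)=\det(D)\Pf(A)=c^n\Pf(A)$ this gives $\Pf\big[K(x_i,x_j)\big]=c^{-n}\Pf\big[\widetilde K(x_i,x_j)\big]$, where $\widetilde K(x_i,x_j):=\begin{pmatrix}k&-(cD^{(\gamma_2)}_y)k\\-(cD^{(\gamma_2)}_x)k&(cD^{(\gamma_2)}_x)(cD^{(\gamma_2)}_y)k\end{pmatrix}(x_i,x_j)$. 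Writing $x_i=\mu\tau+\sigma\tau^{1/3}u_i$, the $n$-th term of the series becomes
\[
\frac{(-1)^n}{n!}\,c^{-n}\sum_{\substack{u_1,\dots,u_n\in c^{-1}(\Z-\mu\tau)\\u_i>s}}\Pf\big[\widetilde K(u_i,u_j)\big]_{i,j=1}^n,
\]
which is a Riemann sum of mesh $c^{-n}$ for $\frac{(-1)^n}{n!}\int_{(s,\infty)^n}\Pf\big[K_{GSE}(u_i,u_j)\big]\,du$. By Lemma \ref{lem: GSE cross ptwise conv} (resp. Lemma \ref{lem: GSE cross ptwise conv 6vm}) the entries of $\widetilde K(u_i,u_j)$ converge pointwise, as $\tau\to\infty$, to those of $K_{GSE}(u_i,u_j)$, hence so does the Pfaffian of each finite block; the estimate \eqref{eq: asmp perturb bound} bounds the oscillation of each entry within a mesh cell (the mesh width $c^{-1}\asymp\tau^{-1/3}$ lying well below the scale $\tau^{-1/6}$ on which \eqref{eq: asmp perturb bound} is stated), and \eqref{eq: asmp exp bound} controls the contribution of large $u_i$ uniformly in $\tau$. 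Combining these in the usual way (cf.\ \cite{BBCS18}) shows that the $n$-th term converges to the $n$-th term of $\Pf(J-K_{GSE})_{L^2(s,\infty)}$.

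To interchange $\lim_{\tau}$ with $\sum_n$ I would dominate the series uniformly in $\tau$. By Hadamard's inequality for Pfaffians (Lemma \ref{lem: Hadamard}) together with the bound $|\widetilde K(u_i,u_j)|\leq Ce^{-c(u_i+u_j)}$ on every entry of the block, which follows from \eqref{eq: asmp exp bound}, one gets $\big|\Pf[\widetilde K(u_i,u_j)]_{i,j=1}^n\big|\leq C^n(2n)^{n/2}e^{-c\sum_i u_i}$, whence
\[
c^{-n}\sum_{\substack{u_i\in c^{-1}(\Z-\mu\tau)\\u_i>s}}\big|\Pf[\widetilde K(u_i,u_j)]_{i,j=1}^n\big|\leq C^n(2n)^{n/2}\Big(c^{-1}\sum_{\substack{u\in c^{-1}(\Z-\mu\tau)\\u>s}}e^{-cu}\Big)^{\!n}\leq C^n(2n)^{n/2}
\]
uniformly in $\tau$ for $\tau$ large. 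Since $C^n(2n)^{n/2}/n!\to 0$, the series is dominated by a convergent one independent of $\tau$, and the same Hadamard estimate applied to $K_{GSE}$ (whose kernel decays exponentially along the contour $C_1$) shows the limiting series converges absolutely. Dominated convergence over $n$ then gives $\Pf(J-K)_{l^2(\Z_{>s})}\to\Pf(J-K_{GSE})_{L^2(s,\infty)}$, and likewise in the crossover regime.

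The hard part is the Riemann-sum-to-integral passage in the middle step: because the summand itself depends on $\tau$, one cannot merely appeal to continuity of the limiting kernel, and the quantitative modulus-of-continuity estimate \eqref{eq: asmp perturb bound}, valid on the scale $\tau^{-1/6}$ which dominates the mesh $\tau^{-1/3}$, is precisely what makes this rigorous. The remaining ingredients — the diagonal rescaling of the Pfaffian, the Hadamard bound, and the dominated convergence in $n$ — are routine.
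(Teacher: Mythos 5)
Your argument correctly follows the paper's strategy for the second step, but it starts from the wrong kernel and thereby skips the crucial first reduction. The kernel $K$ in the statement of Proposition~\ref{prop: GSE cross pf conv} is inherited from Theorem~\ref{thm: pfaff formula}: its off-diagonal blocks carry a factor of $2$ and, most importantly, its lower-right block contains the distributional term $(D_x^{(\gamma_2)}-D_y^{(\gamma_2)})\Delta(x,y)$. You have written down instead the kernel with entries $k$, $-D_y^{(\gamma_2)}k$, $-D_x^{(\gamma_2)}k$, $D_x^{(\gamma_2)}D_y^{(\gamma_2)}k$ (no $\Delta$, no factors of $2$), which is the kernel called $K$ in Proposition~\ref{prop: approx distr pf identity}, not the one you are asked to analyze. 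Your Riemann-sum and dominated-convergence machinery cannot be applied directly to the true $K$: the term $(D_x^{(\gamma_2)}-D_y^{(\gamma_2)})\Delta$ does not satisfy the pointwise bound \eqref{eq: asmp exp bound} and does not converge in the Riemann-sum sense to a function of $(u,v)$ — it is an approximation of $\delta'$.

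The missing idea is precisely Proposition~\ref{prop: approx distr pf identity} (the approximate summation-by-parts identity for Fredholm Pfaffians). That proposition, whose hypotheses are exactly the bounds in Assumption~\ref{asmp: kernels} verified in Lemma~\ref{lem: GSE cross ptwise conv}/Lemma~\ref{lem: GSE cross ptwise conv 6vm}, shows that replacing the kernel of Theorem~\ref{thm: pfaff formula} by the $\Delta$-free kernel changes the Fredholm Pfaffian by $O(\tau^{-1/12})$. Only after this reduction does your argument — diagonal conjugation by $\mathrm{diag}(1,\sigma\tau^{1/3},\dots)$, pointwise convergence of $\widetilde{K}$, the modulus-of-continuity estimate \eqref{eq: asmp perturb bound} for the Riemann sum, Hadamard/dominated convergence over $n$ — become the paper's proof. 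The rest of your write-up is consistent with the paper's, so adding a first sentence invoking Proposition~\ref{prop: approx distr pf identity} and correcting the identification of $K$ would close the gap.
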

\begin{proof}
By Proposition \ref{prop: approx distr pf identity} and conjugating the kernel by $2^{-4/3}\tau^{1/3}$, it suffices to show convergence of the modified kernels
\begin{equation*}
    K'=\begin{pmatrix}
    2^{4/3}\tau^{-1/3}k(x,y)&-D_y^{(\gamma)}(x,y)\\
    -D_x^{(\gamma)}k(x,y)&2^{-4/3}\tau^{1/3}D_x^{(\gamma)}D_y^{(\gamma)} k(x,y)
    \end{pmatrix},
\end{equation*}
instead of $K$. We can replace the discrete functions by piecewise constant functions and sums by integrals, accounting for a global factor of $2^{4/3}\tau^{-1/3}$ for each variable. By Lemma \ref{lem: GSE cross ptwise conv}, we have pointwise convergence of the kernel. Moreover, the exponential bounds of Lemma \ref{lem: GSE cross ptwise conv} imply via Lemma \ref{lem: Hadamard} that
\begin{equation*}
    |\Pf(K'(x_i,x_j))_{i,j=1}^n|\leq C^nn^{n/2}e^{-c\sum u_i}.
\end{equation*}
This allows a dominated convergence argument to show that each term, and then the whole sum of the series expansion for $\Pf(J-K')_{l^2(\Z_{>s})}$ converges to that of $\Pf(J-K_{GSE})_{L^2(s,\infty)}$ or $\Pf(J-K_{cross}^{(\xi)})_{L^2(s,\infty)}$, depending on how $\nu$ is scaled.
\end{proof}

\begin{proposition}
\label{prop: GSE asymp}
For the ASEP, if $\rho=\frac{1}{1+\nu^{-1}}>\frac{1}{2}$, we have
\begin{equation*}
    \PP\left(-\frac{N\left(\frac{\tau}{1-q}\right)-\frac{\tau}{4}}{2^{-4/3} \tau^{1/3}}\leq s\right)\to F_{GSE}(s),
\end{equation*}
and if $\rho=\frac{1}{2}+\frac{2^{-2/3}\xi}{\tau^{1/3}}$, then
\begin{equation*}\tag{$\rho\downarrow \frac{1}{2}$}
    \PP\left(-\frac{N\left(\frac{\tau}{1-q}\right)-\frac{\tau}{4}}{2^{-4/3} \tau^{1/3}}\leq s\right)\to F_{cross}(s,\xi).
\end{equation*}
For the six vertex model, if $\rho=\frac{1}{1+\nu^{-1}}>\frac{1}{2}$, we have
\begin{equation*}
    \PP\left(\frac{h(n,n)-\frac{2a}{1+a}n}{\frac{(a(1-a))^{1/3}}{1+a}n^{1/3} }\leq s\right)\to F_{GSE}(s),
\end{equation*}
and if $\rho=\frac{1}{2}+\frac{2^{-2/3}\xi}{\tau^{1/3}}$, then
\begin{equation*}\tag{$\rho\downarrow \frac{1}{2}$}
    \PP\left(\frac{h(n,n)-\frac{2a}{1+a}n}{\frac{(a(1-a))^{1/3}}{1+a}n^{1/3} }\leq s\right)\to F_{cross}(s,\xi).
\end{equation*}
\end{proposition}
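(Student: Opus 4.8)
The plan is to feed the exact Fredholm Pfaffian formulas of Corollaries~\ref{cor: ASEP formula} and \ref{cor: 6vm formula} into the limit statement of Proposition~\ref{prop: GSE cross pf conv}, and then to remove the auxiliary random shifts $\chi$ and $2S$. For the ASEP we would apply Corollary~\ref{cor: ASEP formula} with its parameter equal to $-\tfrac{\tau}{4}+2^{-4/3}\tau^{1/3}s$ (with $\tau$ already rescaled by $1/(1-q)$, as in Lemma~\ref{lem: GSE cross ptwise conv}, so that the relevant exponential factor is $e^{\tau\frac12\frac{1-z}{1+z}}$), and for the six vertex model we would apply Corollary~\ref{cor: 6vm formula} with parameter $\tfrac{2a}{1+a}n+\tfrac{(a(1-a))^{1/3}}{1+a}n^{1/3}s$; the hypotheses of Theorem~\ref{thm: pfaff formula} are met since $\nu^{-1}<1$ and $\nu t<1$ in the regime $\rho>\tfrac12$, the formulas being valid after the analytic continuation in $\nu$ already recorded in Corollary~\ref{cor: 6vm formula}. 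With these substitutions the centering and scaling constants are exactly those of Assumption~\ref{asmp: kernels}, so Proposition~\ref{prop: GSE cross pf conv} yields
\[
\PP\!\left(-N\!\left(\tfrac{\tau}{1-q}\right)+\chi+2S\le -\tfrac{\tau}{4}+2^{-4/3}\tau^{1/3}s\right)\longrightarrow F_{GSE}(s)
\]
when $\nu>1$ is fixed, and the same with limit $F_{cross}(s;\xi)$ when $\nu$ is coupled to $\tau$ as in Lemma~\ref{lem: GSE cross ptwise conv}; the six vertex statements are identical with $-N(\tfrac{\tau}{1-q})$ replaced by $h(n,n)$ and the constants adjusted, the precise crossover coupling of $\nu$ being the one in Lemma~\ref{lem: GSE cross ptwise conv 6vm}. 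At this point one also checks the elementary change of variables between the hypothesis $\nu^{-1}=1-\tfrac{2^{4/3}\xi}{\tau^{1/3}}$ of Lemma~\ref{lem: GSE cross ptwise conv} and the parametrization $\rho=\tfrac12+\tfrac{2^{-2/3}\xi}{\tau^{1/3}}$ in the statement (expand $\rho=1/(1+\nu^{-1})$); the two differ only at order $\tau^{-2/3}$, which is immaterial because the steepest-descent estimates behind Proposition~\ref{prop: GSE cross pf conv} are stable under such lower-order perturbations.

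It remains to strip off the shift $\chi+2S$. Since $q$, $t$ and the auxiliary $\zeta$ are fixed throughout --- in the crossover regime only $\nu$ varies, while $\chi\sim RS(q,t)$ and $S\sim\Thetad(\zeta^2,q^2)$ stay fixed --- the law of $\chi+2S$ is a fixed, possibly signed, measure on $\Z$ with exponentially decaying tails, and the crucial point is that it admits a convolutional inverse $m$ that is again a two-sided signed measure with exponential decay. For the $2S$ factor this holds as soon as we choose the free parameter $\zeta\in(\sqrt q,1)$, so that $\theta_3(\zeta^2;q^2)/\theta_3(\zeta^2z^2;q^2)$ expands as a Laurent series in $z$ with geometrically decaying coefficients on an annulus containing $|z|=1$; for the $\chi$ factor it follows from the two combinatorial lemmas of Section~\ref{sec: asymp prelim}, which realize $RS(q,t)$ as a convolution of measures whose generating functions are entire $q$-products, hence have super-exponentially decaying convolutional inverses. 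Writing $T_\tau=-\tfrac{\tau}{4}+2^{-4/3}\tau^{1/3}s$ and using that $m$ inverts the law of $\chi+2S$, we then have
\[
\PP\!\left(-N\!\left(\tfrac{\tau}{1-q}\right)\le T_\tau\right)=\sum_k m(k)\,\PP\!\left(-N\!\left(\tfrac{\tau}{1-q}\right)+\chi+2S\le T_\tau-k\right),
\]
the rearrangement being justified by absolute summability of the underlying triple sum ($\sum_k|m(k)|<\infty$, $\sum_x\PP(-N=x)=1$, and $\sum_y|\PP(\chi+2S=y)|<\infty$). For each fixed $k$ the summand converges to $m(k)F_{GSE}(s)$, since shifting the threshold by $k=O(1)$ moves $s$ by $O(\tau^{-1/3})$ and $F_{GSE}$ is continuous; dominated convergence (with dominating sequence $|m(k)|$) then gives the limit $F_{GSE}(s)\sum_k m(k)=F_{GSE}(s)$, using that the convolutional inverse of a mass-$1$ measure again has total mass $1$. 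Equivalently, this last step is an instance of Lemma~\ref{lem: signed measure asymp} with the fixed distribution there replaced by $m$, whose hypothesis that the pre-limit mass function be uniformly small near the relevant location is itself a consequence of the continuity of $F_{GSE}$. Since the left-hand side above is exactly $\PP\!\big(-\frac{N(\tau/(1-q))-\tau/4}{2^{-4/3}\tau^{1/3}}\le s\big)$, this proves the ASEP statements, and the six vertex statements follow verbatim with the substitutions indicated and with $F_{GSE}$ replaced by $F_{cross}(\cdot;\xi)$ (also continuous) in the crossover regime.

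The step I expect to be the genuine obstacle is this shift-removal in the range $t>q$, where $\chi$ is a bona fide signed measure: one must produce the convolutional inverse with a quantitative exponential bound and then interchange a limit with a signed sum. Everything else is bookkeeping --- reconciling the chain of centering and scaling constants across Corollaries~\ref{cor: ASEP formula}/\ref{cor: 6vm formula}, Assumption~\ref{asmp: kernels}, Proposition~\ref{prop: GSE cross pf conv}, and Theorems~\ref{thm: main}/\ref{thm: main 6vm}, together with the $\rho\leftrightarrow\nu$ dictionary near the critical point --- and requires no analytic input beyond Proposition~\ref{prop: GSE cross pf conv} and the combinatorial identities of Section~\ref{sec: asymp prelim}. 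Note that the borderline case $\rho=\tfrac12$ (the GOE limit) is deliberately left out here and handled in Section~\ref{sec: GOE}, since there poles cross the contours and extra terms appear.
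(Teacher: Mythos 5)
Your high-level strategy matches the paper's: feed Corollaries~\ref{cor: ASEP formula} and \ref{cor: 6vm formula} into Proposition~\ref{prop: GSE cross pf conv}, then strip the auxiliary shift $\chi+2S$. The $\rho\leftrightarrow\nu^{-1}$ dictionary and scaling reconciliations you describe are correct. But there is a genuine gap in the shift-removal step, and it is in fact the step the paper devotes most of the proof to.

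You justify the per-$k$ limit $\PP\bigl(-N(\tau/(1-q))+\chi+2S\le T_\tau-k\bigr)\to F_{GSE}(s)$ (and, in the "equivalently" remark, the uniform-smallness hypothesis of Lemma~\ref{lem: signed measure asymp}) by appealing only to the continuity of $F_{GSE}$. Continuity of the limit would suffice if the pre-limit distribution functions were monotone, via the usual P\'olya-type argument; but when $t>q$ the law of $-N(\tau/(1-q))+\chi+2S$ is a genuinely signed measure, and pointwise convergence of a non-monotone sequence to a continuous limit does not upgrade to local-uniform convergence or to vanishing of the mass function. What makes both your dominated-convergence argument and Lemma~\ref{lem: signed measure asymp} work is the separate quantitative estimate that
$\Pf(J-K')_{l^2(\Z_{>k})}-\Pf(J-K')_{l^2(\Z_{>k-1})}=O(\tau^{-1/3})$
uniformly over $k\ge T_\tau(s)$, i.e.\ $\sup_{k\ge T_\tau(s)}\bigl|\PP(-N(\tau/(1-q))+\chi+2S=k)\bigr|\to 0$; this follows from the exponential and perturbation bounds of Assumption~\ref{asmp: kernels}, established in Lemma~\ref{lem: GSE cross ptwise conv}, together with Proposition~\ref{prop: approx distr pf identity}. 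You need to invoke that estimate explicitly, rather than deduce it from continuity of $F_{GSE}$. As a minor remark, your construction of the convolutional inverse $m$ of the whole $\chi+2S$ (inverting the theta factor for $\zeta\in(\sqrt q,1)$) is a slightly different bookkeeping than the paper's, which uses the two combinatorial lemmas of Section~\ref{sec: asymp prelim} to realize a convolutional inverse of (the signed part of) $\chi$ alone as a genuine probability distribution on $\N$ so that Lemma~\ref{lem: signed measure asymp} applies directly; either route works once the uniform-smallness estimate is in place.
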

\begin{proof}
The proofs in all cases are essentially the same, so we give the details for one case. Note that by Proposition \ref{prop: GSE cross pf conv} and Corollary \ref{cor: ASEP formula}, we have the desired convergence for $-N(\tau)+\chi+2S$. In general, $\chi$ is a signed measure, but it has a convolutional inverse which is a genuine probability distribution, so by Lemma \ref{lem: signed measure asymp}, since $\chi$ has exponentially decaying tails, $-N(\tau)+\chi+2S$ has uniformly bounded tails, so it suffices to show that $\sup_{k}\PP(-N(\tau)+\chi+2S=k)\to 0$.

By Proposition \ref{prop: approx distr pf identity} (which is uniform over $k\geq -\frac{1-q}{4}\tau+2^{-4/3}(1-q)^{1/3}\tau^{1/3} s$ for fixed $s$), it suffices to instead consider
\begin{equation*}
    \Pf(J-K')_{l^2(\Z_{>k})}-\Pf(J-K')_{l^2(\Z_{>k-1})}
\end{equation*}
for the modified kernel $K'$. This converges to $0$
uniformly for $k\geq -\frac{1-q}{4}\tau+2^{-4/3}(1-q)^{1/3}\tau^{1/3} s$, which follows from Assumption \ref{asmp: kernels} which was established in Lemma \ref{lem: GSE cross ptwise conv}. In particular, the bounds established show via similar arguments to the convergence of the Fredholm Pfaffian that this is $O(\tau^{-1/3})$, since the sums over $n$ dimensional sets $x_i>s$ are replaced by a sum over the boundary.
\end{proof}

\subsection{GOE asymptotics}
\label{sec: GOE}
We now proceed to the GOE regime. Note that this was already established for the ASEP in \cite{BBCW18} in a special case. However, given the machinery that we have developed, the proof is not too difficult, and so we include it along with the proof for the six vertex model.

Unfortunately, the Fredholm Pfaffian formulas for the ASEP and six vertex model do not directly apply when $\nu=1$, due to the poles at $z=\nu^{-1}$ and $w=\nu^{-1}$ crossing the contour. However, this can be dealt with by subtracting this contribution using an analytic continuation argument. We first formally derive a Fredholm Pfaffian expansion. We will later show that this converges, and compute asymptotics.

\begin{theorem}
\label{thm: analytic cont fredholm pfaff}
Let $|\gamma_2|<1$ (we are taking $\gamma_1\sqrt{q}=1$), let $\lambda\sim \mathbb{FBS}_a^{(q,t,1)}$ with $p_n(a)=O(c^n)$ for some $c<1$, and let $S\sim \Thetad(\zeta^2,q^2)$. Then if $1<r'<r$ are close to $1$, we have
\begin{equation*}
    \PP(\lambda_1+2S\leq s)=\Pf(J-\widetilde{K})_{l^2(\Z_{>s})},
\end{equation*}
where $\widetilde{K}$ is defined in the same manner as $K$, but with
\begin{equation*}
    \widetilde{k}(x,y)=k(x,y)-B(y)+B(x),
\end{equation*}
with
\begin{equation*}
    B(y)=\frac{(1-\gamma_2)^{-2}}{(2\pi i)^2}\oint_{|w|=r'}-w^{-y-3/2}F(w)g(w)\frac{(q;q)_\infty}{(1/w^2;q)_\infty}\frac{\theta_3(\zeta^2w^2;q^2)}{\theta_3(\zeta^2;q^2)}dw,
\end{equation*}
assuming that the Fredholm Pfaffian on the right hand side has an absolutely convergent expansion.
\end{theorem}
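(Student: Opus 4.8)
The plan is to start from the Fredholm Pfaffian formula of Theorem \ref{thm: pfaff formula}, which is valid for $\nu>1$ (equivalently $|\gamma_1\sqrt q|<1$), and analytically continue in $\nu$ to the point $\nu=1$, i.e. $\gamma_1\sqrt q=1$. The obstruction to doing this naively is that, as $\gamma_1\sqrt q\uparrow 1$, the factor $g(z)=\frac{(\gamma_1\sqrt q/z,\gamma_2/z;q)_\infty}{(\gamma_1\sqrt q z,\gamma_2 q z;q)_\infty}$ develops a pole at $z=\gamma_1\sqrt q$ (and $w=\gamma_1\sqrt q$) which approaches the unit circle and crosses the integration contours $|z|=r,|w|=r'$ (recall $r,r'>1$). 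So the continuation of $k(x,y)$ is \emph{not} simply the same integral at $\nu=1$; one must track the residue picked up when the pole crosses the contour. First I would fix $\nu$ slightly larger than $1$ so that all formulas of Theorem \ref{thm: pfaff formula} hold literally, and then move the $z$-contour (and separately the $w$-contour) inward across the pole at $z=\gamma_1\sqrt q$ so that afterwards the contour can be taken to a fixed radius $r,r'$ independent of how close $\nu$ is to $1$. By the residue theorem, this replaces $k(x,y)$ by $k(x,y)$ (now with the contour not enclosing $\gamma_1\sqrt q$) plus a single-integral residue term in $z$, plus a single-integral residue term in $w$, plus a double residue term; by skew-symmetry of $k$ and the structure of the kernel the net effect is exactly $k(x,y)\mapsto k(x,y)+B(x)-B(y)$ for the appropriate one-variable function $B$, with the double-residue contributions cancelling. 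The point is that with the contours now at fixed radius, every quantity is analytic at $\gamma_1\sqrt q=1$, so one can simply set $\nu=1$.

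The key steps, in order: (1) Start with $\nu>1$ and write out $K_{11},K_{12},K_{21},K_{22}$ as in Theorem \ref{thm: pfaff formula}, noting that the pole of $g$ at the relevant variable equal to $\gamma_1\sqrt q$ is the only pole in the annulus between the original contour and a fixed reference contour. (2) Compute the residue of $K(z,w)$ (resp.\ $K(z,1/w)$, $K(1/z,1/w)$) at $z=\gamma_1\sqrt q=1$: here one uses that $(\gamma_1\sqrt q/z;q)_\infty$ contributes a simple zero in the denominator of $g$ at $z=\gamma_1\sqrt q$ whose residue is explicit, and that $\theta_3$ and the remaining $(q;q)$-Pochhammers are regular there; the residue collapses the $z$-integral to the claimed $B(y)$ (up to the sign and the $(1-\gamma_2)^{-2}$ normalization already present). (3) Check that the cross term $K_{12}$, after the $w\mapsto w^{-1}$ deformation used in Theorem \ref{thm: pfaff formula}, produces the matching $-B(y)$ and that $K_{21}(x,y)=-K_{12}(y,x)$ remains consistent, and that the double residue (pole in $z$ \emph{and} in $w$) contributes a constant that is symmetric and hence is killed inside the Pfaffian — or, more carefully, that it combines with the $\Delta$-type boundary term without affecting the $22$-entry's stated form. (4) With all contours now at fixed $r,r'$ close to $1$, observe that $\widetilde k$, $B$, $g$, $F$, $K$ are all analytic in $\gamma_1\sqrt q$ in a neighbourhood of $1$, that the Fredholm Pfaffian series converges absolutely and uniformly there (this is exactly the hypothesis we are allowed to assume), so $\nu\mapsto \Pf(J-\widetilde K)_{l^2(\Z_{>s})}$ is analytic; since it agrees with $\PP(\lambda_1+2S\le s)$ for $\nu>1$ and the latter is also analytic in $\nu$ near $1$ (as discussed in the remark following the definition of $\mathbb{FBS}$, the marginal $\lambda_1$ depends analytically on $t,\nu$ with exponential decay), the two agree at $\nu=1$ by analytic continuation.

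The main obstacle I expect is bookkeeping step (3): correctly accounting for \emph{which} residue contributions survive and getting all the signs, the factors of $2$, and the $(1-\gamma_2)^{\pm 2}$ normalizations to line up so that the modification is precisely the antisymmetric rank-one perturbation $k\mapsto k+B(x)-B(y)$ and nothing contaminates the $K_{12}$ or $K_{22}$ entries. In particular one must verify that the pole-crossing in the $K_{22}$ integral (where \emph{both} $z\mapsto 1/z$ and $w\mapsto 1/w$ deformations already occur, and there is already a $z=w^{-1}$ pole generating the $\Delta$ term) does not generate an additional genuine two-variable term; the cleanest way is to do the pole-crossing \emph{before} the substitutions $z\mapsto 1/z$, $w\mapsto1/w$, i.e. at the level of the original $\mathbb{FBS}$ correlation kernel from \cite{BBNV18}, where the perturbation is manifestly a rank-one antisymmetric correction of the scalar kernel. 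A secondary, more routine obstacle is checking the analyticity/uniform-convergence claim needed for step (4), which amounts to a Hadamard-type bound on the Pfaffian minors uniform for $\gamma_1\sqrt q$ in a complex neighbourhood of $1$, exactly of the type used elsewhere in the paper.
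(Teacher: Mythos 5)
Your approach is essentially the same as the paper's: analytically continue the Fredholm Pfaffian formula of Theorem \ref{thm: pfaff formula} in $\nu$ from $\nu>1$ to $\nu=1$, tracking the poles of $g$ that cross the contours and subtracting the resulting residues, which form a rank-one antisymmetric correction $B(x)-B(y)$ to the scalar kernel $k$, and then invoking analyticity of both sides (the left side via the $\mathbb{HL}$/rational-function connection) to conclude. The concerns you flag under step (3) turn out to be vacuous in the paper's execution: since Theorem \ref{thm: pfaff formula} already expresses $K_{12}$ and $K_{22}$ purely via difference operators applied to the single scalar kernel $k(x,y)$, one needs only to continue $k$ itself (not the three entries separately), and the potential double residue at $z=w=\nu$ vanishes because the numerator factor $(w/z;q)_\infty$ in $K(z,w)$ already carries a zero there --- equivalently, the $(\nu^{-1}w;q)_\infty$ factor appearing in the residue $B(y)$ cancels the pole of $g(w)$ at $w=\nu$. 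The paper also records that the $z=\nu$ residue is $A(x)B(y)$ with $A(x)$ a function that degenerates to the constant $1-\gamma_2$ only at $\nu=1$; keeping $A$ explicit is what lets the same machinery extend to the Gaussian regime in Theorem \ref{thm: pfaff analytic cont}.
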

\begin{proof}
First, note that $\mathbb{FBS}_a^{(q,t,\nu)}(\lambda_1=x)$ is an analytic function in $\nu$. Maybe the simplest argument is to note that the distribution function for $\mathbb{HL}_a^{(q,t,\nu)}$ is analytic (in fact rational) in $\nu$, and $\mathbb{FBS}_a^{(q,t,\nu)}$ is a convolution with a function supported on $\N$ by Theorems \ref{thm: general 6vm HL distr equality} and \ref{thm: HL FBS same}. Further convolving with $2S$, we still have an analytic distribution function.

Next, we note that as the integrand in $k(x,y)$ is analytic in $\nu$, so is the sum of its residues at the poles within the contour. The issue is that Theorem \ref{thm: pfaff formula} only applies when the poles $z=\nu$ and $w=\nu$ lie outside the contour, but when $\nu=1$, this is impossible. It's easy to see that the residues at $z=\nu$ and $w=\nu$ are given by $A(x)B(y)$ and $-A(y)B(x)$ respectively (the latter can be seen via skew symmetry of $k(x,y)$), where
\begin{equation*}
    A(x)=\nu^{x+3/2}F(\nu)\frac{(-t;q)_\infty}{(-qt\nu^2;q)_\infty}.
\end{equation*}
and
\begin{equation*}
    B(y)=\frac{(1-\gamma_2)^{-2}}{2\pi i}\oint_{|w|=r'}w^{-y-5/2}\frac{F(w)g(w)(q;q)_\infty(\nu^{-1}w;q)_\infty(q\nu/w;q)_\infty}{(1/w^2;q)_\infty(\nu^{-1}/w;q)_\infty(q\nu w;q)_\infty}\frac{\theta_3(\zeta^2\nu^2w^2;q^2)}{\theta_3(\zeta^2;q^2)}dw.
\end{equation*}

Thus, we subtract off the contributions from these poles, and obtain that
\begin{equation*}
    \widetilde{k}(x,y)=k(x,y)-A(x)B(y)+A(y)B(x)
\end{equation*}
extends $k(x,y)$ to an analytic function in $\nu$ near $\nu^{-1}=1$. At $\nu=1$, we can compute $A(x)=1-\gamma_2$ and
\begin{equation*}
    B(x)=\frac{(1-\gamma_2)^{-2}}{2\pi i}\oint_{|w|=r'}-w^{-x-3/2}F(w)g(w)\frac{(q;q)_\infty}{(1/w^2;q)_\infty}\frac{\theta_3(\zeta^2w^2;q^2)}{\theta_3(\zeta^2;q^2)}dw.
\end{equation*}
\end{proof}

Recall that Lemma \ref{lem: GSE cross ptwise conv} established $k(x,y)\to k_{GSE}(u,v)$ even if $\nu=1$. Here, we establish the analogous results for $B$.
\begin{lemma}
\label{lem: GOE conv}
Let $\nu=1$. Then with the same scalings as in Lemma \ref{lem: GSE cross ptwise conv}, we have for all $i$,
\begin{equation*}
    \left(2^{-4/3}\tau^{1/3}D_x^{(\gamma_2)}\right)^iB(u)\to \partial_u^i b(u),
\end{equation*}
and the bounds
\begin{equation}
\label{eq: B bound}
    \left|\left(2^{-4/3}\tau^{1/3}D_x^{(\gamma_2)}\right)^iB(u)\right|\leq Ce^{-cu},
\end{equation}
and
\begin{equation}
\label{eq: B diff bound}
    \left|\left(2^{-4/3}\tau^{1/3}D_x^{(\gamma_2)}\right)^i\left(2^{-4/3}\tau^{1/3}D_y^{(\gamma_2)}\right)^j(B(u+du)-B(u))\right|\leq C\tau^{-1/12}
\end{equation}
if $du\in [-\tau^{-1/6},\tau^{1/6}]$, uniformly in $u>s$.
\end{lemma}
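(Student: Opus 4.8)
The plan is to run the same steepest descent analysis as in the proof of Lemma~\ref{lem: GSE cross ptwise conv} (and Lemma~\ref{lem: GSE cross ptwise conv 6vm} for the six vertex model), but now on the \emph{single} contour integral defining $B$ rather than the double integral defining $k$, so the combinatorial bookkeeping is strictly lighter. After rescaling $\tau$ by $1/(1-q)$ exactly as there, the integrand of $B(y)$ is $w^{-3/2}$ times $\exp\big(\tau G(w)-2^{-4/3}\tau^{1/3}u\log w\big)$ times a prefactor built from $g(w)$, the factor $(q;q)_\infty/(1/w^2;q)_\infty$, and the theta ratio, with the same function $G$ as in \eqref{eq: int first formula}. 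First I would deform the contour $|w|=r'$ to the steepest descent contour $C_{r,R}$ of Figure~\ref{fig: contour}; since $\nu=1$ this deformation crosses no poles (both contours enclose the unit disk, and the pole at $w=1$ coming from $1/(1/w^2;q)_\infty$ lies inside both), and the computation of $\Re G$ already carried out in Lemma~\ref{lem: GSE cross ptwise conv} shows $C_{r,R}$ is steepest descent with $\Re G$ uniformly bounded below $0$ off a fixed neighbourhood of $w=1$, so the integral localizes near $w=1$.

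Next I would substitute $w=1+2^{4/3}\beta\tau^{-1/3}$ and Taylor expand, with $\tau G(w)=\beta^3/3+O(|\beta|^4\tau^{-1/3})$ and $2^{-4/3}\tau^{1/3}u\log w=\beta u+O(|\beta|^2\tau^{-1/3})$, precisely as in Lemma~\ref{lem: GSE cross ptwise conv}. The one genuinely new computation is the limit of the prefactor: $g(w)\to-(1-\gamma_2)$ since at $\gamma_1\sqrt q=1$ the simple zero of $(\gamma_1\sqrt q/w;q)_\infty$ at $w=1$ cancels the simple zero of $(\gamma_1\sqrt q w;q)_\infty$ there; the theta ratio $\theta_3(\zeta^2w^2;q^2)/\theta_3(\zeta^2;q^2)\to1$; and $(1/w^2;q)_\infty$ has a simple zero at $w=1$ which, combined with the $(q;q)_\infty$ factor and the Jacobian $dw=2^{4/3}\tau^{-1/3}\,d\beta$, produces exactly $\frac{1}{2\beta}\,d\beta$. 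Tracking the overall sign and the $(1-\gamma_2)$ powers (the $(1-\gamma_2)^{-2}$ normalization together with the residue normalization absorbed into $B$ in Theorem~\ref{thm: analytic cont fredholm pfaff}), everything collapses to $B(u)\to\frac{1}{2\pi i}\oint_{C_1}\frac{e^{\beta^3/3-\beta u}}{2\beta}\,d\beta=b(u)$, the limiting contour being reached from the rescaled $C_{r,R}$ by the usual exponential tail decay. As in Lemma~\ref{lem: GSE cross ptwise conv}, each application of $2^{-4/3}\tau^{1/3}D_x^{(\gamma_2)}$ inserts a factor $-\frac{\tau^{1/3}(1-\gamma_2)^2}{2^{7/3}}\frac{w-w^{-1}}{(1-\gamma_2 w)(1-\gamma_2 w^{-1})}\to-\beta$, matching the effect of $\partial_u$ on $b$, and iterating gives the convergence for all $i$.

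For the bound \eqref{eq: B bound} I would reuse the reasoning behind the estimate \eqref{eq: asmp exp bound} established in Lemma~\ref{lem: GSE cross ptwise conv}: off $w=1$ the integrand is exponentially small in $\tau$, while on the rescaled contour $\Re\beta$ is bounded away from $0$, so $|e^{-\beta u}|$ contributes decay $e^{-cu}$ uniformly in $u>s$, and inserting difference operators only multiplies by uniformly bounded factors. For \eqref{eq: B diff bound}, perturbing $u$ by $du$ of order at most $\tau^{-1/6}$ multiplies the integrand by $w^{-2^{-4/3}\tau^{1/3}\,du}=e^{O(\tau^{-1/6})}$ on the localized contour, and $|e^x-1|\le|x|e^{|x|}$ converts this into the claimed additive error, again made uniform in $u$ by the exponential decay in $u$.

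I expect the only real obstacle to be the prefactor computation of the second paragraph: verifying that the $q$-Pochhammer zeros near $w=1$, the Jacobian, and the $(1-\gamma_2)$ factors conspire to leave precisely the constant $\frac12$ in front of $\frac1\beta$, so that the limit is $b$ on the nose rather than a scalar multiple of it. Everything else is a direct transcription of the steepest descent analysis already completed for the kernel $k$; the six vertex analogue is identical, using the phase function and scalings of Lemma~\ref{lem: GSE cross ptwise conv 6vm}.
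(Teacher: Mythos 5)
Your proof is correct and follows essentially the same approach as the paper: the paper likewise runs the same steepest descent analysis from Lemma~\ref{lem: GSE cross ptwise conv} on the single contour integral over $C_{r,R}$, with the key new observations being exactly the prefactor limits you identify — $g(w)\to -(1-\gamma_2)$ (from the cancellation of the $q$-Pochhammer zeros at $w=1$ when $\gamma_1\sqrt{q}=1$), $\frac{2^{4/3}\tau^{-1/3}}{1-w^{-2}}\to\frac{1}{2\alpha}$, and the remaining factors $\to 1$ — and with the bounds \eqref{eq: B bound} and \eqref{eq: B diff bound} obtained from the same steepest descent estimates.
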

\begin{proof}
Again, we proceed by a steepest descent analysis, which is extremely similar to that in the proof of Lemma \ref{lem: GSE cross ptwise conv}. Note that we may use the same contour $C_{r,R}$, since $F(w)$ has the same form. Making the same change of variables, we can check that the $w^{-x}F(w)\to e^{\frac{\alpha^3}{3}-\alpha u}$, $g(w)\to -(1-\gamma_2)$, and $\frac{2^{4/3}\tau^{-1/3}}{1-w^{-2}}\to \frac{1}{2\alpha}$, with the other factors going to $1$. 

The discrete derivatives amount to a change in the integrand by the same factors as before, and so can be dealt with in the same way. The bounds \eqref{eq: B bound} and \eqref{eq: B diff bound} can similarly be obtained from the steepest descent asymptotics.
\end{proof}
With this, we may proceed as in the previous cases. 
\begin{proposition}
\label{prop: GOE pf conv}
Under the scalings of Assumption \ref{asmp: kernels}, if $\nu=1$, we have that the series expansion for $\Pf(J-\widetilde{K})_{l^2(\Z_{>s})}$ converges absolutely, and
\begin{equation*}
    \Pf(J-\widetilde{K})_{l^2(\Z_{>s})}\to \Pf(J-K_{GOE})_{L^2(s,\infty)}.
\end{equation*}
\end{proposition}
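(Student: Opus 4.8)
The plan is to follow the GSE case (Propositions~\ref{prop: GSE cross pf conv} and~\ref{prop: GSE asymp}), the only new ingredient being the coboundary term $B(x)-B(y)$ appearing in $\widetilde k$. First I would note that $\widetilde k=k+\big(B(x)-B(y)\big)$, where $k$ satisfies Assumption~\ref{asmp: kernels} even at $\nu=1$ by Lemma~\ref{lem: GSE cross ptwise conv}, and that $B(x)-B(y)$ is carried inertly through the summation by parts of Lemma~\ref{lem: int by parts multivariate} and never enters the $\Delta$-part of the $(2,2)$ entry of $\widetilde K$; hence a variant of Proposition~\ref{prop: approx distr pf identity} (with the same coboundary bookkeeping as below to secure absolute convergence) lets us replace $\widetilde K$ by the kernel $\widetilde K'$, built from $\widetilde k$ by the same recipe as $K'$ from $k$ in that proposition, at the cost of an $O(\tau^{-1/12})$ error. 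Conjugating by $2^{-4/3}\tau^{1/3}$ and replacing sums by integrals exactly as in the proof of Proposition~\ref{prop: GSE cross pf conv}, Lemmas~\ref{lem: GSE cross ptwise conv} and~\ref{lem: GOE conv} give pointwise convergence of every entry of $\widetilde K'$ to the corresponding entry of $K_{GOE}$, using $k_{GOE}=k_{GSE}+b(u)-b(v)$ and $2^{-4/3}\tau^{1/3}D^{(\gamma_2)}_xB(u)\to\partial_u b(u)$.

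The main obstacle is that, unlike in the GSE case, the $(1,1)$ entry of $\widetilde K'$ fails to decay in both variables, since $B(x)$ decays only in $x$, so the Hadamard bound $|\Pf(\widetilde K'(x_i,x_j))_{i,j=1}^n|\le C^nn^{n/2}e^{-c\sum u_i}$ used to justify dominated convergence in Proposition~\ref{prop: GSE cross pf conv} is not directly available. I would resolve this by exploiting that the perturbation has a rank-two ``coboundary'' structure: with $\mathbf q(x)=(1,0)^T$ and $\mathbf p(x)=\big(B(x),-2D^{(\gamma_2)}_xB(x)\big)^T$ one checks
\begin{equation*}
    \widetilde K'(x,y)=K'(x,y)+\mathbf p(x)\mathbf q(y)^T-\mathbf q(x)\mathbf p(y)^T,
\end{equation*}
and likewise $K_{GOE}=K_{GSE}+\mathbf p_\infty(u)\mathbf q(v)^T-\mathbf q(u)\mathbf p_\infty(v)^T$ with $\mathbf p_\infty(u)=\big(b(u),-b'(u)\big)^T$. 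On any fixed set of $m$ sites the perturbation matrix equals $PQ^T-QP^T$ for single vectors $P,Q\in\C^{2m}$, so its restriction to any set of more than one site has vanishing Pfaffian. Expanding each $\Pf(\widetilde K'(x_i,x_j))_{i,j=1}^n$ by the Pfaffian addition formula (Lemma~\ref{lem: pf expansion}) therefore gives
\begin{equation*}
    \Pf(\widetilde K'(x_i,x_j))_{i,j=1}^n=\Pf(K'(x_i,x_j))_{i,j=1}^n+\sum_{k=1}^n(\pm)\,\big(2D^{(\gamma_2)}_{x_k}B(x_k)\big)\,\Pf\big(K'(x_i,x_j)\big)_{i,j\ne k},
\end{equation*}
and now every variable carries exponential decay, coming either from the factor $D^{(\gamma_2)}_{x_k}B(x_k)$ or from Lemma~\ref{lem: GSE cross ptwise conv} applied to the sub-Pfaffian of $K'$.

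With this expansion in hand the argument should close exactly as in Proposition~\ref{prop: GSE cross pf conv}: the Hadamard bound (Lemma~\ref{lem: Hadamard}) applied to the two explicitly decaying pieces gives absolute convergence of the series for $\Pf(J-\widetilde K')_{l^2(\Z_{>s})}$, and term-by-term dominated convergence --- using the uniform exponential bounds~\eqref{eq: asmp exp bound}, \eqref{eq: B bound} and the perturbation estimates~\eqref{eq: asmp perturb bound}, \eqref{eq: B diff bound} --- shows the sum converges to the analogous expansion of $\Pf(J-K_{GOE})_{L^2(s,\infty)}$. Thus the hard part is the coboundary bookkeeping: once one sees that the non-decaying constant vector $\mathbf q$ can appear at most once in any nonzero sub-Pfaffian, the convergence analysis reduces to that of the GSE regime. (The crossing of the poles at $z,w=\nu$ through the contour when $\nu=1$, which would obstruct a direct appeal to Theorem~\ref{thm: pfaff formula}, has already been removed at the level of exact formulas in Theorem~\ref{thm: analytic cont fredholm pfaff}.)
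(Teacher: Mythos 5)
Your strategy mirrors the paper's: pass to the $\Delta$-free kernel via a variant of Proposition~\ref{prop: approx distr pf identity}, split $\widetilde K'$ as $K'$ plus a rank-two skew perturbation via Lemma~\ref{lem: pf expansion}, bound by Hadamard, and close with dominated convergence. Writing the perturbation explicitly as $\mathbf{p}(x)\mathbf{q}(y)^T-\mathbf{q}(x)\mathbf{p}(y)^T$ is a clean way to exhibit the rank-two structure, which the paper simply asserts with a reference.

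There is, however, a genuine error in the Pfaffian bookkeeping. The claim that the perturbation's ``restriction to any set of more than one site has vanishing Pfaffian'' is false: rank two only forces $\Pf(B')_{J^c}=0$ when $|J^c|\geq 4$, and Lemma~\ref{lem: pf expansion} sums over \emph{all} even subsets $J\subseteq\{1,\dots,2n\}$ of rows/columns, not over unions of variable-pairs $\{2k-1,2k\}$. Size-two subsets $J^c$ straddling two variables do contribute: for $i\neq j$, the restriction of $B'$ to $\{2i-1,2j-1\}$ has Pfaffian $B(x_i)-B(x_j)$, and the restriction to $\{2i-1,2j\}$ has Pfaffian proportional to $D_{x_j}^{(\gamma_2)}B(x_j)$; neither vanishes in general. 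Consequently the displayed identity, which keeps only the $n$ ``diagonal'' terms $J^c=\{2k-1,2k\}$, is missing $O(n^2)$ cross terms, and the correct count of potentially nonzero summands is $1+\binom{2n}{2}$, which is what the paper's proof uses. This does not invalidate the conclusion: in every cross term the companion $\Pf(K')_J$ still retains at least one row/column of each variable, so Hadamard furnishes exponential decay in every $u_l$, while $\Pf(B')_{J^c}$ is bounded by $C(e^{-cu_i}+e^{-cu_j})$, and the absolute convergence and dominated convergence steps go through. But the displayed formula should be replaced by the full expansion over all size-$2$ subsets.
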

\begin{proof}
By Proposition \ref{prop: approx distr pf identity}, it suffices to study the modified kernel
\begin{equation*}
    \widetilde{K}'=\begin{pmatrix}
    \widetilde{k}(x,y)&-D_y^{(\gamma)}\widetilde{k}(x,y)\\
    -D_x^{(\gamma)}\widetilde{k}(x,y)&D_x^{(\gamma)}D_y^{(\gamma)} \widetilde{k}(x,y)
    \end{pmatrix}.
\end{equation*}
Strictly speaking, Proposition \ref{prop: approx distr pf identity} does not apply, but as we describe below it can be modified to hold up to an additional $O(n^2)$ factor.

To see that the series expansion of the Fredholm Pfaffian converges absolutely, note that by Lemma \ref{lem: pf expansion}, we can write
\begin{equation*}
    \left|\Pf(\widetilde{K}'(x_i,x_j)_{i,j=1}^n\right|\leq \sum_{I}|\Pf(K')_I\Pf(B')_{I^c}|,
\end{equation*}
where $B'$ is the Pfaffian kernel defined by the function $B(y)-B(x)$, the subscripts denote which rows and columns are included, and the sum is over even subsets of $\{1,\dotsc, n\}$. Now $\Pf(B')_{I^c}=0$ unless $|I^c|=0,2$, since $B'$ is a rank 2 matrix (see e.g. Proposition A.1 of \cite{IMS22}). No matter which subset $I$ we pick, either $\Pf(K')_I$ will include all variables $x_i$, or it will miss exactly one, say $x_i$, in which case $\Pf(B')_{I^c}=D_{x_i}^{(\gamma_2)}B(x_i)$. Thus, using Lemma \ref{lem: Hadamard}, we can then bound
\begin{equation*}
    \sum_{I}|\Pf(K')_I\Pf(B')_{I^c}|\leq (2n)^{n/2} \left(1+{2n\choose 2}\right)C^ne^{-c\sum u_i},
\end{equation*}
which implies that the series expansion for $\Pf(\widetilde{K})_{l^2(\Z_{>s})}$ converges absolutely. Note this same argument establishes a version of Proposition \ref{prop: approx distr pf identity}. In particular, we may expand each boundary term further using Lemma \ref{lem: pf expansion}, again noting that each variable appears in two columns/rows, and only one will be used up in the $\Pf(B')_{I^c}$ factor, leaving the other one to ensure exponential decay in that variable giving integrability. There are $O(n^2)$ terms introduced this way, which does not affect the rest of the proof of Proposition \ref{prop: approx distr pf identity}.

The proof of convergence for $\Pf(J-\widetilde{K}')_{l^2(\Z_{>s})}$ proceeds in a similar manner as in the previous cases. In particular, the bound above is uniform in $\tau$, so we can use dominated convergence to turn the pointwise convergence of Lemma \ref{lem: GOE conv} into convergence of the Fredholm Pfaffians.
\end{proof}

Since the analogous statements for the six vertex model have an essentially identical proof (in particular, the only part of the integrand that changes is $F(z)$, but $F(1)=1$ still holds), we omit them. This then leads to the following asymptotics by a similar argument as in Proposition \ref{prop: GSE asymp}.
\begin{proposition}
For the ASEP, if $\rho=\frac{1}{2}$, we have 
\begin{equation*}\tag{$\rho=\frac{1}{2}$}
    \PP\left(-\frac{N\left(\frac{\tau}{1-q}\right)-\frac{\tau}{4}}{2^{-4/3} \tau^{1/3}}\leq s\right)\to F_{GOE}(s),
\end{equation*}
and for the six vertex model, if $\rho=\frac{1}{2}$, we have
\begin{equation*}\tag{$\rho=\frac{1}{2}$}
    \PP\left(\frac{h(n,n)-\frac{2a}{1+a}n}{\frac{(a(1-a))^{1/3}}{1+a}n^{1/3}}\leq s\right)\to F_{GOE}(s).
\end{equation*}
\end{proposition}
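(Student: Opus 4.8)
The plan is to follow the same template as Proposition \ref{prop: GSE asymp}, the only genuinely new feature being that at $\rho=\frac12$ one has $\nu^{-1}=1$, so the Fredholm Pfaffian formulas of Corollaries \ref{cor: ASEP formula} and \ref{cor: 6vm formula} are no longer directly valid: the poles at $z=\nu^{-1}$ and $w=\nu^{-1}$ now collide with the contours. I would therefore start from the analytically continued formula of Theorem \ref{thm: analytic cont fredholm pfaff}. Via Theorems \ref{thm: general 6vm HL distr equality} and \ref{thm: HL FBS same} (and, for the ASEP, the six vertex limit as in Corollary \ref{cor: ASEP formula}), this expresses $\PP(-N(\tau)+\chi+2S\le s)$ and $\PP(h(n,n)+\chi+2S\le s)$ at $\nu=1$ as $\Pf(J-\widetilde K)_{l^2(\Z_{>s})}$, where $\widetilde k(x,y)=k(x,y)-B(y)+B(x)$ is a rank-two modification of the kernel $k$ of Theorem \ref{thm: pfaff formula}, with $F(z)$ given by \eqref{eq: asep F} for the ASEP and by \eqref{eq: 6vm F} for the six vertex model.

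Next I would apply Proposition \ref{prop: GOE pf conv}: under the scalings of Assumption \ref{asmp: kernels} it yields
\begin{equation*}
    \Pf(J-\widetilde K)_{l^2(\Z_{>s})}\to \Pf(J-K_{GOE})_{L^2(s,\infty)}=F_{GOE}(s).
\end{equation*}
This rests on the steepest descent asymptotics $k\to k_{GSE}$ from Lemma \ref{lem: GSE cross ptwise conv} (which was arranged to hold at $\nu=1$ as well) together with $B\to b$ from Lemma \ref{lem: GOE conv}, a dominated convergence argument driven by the Hadamard bound of Lemma \ref{lem: Hadamard}, and the observation that the rank-two correction introduces only $O(n^2)$ extra terms in the Fredholm expansion, each still carrying enough exponential decay in the surviving variables. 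For the six vertex model the same analysis applies verbatim, since the only change is that $F$ is replaced by $\prod_i\frac{1+a_iz}{1+a_i/z}$, and $F(1)=1$ so the critical point and the steepest-descent geometry are unchanged (cf. Lemma \ref{lem: GSE cross ptwise conv 6vm}).

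It then remains to remove the auxiliary shifts $\chi+2S$, exactly as in Proposition \ref{prop: GSE asymp}. The variable $2S$ with $S\sim\Thetad(\zeta^2,q^2)$ is tight, and $\chi\sim RS(q,t)$ --- a signed measure when $q<t<1$ --- has exponentially decaying tails and a convolutional inverse which is a genuine probability measure (the convolution identities of Section \ref{sec: asymp prelim}). By Lemma \ref{lem: signed measure asymp} it then suffices to show that the point masses vanish uniformly at the relevant scale, i.e. $\sup_k\PP(-N(\tau)+\chi+2S=k)\to 0$ for $k\ge -\frac{1-q}{4}\tau+2^{-4/3}(1-q)^{1/3}\tau^{1/3}s$ (and similarly for the six vertex model). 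Writing this point mass as a difference of Fredholm Pfaffians at consecutive $k$ and invoking the (slightly modified, as in Proposition \ref{prop: GOE pf conv}) version of Proposition \ref{prop: approx distr pf identity}, I would replace it by the corresponding difference for the modified kernel $\widetilde K'$, which is $O(\tau^{-1/3})$ uniformly because the exponential bounds of Assumption \ref{asmp: kernels} collapse the $n$-fold sums over $\{x_i>s\}$ to sums over boundary hyperplanes.

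The main point requiring care --- and the only place where this regime differs from the $\rho>\frac12$ and $\rho\downarrow\frac12$ cases --- is the analytic continuation in $\nu$ underlying Theorem \ref{thm: analytic cont fredholm pfaff}, together with the verification that the rank-two term $B$ destroys neither absolute convergence of the Pfaffian series nor the dominated-convergence argument; both of these have already been carried out in Theorem \ref{thm: analytic cont fredholm pfaff} and Proposition \ref{prop: GOE pf conv}. Once these are in hand, the proposition follows by stringing the above steps together, so I expect the write-up to be short.
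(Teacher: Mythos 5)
Your proposal is correct and follows the same route the paper takes (the paper omits the explicit write-up, pointing to Proposition \ref{prop: GOE pf conv} and the shift-removal argument from Proposition \ref{prop: GSE asymp}): start from the analytically continued formula of Theorem \ref{thm: analytic cont fredholm pfaff}, invoke Proposition \ref{prop: GOE pf conv} for convergence of the Fredholm Pfaffian with the rank-two correction, note that $F(1)=1$ so the six vertex model case is identical, and then remove the auxiliary shifts $\chi+2S$ via Lemma \ref{lem: signed measure asymp} and the uniform $O(\tau^{-1/3})$ bound on the point masses exactly as in Proposition \ref{prop: GSE asymp}. No gaps.
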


\section{Asymptotics: Gaussian asymptotics}
\label{sec: Gauss}
In this section, we establish Gaussian fluctuations in the $\rho<\frac{1}{2}$ regime. However, as we have already seen in the GOE case, this causes certain poles to cross the contour and these residues must be removed. In the GOE case, there was only one pole and the asymptotic analysis was similar to the GSE case. In the Gaussian case, many poles will cross, and more serious convergence issues will arise which must be dealt with.

\subsection{Analytic continuation}
We now wish to extend the Pfaffian formulas in Theorem \ref{thm: pfaff formula}, valid for $\nu^{-1}<1$ to all $\nu>0$. This requires analytic continuation in the parameter $\nu$, which amounts to removing the extra poles coming from the $(\gamma_1\sqrt{q}z;q)_\infty$ and $(\gamma_1\sqrt{q}w;q)_\infty$ factors in $k(x,y)$. We have actually already encountered this in the GOE regime.

In the Gaussian case, this is complicated by the fact that naively doing so, the resulting expressions are exponentially large in $x$ and $y$, requiring additional work to show that we actually obtain a convergent formula. 

We let $\nu_-<\nu$ denote a number close to but smaller than $\nu$, and let $k_{\nu^{-1}}$ denote the same contour integral defining $k$ except with $\nu^{-1}<r'<r$, and $r',r$ close to $\nu^{-1}$. The reason for this is both that the critical point now lies at $\nu^{-1}$ rather than $1$, and to give sufficient decay to allow the Fredholm Pfaffian to converge. This in addition to the fact that $\nu<1$ introduces many poles which must be dealt with. We first determine the relevant residues.

\begin{lemma}
\label{lem: q-poch identity}
We have
\begin{equation*}
    (-z)^k q^{k\choose 2}(zq^{k};q)_\infty(q^{1-k}/z;q)_\infty=(z;q)_\infty(q/z;q)_\infty.
\end{equation*}
\end{lemma}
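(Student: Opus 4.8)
The identity to establish is
\[
    (-z)^k q^{\binom{k}{2}}(zq^{k};q)_\infty(q^{1-k}/z;q)_\infty=(z;q)_\infty(q/z;q)_\infty,
\]
for $k$ a non-negative integer (or more generally any integer, reading the finite $q$-Pochhammer symbols with possibly negative index via the standard convention). My plan is to reduce both sides to ratios of $(z;q)_\infty$ and $(q/z;q)_\infty$ by splitting off finite products, since the infinite parts differ only by a finite shift in the exponent of $q$ inside the argument.

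First I would write $(zq^k;q)_\infty = (z;q)_\infty / (z;q)_k$, using $(a;q)_\infty=(a;q)_k\,(aq^k;q)_\infty$. Similarly I would write $(q^{1-k}/z;q)_\infty$ in terms of $(q/z;q)_\infty$: since $q^{1-k}/z = (q/z)q^{-k}$, we have $(q^{1-k}/z;q)_\infty=((q/z)q^{-k};q)_\infty = (q^{-k+1}/z;q)_k\,(q/z;q)_\infty$, i.e.\ the infinite part is $(q/z;q)_\infty$ times the finite product $\prod_{j=0}^{k-1}(1-q^{1-k+j}/z) = \prod_{i=1}^{k}(1-q^{-i+1}\cdot q /z)$ — it is cleanest to index it as $\prod_{i=1}^{k}(1 - q^{1-i}/z)$. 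Substituting both, the claimed identity becomes the purely finite (rational) identity
\[
    (-z)^k q^{\binom{k}{2}} \cdot \frac{\prod_{i=1}^{k}(1-q^{1-i}/z)}{(z;q)_k} = 1,
\]
that is, $(-z)^k q^{\binom{k}{2}}\prod_{i=1}^{k}(1-q^{1-i}/z) = (z;q)_k = \prod_{i=0}^{k-1}(1-zq^i)$.

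The remaining step is to verify this finite identity, which is elementary: in the product $\prod_{i=1}^{k}(1-q^{1-i}/z)$, pull out a factor $-q^{1-i}/z$ from the $i$-th term to get $\prod_{i=1}^k (-q^{1-i}/z)(1 - zq^{i-1}) = (-1/z)^k q^{\sum_{i=1}^k(1-i)}\prod_{i=1}^k(1-zq^{i-1})$. Now $\sum_{i=1}^k (1-i) = -\binom{k}{2}$ and $\prod_{i=1}^k(1-zq^{i-1}) = (z;q)_k$, so the left side becomes $(-z)^k q^{\binom{k}{2}} \cdot (-1/z)^k q^{-\binom{k}{2}}(z;q)_k = (z;q)_k$, as desired. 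I do not anticipate a genuine obstacle here — the only points requiring a little care are the bookkeeping of the exponent $\binom{k}{2}$ (matching $\sum_{i=1}^k(i-1)=\binom{k}{2}$) and making sure the convention for finite $q$-Pochhammer symbols with shifted arguments is applied consistently; both are routine. If one wishes to avoid the integer-$k$ restriction entirely, an alternative is to observe that both sides, viewed as functions of $k$ (with $q^k$ treated as a variable $u$), satisfy the same first-order $q$-difference recursion in $u\mapsto qu$ and agree at $u=1$ (i.e.\ $k=0$), so they coincide; I would mention this only as a remark since the integer case is all that is needed downstream.
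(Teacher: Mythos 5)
Your proof is correct. You reduce the infinite-product identity to a purely finite one by peeling off $(z;q)_k$ and $(q^{1-k}/z;q)_k$, then verify the resulting rational identity directly by factoring $1-q^{1-i}/z = (-q^{1-i}/z)(1-zq^{i-1})$ and tracking the exponent $\sum_{i=1}^k(i-1)=\binom{k}{2}$. The algebra all checks out.

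The paper takes a slightly different (though closely related) route: it proceeds by induction on $k$, using the single-step identity $-z(zq;q)_\infty(1/z;q)_\infty = (z;q)_\infty(q/z;q)_\infty$ and substituting $z \mapsto zq^{k-1}$ to show the left-hand side is invariant under $k \mapsto k-1$, so all cases reduce to $k=0$. Your approach does the entire cancellation in one shot, which makes the bookkeeping of the $\binom{k}{2}$ exponent explicit rather than hidden in the induction. Both are elementary and short; yours is perhaps slightly more transparent, since the finite identity $(-z)^k q^{\binom{k}{2}}\prod_{i=1}^k(1-q^{1-i}/z) = (z;q)_k$ can be verified at a glance, while the paper's inductive step requires the reader to unwind the substitution. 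Your closing remark about treating $q^k$ as a continuous variable and using a $q$-difference recursion is a valid alternative, though, as you note, unnecessary here since only integer $k$ is ever used.
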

\begin{proof}
We proceed by induction. The base case $k=0$ is trivial. We have the identity
\begin{equation*}
    -z(zq;q)_\infty(1/z;q)_\infty=(z;q)_\infty(q/z;q)_\infty,
\end{equation*}
and plugging in $z=zq^{k-1}$ gives
\begin{equation*}
\begin{split}
    (-z)^k q^{k\choose 2}(zq^{k};q)_\infty(q^{1-k}/z;q)_\infty&=(-z)^{k-1}q^{k-1\choose 2}(zq^{k-1};q)_\infty(q^{1-(k-1)}/z;q)_\infty
    \\&=(z;q)_\infty(q/z;q)_\infty
\end{split}
\end{equation*}
by the inductive hypothesis.
\end{proof}

\begin{lemma}
\label{lem: residues}
Assume that $\nu^{-1}>1$. The residue of $k_{\nu^{-1}}(x,y)$ at $z=q^{-k}\nu$ is $A_k(x)B(y)$ and the residue of $k_{\nu^{-1}}(x,y)$ at $w=q^{-k}\nu$ is $-A_k(y)B(x)$, where
\begin{equation*}
    A_k(x)=-\left(q^{-k}\nu\right)^{-x-3/2}\frac{F(q^{-k}\nu)q^{-k^2-k}\zeta^{2k}\nu^{2k+1}(q;q)_\infty(q^k\nu^{-2};q)_\infty(-tq^k;q)_\infty}{(q^{-k};q)_{k-1}(-tq^{1-k}\nu^2;q)_\infty(q^{2k}\nu^{-2};q)_\infty\theta_3(\zeta^2;q^2)},
\end{equation*}
and
\begin{equation*}
    B(y)=\frac{(1-\gamma_2)^2}{2\pi i}\oint_{|w|=r'}w^{-y-5/2}F(w)\frac{(\nu^{-1} w;q)_\infty(q^{1}\nu/w;q)_\infty\theta_3(\zeta^2\nu^2w^2;q^2)}{(1/w^2;q)_\infty(\nu^{-1}/w;q)_\infty(q^{1}\nu w;q)_\infty}g(w)dw,
\end{equation*}
where $r'>\nu^{-1}$ is close to $\nu^{-1}$.

The residue of $k_{\nu^{-1}}(x,y)$ at $z=q^{-k}w^{-1}$ is given by
\begin{equation*}
\begin{split}
    S_k(x,y)=&-\frac{(1-\gamma_2)^{-2}}{2\pi i}\oint _{|w|=r'}(q^kw)^{x+1/2} w^{-y-5/2}F(w)F(q^{-k}w^{-1})
    \\&\qquad \times w^{-2k}\frac{(w^2;q)_\infty(q/w^2;q)_\infty}{(q^{2k}w^2;q)_\infty(1/w^2;q)_\infty}\frac{\theta_3(\zeta^2q^{-2k};q^2)}{\theta(\zeta^2;q^2)}g(w)g(q^{-k}w^{-1})dw,
\end{split}
\end{equation*}
where $1<r'<q^{-1}\nu^{-1}$, and satisfies $S_k(x,y)=-S_k(y,x)$.
\end{lemma}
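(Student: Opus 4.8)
\textbf{Proof proposal for Lemma \ref{lem: residues}.}

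The plan is to extract the three families of residues directly from the explicit contour integral representation of $k_{\nu^{-1}}(x,y)$ in Theorem \ref{thm: pfaff formula}, keeping track of which poles of the integrand in $z$ (and, by the skew symmetry $k(x,y)=-k(y,x)$, in $w$) lie between the old contour $|z|=r$ with $r$ close to $1$ and the new contour $|z|=r'$ with $r'$ close to $\nu^{-1}>1$. Recall $k_{\nu^{-1}}(x,y)$ is a double contour integral of $z^{-x-3/2}w^{-y-5/2}F(z)F(w)K(z,w)$ with $K(z,w)$ containing the factor $g(z)g(w)$ and $g(z)=(\gamma_1\sqrt q/z,\gamma_2/z;q)_\infty/(\gamma_1\sqrt q z,\gamma_2 q z;q)_\infty$; since $\gamma_1\sqrt q=\nu^{-1}$ (we are in the $\gamma_1\sqrt q=\nu^{-1}$, general $t$ setting here but the denominator factor $(\gamma_1\sqrt q z;q)_\infty=(\nu^{-1}z;q)_\infty$), the poles of $g(z)$ in $z$ sit at $z=q^{-k}\nu$ for $k\geq 0$. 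In addition $K(z,w)$ has the factor $1/(1/zw;q)_\infty$ contributing poles at $z=q^{-k}w^{-1}$. These are exactly the three pole families named in the statement. So the structure of the proof is: (i) isolate the pole $z=q^{-k}\nu$ of $g(z)$, compute its residue, and simplify using the $q$-Pochhammer reflection identity of Lemma \ref{lem: q-poch identity} to fold the factors $(-z)^kq^{\binom k2}$-type products into the closed form $A_k(x)$; the $w$-integral that is left over, together with the $\theta_3$ and remaining $(q;q)_\infty$-factors, is precisely $B(y)$, and one notes the $w$-integrand no longer has a pole at $w=q^{-k}\nu$ inside $|w|=r'$ so no further residues are generated here. (ii) By skew symmetry of $k_{\nu^{-1}}$, the residue at $w=q^{-k}\nu$ is the negative of the residue at $z=q^{-k}\nu$ with $x$ and $y$ swapped, giving $-A_k(y)B(x)$. (iii) For the pole $z=q^{-k}w^{-1}$, take the residue of the $z$-integrand (so that $z^{-x-3/2}\mapsto (q^kw)^{x+3/2}$ up to the $\mathrm{Res}$ Jacobian, producing the $(q^kw)^{x+1/2}$ prefactor), then specialize all the $(\cdot;q)_\infty$ factors of $K(z,w)$ and of $g(z)$ at $z=q^{-k}w^{-1}$; the $\theta_3(\zeta^2z^2w^2;q^2)$ becomes $\theta_3(\zeta^2q^{-2k};q^2)$, and what remains is the stated one-variable integral $S_k(x,y)$.

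The skew symmetry $S_k(x,y)=-S_k(y,x)$ is the one genuinely structural point and I would prove it the same way as skew symmetry of $k$ itself: the set of poles $\{z=q^{-k}w^{-1}\}$ is symmetric in $(z,w)$, and the residue operation commutes appropriately with the antisymmetry $K(z,w)=-K(w,z)$ that is built into $k_{\nu^{-1}}(x,y)=-k_{\nu^{-1}}(y,x)$ (equivalently: $k_{\nu^{-1}}$ minus the sum of its $z=q^{-k}w^{-1}$ residues over the contour deformation is skew-symmetric, and each individual residue family, being a deformation-symmetric package, must itself be skew-symmetric). Concretely one checks that swapping $x\leftrightarrow y$ in the displayed formula for $S_k$ and then substituting $w\mapsto q^{-k}w^{-1}$ (which maps $|w|=r'$ to itself for $r'$ chosen so that $q^{-k}/r'=r'$, i.e. one can take $r'$ on the appropriate circle, or argue by analytic continuation in $r'$) reproduces $-S_k(x,y)$; the factors $F(w)F(q^{-k}w^{-1})$, $g(w)g(q^{-k}w^{-1})$, and $\tfrac{(w^2;q)_\infty(q/w^2;q)_\infty}{(q^{2k}w^2;q)_\infty(1/w^2;q)_\infty}$ are each invariant (the last using Lemma \ref{lem: q-poch identity} again), while the monomial part $(q^kw)^{x+1/2}w^{-y-5/2}$ picks up exactly the sign and the Jacobian $-w^{-2}$ needed.

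The main obstacle is bookkeeping: correctly matching all the $q$-Pochhammer, $\theta_3$, and sign/power-of-$q$ factors so that the residue at $z=q^{-k}\nu$ collapses to the compact $A_k(x)$ with the specific constants $q^{-k^2-k}\zeta^{2k}\nu^{2k+1}$, the finite product $(q^{-k};q)_{k-1}$ in the denominator, and the $(-tq^k;q)_\infty/(-tq^{1-k}\nu^2;q)_\infty$ ratio. This is where Lemma \ref{lem: q-poch identity} does the real work — it is what converts the "mirror" infinite products produced by evaluating $(\gamma_2/z;q)_\infty$-type factors and the $\theta_3$ quasi-periodicity (i.e. $\theta_3(\zeta^2q^{-2k}(\cdot);q^2)$ relating back to $\theta_3(\zeta^2(\cdot);q^2)$ via a $q^{-k^2}\zeta^{-2k}$ factor) into the advertised closed form. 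I would organize this as a single careful computation with Lemma \ref{lem: q-poch identity} applied once to the $z\leftrightarrow 1/z$ reflection of the $(1/z^2;q)_\infty$/$(qwz;q)_\infty$ block and once inside the $S_k$ symmetry check, and otherwise the argument is routine residue calculus on explicitly given meromorphic integrands, with convergence of all the $w$-integrals guaranteed by the choice $r'>\nu^{-1}$ (respectively $1<r'<q^{-1}\nu^{-1}$) exactly as stated.
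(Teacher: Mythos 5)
Your proposal is correct and follows essentially the same route as the paper: compute the residues directly from the double contour integral, apply Lemma \ref{lem: q-poch identity} (both to the $\theta_3$ quasi-periodicity and to the Pochhammer reflection) to collapse the $z=q^{-k}\nu$ residue into the closed form $A_k(x)B(y)$, deduce the $w$-residues by skew symmetry of $k_{\nu^{-1}}$, and specialize the integrand at $z=q^{-k}w^{-1}$ to get $S_k$. One small bookkeeping slip: the outward-moving poles at $z=q^{-k}w^{-1}$ with $k\geq 1$ come from the factor $1/(qwz;q)_\infty$ in $K(z,w)$, not from $1/(1/zw;q)_\infty$ (whose zeros sit at $zw=q^{k}$, $k\geq 0$, i.e.\ inward); this does not affect the substance of the argument. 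Your explicit verification of $S_k(x,y)=-S_k(y,x)$ via the change of variables $w\mapsto q^{-k}w^{-1}$ is more detailed than the paper's one-line remark that it follows from skew symmetry of the integrand; both are fine.
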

\begin{proof}
Since $k_{\nu^{-1}}(x,y)$ is skew-symmetric, the first two residue computations are completely equivalent, so we give one. Note that pole occurs in the $K(z,w)$ factor, so we compute the residue there first, obtaining
\begin{equation}
\label{eq: residue long}
\begin{split}
    &-\nu q^{-k}g(w)\frac{(q^k\nu^{-1} w;q)_\infty(q^{1-k}\nu/w;q)_\infty\theta_3(\zeta^2q^{-2k}\nu^2 w^2;q^2)}{(1/w^2;q)_\infty(q^k\nu^{-1}/w;q)_\infty(q^{1-k}\nu w;q)_\infty}
    \\&\qquad\qquad\times\frac{(q;q)_\infty(q^k\nu^{-2};q)_\infty(-tq^k;q)_\infty}{(q^{-k};q)_{k-1}(-tq^{1-k}\nu^2;q)_\infty(q^{2k}\nu^{-2};q)_\infty\theta_3(\zeta^2;q^2)}.
\end{split}
\end{equation}
We then use Lemma \ref{lem: q-poch identity}, which in particular implies that
\begin{equation*}
    (q\zeta^{-2}\nu^{-2}w^{-2})^kq^{k(k-1)}\theta_3(\zeta^2 q^{-2k}\nu^2 w^2;q^2)=\theta_3(\zeta^2 \nu^2 w^2;q^2),
\end{equation*}
to simplify \eqref{eq: residue long} to
\begin{equation*}
\begin{split}
    &-\nu q^{-k}g(w)\frac{(\nu^{-1} w;q)_\infty(q^{1}\nu/w;q)_\infty\theta_3(\zeta^2\nu^2w^2;q^2)}{(1/w^2;q)_\infty(\nu^{-1}/w;q)_\infty(q^{1}\nu w;q)_\infty}q^{-k^2}\zeta^{2k}\nu^{2k}
    \\&\qquad\qquad\times\frac{(q;q)_\infty(q^k\nu^{-2};q)_\infty(-tq^k;q)_\infty}{(q^{-k};q)_{k-1}(-tq^{1-k}\nu^2;q)_\infty(q^{2k}\nu^{-2};q)_\infty\theta_3(\zeta^2;q^2)}.
\end{split}
\end{equation*}
Combining this with the rest of the integrand in $k(x,y)$ gives the desired result. Note that the radius $r'$ can be expanded up until $\nu^{-1}$, since there are no poles obstructing this.

For the final residue computation, again the pole occurs in $K(x,y)$, and there using Lemma \ref{lem: q-poch identity} we have a residue of
\begin{equation*}
    -q^{-k}w^{-1}w^{-2k}\frac{(w^2;q)_\infty(q/w^2;q)_\infty}{(q^{2k}w^2;q)_\infty(1/w^2;q)_\infty}\frac{\theta_3(\zeta^2q^{-2k};q^2)}{\theta(\zeta^2;q^2)}g(w)g(q^{-k}w^{-1}).
\end{equation*}
Combining this with the other factors in the integrand gives the desired result. In particular, the radius $r'$ can be extended due to the lack of poles obstructing this. The skew-symmetry of $S_k(x,y)$ follows from that of the integrand defining $k(x,y)$. 
\end{proof}

We now give an analytic continuation of the Fredholm Pfaffian formulas in the Gaussian case. Note that we will later establish convergence of the series expansion, so this is purely formal. 
\begin{theorem}
\label{thm: pfaff analytic cont}
Let $\gamma_1\sqrt{q}\in [q^{-m+\frac{2-\varepsilon}{2}},q^{-m+\frac{1-\varepsilon}{2}})$ for some $m\in \N$ and $\varepsilon=0,1$, and assume that $|\gamma_2|<1$ and $t<1$. Let
\begin{equation*}
    S(x,y)=\sum_{1\leq k\leq 2m+\varepsilon-2}S_k(x,y),\qquad A(x)=\sum_{j=0}^{2m+\varepsilon-2}A_j(x).
\end{equation*}

Then we have
\begin{equation*}
    \PP(\lambda_1+2S\leq s)=\Pf(J-\widehat{K})_{l^2(\Z_{>s})},
\end{equation*}
where
\begin{equation*}
\begin{split}
    \widehat{K}&=\begin{pmatrix}
    \widehat{k}(x,y)&-2D_y^{(\gamma)}\widehat{k}(x,y)\\
    -2D_x^{(\gamma)}\widehat{k}(x,y)&4D_x^{(\gamma)}D_y^{(\gamma)} \widehat{k}(x,y)+(D_x^{(\gamma)}-D_y^{(\gamma)})\Delta
    \end{pmatrix}
    \\\widehat{k}&=k_{\nu^{-1}}(x,y)-S(x,y)-A(x)B(y)+A(y)B(x),
\end{split}
\end{equation*}
whenever the series expansion on the right converges absolutely.
\end{theorem}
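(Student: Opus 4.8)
The plan is to start from the Fredholm Pfaffian formula of Theorem \ref{thm: pfaff formula}, valid when $|\gamma_1\sqrt q|<1$, and analytically continue in $\nu$ (equivalently in $\gamma_1 = \nu^{-1}q^{-1/2}$) past the values where poles cross the contour. As noted in the excerpt, the left-hand side $\PP(\lambda_1+2S\leq s)$ is analytic in $\nu$: the distribution of $\lambda_1$ under $\mathbb{HL}_a^{(q,t,\nu)}$ is rational in $\nu$ by Theorem \ref{thm: general 6vm HL distr equality}, and the passage to $\mathbb{FBS}$ and then convolution with $2S$ only convolves with $\nu$-independent (resp.\ analytic) kernels, using Theorem \ref{thm: HL FBS same}. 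So it suffices to analytically continue the right-hand side and check that the continued expression is what is claimed.

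First I would identify exactly which poles of the integrand of $k(x,y)$ obstruct moving the contours as $\gamma_1\sqrt q$ grows. The factor $g(z)g(w)$ contributes poles at $z=\gamma_1\sqrt q\, q^{-j}=q^{-j}\nu^{-1}$ — but since we are working with $k_{\nu^{-1}}$, whose contours sit just outside $|z|=\nu^{-1}$, one must instead track the reciprocal-type poles; concretely, as in Lemma \ref{lem: residues}, the relevant poles are $z=q^{-k}\nu$ (from the $K(z,w)$ factor, after the substitution reorganizes $g$), the analogous poles in $w$, and the ``diagonal'' poles $z=q^{-k}w^{-1}$ coming from the $1/(1/(zw);q)_\infty$ type factor in $K(z,w)$. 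For $\gamma_1\sqrt q\in[q^{-m+(2-\varepsilon)/2},q^{-m+(1-\varepsilon)/2})$ exactly the poles with $0\le j\le 2m+\varepsilon-2$ (resp.\ $1\le k\le 2m+\varepsilon-2$) lie on the wrong side, so they must be subtracted. Lemma \ref{lem: residues} computes each such residue: the $z=q^{-k}\nu$ residue is the rank-one term $A_k(x)B(y)$ and the $w$-residue is $-A_k(y)B(x)$ (consistent with skew-symmetry of $k$), while the diagonal residue is $S_k(x,y)$, itself skew-symmetric. Summing over the obstructing indices gives $A(x)$, $B(y)$, $S(x,y)$, and hence
\[
\widehat k(x,y)=k_{\nu^{-1}}(x,y)-S(x,y)-A(x)B(y)+A(y)B(x),
\]
which is manifestly skew-symmetric and, by construction of the residue subtraction, extends $k$ analytically through the range of $\gamma_1\sqrt q$ in question. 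One then forms $\widehat K$ from $\widehat k$ by the same difference-operator recipe as in Theorem \ref{thm: pfaff formula}, noting the residue subtraction for the $K_{12},K_{21},K_{22}$ entries proceeds identically because those entries are obtained from $K_{11}$ by contour deformations and substitutions that commute with extracting residues (the extra $(1-w^2)/((1-\gamma_2 w)(1-\gamma_2/w))$ factors and the $z\mapsto z^{-1}$ substitution act in the same way on each residue term).

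The output of this is the identity $\PP(\lambda_1+2S\le s)=\Pf(J-\widehat K)_{l^2(\Z_{>s})}$ \emph{provided the right-hand side converges absolutely} — which is precisely the hypothesis in the statement. So at the level of this theorem, nothing more is needed: it is a formal analytic-continuation statement, and the proof is exactly "subtract the residues that cross, invoke analyticity of both sides, and match." The main subtlety, and the step I expect to take the most care, is bookkeeping: getting the index ranges $0\le j\le 2m+\varepsilon-2$ and $1\le k\le 2m+\varepsilon-2$ correct as $\gamma_1\sqrt q$ passes each threshold $q^{-j/2}$, correctly distinguishing the $z$-poles, $w$-poles, and diagonal $z=q^{-k}w^{-1}$ poles (and checking that the diagonal poles for $k=0$ do not cross, hence start at $k=1$), and verifying via Lemma \ref{lem: q-poch identity} that the theta-function and $q$-Pochhammer factors in each residue simplify to the clean forms of Lemma \ref{lem: residues}. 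The genuine analytic difficulty — showing that $\widehat k$, though it contains terms like $A_k(x)$ that grow exponentially in $x$, nonetheless yields an absolutely convergent Fredholm Pfaffian expansion in the Gaussian scaling — is explicitly deferred: it is exactly the "whenever the series expansion on the right converges absolutely" clause, to be established in the subsequent steepest-descent analysis of Section \ref{sec: Gauss}. So for this theorem the burden is purely the residue computation and the analyticity argument, with convergence assumed as a hypothesis.
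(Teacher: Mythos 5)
Your proposal follows essentially the same route as the paper's proof: analyticity of the left-hand side in $\nu$, analytic continuation of the right-hand side by subtracting residues at the poles that cross the contour, and invoking the convergence hypothesis. Two small but real omissions: the paper explicitly observes that potential double-counting from simultaneous poles at $z=q^{-k}\nu$ and $w=q^{-l}\nu$ is harmless because the $(w/z,qz/w;q)_\infty$ factors in the numerator of $K(z,w)$ kill those would-be second-order contributions, and that the hypothesis $t<1$ is precisely what keeps the poles from the $(\gamma_2 qz;q)_\infty$, $(\gamma_2 qw;q)_\infty$ factors (i.e.\ at $|z|=q^{-k-1}/(t\nu)>\nu^{-1}$) outside the deformed contour. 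You also attribute the diagonal poles $z=q^{-k}w^{-1}$ to the $(1/zw;q)_\infty$ factor, but those zeros lie at $zw=q^{j}$ with $j\ge 0$ and are irrelevant for $|z|,|w|>1$; the poles that actually cross come from the $(qzw;q)_\infty$ factor, i.e.\ $zw=q^{-k}$ with $k\ge 1$, consistent with the index range $1\le k\le 2m+\varepsilon-2$ in the statement. With those details supplied the argument is complete and matches the paper.
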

\begin{proof}
Since the left hand side is analytic in $\gamma_2$, it suffices to show that the right hand side agrees with an analytic extension of the formula in Theorem \ref{thm: pfaff formula}. Since we assume the series converges absolutely, it suffices to show the same for $\widehat{K}$.

We note that the residues at all poles are analytic in $\gamma_2$, and so it suffices to show that $\widehat{K}$ is obtained from $K$ by removing any additional poles that are introduced when $\nu^{-1}>1$ and by moving the contour defining $k(x,y)$. It's easy to see that the poles introduced are at $w,z=q^{-k}\nu$, for $0\leq k\leq 2m+\varepsilon -2$, and at $z=q^{-k}w^{-1}$, for $1\leq k\leq 2m+\varepsilon-2$. The perturbations $S$ and $A(x)B(y)-A(y)B(x)$ are exactly defined to cancel these poles, noting that the potential double count of poles of the form $z=q^{-k}\nu$ and $w=q^{-l}\nu$ is not an issue, since the factor $(w/z;q)_\infty(qz/w)_\infty$ in the numerator cancels these potential poles. The assumption that $t<1$ prevents the poles coming from the $(-qt\nu w;q)_\infty$ and $(-qt\nu z;q)_\infty$ factors from interfering.
\end{proof}

\subsection{Asymptotics}
We now study the asymptotics of the analytically continued Pfaffian kernel.
\begin{lemma}
\label{lem: asep gaus asymp}
Let 
\begin{equation*}
    \mu=\frac{\nu}{(1+\nu)^2}, \qquad \sigma^2=\nu^{-2}\frac{1-\nu}{(1+\nu^{-1})^3},
\end{equation*}
let $G(w)=\frac{1}{2}\frac{1-w}{1+w}+\mu\log w$, and let
\begin{equation*}
    x=\mu (1-q)\tau +\sigma (1-q)^{1/2}\tau^{1/2}u,\qquad y=\mu (1-q)\tau +\sigma (1-q)^{1/2}\tau^{1/2}v.
\end{equation*}

For the ASEP, we have
\begin{equation*}
\begin{split}
    &\left|(D_x^{(\gamma_2)})^i(D_y^{(\gamma_2)})^j\left(k_{\nu^{-1}}(u,v)-S(u,v)\right)\right|
    \\\leq &Ce^{-c\tau}\min\left(\nu_-^{\sigma\tau^{1/2}u}(q\nu_-^{-1})^{\sigma\tau^{1/2}v},\nu_-^{\sigma\tau^{1/2}v}(q\nu_-^{-1})^{\sigma\tau^{1/2}u}\right),
\end{split}
\end{equation*}
uniformly in $u,v>s$, where $\nu_-<\nu$ is close to $\nu$, and
\begin{equation*}
\begin{split}
    (D_x^{(\gamma_2)})^iA(u)&=-(1+o(1))\left(-\frac{(1-\gamma_2)^2}{2}\frac{\nu-\nu^{-1}}{(1-\gamma_2\nu)(1-\gamma_2\nu^{-1})}\right)^i
    \\&\qquad \times\nu^{-\sigma\tau^{1/2}u}\exp(\tau G(\nu))\nu^{-1/2}\frac{(q;q)_\infty(-t;q)_\infty}{(-qt\nu^2;q)_\infty\theta_3(\zeta^2;q^2)},
    \\(D_x^{(\gamma_2)})^iB(u)&=-(1+o(1))\left(\frac{(1-\gamma_2)^2}{2}\frac{\nu-\nu^{-1}}{(1-\gamma_2\nu)(1-\gamma_2\nu^{-1})}\right)^{i-1}
    \\&\qquad \times \nu^{\sigma \tau^{1/2}u}e^{-\tau G(\nu)}\nu^{1/2}\frac{(-qt\nu^2;q)_\infty\theta_3(\zeta^2;q^2)}{(q;q)_\infty(-t;q)_\infty}\frac{1}{2\sqrt{2\pi}}\frac{\tau^{-1/2}}{\sigma}e^{-u^2/2}
    \\&\qquad +O(e^{\tau(G(\nu^{-1})-c)}\nu_-^{\sigma \tau^{1/2}u}).
\end{split}
\end{equation*}

\end{lemma}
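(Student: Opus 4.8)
The plan is to carry out a steepest-descent analysis for each of the three types of objects appearing in $\widehat{k}$, all governed by the same phase function $G(w)=\tfrac12\tfrac{1-w}{1+w}+\mu\log w$, whose critical point in the Gaussian regime has moved from $1$ to $\nu^{-1}$. First I would verify the basic properties of $G$ at $w=\nu^{-1}$: a direct computation gives $G'(\nu^{-1})=0$ and $G''(\nu^{-1})=\tfrac{\nu^2(\nu^{-1}-\nu)}{4}\cdot(\text{positive const})\ne 0$ (this is where the condition $\nu<1$ enters, forcing $G''(\nu^{-1})>0$ so that a vertical contour through $\nu^{-1}$ is steepest descent and the Gaussian $\tau^{1/2}$ scaling with variance $\sigma^2$ emerges); one should also check that $\Re G$ along the relevant deformed contour (a circle of radius $r'$ slightly exceeding $\nu^{-1}$, closed up as in the $C_{r,R}$ construction of Lemma \ref{lem: GSE cross ptwise conv}) attains a strict maximum at $\nu^{-1}$ and decreases monotonically away from it. The factor $\nu^{-\sigma\tau^{1/2}u}\exp(\tau G(\nu))$ comes from expanding $w^{-x}F(w)=\exp(-x\log w + \text{(exponential factor)})$ with $x=\mu(1-q)\tau+\sigma(1-q)^{1/2}\tau^{1/2}u$, rescaling $w=\nu^{-1}(1+\zeta/(\sigma(1-q)^{1/2}\tau^{1/2}))$ (after absorbing the $1-q$ time rescaling), and performing a Gaussian integral; the $\tfrac{1}{2\sqrt{2\pi}}\tfrac{\tau^{-1/2}}{\sigma}e^{-u^2/2}$ prefactor in $B$ is precisely the value of this Gaussian integral against the local form $\tfrac{1}{1-w^2}\to\tfrac{\text{const}}{\zeta}$ of the $(1/w^2;q)_\infty^{-1}$ factor near $w=\nu^{-1}$ (which produces the simple pole giving the $\tfrac{1}{2\alpha}$-type residue), while in $A$ there is no such pole so one just evaluates the remaining $q$-Pochhammer and theta factors at $w=\nu^{-1}$, yielding $\nu^{-1/2}\tfrac{(q;q)_\infty(-t;q)_\infty}{(-qt\nu^2;q)_\infty\theta_3(\zeta^2;q^2)}$.

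For the discrete difference operators $D_x^{(\gamma_2)}$: each application inserts a factor $-\tfrac{(1-\gamma_2)^2}{2}\tfrac{w-w^{-1}}{(1-\gamma_2 w)(1-\gamma_2 w^{-1})}$ into the integrand (exactly as in the proof of Lemma \ref{lem: GSE cross ptwise conv}, but now without the $\tau^{1/3}$ rescaling since we are in the Gaussian scaling and do not conjugate), which at the critical point $w=\nu^{-1}$ evaluates to $-\tfrac{(1-\gamma_2)^2}{2}\tfrac{\nu-\nu^{-1}}{(1-\gamma_2\nu)(1-\gamma_2\nu^{-1})}$; since this factor is bounded and continuous near $\nu^{-1}$, the steepest-descent localization is unaffected and each $D$ simply contributes this constant multiplicatively in the leading order, which explains the powers $i$ (resp. $i-1$) appearing — the off-by-one in $B$ is because one power of the $\tfrac{w-w^{-1}}{\cdots}$ factor is consumed against the simple pole in the Gaussian-integral evaluation, or equivalently because $B$ already carries one residue. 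The $O(e^{\tau(G(\nu^{-1})-c)}\nu_-^{\sigma\tau^{1/2}u})$ error in $B$ records the contribution of the part of the contour bounded away from $\nu^{-1}$, which decays by a fixed exponential rate $c>0$ in $\tau$ relative to the main term and carries the residual $\nu_-^{\sigma\tau^{1/2}u}$ growth/decay from $w^{-x}$ evaluated at $|w|=r'$.

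For the bound on $k_{\nu^{-1}}(u,v)-S(u,v)$: the point is that after removing the poles $z=q^{-k}w^{-1}$ via $S$, one may freely deform the $z$-contour inward to $|z|=\nu_-<\nu$ (there being no further obstructing poles, using $t<1$ to avoid the $(-qt\nu z;q)_\infty$ poles), while keeping $|w|=r'$ slightly above $\nu^{-1}$; on these contours $|z^{-x}F(z)|\le C\nu_-^{-x}=C\nu_-^{\sigma\tau^{1/2}u}\cdot(\text{something}\approx e^{-\tau\cdot\text{const}})$ — more carefully, since $\nu_-<\nu$ and the true saddle is at $\nu^{-1}$, the real part of the exponent $\tau\Re G$ is strictly below its critical value $\tau G(\nu^{-1})$ by a fixed positive amount $c\tau$, giving the overall $e^{-c\tau}$ prefactor, and the $w$-integral contributes the $(q\nu_-^{-1})^{\sigma\tau^{1/2}v}$-type factor; skew-symmetry of the kernel gives the $\min$ of the two symmetric bounds. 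The difference operators are handled as above since they only insert bounded factors. The main obstacle I anticipate is bookkeeping: tracking the numerous $q$-Pochhammer and Jacobi theta factors through the residue computations of Lemma \ref{lem: residues} and through the rescaling, verifying that they combine into exactly the stated constants (in particular the reciprocal relationship between the constants in $A$ and in $B$, which is what will later make the rank-two correction $A(x)B(y)-A(y)B(x)$ converge to the correct limiting object), and making every estimate uniform in $u,v>s$ — the latter following, as in Lemma \ref{lem: GSE cross ptwise conv}, from the exponential decay in $u,v$ that the steepest-descent contour provides.
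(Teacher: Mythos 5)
Your overall framework — steepest descent around the saddle $\nu^{-1}$ for each of $k_{\nu^{-1}}-S$, $A$, $B$, with difference operators contributing bounded multiplicative factors — matches the paper. But the argument for the bound on $k_{\nu^{-1}}(u,v)-S(u,v)$ is genuinely wrong and could not be salvaged in its proposed form. You claim one may ``deform the $z$-contour inward to $|z|=\nu_-<\nu$.'' This cannot be done: the starting contour has $|z|>\nu^{-1}>1$, and pushing it inside the unit circle crosses a cascade of poles (in particular $z=w^{-1}$ with residue $(D_x-D_y)\Delta$, the $z=\nu$ pole already subtracted in $A_0 B$, and further $z=q^{-k}\nu$, $z=q^{-k}w^{-1}$ poles), none of which are accounted for by $S$, which only removes the $k\ge 1$ poles crossed going from radius $\approx 1$ out to radius $\approx\nu^{-1}$. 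Moreover, even waiving that, the resulting bound would have the wrong sign: with $|z|=\nu_-<1$ and $u>0$ one has $|z^{-x}|=\nu_-^{-\mu\tau-\sigma\tau^{1/2}u}$, which \emph{grows} exponentially in both $\tau$ and $u$; your own arithmetic ``$\nu_-^{-x}=\nu_-^{\sigma\tau^{1/2}u}\cdot(\text{small})$'' has a sign error. In fact $\Re G$ has a local \emph{maximum} at $\nu<1$ with $G(\nu)=-G(\nu^{-1})>0$, so the exponential factor would also grow. The paper instead keeps both contours just \emph{outside} $\nu^{-1}$, at radii $\ge\nu_-^{-1}>\nu^{-1}>1$; there $\Re G<0$ and $|z^{-\sigma\tau^{1/2}u}|\le\nu_-^{\sigma\tau^{1/2}u}$ decays, giving the $e^{-c\tau}$ and the stated $u,v$-dependence, with the $\min$ from skew-symmetry as you correctly anticipate.

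Two smaller inaccuracies: (1) $A(u)$ is not an integral evaluated at $w=\nu^{-1}$ — it is a finite sum $\sum_{k}A_k(u)$ of pole residues at $z=q^{-k}\nu$, with the dominant $k=0$ term giving $\nu^{-\sigma\tau^{1/2}u}e^{\tau G(\nu)}$ and the stated Pochhammer/theta constant, while the $k\geq 1$ terms are exponentially smaller because $G(q^{-k}\nu)<G(\nu)$. (2) Your explanation of the $\frac{1}{2\sqrt{2\pi}}\frac{\tau^{-1/2}}{\sigma}e^{-u^2/2}$ prefactor in $B$ as coming from a ``simple pole $\frac{1}{1-w^2}\to\frac{\text{const}}{\zeta}$ of $(1/w^2;q)_\infty^{-1}$ near $w=\nu^{-1}$'' is mistaken — that factor is regular at $w=\nu^{-1}\ne 1$ (you are recalling the GOE/$\nu=1$ case, where the saddle is at $1$ and such a $\frac{1}{2\alpha}$ residue does appear). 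In the Gaussian regime the relevant $(\nu^{-1}/w;q)_\infty$ zero in the denominator of $B$'s integrand cancels against the same factor in $g(w)$, and the $\frac{1}{\sqrt{2\pi}}\tau^{-1/2}/\sigma$ is just the Jacobian of $w=\nu^{-1}+i\nu^{-1}\tau^{-1/2}\alpha/\sigma$ together with the Gaussian integral $\int e^{-i\alpha u-\alpha^2/2}d\alpha=\sqrt{2\pi}e^{-u^2/2}$; the apparent extra $\tfrac12$ in the lemma's prefactor is absorbed into $(\tfrac12\cdot\cdots)^{i-1}$ at $i=0$.
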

\begin{proof}
Let us rescale $\tau$ by $1-q$. We can compute $G'(\nu^{-1})=0$, $G''(\nu^{-1})=\nu^{2}\sigma^2>0$, and moreover $G(\nu^{-1})<0$ since as a function of $\nu^{-1}$, it is $0$ at $1$ and decreasing for $\nu^{-1}>1$. Also, note that if $z=re^{i\theta}$, we have that $\Re(G(z))$ is maximized at $\theta=0$ (the computation is essentially the same as the GSE case, since the only change is to the $\log z$ term whose real part is independent of $\theta$).

We notice that after rescaling $x$ and $y$ to $u,v$, the integral for $k_{\nu^{-1}}(u,v)$ is of the form
\begin{equation*}
    \oint_{|z|=r}\oint_{|w|=r'}\exp(\tau(G(z)+G(w))z^{-\sigma\tau^{1/2}u}w^{-\sigma\tau^{1/2}v}F(z,w)dzdw,
\end{equation*}
where $F(z,w)$ is bounded on the contour, and $\nu_{-}^{-1}\leq r<r'$ are larger than but close to $\nu^{-1}$. Then the claimed bound for $k_{\nu^{-1}}$ follows immediately, since $\Re(G(z)+G(w))<2G(\nu^{-1})<0$, $|z|,|w|\geq (\nu_-)^{-1}>\nu^{-1}$, and the rest of the integrand is bounded.

The integral for $S_k(u,v)$ is of the form
\begin{equation*}
    \oint_{|w|=r'}(q^kw)^{\sigma \tau^{1/2}u}w^{-\sigma\tau^{1/2}v}\exp(\tau(G(w)+G(q^{-k}w^{-1})))H(w)dw
\end{equation*}
for some $r'>\nu^{-1}$ but close to $\nu^{-1}$, and $H(w)$ bounded on the contour. If $w=re^{i\theta}$, we have that $\Re(G(w))+\Re(G(q^{-k}w^{-1}))$ is negative if $q^{-k}r^{-1}>1$, and otherwise attains its maximum at $\theta=0$. There, we have that $\Re(G(1/r))=-\Re(G(r))$, and $\frac{d}{dr}\Re(G(r))<0$ on $(\nu,\nu^{-1})$. Together, this implies that $\Re(G(w)+G(q^{-k}w^{-1}))<0$ for $|w|=r'$ with $r'=\nu^{-1}$, and so by continuity also for $r'$ slightly larger. This gives the desired bound. The other bound for $S(u,v)$ follows from skew-symmetry.

Next, note that
\begin{equation*}
    A_k(u)=\left(q^{-k}\nu\right)^{-\sigma\tau^{1/2}u}\exp(\tau G(q^{-k}\nu))C_k
\end{equation*}
where $C_k$ is not a function of $u$. For $k>0$, since we have already seen that $G(\nu)=-G(\nu^{-1})>G(q^{-k}\nu)$ for $k\leq 2m+\varepsilon-2$, this is bounded by
\begin{equation*}
    C_k\left(q^{-k}\nu\right)^{-\sigma\tau^{1/2}u}e^{-c\tau}
\end{equation*}
for some $c>0$. On the other hand,
\begin{equation*}
    A_0(u)=-\nu^{-\sigma\tau^{1/2}u}\exp(\tau G(\nu))\nu^{-1/2}\frac{(q;q)_\infty(-t;q)_\infty}{(-qt\nu^2;q)_\infty\theta_3(\zeta^2;q^2)},
\end{equation*}
which immediately gives the claimed asymptotics for $A(u).$

\begin{figure}
    \centering
    \includegraphics[scale=0.7]{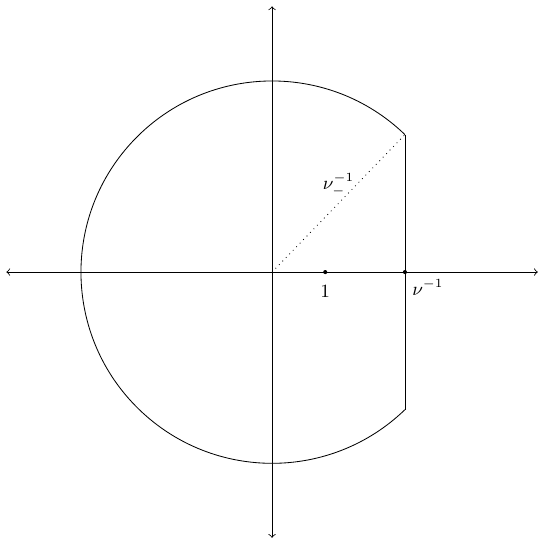}
    \caption{The contour for the Gaussian regime.}
    \label{fig:contour2}
\end{figure}

Finally, we obtain asymptotics for $B(u)$ through a steepest descent argument. We have that the contour $|w|=\nu^{-1}$ is steepest descent for the function $\Re(G(w))$, and this will remain true if we take a circle of radius $\nu_-^{-1}$ slightly larger than $\nu^{-1}$ and flatten it at $\Re(z)=\nu^{-1}$ to a vertical line (see Figure \ref{fig:contour2}). The error from the circular part of the contour is $O(e^{\tau(G(\nu^{-1})-c)}\nu_-^{\sigma \tau^{1/2}u})$ for some $c>0$ and $\nu_-<\nu$, and so we now focus on the contour near $\nu^{-1}$. Thus, we can localize the integral to a neighbourhood of $\nu^{-1}$ with the change of variables $w=\nu^{-1}+i\nu^{-1}\tau^{-1/2}\alpha/\sigma$ and expand $G(w)$ using a Taylor series, with the exponential factor becoming
\begin{equation*}
    \nu^{\sigma\tau^{1/2}u}\exp\left(\tau G(\nu^{-1})-i\alpha u-\alpha^2/2+O(\tau^{-1/2})\right).
\end{equation*}
The remaining factors in the integrand give
\begin{equation*}
    \nu^{3/2}\frac{(1-\gamma_2\nu^{-1})(1-\gamma_2\nu)}{(\nu^{-1}-\nu)}\frac{(-qt\nu^2;q)_\infty\theta_3(\zeta^2;q^2)}{(q;q)_\infty(-t;q)_\infty}(1+O(\tau^{-1/2})),
\end{equation*}
uniformly since the contour is far from any singularities due to the cancellation of the $(\nu^{-1}/w;q)_\infty$ factor. This gives
\begin{equation*}
\begin{split}
    B(u)&=\nu^{3/2}\frac{(1-\gamma_2\nu^{-1})(1-\gamma_2\nu)}{(\nu^{-1}-\nu)}\frac{(-qt\nu^2;q)_\infty\theta_3(\zeta^2;q^2)}{(q;q)_\infty(-t;q)_\infty}(1+O(\tau^{-1/2}))
    \\&\qquad \times\frac{(1-\gamma_2)^{-2}}{2\pi}\frac{\nu^{-1}\tau^{-1/2}}{\sigma}\int_{-\infty}^\infty \nu^{\sigma\tau^{1/2}u}\exp\left(\tau G(\nu^{-1})-i\alpha u-\alpha^2/2\right)d\alpha
    \\&=\nu^{1/2}\frac{(1-\gamma_2\nu^{-1})(1-\gamma_2\nu)}{(\nu^{-1}-\nu)}\frac{(-qt\nu^2;q)_\infty\theta_3(\zeta^2;q^2)}{(q;q)_\infty(-t;q)_\infty}(1+O(\tau^{-1/2}))
    \\&\qquad \times \nu^{\sigma \tau^{1/2}u}e^{-\tau G(\nu)}\frac{(1-\gamma_2)^{-2}}{\sqrt{2\pi}}\frac{\tau^{-1/2}}{\sigma}\exp(-u^2/2),
\end{split}
\end{equation*}
where we use $G(\nu^{-1})=-G(\nu)$, plus an error coming from the rest of the contour, which is $O(e^{\tau(G(\nu^{-1})-c)}\nu_-^{\sigma \tau^{1/2}u})$.

Note that in all cases, the effect of the difference operator $D_x^{(\gamma_2)}$ is multiplication by a factor of the form
\begin{equation*}
    \frac{w-w^{-1}}{(1-\gamma_2w)(1-\gamma_2w^{-1})},
\end{equation*}
which has no relevant poles since $t<1$, and so a similar analysis establishes the analogous results with $D_x^{(\gamma_2)}$ applied.

\end{proof}

\begin{lemma}
\label{lem: 6vm gaus asymp}
Let 
\begin{equation*}
    \mu=\frac{2a^2+a(\nu+\nu^{-1})}{(1+a\nu)(1+a\nu^{-1})}, \qquad \sigma^2=\frac{a(1-a^2)(\nu^{-1}-\nu)}{(1+a\nu)^2(1+a\nu^{-1})^2},
\end{equation*}
let $G(z)=\log(1+az)-\log(1+a/z)+\mu\log z$, and let
\begin{equation*}
    x=\mu n +\sigma n^{1/2}u,\qquad y=\mu n +\sigma n^{1/2}v.
\end{equation*}
For the six vertex model, the conclusions of Lemma \ref{lem: asep gaus asymp} continue to hold (with the above definitions replacing those in Lemma \ref{lem: asep gaus asymp} and $n$ replacing $\tau$).    
\end{lemma}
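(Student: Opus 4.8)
The plan is to run the steepest-descent argument of Lemma~\ref{lem: asep gaus asymp} essentially verbatim, the only change being the exponential factor inside the contour integrals. Recall from Corollaries~\ref{cor: 6vm formula} and~\ref{cor: ASEP formula} that the Fredholm Pfaffian of Theorem~\ref{thm: pfaff analytic cont} is built from the same ingredients ($k_{\nu^{-1}}$, the residues $S_k$, $A_k$ and $B$) for the ASEP and for the homogeneous six vertex model, the sole difference being the factor $F$: it is \eqref{eq: asep F} in one case and \eqref{eq: 6vm F} in the other. After passing to the rescaled variables $u,v$ of the statement, this substitution replaces $\tau G(\cdot)$ by $n G(\cdot)$ with $G(z)=\log(1+az)-\log(1+a/z)+\mu\log z$, while every other factor in each integral is untouched and stays bounded, uniformly in $n$, on the contours in play. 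So the task reduces to checking that this new $G$ shares the handful of analytic properties of the function $G$ in Lemma~\ref{lem: asep gaus asymp} that the proof there exploits.

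Concretely, I would first verify by direct differentiation — exactly as in the $\rho\geq\frac12$ six vertex analysis of Lemma~\ref{lem: GSE cross ptwise conv 6vm} — that $G'(\nu^{-1})=0$ and $G''(\nu^{-1})=\nu^{2}\sigma^{2}>0$ (so $\nu^{-1}$ is a nondegenerate saddle for the rescaled variable, using $\nu<1$), that $G(\nu^{-1})<0$ since $r\mapsto G(r)$ vanishes at $r=1$ and is monotone on $(1,\infty)$, and that the functional symmetry $G(1/z)=-G(z)$ holds, which is immediate from the form of $G$. Next I would check the steepest-descent geometry: writing $z=re^{i\theta}$, the $\log$-terms contribute nothing $\theta$-dependent to $\Re G$, so the computation of $\tfrac{d}{d\theta}\Re G(re^{i\theta})$ is identical in form to the one in Lemma~\ref{lem: GSE cross ptwise conv 6vm} and shows $\Re G$ is maximized at $\theta=0$ on each circle $|z|=r$ with $r$ in a neighbourhood of $\nu^{-1}$; and $\tfrac{d}{dr}\Re G(r)<0$ on $(\nu,\nu^{-1})$, which (together with $\Re G(1/r)=-\Re G(r)$) is what makes the contour $|w|=\nu^{-1}$, flattened near $\nu^{-1}$ to a vertical segment as in Figure~\ref{fig:contour2}, steepest descent and forces $\Re(G(w)+G(q^{-k}w^{-1}))<0$ on the contours defining the $S_k$. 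Finally I would confirm that the finitely many spurious poles $q^{-k}\nu$, $1\leq k\leq 2m+\varepsilon-2$, are exponentially subdominant, i.e.\ $G(q^{-k}\nu)<G(\nu)=-G(\nu^{-1})$, again by monotonicity of $G$ on the positive real axis.

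With these facts in hand, every estimate in Lemma~\ref{lem: asep gaus asymp} transcribes directly: the bound on $k_{\nu^{-1}}(u,v)-S(u,v)$ follows from $\Re(G(z)+G(w))<2G(\nu^{-1})<0$ together with $|z|,|w|\geq\nu_-^{-1}>\nu^{-1}$; the asymptotics of $A$ come from isolating the $k=0$ residue $A_0$ and noting the remaining residues are $O(e^{-cn})$ times the appropriate geometric factor; and the asymptotics of $B$ come from the saddle-point expansion of its defining integral around $w=\nu^{-1}$ after the change of variables $w=\nu^{-1}+i\nu^{-1}n^{-1/2}\alpha/\sigma$, with the residual circular part of the contour contributing $O(e^{n(G(\nu^{-1})-c)}\nu_-^{\sigma n^{1/2}u})$. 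The difference operator $D_x^{(\gamma_2)}$ again only inserts the multiplicative factor $\tfrac{(1-\gamma_2)^{2}}{2}\tfrac{w-w^{-1}}{(1-\gamma_2 w)(1-\gamma_2 w^{-1})}$, which is pole-free on the relevant contours because $t<1$ forces $|\gamma_2|<1$, so the versions with difference operators applied follow identically.

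The main obstacle is the steepest-descent bookkeeping for this $G$: because $G$ now carries two $\log$-type terms rather than the single rational term of the ASEP, the monotonicity statements $\tfrac{d}{d\theta}\Re G<0$ off the real axis and $\tfrac{d}{dr}\Re G<0$ on $(\nu,\nu^{-1})$ require a genuine, though elementary, computation, and one must also track which circle radii and which line segments remain admissible as $\nu$ ranges over $(0,1)$ and $a$ over $(0,1)$. This is precisely the type of computation already carried out for the six vertex model in the $\rho\geq\frac12$ regime (Lemma~\ref{lem: GSE cross ptwise conv 6vm}), so no new ideas are needed; it is just a matter of redoing it at the shifted simple saddle $\nu^{-1}$ instead of the triple saddle at $1$.
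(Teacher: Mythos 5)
Your proposal is correct and mirrors the paper's own proof: both reduce the lemma to verifying the handful of analytic properties of $G$ used in Lemma~\ref{lem: asep gaus asymp} ($G'(\nu^{-1})=0$, $G''(\nu^{-1})=\nu^2\sigma^2>0$, $G(1/z)=-G(z)$, and the steepest-descent monotonicity of $\Re G$ along circles and along $(\nu,\nu^{-1})$), then appeal to the computations already carried out in Lemma~\ref{lem: GSE cross ptwise conv 6vm} for the six-vertex $G$ and observe that the new pole at $-a$ lies inside the unit circle and so does not obstruct the contour deformations. No new idea is needed, and your write-up supplies a bit more explicit bookkeeping than the paper does, but the route is the same.
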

\begin{proof}
The proof is extremely similar, and so we emphasize the key differences. Except in the steepest descent analysis, the analysis is essentially the same, since the only change is to the definition of $F$ and the slightly different scaling. We thus focus on the properties of $G(z)$ required for the analysis.

We note that $G'(\nu^{-1})=0$, $G''(\nu^{-1})=\nu^{2}\sigma^2$, $G(z^{-1})=-G(z)$, and using the computation for the derivative in the GSE case, if $z=re^{i\theta}$, then for $r>1$, $\Re(G(z))$ is decreasing on circles away from the real axis. Furthermore, $\frac{d}{dr}G(r)<0$ on $(\nu,\nu^{-1})$, which along with $G(1/r)=-G(r)$ implies $G(\nu^{-1})<0$. These were the only properties used to establish the upper bounds for $k_{\nu^{-1}}$, $S$, and the asymptotics for $A$. The steepest descent analysis used to study $B(x)$ is also similar. In particular, the only part of the integrand that changes is $G$ (and a corresponding change in $\sigma$). The new pole $-a$ is located inside the unit circle, and so do not prevent the needed contour changes.
\end{proof}

\begin{proposition}
\label{prop: gauss pfaff conv}
For both the ASEP and the six vertex model, if $t<1$ and $\nu<1$, we have that the series expansion for $\Pf(J-K)_{l^2(\Z_{>s})}$ converges absolutely, and that
\begin{equation*}
    \Pf(J-\widehat{K})_{l^2(\Z_{>s})}\to \Phi(s),
\end{equation*}
where $\Phi$ is the distribution function for a standard Gaussian.
\end{proposition}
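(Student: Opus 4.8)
The plan is to feed the analytically continued Fredholm Pfaffian formula of Theorem~\ref{thm: pfaff analytic cont} into the steepest descent asymptotics of Lemmas~\ref{lem: asep gaus asymp} and~\ref{lem: 6vm gaus asymp}, and to exploit the fact that in the Gaussian regime the kernel degenerates to rank two. Recall $\widehat k(x,y)=k_{\nu^{-1}}(x,y)-S(x,y)-A(x)B(y)+A(y)B(x)$. Lemma~\ref{lem: asep gaus asymp} (resp.~\ref{lem: 6vm gaus asymp}) tells us that, under the scaling $x=\mu(1-q)\tau+\sigma(1-q)^{1/2}\tau^{1/2}u$, the remainder $k_{\nu^{-1}}-S$ together with all of its difference operator images is $O(e^{-c\tau})$ times a factor decaying geometrically in $u+v$, and that $D_xA\sim c_AA$ and $D_xB\sim c_BB$ with $c_A=-c_B$; in other words each function is, to leading order, reproduced up to a scalar by the difference operator. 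Hence, to leading order in $\tau$, the entire correlation kernel $\widehat K$ is assembled out of just the two functions $A$ and $B$, i.e.\ it is a rank-$\le 2$ skew-symmetric kernel on the doubled index set $\Z_{>s}\times\{1,2\}$.

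The first step is to establish absolute convergence of the Fredholm Pfaffian series for $\Pf(J-\widehat K)_{l^2(\Z_{>s})}$ and to reduce it, up to an $o(1)$ error, to the simpler kernel with entries $\widehat k$, $-D_y\widehat k$, $-D_x\widehat k$, $D_xD_y\widehat k$ obtained by dropping the $(D_x-D_y)\Delta$ term; this is the Gaussian-scaling analogue of Proposition~\ref{prop: approx distr pf identity}, proved by the same summation-by-parts machinery of Section~\ref{sec: asymp prelim} combined with the decay estimates of Lemmas~\ref{lem: asep gaus asymp}/\ref{lem: 6vm gaus asymp}. I expect this to be the main obstacle: $A(x)$ is \emph{exponentially large} in $\tau$ (and increasing in $u$) whereas $B(y)$ is exponentially small, so the naive Hadamard bound on $\Pf(\widehat K(x_i,x_j))$ is worthless. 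One must instead use the skew structure — either by conjugating $\widehat K$ by a diagonal factor matched to the $\nu^{\mp\sigma\tau^{1/2}u}e^{\pm\tau G(\nu)}$ growth/decay of $A$ and $B$, or by expanding the Pfaffian according to how many rows come from the rank-two part $A(y)B(x)-A(x)B(y)$ versus the exponentially small remainder $k_{\nu^{-1}}-S$, so that every growing $A$-row is forced to pair with a decaying $B$-row.

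Once the kernel is rank $\le 2$ to leading order, the Fredholm Pfaffian expansion truncates, since the Pfaffian of a skew-symmetric matrix of rank smaller than its size vanishes: all terms with $n\ge 2$ points drop out and $\Pf(J-\widehat K)_{l^2(\Z_{>s})}=1+\sum_{x>s}D_y\widehat k(x,y)|_{y=x}+o(1)$. Using $D_yA\sim c_AA$, $D_yB\sim c_BB$ and $c_A=-c_B$, the diagonal value is $D_y\widehat k(x,x)\sim(c_A-c_B)A(x)B(x)=2c_AA(x)B(x)$, and in the product $A(x)B(x)$ all of the factors $\nu^{\pm\sigma\tau^{1/2}u}$, $e^{\pm\tau G(\nu)}$, $\nu^{\pm1/2}$ and the $q$-Pochhammer/theta prefactors from Lemma~\ref{lem: asep gaus asymp} cancel, leaving $A(x)B(x)\sim-\tfrac{1}{c_A}\tfrac{1}{2\sqrt{2\pi}}\sigma^{-1}\tau^{-1/2}e^{-u^2/2}$. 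Therefore $D_y\widehat k(x,x)\sim-\tfrac{1}{\sqrt{2\pi}}\sigma^{-1}\tau^{-1/2}e^{-u^2/2}$, and the sum over $x>s$ is a Riemann sum whose spacing $\sigma^{-1}\tau^{-1/2}$ (up to the $(1-q)$ rescaling) exactly matches the change-of-variables Jacobian, so $\sum_{x>s}D_y\widehat k(x,x)\to-\tfrac{1}{\sqrt{2\pi}}\int_s^\infty e^{-u^2/2}\,du$ and hence $\Pf(J-\widehat K)_{l^2(\Z_{>s})}\to 1-\int_s^\infty\tfrac{e^{-u^2/2}}{\sqrt{2\pi}}\,du=\Phi(s)$. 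Dominated convergence, legitimated by the bounds of the previous step exactly as in Proposition~\ref{prop: GSE cross pf conv}, upgrades the termwise statement to convergence of the (now finite) expansion. The six vertex case is identical, with Lemma~\ref{lem: 6vm gaus asymp} and $x=\mu n+\sigma n^{1/2}u$ replacing the ASEP quantities.
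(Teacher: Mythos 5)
Your plan follows the same route as the paper: first show that the $(D_x-D_y)\Delta$ part of $\widehat K$ can be removed by summation by parts, then decompose $\widehat k$ into the exponentially small $k_{\nu^{-1}}-S$ piece and the exactly-rank-two piece $A(y)B(x)-A(x)B(y)$, show the $n\ge2$ terms vanish, and compute the $n=1$ term, which you do correctly (the cancellation of the $\nu^{\pm\sigma\tau^{1/2}u}$, $e^{\pm\tau G(\nu)}$ and Pochhammer/theta factors in $A(x)B(x)$, and the identity $c_A=-c_B$, are exactly right). However, two points you treat as routine are where the real work lies.

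First, the mechanism that makes the $n\ge2$ terms go to zero is more delicate than \textquotedblleft every growing $A$-row pairs with a decaying $B$-row.\textquotedblright\ In a generic matching of $\Pf(K_1)_{\widehat i,\widehat j}\,\Pf(K_2)_{i,j}$, the growing factor $\nu^{-\sigma\tau^{1/2}u_i}$ from $\Pf(K_2)_{i,j}$ is not adjacent to anything involving $u_j$; it must be transported along the chain of $K_1$-entries that the matching induces from the variable attached to $i$ to the variable attached to $j$. The bound in Lemma~\ref{lem: asep gaus asymp} for $k_{\nu^{-1}}-S$ is a $\min$ of two asymmetric exponential estimates, and the paper's proof chooses, for the edges along this chain, the asymmetric estimate rather than the geometric mean, so that the exponents telescope and cancel those of $\Pf(K_2)_{i,j}$, while the remaining edges take the geometric mean to retain the $e^{-c\tau}$ smallness. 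That telescoping-along-a-path device is the crux and is not captured by a direct row pairing. The alternative you float, conjugating by a diagonal $\nu^{\pm\sigma\tau^{1/2}u/2}$, does not resolve the mismatch: a single conjugating weight must simultaneously damp $A$ and $B$, which grow and decay at the \emph{same} exponential rate, so one of them necessarily remains unbounded after any such conjugation.

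Second, the summation-by-parts step (removing $(D_x-D_y)\Delta$) cannot simply cite Proposition~\ref{prop: approx distr pf identity}: Assumption~\ref{asmp: kernels} is not satisfied here, since $k_{\nu^{-1}}-S$ and $A(x)B(y)$ have the exponential $u$-dependence just discussed, not the uniform $e^{-c(u+v)}$ decay. The paper redoes the boundary-term estimates in this regime using the same path/geometric-average bookkeeping as for the main terms, plus an explicit residue computation for the $n=2$ boundary term coming from $A_0(x)B(y)-A_0(y)B(x)$. You should be aware this is a separate, nontrivial estimate rather than a reference. Finally, a minor slip: the expansion never becomes literally finite; the $n\ge2$ terms tend to $0$ but are not identically zero at finite $\tau$.
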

\begin{proof}
Since both convergence statements have essentially the same proof, we focus on the ASEP case. Let us first explain the heuristics behind the limit. Note that by Lemma \ref{lem: asep gaus asymp} and Lemma \ref{lem: 6vm gaus asymp}, in the limit we have that $k_{\nu^{-1}}(u,v)\to 0$ and $S(u,v)\to 0$ (along with all derivatives). On the other hand, if these are taken to $0$, what remains is a rank $2$ kernel, and so every term in the Fredholm Pfaffian expansion except the first two are $0$. One can then check that these two terms give the distribution function for a Gaussian. We now justify this limiting procedure.

We first wish to use summation by parts to remove the part of the kernel converging to a distribution, as in Proposition \ref{prop: approx distr pf identity}. The proof proceeds similarly, except that care must be taken to show integrability due to the different bounds. We will assume this for now, and return to this point at the end of the proof. Thus, we will first show convergence of the modified kernel
\begin{equation*}
    \widehat{K}'=\begin{pmatrix}
    \widehat{k}(x,y)&-D_y^{(\gamma)}\widehat{k}(x,y)\\
    -D_x^{(\gamma)}\widehat{k}(x,y)&D_x^{(\gamma)}D_y^{(\gamma)} \widehat{k}(x,y)
    \end{pmatrix}.
\end{equation*}

We have the $n$th term in the expansion is given by
\begin{equation*}
    \Pf(\widehat{K}'(u_i,u_j))_{i,j=1}^n=\Pf(K_1)+\sum_{i,j}\Pf(K_1)_{\widehat{i},\widehat{j}}\Pf(K_2)_{i,j},
\end{equation*}
where $K_1$ is the part of the kernel corresponding to $k_{\nu^{-1}}(u,v)-S(u,v)$ and $K_2$ to $-A(u)B(v)+A(v)B(u)$. We only need to consider even subsets up to size $2$ because $K_2$ is rank at most 2. 

Using the expansion of the Pfaffian into matchings, we have 
\begin{equation*}
    |\Pf(K_1)|\leq \frac{(2n)!}{2^nn!}(Ce^{-c\tau})^ne^{-c(\sum u_i)},
\end{equation*}
which we obtain by taking a geometric average of the two bounds given by Lemmas \ref{lem: asep gaus asymp} and \ref{lem: 6vm gaus asymp}. For the other term, we note that each variable is associated to two rows/columns, and so expanding $\Pf(K_1)_{\widehat{i},\widehat{j}}$ into a sum over matchings $M$ of $\{1,\dotsc,2n\}\setminus \{i,j\}$ and taking the induced matching on variables, we see that there will be factors corresponding to a path $u_{i_1}\to \dotsc \to u_{i_k}$ where $u_{i_1}$ is associated to $i$ and $u_{i_k}$ is associated to $j$ (possibly the same variable). Taking the same geometric average as above for the factors not in this path, and one of the bounds for the factors in the path, we obtain
\begin{equation*}
    \left|\prod_{(i',j')\in M}(K_1)_{i'j'}\right|\leq (Ce^{-c\tau})^ne^{-c\sum_{l\neq i} u_l}\nu^{\sigma \tau^{1/2}(u_i-u_j)}
\end{equation*}
and
\begin{equation*}
    \left|\prod_{(i',j')\in M}(K_1)_{i'j'}\right|\leq (Ce^{-c\tau})^ne^{-c\sum_{l\neq j} u_l}\nu^{\sigma \tau^{1/2}(u_j-u_i)}.
\end{equation*}
Then as
\begin{equation*}
    |\Pf(K_2)_{i,j}|\leq C\nu^{-\sigma\tau^{1/2}u_i+\sigma\tau^{1/2}u_j}e^{-cu_i}+C\nu^{-\sigma\tau^{1/2}u_j+\sigma\tau^{1/2}u_i}e^{-cu_j},
\end{equation*}
we have that
\begin{equation*}
    |\prod_{(i',j')\in M}(K_1)_{i'j'}\Pf(K_2)_{i,j}|\leq (Ce^{-c\tau})^{n-1}e^{-c\sum u_i},
\end{equation*}
and so after summing over the matchings $M$ and summing over the variables $u_i$, we obtain a bound of
\begin{equation*}
    C\frac{(2n)!}{n!}c(\tau)^{n-1},
\end{equation*}
for some $c(\tau)$ arbitrarily small as $\tau\to \infty$. This implies that the $n\geq 2$ terms in the series expansion for the Fredholm determinant are bounded by
\begin{equation*}
    C\sum_{n\geq 2}n^2{2n\choose n}c(\tau)^{n-1}\to 0
\end{equation*}
as $c(\tau)\to 0$, after summing over $i,j$ and $n$.

Turning now to the $n=1$ term, the summands have exponential tails in $u$, and so after replacing the sum with an integral and using a dominated convergence argument, it suffices to take the pointwise limit. The $k_{\nu^{-1}}-S$ term goes to $0$, and $A(u)(D_{x}^{(\gamma_2)}B)(u)-(D_x^{(\gamma_2)}A)(u)B(u)$ converges to $\frac{1}{\sqrt{2\pi}}e^{-u^2/2}$ (taking into account the scaling of the variables which turns the sum into an integral), giving
\begin{equation*}
    \Pf(J-\widehat{K})_{l^2(\Z_{>s})}\to 1-\int_{s}^\infty \frac{1}{\sqrt{2\pi}}e^{-u^2/2}du=\Phi(s).
\end{equation*}

Let us now explain why Proposition \ref{prop: approx distr pf identity} continues to hold in this setting. Again, we apply Lemma \ref{lem: boundary bound}, and note that by an argument similar to above, the first term can be made small enough even after integrating, noting that integrability for the remaining variables is given by a similar argument to above. In particular, although some variables are removed, variables are also set equal, resulting in a Pfaffian where each variable occurs exactly twice.

We thus study the second term, which involves bounds near when some of the variables are set close to $s$. Since the higher order main terms don't contribute, we expect that these error terms should also go to $0$. We use the same ideas as in the bounds derived above to do so. Recall that we expand the Pfaffian as a sum over an even subset $J$ of $\{1,\dotsc, n\}$ indicating the variables for which we select the distributional part of the kernel, $M$ a matching of $J$ given by further expanding the Pfaffian of the distributional part, and $M'\subseteq M$ a non-empty sub-matching indicating which variables we are selecting to be boundary variables. Note that since we are interested in bounding the boundary error from summation by parts, we can assume $M'$ is non-empty. To bound the boundary term
\begin{equation*}
    \sum_{\substack{u_i>s\\i\not\in J\text{ or }(i,j)\in M}}\left|B^{(\gamma_2)}_{(M')^c}D_{M'}^{(\gamma_2)}\Pf(\widehat{K}''(u_i,u_j))_{(2J)^c}\bigg|_{\substack{u_i=u_j=s, (i,j)\in (M')^c\\ u_i=u_j, (i,j)\in M'}}\right|,
\end{equation*}
where $\widehat{K}''$ is $\widehat{K}$ with the distributional part removed, we proceed as before, expanding further as a sum over a product of Pfaffians. Again, although we have removed certain rows/columns, we also set certain variables equal to each other and so each variable remaining still appears exactly twice, allowing the same argument as before to work. For the variables set close to $s$, we note that they are essentially equal, up to a lower order perturbation in the exponent which is killed by any $e^{-c\tau}$ factor. We thus obtain a bound of the form
\begin{equation*}
    \frac{(2(n-|J|))!}{2^{n-|J|}(n-|J|)!}\tau^{-|(M')^c|/12}C^nc(\tau)^{n-|J|/2-1}.
\end{equation*}
Let $k=|J|/2$, and sum over even subsets $J$, matchings $M$ of $J$, and $M'\subseteq M$ non-empty, giving that the boundary terms for $n\geq 3$ altogether contribute an error of at most
\begin{equation*}
\begin{split}
    &\sum_{n=3}^\infty \frac{1}{n!}C\tau^{-1/12}\sum_{k=0}^{n/2}{n\choose 2k}\frac{(2k)!}{2^kk!}2^k\frac{(2n-4k)!}{2^{n-k}(n-2k)!}C^nc(\tau)^{n-k-1}
    \\=&C\tau^{-1/12}\sum_{l=2}^\infty \sum_{k=0}^l\frac{(2l)!}{k!l!^2}C^{l+2k}c(\tau)^{l+k-1}
    \\ \leq & C\tau^{-1/12}\sum_{l=2}^\infty {2l\choose l}(Cc(\tau))^{l-1}\to 0,
\end{split}
\end{equation*}
where we made the substitutions $l=n-2k$. 

Finally, the error from the $n=2$ term can be estimated directly. Note that since $n=2$, we must have $2k=2$ and so there is exactly one boundary term coming from $\widehat{k}(x,y)$. In particular, it suffices to show that $\widehat{k}(u,v)$ is small near $(s,s)$. Again, the $k_{\nu^{-1}}$ and $S$ terms go to $0$ already and so it's easy to see that their boundary terms will also go to $0$, and so we focus on the remaining $A(x)B(y)-A(y)B(x)$ term. The higher order $A_k$ are also small, and so it suffices to look at $A_0$ instead of $A$. We do so by explicitly computing the boundary term, which is $(B_x^{(\gamma_2)}-B_y^{(\gamma_2)})(A_0(x)B(y)-A_0(y)B(x))$. Now $(B_x^{(\gamma_2)}-B_y^{(\gamma_2)})$ acts linearly, and
\begin{equation*}
\begin{split}
    &(B_x^{(\gamma_2)}-B_y^{(\gamma_2)})(w^xz^y)(s,s)
    \\=&\frac{(1-\gamma_2)^2}{2}\frac{(wz)^{s+1}}{1-wz}\left(\frac{w^{-1}}{1-\gamma_2 w^{-1}}-\frac{z}{1-\gamma_2 z}-\frac{z^{-1}}{1-\gamma_2z^{-1}}+\frac{w}{1-\gamma_2 w}\right),
\end{split}
\end{equation*}
so the boundary term is of the exact same form as $A_0(x)B(y)-A_0(y)B(x)$ except that we add this extra factor to the integral defining $B$ with $w=\nu$. A very similar steepest descent analysis then gives the same types of asymptotics as Lemmas \ref{lem: asep gaus asymp} or \ref{lem: 6vm gaus asymp}, but since we are not summing over anything anymore, the $\tau^{-1/2}$ factor means this term also goes to $0$.
\end{proof}

\begin{proposition}
For the ASEP, if $\rho<\frac{1}{2}$ and $t<1$, we have
\begin{equation*}
    \PP\left(-\frac{N\left(\frac{\tau}{1-q}\right)-\mu \tau}{\sigma \tau^{1/2}}\leq s\right)\to \Phi(s),
\end{equation*}
where
\begin{equation*}
    \mu=\frac{\nu}{(1+\nu)^2}, \qquad \sigma^2=\nu^{-2}\frac{1-\nu}{(1+\nu^{-1})^3}.
\end{equation*}
For the six vertex model, if $\rho<\frac{1}{2}$ (we always assume $t<1$), we have
\begin{equation*}
    \PP\left(\frac{h(n,n)-\mu n}{\sigma n^{1/2}}\leq s\right)\to \Phi(s),
\end{equation*}
where
\begin{equation*}
    \mu=\frac{2a^2+a(\nu+\nu^{-1})}{(1+a\nu)(1+a\nu^{-1})}, \qquad \sigma^2=\frac{a(1-a^2)(\nu^{-1}-\nu)}{(1+a\nu)^2(1+a\nu^{-1})^2}.
\end{equation*}
\end{proposition}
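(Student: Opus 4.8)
The plan is to assemble the exact Fredholm Pfaffian formula with the asymptotic analysis already carried out above, in exactly the same way as in Proposition \ref{prop: GSE asymp}. First, by Corollary \ref{cor: ASEP formula} for the ASEP (respectively Corollary \ref{cor: 6vm formula} for the six vertex model), combined with the analytic continuation in $\nu$ furnished by Theorem \ref{thm: pfaff analytic cont}, we have for $\nu<1$ and $t<1$ the identity
\[
    \PP\left(-N\left(\tfrac{\tau}{1-q}\right)+\chi+2S\leq s\right)=\Pf(J-\widehat K)_{l^2(\Z_{>s})},
\]
and the analogous identity with $-N(\tau/(1-q))$ replaced by $h(n,n)$, where $\chi\sim RS(q,t)$ and $S\sim\Thetad(\zeta^2,q^2)$ are independent of the model. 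We use the centering and scaling of the variables $x,y$ prescribed in Lemma \ref{lem: asep gaus asymp} (respectively Lemma \ref{lem: 6vm gaus asymp}), and one checks by direct substitution that the constants $\mu$ and $\sigma^2$ there agree with those in the statement.

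Second, Proposition \ref{prop: gauss pfaff conv} gives immediately that $\Pf(J-\widehat K)_{l^2(\Z_{>s})}\to\Phi(s)$ under this scaling, so that the shifted, normalized quantity converges in distribution to a standard Gaussian. It then remains to remove the auxiliary shifts $\chi+2S$. These are of constant order while the normalization is $\tau^{1/2}$ (respectively $n^{1/2}$), so they should be asymptotically irrelevant; the one subtlety, as in the earlier regimes, is that when $q<t<1$ the measure $RS(q,t)$ is only a signed measure. We handle this via Lemma \ref{lem: signed measure asymp}: since $\PP(\chi=k)$ decays exponentially for $t<1$ and $\PP(S=k)$ decays super-exponentially, the convolution $-N(\tau/(1-q))+\chi+2S$ has uniformly bounded total mass on the tails, so it suffices to prove the anti-concentration estimate $\sup_k\PP(-N(\tau/(1-q))+\chi+2S=k)\to0$ (and likewise for $h(n,n)$).

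Third, to establish this estimate, write
\[
    \PP\left(-N\left(\tfrac{\tau}{1-q}\right)+\chi+2S=k\right)=\Pf(J-\widehat K)_{l^2(\Z_{>k})}-\Pf(J-\widehat K)_{l^2(\Z_{>k-1})},
\]
and bound the difference of the two Fredholm Pfaffians uniformly over $k$ in the relevant range. Expanding into the Pfaffian series and using the summation-by-parts reduction (the version of Proposition \ref{prop: approx distr pf identity} valid in the Gaussian regime, as explained at the end of the proof of Proposition \ref{prop: gauss pfaff conv}) together with the decay bounds of Lemmas \ref{lem: asep gaus asymp} and \ref{lem: 6vm gaus asymp}, each term of the difference becomes a sum over a codimension-one set of variables, which supplies the extra vanishing factor. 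Combining the three steps yields the claimed Gaussian limits. I expect the main obstacle to be precisely the uniformity in $k$ of this anti-concentration bound: unlike the GSE and crossover regimes, the kernel in the Gaussian regime is not exponentially small in $\tau$ but decays only polynomially (through the $\tau^{-1/2}$ factors in $B$ from the steepest descent analysis), so one must carefully track these factors through the Pfaffian expansion to see that the off-diagonal terms and the boundary corrections from summation by parts still vanish in the limit.
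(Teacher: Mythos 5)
Your proposal matches the paper's approach: the paper's own proof of this proposition is a two-sentence pointer back to Proposition \ref{prop: GSE asymp}, and your expansion of that pointer (exact Fredholm Pfaffian formula via Corollary \ref{cor: ASEP formula} or \ref{cor: 6vm formula} together with Theorem \ref{thm: pfaff analytic cont}, the Gaussian limit from Proposition \ref{prop: gauss pfaff conv}, then removal of the auxiliary shifts $\chi+2S$ by Lemma \ref{lem: signed measure asymp} plus a uniform anti-concentration bound obtained by differencing consecutive Fredholm Pfaffians) is exactly the structure of the paper's argument. One small correction to your final remark: it is not the whole kernel that is only polynomially small in $\tau$ here — the piece $k_{\nu^{-1}}-S$ is genuinely exponentially small (cf.\ Lemma \ref{lem: asep gaus asymp}), and the $\tau^{-1/2}$ factor appears only in the rank-two piece $A(x)B(y)-A(y)B(x)$, matching the lattice spacing $\sigma\tau^{1/2}$ so that a single summation over one variable is $O(1)$; the anti-concentration estimate then gives $O(\tau^{-1/2})$ by removing one such summation, entirely analogous to the $O(\tau^{-1/3})$ rate in the GSE case. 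The detailed tracking of these factors is precisely what the proof of Proposition \ref{prop: gauss pfaff conv} supplies, so there is no genuine additional obstacle beyond what has already been verified there.
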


\begin{proof}
Given Proposition \ref{prop: gauss pfaff conv}, what remains is to show that convergence of $-N(\tau)+\chi+2S$ or $h(n,n)+\chi+2S$ after rescaling implies convergence of $-N(\tau)$ or $h(n,n)$ after rescaling. This proceeds in a similar manner to the $\rho>\frac{1}{2}$ case in Proposition \ref{prop: GSE asymp}.
\end{proof}

\section*{Acknowledgment}
The author thanks Alexei Borodin for his guidance throughout this project. The author also thanks Guillaume Barraquand for suggesting Corollary \ref{cor: contour int formula}, Matteo Mucciconi for explaining the papers \cite{IMS21,IMS22}, the authors of \cite{BBCW18} for providing the Latex code used to generate the figures in Section \ref{sec: 6vm and HL}, and Amol Aggarwal, Promit Ghosal, and Dominik Schmid for helpful discussions.

\bibliography{bibliography}{}
\bibliographystyle{abbrvurl}

\appendix
\section{Fredholm Pfaffians}
\label{sec: app}
Recall that the Pfaffian $\Pf(K)$ of a skew-symmetric matrix $K$ is defined as
\begin{equation*}
    \Pf(K)=\sum_{M}\sgn(M)\prod _{(i,j)\in M}K_{ij},
\end{equation*}
where the sum is over all perfect matchings of $\{1,\dotsc, 2n\}$, $\sgn(M)$ is the sign of the permutation given in one line notation by recursively selecting the smallest number out of $M$ along with its paired number, and $i<j$. The Pfaffian satisfies $\Pf(K)^2=\det(K)$, and so in particular is $0$ if $K$ is not of full rank.

We also require the following expansion for Pfaffians.
\begin{lemma}[{\hspace{1sp}\cite[Lemma 4.2]{S90}}]
\label{lem: pf expansion}
Let $A$ and $B$ denote $2n\times 2n$ skew symmetric matrices. Then
\begin{equation*}
    \Pf(A+B)=\sum_{\substack{J\subseteq \{1,\dotsc, 2n\}\\|J|\text{ even}}}(-1)^{\sum_{i\in J}i-|J|/2}\Pf(A_I)\Pf(B_{I^c}),
\end{equation*}
where $A_I$ denotes the submatrix given by rows and columns indexed by $I$.
\end{lemma}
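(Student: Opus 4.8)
The plan is to prove this by the familiar "split each edge between $A$ and $B$" expansion of the Pfaffian; the only real content is the sign bookkeeping. Recall $\Pf(C)=\sum_M\sgn(M)\prod_{\{i,j\}\in M}C_{ij}$, the sum running over perfect matchings $M$ of $\{1,\dots,2n\}$ with $i<j$ in each factor. Putting $C=A+B$ and expanding each factor $(A+B)_{ij}=A_{ij}+B_{ij}$ writes $\Pf(A+B)$ as a sum over pairs $(M,S)$ with $S\subseteq M$, where the edges of $S$ contribute $A$-entries and those of $M\setminus S$ contribute $B$-entries. I would then reindex: the data $(M,S)$ is the same as an even subset $I\subseteq\{1,\dots,2n\}$ (the vertex set of $S$), a perfect matching $S$ of $I$, and a perfect matching $T=M\setminus S$ of $I^c$. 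Once the signs are under control, the inner double sum over $(S,T)$ will factor, by the very definition $\Pf(A_I)=\sum_S\sgn_I(S)\prod_{\{i,j\}\in S}A_{ij}$ (and likewise for $B_{I^c}$), into $\Pf(A_I)\Pf(B_{I^c})$.

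So the crux is the sign identity
\begin{equation*}
  \sgn(S\cup T)=(-1)^{\sum_{i\in I}i-|I|/2}\,\sgn_I(S)\,\sgn_{I^c}(T),
\end{equation*}
where $\sgn_I$ means the matching-sign computed with respect to the linear order induced on $I$. To prove it I would first record the elementary fact that the sign of a matching is unchanged if one permutes the order in which its pairs are listed (interchanging two listed pairs is a product of two transpositions, hence even); thus $\sgn(S\cup T)$ equals the sign of the permutation taking $(1,\dots,2n)$ to the list "all pairs of $S$, then all pairs of $T$" (each pair written smaller-element-first). Now write that permutation as a product of two: the shuffle $\tau$ sending $(1,\dots,2n)$ to $(\text{sorted }I,\text{sorted }I^c)$, and the permutation $\rho$ reordering the first $|I|$ positions according to $S$ and the last $|I^c|$ positions according to $T$. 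Since $\rho$ is block-diagonal for that split, $\sgn(\rho)=\sgn_I(S)\,\sgn_{I^c}(T)$ after the order-isomorphism $I\cong\{1,\dots,|I|\}$; and a direct count of the inversions of $\tau$ that cross between the $I$-block and the $I^c$-block gives $\sum_{i\in I}i-\tfrac{|I|(|I|+1)}{2}$, which has the same parity as $\sum_{i\in I}i-|I|/2$ because $|I|$ is even. Substituting and summing over all even $I$ gives the formula.

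Everything except this sign computation is routine reindexing together with the definition of the Pfaffian, so the sign identity is where any error would hide — in particular the passage from the literal inversion exponent $\sum_{i\in I}i-\tfrac{|I|(|I|+1)}{2}$ to the stated $\sum_{i\in I}i-|I|/2$. As a safeguard I would check the final identity by hand for $2n=2$ and $2n=4$ against the explicit expansions $\Pf(C)=C_{12}$ and $\Pf(C)=C_{12}C_{34}-C_{13}C_{24}+C_{14}C_{23}$. The statement is in any case classical; it is \cite[Lemma 4.2]{S90}.
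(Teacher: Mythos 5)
Your proof is correct. The paper does not actually prove this lemma — it cites it to Stembridge \cite[Lemma 4.2]{S90} and uses it as a black box — so there is no in-paper argument to compare against. Your argument is the standard one: expand each entry $(A+B)_{ij}$, reindex by the vertex set $I$ of the $A$-edges, and factor once the sign identity is established. The sign bookkeeping is the only nontrivial point and you handle it correctly: the crossing inversions of the shuffle number $\sum_{i\in I} i - \binom{|I|+1}{2}$, and since $|I|=2m$ is even, $\binom{|I|+1}{2} - |I|/2 = 2m^2$ is even, so the exponent can be replaced by $\sum_{i\in I} i - |I|/2$ as claimed. (Incidentally, the lemma's statement has a small notational slip — the summation index is written $J$ while the body uses $I$; they are the same set.)
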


Let $(\Omega,\mu)$ be a measure space. For us, $\Omega$ will either be a countable set with the counting measure or $\mathbb{R}$ with Lebesgue measure, so we will drop the dependence on $\mu$. Let $K:\Omega\times \Omega\to\R$ be a measurable $2\times 2$ skew-symmetric matrix-valued kernel
\begin{equation*}
    K(x,y)=\begin{pmatrix}
    K_{11}(x,y)&K_{12}(x,y)
    \\K_{21}(x,y)&K_{22}(x,y)
    \end{pmatrix},
\end{equation*}
and let $J$ denote the kernel defined by
\begin{equation*}
    J(x,y)=\delta_{x=y}\begin{pmatrix}
    0&1\\-1&0
    \end{pmatrix}.
\end{equation*}
We define the \emph{Fredholm Pfaffian} $\Pf(J-K)_{L^2(\Omega)}$ by the series expansion
\begin{equation*}
    \Pf(J-K)_{L^2(\Omega)}=1+\sum_{n=1}^\infty \frac{(-1)^n}{n!}\int_{\Omega^{n}}\Pf(K(x_i,x_j))_{i,j=1}^{n}d\mu^{\otimes n},
\end{equation*}
provided that the series converges absolutely, i.e.
\begin{equation*}
    \Pf(J-K)_{L^2(\Omega)}=1+\sum_{n=1}^\infty \frac{(-1)^n}{n!}\int_{\Omega^{n}}|\Pf(K(x_i,x_j))_{i,j=1}^{n}|d\mu^{\otimes n}<\infty.
\end{equation*}
In particular, we do not require that $K_{ij}(x,y)$ have any decay properties beyond convergence of this series, and indeed we will wish to work with kernels which can exponentially grow in $x$ or $y$. Note here that the convention is that the matrices appearing in the expansion consist of $n^2$ $2\times 2$ blocks rather than $4$ $n\times n$ blocks. This is related to the Fredholm determinant via $\Pf(J-K)_{L^2(\Omega)}^2=\det(1-J^TK)_{L^2(\Omega)}$.

A standard tool to prove absolute convergence of this series is given by Hadamard's bound for determinants, which immediately implies a bound on Pfaffians, since $|\Pf(X)|=\sqrt{|\det(X)|}$ for any skew-symmetric matrix $X$. For convenience, we state the needed result.

\begin{lemma}
\label{lem: Hadamard}
Let $K$ be an $n\times n$ matrix with $|K_{ij}|\leq a_ib_j$. Then
\begin{equation*}
    |\det(K)|\leq n^{\frac{n}{2}}\prod _{i}a_ib_i.
\end{equation*}
Thus, if $K$ is skew-symmetric of size $2n$, and $|K_{ij}|\leq a_ib_j$, then
\begin{equation*}
    |\Pf(K)|\leq (2n)^{\frac{n}{2}}\prod_i \sqrt{a_i b_i}.
\end{equation*}
\end{lemma}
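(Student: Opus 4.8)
The plan is to reduce the determinant estimate to the classical Hadamard inequality for determinants, and then deduce the Pfaffian estimate from the identity $\Pf(K)^2=\det(K)$ recorded in this appendix.

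First I would dispose of the degenerate case. If some $a_i=0$ then the hypothesis $|K_{ij}|\le a_ib_j$ forces the $i$th row of $K$ to vanish, so $\det(K)=0$ and both inequalities hold trivially; the same applies if some $b_j=0$. Hence we may assume $a_i>0$ and $b_j>0$ for all $i,j$, and write $K_{ij}=a_ib_j L_{ij}$ with $|L_{ij}|\le 1$. By multilinearity of the determinant in the rows and in the columns, $\det(K)=\bigl(\prod_i a_i\bigr)\bigl(\prod_j b_j\bigr)\det(L)$. Each row of $L$ has Euclidean norm at most $\sqrt{n}$ since all its entries are bounded by $1$ in absolute value, so the classical Hadamard inequality $|\det(L)|\le\prod_i\|\mathrm{row}_i(L)\|_2$ gives $|\det(L)|\le n^{n/2}$. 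Combining these yields $|\det(K)|\le n^{n/2}\prod_i a_ib_i$, which is the first claim.

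For the second claim, apply the first part to the skew-symmetric matrix $K$ of size $2n$, obtaining $|\det(K)|\le(2n)^{n}\prod_{i=1}^{2n}a_ib_i$. Since $\Pf(K)^2=\det(K)$, we have $|\Pf(K)|=\sqrt{|\det(K)|}\le(2n)^{n/2}\prod_{i=1}^{2n}\sqrt{a_ib_i}$, as desired.

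There is essentially no obstacle here: the only points requiring any care are the bookkeeping of the index ranges ($n$ for the determinant bound versus $2n$ for the Pfaffian bound) and the trivial degenerate case where some $a_i$ or $b_j$ vanishes. Alternatively, since Hadamard's bound is entirely standard, one could state the lemma with a one-line reference and the remark that the Pfaffian half follows from $|\Pf(X)|=\sqrt{|\det(X)|}$.
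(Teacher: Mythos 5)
Your proof is correct, and it follows exactly the route the paper has in mind: the paper states this lemma without proof, remarking only that it follows from Hadamard's determinant bound together with the identity $|\Pf(X)|=\sqrt{|\det(X)|}$, which is precisely what you carry out. The only thing you add beyond what the paper sketches is the (sound) bookkeeping of factoring $K_{ij}=a_ib_jL_{ij}$ and the trivial degenerate case.
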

\end{document}